\newtheorem{thm}{Theorem}
\newtheorem{lem}[thm]{Lemma}
\newtheorem{cor}[thm]{Corollary}
\newtheorem{prop}[thm]{Proposition}
\newtheorem{fac}[thm]{Fact}
\newtheorem{cla}[thm]{Claim}
\theoremstyle{definition}
\newtheorem{rem}[thm]{Remark}
\newtheorem{nota}[thm]{Notation}
\newtheorem{defi}[thm]{Definition}
\newtheorem{defi+nota}[thm]{Definition and Notation}
\newtheorem*{conv}{Convention}
\newtheorem*{fore}{Warning}
\title{Partially critical 2-structures}
\author{
Houmem Belkhechine\thanks{Carthage University, 
Bizerte Preparatory Engineering Institute, BP 64, 7021 Bizerte, Tunisia; {\tt houmem@gmail.com}.}
\and Imed Boudabbous\thanks{University of Sfax, Department of Mathematics, 
B.P. 802, 3038 Sfax, Tunisia; {\tt imed.boudabbous@gmail.com}.}
\and Pierre Ille\thanks{Aix Marseille Univ, CNRS, Centrale Marseille, I2M UMR 7373, 
13453 Marseille, France; {\tt pierre.ille@univ-amu.fr}.}
}
\begin{document}

\maketitle

\begin{abstract}
A 2-structure $\sigma$ consists of a vertex set $V(\sigma)$ and of an equivalence relation $\equiv_\sigma$ defined on $(V(\sigma)\times V(\sigma))\setminus\{(v,v):v\in V(\sigma)\}$. 
Given a 2-structure $\sigma$, a subset $M$ of $V(\sigma)$ is a module of $\sigma$ if for $x,y\in M$ and $v\in V(\sigma)\setminus M$, 
$(x,v)\equiv_{\sigma}(y,v)$ and $(v,x)\equiv_{\sigma}(v,y)$. 
For instance, 
$\emptyset$, $V(\sigma)$ and $\{v\}$, for $v\in V(\sigma)$, are modules of $\sigma$ called trivial modules of $\sigma$. 
A 2-structure $\sigma$ is prime if $v(\sigma)\geq 3$ and all the modules of 
$\sigma$ are trivial. 
A prime 2-structure $\sigma$ is critical if for each $v\in V(\sigma)$, $\sigma-v$ is not prime. 
A prime 2-structure $\sigma$ is partially critical if there exists $X\subsetneq V(\sigma)$ such that $\sigma[X]$ is prime, and for each $v\in V(\sigma)\setminus X$, $\sigma-v$ is not prime. 
We characterize finite or infinite partially critical 2-structures. 
\end{abstract}

\medskip

\noindent {\bf Mathematics Subject Classifications (2010):} 05C75, 05C63, 06A05.

\medskip

\noindent {\bf Key words:} 2-structure, module, prime, critical, partially critical. 

\section{Introduction}\label{s_intro}

The 2-structures were introduced by Ehrenfeucht et al.~\cite{EHR99}. 
They are well adapted generalizations of binary combinatorial structures like graphs, tournaments,... within the framework of modular decomposition. 
We consider finite or infinite 2-structures. 

A module (or a clan~\cite{EHR99}) of a 2-structure is a subset such that each vertex outside is linked in the same way to all the vertices inside. 
A 2-structure is prime if all its modules are trivial. 
In a finite and prime 2-structure, we can remove one or two vertices in order to obtain a prime 2-substructure. 
This result if false for infinite and prime 2-structures. 
In fact, there exist infinite and prime 2-structures that become non-prime 
after removing any finitely many vertices. 
In the sequel, such prime 2-structures are called finitely critical. 
A vertex $v$ of a prime 2-structure is critical (in terms of primality) when the 2-substructure obtained by removing $v$ is not prime. 
Now, a prime 2-structure is critical if all its vertices are critical. 
The finite and critical 2-structures were characterized independently by 
Bonizzoni~\cite{B94}, and Schmerl and Trotter~\cite{ST93}. 
The problem of the characterization of infinite and critical 2-structures remains open. 
The central difficulty comes from the existence of finitely critical 2-structures. 
Nevertheless, Boudabbous and Ille ~\cite{BI07} succeeded in characterizing infinite and prime digraphs that are critical, but not finitely critical. 

A prime 2-structure is partially critical if every vertex outside a prime induced 2-substructure is critical. 
Finite and partially critical graphs were characterized by Breiner et al.~\cite{BDI08}. 
Finite and partially critical tournaments were characterized by Sayar~\cite{S11} who adapted the examination of partial criticality presented in \cite{BDI08} to tournaments.

Almost all finite and prime 2-structures are prime. 
Thus, it is impossible to characterize or to describe the finite and prime 2-structures of a given cardinality. 
Now, suppose that a finite and prime 2-structure admits a critical vertex. 
The withdrawal of this vertex creates a partial module, which imposes conditions 
on the 2-structure. 
When the 2-structure is critical, that is, when all its vertices are critical, we obtain so many conditions that it is possible to characterize the finite and critical 2-structures up to isomorphism (see~\cite{B94} and \cite{ST93}). 
For finite and partially critical 2-structures, we have less conditions, and we do not succeed in characterizing them up to isomorphism. 
Nevertheless, we can localize the created partial modules because of the prime induced 2-substructure, which leads us to a description by using an auxiliary graph. 

In this paper, we characterize finite or infinite partially critical 2-structures. 
For the finite case, we follow the same approach as that of \cite{BDI08}. 
We associate with the prime induced 2-substructure its outside graph (see Definition~\ref{outsideg}). 
For a finite and partially critical 2-structure, the components of its outside graph are critical and bipartite (see Theorem~\ref{thm_main_2}), that is, are half graphs (see Proposition~\ref{prop1_half_graph}). 
This result establishes an important structural link between partial criticality and (global) criticality via the outside graph. 
Furthermore, always in the finite case, if we add an odd number of vertices to the prime induced 2-substructure, we obtain a non-prime induced 2-substructure. 
This fact is false in the infinite case when we consider finitely critical 2-structures as particular partially critical 2-structures. 
Therefore, to study infinite and partially critical 2-structures, we suppose that the addition of 5 vertices to the prime induced 2-substructure gives a non-prime induced 2-substructure. 
Under this assumption, we can proceed by compactness. 
We obtain that the components of the outside graph are critical and $P_5$-free bipartite graphs. 
It turns out that the critical and $P_5$-free bipartite graphs are the half graphs defined from a discrete linear order (see Theorem~\ref{thm1_half_graph}). 

At present, we formalize our presentation. 
A {\em 2-structure}~\cite{EHR99} $\sigma$ consists of a finite or infinite {\em vertex set} $V(\sigma)$, and of an equivalence relation $\equiv_\sigma$ defined on $(V(\sigma)\times V(\sigma))\setminus\{(v,v):v\in V(\sigma)\}$. 
The cardinality of $V(\sigma)$ is denoted by $v(\sigma)$. 
The set of the equivalence classes of $\equiv_\sigma$ is denoted by 
$E(\sigma)$. 
Given a 2-structure $\sigma$, 
with each $W\subseteq V(\sigma)$ associate the {\em 2-substructure} $\sigma[W]$ of $\sigma$ induced by $W$ defined on $V(\sigma[W])=W$ such that 
$$(\equiv_{\sigma[W]})\ =\ (\equiv_{\sigma})_{\restriction (W\times W)\setminus\{(w,w):w\in W\}}.$$
Given $W\subseteq V(\sigma)$, 
$\sigma[V(\sigma)\setminus W]$ is denoted by $\sigma-W$, and by $\sigma-w$ when $W=\{w\}$. 

A graph $\Gamma=(V(\Gamma),E(\Gamma))$ is identified with the 2-structure 
$\sigma_\Gamma$ defined on 
$V(\sigma_\Gamma)=V(\Gamma)$ as follows. 
For $u,v,x,y\in V(\Gamma)$ such that $u\neq v$ and $x\neq y$, 
$(u,v)\equiv_{\sigma_\Gamma}(x,y)$ if 
$\{u,v\},\{x,y\}\in E(\Gamma)$ or $\{u,v\},\{x,y\}\not\in E(\Gamma)$. 
Similarly, 
a tournament $T=(V(T),A(T))$ is identified with the 2-structure $\sigma_T$ defined on $V(\sigma_T)=V(T)$ as follows. 
For $u,v,x,y\in V(T)$ such that $u\neq v$ and $x\neq y$, 
$(u,v)\equiv_{\sigma_T}(x,y)$ if $(u,v),(x,y)\in A(T)$ or $(u,v),(x,y)\not\in A(T)$.

\subsection{Prime 2-structures}\label{sub_prime}

We remind the important results on prime 2-structures.  

\begin{conv}
Let $\sigma$ be a 2-structure. 
For $X\subseteq V(\sigma)$, $\overline{X}$ denotes $V(\sigma)\setminus X$. 
\end{conv}

Let $\sigma$ be a 2-structure. 
A subset $M$ of $V(\sigma)$ is a {\em module} \cite{S92} of $\sigma$ if for any $x,y\in M$ and 
$v\in\overline{M}$, we have 
$$(x,v)\equiv_{\sigma}(y,v)\ \text{and}\ (v,x)\equiv_{\sigma}(v,y).$$
For instance, 
$\emptyset$, $V(\sigma)$ and $\{v\}$, for $v\in V(\sigma)$, are modules of $\sigma$ called {\em trivial} modules of $\sigma$. 
A 2-structure $\sigma$ is {\em prime} if $v(\sigma)\geq 3$ and all the modules of 
$\sigma$ are trivial. 
The main definitions follow.

\begin{defi}\label{defi_critical}
Given a prime 2-structure $\sigma$, 
a vertex $v$ of $\sigma$ is {\em critical} (in terms of primality) if $\sigma-v$ is not prime. 
More generally, a subset $W$ of $V(\sigma)$ is {\em critical} if $\sigma-W$ is not prime. 
A prime 2-structure is {\em critical} if all its vertices are critical. 

Let $\sigma$ be a prime 2-structure. 
Given $W\subseteq V(\sigma)$, $\sigma$ is {\em $W\!$-critical} if all the elements of $W$ are critical vertices of $\sigma$. 
Lastly, a prime 2-structure $\sigma$ is {\em partially critical} if 
there exists $X\subsetneq V(\sigma)$ such that $\sigma[X]$ is prime, and $\sigma$ 
is $\overline{X}\!$-critical. 
\end{defi}

\begin{nota}\label{nota_p}
Let $\sigma$ be a 2-structure. 
With $X\subsetneq V(\sigma)$ such that $\sigma[X]$ is prime, 
associate the following subsets of $\overline{X}$ 
\begin{itemize}
\item ${\rm Ext}_\sigma(X)$ is the set of $v\in\overline{X}$ such that $\sigma[X\cup\{v\}]$ is prime;
\item $\langle X\rangle_\sigma$ is the set of $v\in\overline{X}$ such that $X$ is a module of 
$\sigma[X\cup\{v\}]$;
\item given $\alpha\in X$, $X_\sigma(\alpha)$ is the set of $v\in\overline{X}$ such that 
$\{\alpha,v\}$ is a module of $\sigma[X\cup\{v\}]$. 
\end{itemize}
The set $\{{\rm Ext}_\sigma(X),\langle X\rangle_\sigma\}\cup\{X_\sigma(\alpha):\alpha\in X\}$ is denoted by 
$p_{(\sigma,\overline{X})}$. 
It is called the {\em outside partition}. 
\end{nota}

The next result (see \cite[Lemmas~6.3 and 6.4]{EHR99}) is basic in the study of primality.

\begin{lem}\label{lem_EHR}
Given a 2-structure $\sigma$, consider $X\subsetneq V(\sigma)$ such that $\sigma[X]$ is prime. 
The set $p_{(\sigma,\overline{X})}$ is a partition of $\overline{X}$. 
Moreover, the three assertions below hold 
\begin{enumerate}
\item for $v\in\langle X\rangle_\sigma$ and $w\in\overline{X}\setminus\langle X\rangle_\sigma$, 
if $\sigma[X\cup\{v,w\}]$ is not prime, then $X\cup\{w\}$ is a module of $\sigma[X\cup\{v,w\}]$;
\item given $\alpha\in X$, for $v\in X_\sigma(\alpha)$ and 
$w\in\overline{X}\setminus X_\sigma(\alpha)$, 
if $\sigma[X\cup\{v,w\}]$ is not prime, then $\{\alpha,v\}$ is a module of $\sigma[X\cup\{v,w\}]$;
\item for distinct $v,w\in{\rm Ext}_\sigma(X)$, if $\sigma[X\cup\{v,w\}]$ is not prime, then $\{v,w\}$ is a module of $\sigma[X\cup\{v,w\}]$.
\end{enumerate}
\end{lem}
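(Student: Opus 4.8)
The plan is to derive everything from one elementary observation together with one classical fact. The observation: if $N$ is a module of $\sigma[W]$ and $Y\subseteq W$, then $N\cap Y$ is a module of $\sigma[Y]$; applied with $Y=X$, whenever $\sigma[X]$ is prime the trace $N\cap X$ is a trivial module of $\sigma[X]$, that is $N\cap X\in\{\emptyset\}\cup\{\{\alpha\}:\alpha\in X\}\cup\{X\}$. The classical fact: if two modules of a 2-structure have nonempty intersection, then their union is again a module (a three-line verification from the definition).

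First I would settle the partition. Fix $v\in\overline{X}$. If $\sigma[X\cup\{v\}]$ is prime then $v\in{\rm Ext}_\sigma(X)$. Otherwise $\sigma[X\cup\{v\}]$ has a nontrivial module $M$; since $M\cap X$ is trivial and $2\le|M|$ with $M\ne X\cup\{v\}$, the only possibilities are $M=X$ (so $v\in\langle X\rangle_\sigma$) or $M=\{\alpha,v\}$ for some $\alpha\in X$ (so $v\in X_\sigma(\alpha)$). Hence the members of $p_{(\sigma,\overline{X})}$ cover $\overline{X}$. For pairwise disjointness: no $v\in{\rm Ext}_\sigma(X)$ can lie in $\langle X\rangle_\sigma$ or in any $X_\sigma(\alpha)$, since $X$ and $\{\alpha,v\}$ are nontrivial subsets of the prime 2-structure $\sigma[X\cup\{v\}]$ (recall $v(\sigma[X])\ge 3$); if $v\in X_\sigma(\alpha)\cap X_\sigma(\beta)$ with $\alpha\ne\beta$, then $\{\alpha,v\}\cup\{\beta,v\}=\{\alpha,\beta,v\}$ is a module of $\sigma[X\cup\{v\}]$ whose trace $\{\alpha,\beta\}$ is a nontrivial module of $\sigma[X]$, impossible; and if $v\in\langle X\rangle_\sigma\cap X_\sigma(\alpha)$, then combining the equivalences that express that $X$ and $\{\alpha,v\}$ are both modules of $\sigma[X\cup\{v\}]$ shows that $X\setminus\{\alpha\}$ is a module of $\sigma[X]$, again nontrivial, impossible.

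For the three assertions the argument is uniform. Suppose $\sigma[X\cup\{v,w\}]$ is not prime and fix a nontrivial module $M$ of it. Its trace $M\cap X$ is trivial, so there is only a short finite list of possible shapes for $M$ (according as $M\cap X$ is $\emptyset$, a singleton $\{\beta\}$, or $X$). For each shape one argues from two ingredients: the equivalences expressing that $M$ is a module of $\sigma[X\cup\{v,w\}]$ (equivalently, that the traces of $M$ on $X$, on $X\cup\{v\}$ and on $X\cup\{w\}$ are modules), and the equivalences expressing the standing hypothesis on $v$ and $w$. Either these together force $v$ or $w$ into a part of $p_{(\sigma,\overline{X})}$ forbidden by the hypothesis — using the disjointness just proved — and we reach a contradiction, or they directly exhibit the module claimed in the assertion. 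For instance, in assertion~1 ($v\in\langle X\rangle_\sigma$, $w\notin\langle X\rangle_\sigma$): the shapes $\{v,w\}$, $\{\alpha,v\}$, $\{\alpha,v,w\}$, $X$, $X\cup\{v\}$ each lead to $w\in\langle X\rangle_\sigma$ or $v\in X_\sigma(\alpha)$, a contradiction; and for the shapes $\{\alpha,w\}$ and $X\cup\{w\}$ one reads off, from the defining equivalences of $M$ at the vertex $v$ together with those saying that $X$ is a module of $\sigma[X\cup\{v\}]$, that $v$ relates uniformly to $X\cup\{w\}$, so $X\cup\{w\}$ is a module of $\sigma[X\cup\{v,w\}]$, as wanted. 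Assertions~2 and~3 are handled identically, with $\{\alpha,v\}$, respectively $\{v,w\}$, playing the role of $X$.

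The only genuinely computational step — and the point to write out carefully — is of this last kind: verifying that the candidate set is really a module by fusing the two families of equivalences, those carried by $M$ and those coming from the outside-partition membership of $v$ (or $w$). There is no structural obstacle; the remainder is bookkeeping over a handful of cases.
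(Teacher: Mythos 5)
Your proposal is correct. The paper does not prove Lemma~\ref{lem_EHR} itself --- it is quoted from \cite[Lemmas~6.3 and 6.4]{EHR99} --- so there is no in-paper argument to compare against; your proof is the standard module-calculus one (traces of modules are modules, unions of overlapping modules are modules, then a case analysis on the trace $M\cap X$), and the cases you spell out for the partition claim and for assertion~1 all check out, with the remaining assertions indeed reducing to the same bookkeeping. The only cosmetic caveat is that in assertions~2 and~3 the sorting of shapes into ``contradiction'' versus ``yields the claimed module'' differs slightly from assertion~1 (e.g.\ in assertion~2 the shape $M=X\cup\{v\}$ gives $w\in\langle X\rangle_\sigma$, which is not forbidden, but directly forces $\{\alpha,v\}$ to be a module of $\sigma[X\cup\{v,w\}]$), which your general dichotomy already covers.
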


The classic parity theorem \cite[Theorem~6.5]{EHR99} follows from Lemma~\ref{lem_EHR}. 

\begin{thm}\label{tEHR}
Given a 2-structure $\sigma$, consider $X\subsetneq V(\sigma)$ such that $\sigma[X]$ is prime and 
$|\overline{X}|\geq 2$. 
If $\sigma$ is prime, then there exist distinct $v,w\in\overline{X}$ such that 
$\sigma[X\cup\{v,w\}]$ is prime. 
\end{thm}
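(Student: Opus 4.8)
The plan is to argue by contradiction. Assume $\sigma$ is prime but that $\sigma[X\cup\{v,w\}]$ is not prime for \emph{every} pair of distinct $v,w\in\overline{X}$; I will produce a nontrivial module of $\sigma$, contradicting primality. The whole argument is organized around the outside partition $p_{(\sigma,\overline{X})}$ supplied by Lemma~\ref{lem_EHR}: its three types of block — the $X_\sigma(\alpha)$'s, the block $\langle X\rangle_\sigma$, and the block $\mathrm{Ext}_\sigma(X)$ — will be treated in turn, each using the correspondingly numbered assertion of that lemma, and in each case the module I build is forced to be nontrivial because $|X|\geq 3$, $X\subsetneq V(\sigma)$ and $|\overline{X}|\geq 2$.

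First I would dispose of the case where $X_\sigma(\alpha)\neq\emptyset$ for some $\alpha\in X$. I claim $\{\alpha\}\cup X_\sigma(\alpha)$ is a module of $\sigma$. Indeed, by definition of $X_\sigma(\alpha)$ every $u\in X_\sigma(\alpha)$ is linked to each element of $X\setminus\{\alpha\}$ exactly as $\alpha$ is; and for $z\in\overline{X}\setminus X_\sigma(\alpha)$, the 2-substructure $\sigma[X\cup\{u,z\}]$ is not prime by assumption, so Lemma~\ref{lem_EHR}(2) forces $\{\alpha,u\}$ to be a module of it, i.e.\ $z$ is linked to $u$ exactly as to $\alpha$; hence every vertex outside $\{\alpha\}\cup X_\sigma(\alpha)$ is linked uniformly to it. So from now on I may assume $X_\sigma(\alpha)=\emptyset$ for all $\alpha\in X$, that is, $\overline{X}=\langle X\rangle_\sigma\sqcup\mathrm{Ext}_\sigma(X)$. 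If $\langle X\rangle_\sigma=\emptyset$, then $\overline{X}=\mathrm{Ext}_\sigma(X)$, and for all distinct $v,w\in\overline{X}$ Lemma~\ref{lem_EHR}(3) (with the contradiction hypothesis) makes $\{v,w\}$ a module of $\sigma[X\cup\{v,w\}]$; reading this off for every pair shows every element of $\overline{X}$ is linked in the same way to each vertex of $X$, so $\overline{X}$ is a module of $\sigma$. If instead $\langle X\rangle_\sigma\neq\emptyset$, the module to exhibit is $X\cup\mathrm{Ext}_\sigma(X)$: for $v\in\langle X\rangle_\sigma$ and $z\in\mathrm{Ext}_\sigma(X)$ one has $v\notin\mathrm{Ext}_\sigma(X)$, $z\notin\langle X\rangle_\sigma$ and $\sigma[X\cup\{v,z\}]$ not prime, so Lemma~\ref{lem_EHR}(1) gives that $X\cup\{z\}$ is a module of $\sigma[X\cup\{v,z\}]$; since $v$ is already linked uniformly to $X$ (as $v\in\langle X\rangle_\sigma$), this says $v$ is linked in the same way to all of $X\cup\{z\}$, and letting $z$ range over $\mathrm{Ext}_\sigma(X)$ we get that $v$ is linked uniformly to all of $X\cup\mathrm{Ext}_\sigma(X)$. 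As the complement of $X\cup\mathrm{Ext}_\sigma(X)$ is exactly $\langle X\rangle_\sigma$, every outside vertex is linked uniformly to it, so it is a module of $\sigma$, nontrivial because it contains $X$ and omits the nonempty set $\langle X\rangle_\sigma$.

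The steps I expect to be genuinely delicate are twofold: first, recognizing that the right module in the last case is $X\cup\mathrm{Ext}_\sigma(X)$, and neither $X$ nor $X\cup\langle X\rangle_\sigma$ (the latter typically fails to be a module, since a vertex of $\mathrm{Ext}_\sigma(X)$ need not be linked uniformly to $X$); and second, arranging the trichotomy so that each of the three clauses of Lemma~\ref{lem_EHR} is called upon exactly once and the three candidate modules $\{\alpha\}\cup X_\sigma(\alpha)$, $\overline{X}$, $X\cup\mathrm{Ext}_\sigma(X)$ cover all possibilities. Everything else reduces to routine ``same-linking'' bookkeeping, together with the immediate observation that none of these three sets is empty, a singleton, or all of $V(\sigma)$.
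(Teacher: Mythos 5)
Your proof is correct and is exactly the classic derivation the paper alludes to: the paper omits the argument, citing \cite[Theorem~6.5]{EHR99} and noting only that the result ``follows from Lemma~\ref{lem_EHR}'', and your case analysis over the blocks of $p_{(\sigma,\overline{X})}$ — producing the nontrivial modules $\{\alpha\}\cup X_\sigma(\alpha)$, $\overline{X}$, and $X\cup{\rm Ext}_\sigma(X)$ from the three assertions of that lemma — is precisely that standard derivation, with the nontriviality checks ($|X|\geq 3$, $|\overline{X}|\geq 2$) handled correctly.
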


Theorem~\ref{tEHR} leads us to introduce the outside graph as follows. 
We need the next notation. 

\begin{nota}\label{nota_outsideg}
Given a 2-structure $\sigma$, consider $X\subsetneq V(\sigma)$ such that $\sigma[X]$ is prime. 
The set of the nonempty subsets $Y$ of $\overline{X}$, such that $\sigma[X\cup Y]$ is prime, is denoted by 
$\varepsilon_{(\sigma,\overline{X})}$. 
Hence ${\rm Ext}_\sigma(X)=\{v\in\overline{X}:\{v\}\in\varepsilon_{(\sigma,\overline{X})}\}$. 
Furthermore, suppose that $|\overline{X}|\geq 2$. 
By Theorem~\ref{tEHR}, $\varepsilon_{(\sigma,\overline{X})}$ contains an unordered pair. 
\end{nota}

\begin{defi}\label{outsideg}
Given a 2-structure $\sigma$, consider $X\subsetneq V(\sigma)$ such that $\sigma[X]$ is prime. 
The {\em outside graph} 
$\Gamma_{(\sigma,\overline{X})}$ is defined on 
$\overline{X}$ by 
$$E(\Gamma_{(\sigma,\overline{X})})=
\{Y\in\varepsilon_{(\sigma,\overline{X})}:|Y|=2\}.$$
By Theorem~\ref{tEHR}, $\Gamma_{(\sigma,\overline{X})}$ is nonempty when 
$|\overline{X}|\geq 2$. 
The outside graph is a common tool in the study of prime graphs~\cite{I97,IR14}. 
\end{defi}

By applying Theorem~\ref{tEHR} several times, we obtain the following result. 

\begin{cor}\label{cor_EHR}
Given a 2-structure $\sigma$, consider $X\subsetneq V(\sigma)$ such that $\sigma[X]$ is prime. 
Suppose that $\overline{X}$ is finite, with $|\overline{X}|\geq 2$. 
If $\sigma$ is prime, then there exist $v,w\in\overline{X}$ such that 
$\sigma-\{v,w\}$ is prime. 
\end{cor}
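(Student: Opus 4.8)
The plan is to deduce the statement from Theorem~\ref{tEHR} by iterating it along a chain of prime induced substructures of $\sigma$ containing $X$. Concretely, I would build a strictly increasing sequence $X=X_0\subsetneq X_1\subsetneq\cdots\subsetneq X_k$ of subsets of $V(\sigma)$ such that each $\sigma[X_i]$ is prime, $|X_{i+1}\setminus X_i|=2$, and $X_{i+1}\setminus X_i\subseteq\overline{X}$; in particular $\overline{X_i}\subseteq\overline{X}$ for every $i$. The construction is greedy: as long as $|\overline{X_i}|\geq 2$, apply Theorem~\ref{tEHR} to $\sigma$ and the prime induced substructure $\sigma[X_i]$ (legitimate since $\sigma$ is prime) to obtain distinct $v,w\in\overline{X_i}$ with $\sigma[X_i\cup\{v,w\}]$ prime, and set $X_{i+1}=X_i\cup\{v,w\}$. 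Since $\overline{X}$ is finite and $|\overline{X_{i+1}}|=|\overline{X_i}|-2$, the process halts at some $X_k$ with $|\overline{X_k}|\leq 1$.

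Two cases then arise, according to the parity of $|\overline{X}|$. If $|\overline{X_k}|=1$, say $\overline{X_k}=\{z\}$, then $\sigma-z=\sigma[X_k]$ is prime and $z\in\overline{X}$, so the choice $v=w=z$ works. If instead $|\overline{X_k}|=0$, then $k\geq 1$ because $|\overline{X}|\geq 2$, and $X_{k-1}$ satisfies $|\overline{X_{k-1}}|=2$; writing $\overline{X_{k-1}}=\{v,w\}$ we get $\sigma-\{v,w\}=\sigma[X_{k-1}]$ prime with $v,w\in\overline{X}$. In both cases we have produced at most two vertices of $\overline{X}$ whose removal leaves a prime 2-structure.

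There is no serious obstacle here, only parity bookkeeping; the one point to watch — and the reason the conclusion is phrased with a pair $\{v,w\}$ rather than with two \emph{distinct} removable vertices — is that when $|\overline{X}|$ is odd the reduction in steps of $2$ cannot be stopped at co-size exactly $2$, so one is forced down to a prime induced substructure of co-size $1$ and the two vertices produced may coincide. Alternatively, the whole argument can be packaged as an induction on $|\overline{X}|$, with base cases $|\overline{X}|\in\{2,3\}$ (the first handled by $\sigma-\overline{X}=\sigma[X]$, the second by a single application of Theorem~\ref{tEHR}) and the inductive step for $|\overline{X}|\geq 4$ replacing $X$ by a prime $X_1\supsetneq X$ with $|\overline{X_1}|=|\overline{X}|-2$ supplied by Theorem~\ref{tEHR} and then invoking the induction hypothesis, noting $\overline{X_1}\subseteq\overline{X}$.
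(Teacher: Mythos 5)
Your proof is correct and is essentially the paper's own argument: the paper gives no written proof beyond the remark that Corollary~\ref{cor_EHR} is obtained ``by applying Theorem~\ref{tEHR} several times,'' which is exactly your greedy iteration, and your parity bookkeeping (allowing $v=w$ when $|\overline{X}|$ is odd, since the statement does not require $v$ and $w$ to be distinct) is the right way to finish.
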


Schmerl and Trotter~\cite{ST93} characterized the finite and critical 2-structures (see Definition~\ref{defi_critical}). 
Using their characterization, they obtained the following improvement of Corollary~\ref{cor_EHR}, which is an important result on the finite and 
prime 2-structures. 

\begin{thm}\label{thm_ST}
Given a finite and prime 2-structure $\sigma$, if $v(\sigma)\geq 7$, then there exist distinct vertices $v$ and $w$ of $\sigma$ such that $\sigma-\{v,w\}$ is prime. 
\end{thm}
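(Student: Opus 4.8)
The plan is to derive the statement from the Schmerl--Trotter classification of the finite critical 2-structures. The ingredient I would isolate is the following consequence of that classification, call it $(\star)$: \emph{every finite critical 2-structure $\tau$ with $v(\tau)=m\geq 6$ admits a prime induced 2-substructure on exactly $m-2$ vertices}. The classification describes the critical 2-structures on at least five vertices by a short explicit list of families, each built over a linearly ordered vertex set, so I would establish $(\star)$ by inspecting that list: in each family one removes a suitable pair of vertices---typically the two extremities of the associated linear order---and checks that what remains is a 2-structure isomorphic to a smaller member of the same family, hence prime as soon as it has at least three vertices (or at least four in the graph-like case), which holds since $m-2\geq 4$. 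The finitely many sporadic critical 2-structures have few vertices, and those among them with exactly six vertices have to be checked by hand. I would also record that the bound $m\geq 6$ in $(\star)$ is best possible: a critical graph on five vertices has no proper prime induced 2-substructure whatsoever, because no graph on three vertices is prime while criticality forbids a prime induced substructure on four vertices.

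Granting $(\star)$, set $n=v(\sigma)\geq 7$. If $\sigma$ is critical, then $n\geq 6$, so $(\star)$ applied to $\sigma$ produces $X\subseteq V(\sigma)$ with $|X|=n-2$ and $\sigma[X]$ prime; the two elements of $\overline{X}$ are the required distinct vertices $v,w$, since $\sigma-\{v,w\}=\sigma[X]$. Assume now $\sigma$ is not critical and fix a non-critical vertex $v$, so that $\tau:=\sigma-v$ is prime with $v(\tau)=n-1\geq 6$. If $\tau$ is also not critical, it has a non-critical vertex $w\neq v$, whence $\sigma-\{v,w\}=\tau-w$ is prime and we are done. Otherwise $\tau$ is critical; since $v(\tau)=n-1\geq 6$ we may apply $(\star)$ to $\tau$, obtaining $X\subseteq V(\tau)$ with $|X|=n-3$ and $\sigma[X]=\tau[X]$ prime. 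As $v\notin X$, the set $\overline{X}=V(\sigma)\setminus X$ has exactly three elements, so Corollary~\ref{cor_EHR} yields distinct $v',w'\in\overline{X}$ with $\sigma-\{v',w'\}$ prime. If $v$ were one of $v',w'$, say $v'=v$, then $\sigma-\{v',w'\}=\tau-w'$ would be prime with $w'\in V(\tau)$, contradicting the criticality of $\tau$. Hence $v\notin\{v',w'\}$, and $v',w'$ are the desired distinct vertices of $\sigma$.

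Accordingly, the substance of the proof is entirely in $(\star)$, which is where the Schmerl--Trotter classification is used and where all the case analysis resides; the delicate points are the treatment of the sporadic small critical 2-structures and the check that $(\star)$ already holds from six vertices on. This is also exactly where the hypothesis $v(\sigma)\geq 7$ is consumed: in the non-critical case one passes to a critical 2-substructure on $v(\sigma)-1$ vertices, and $(\star)$ can be applied to it only when $v(\sigma)-1\geq 6$.
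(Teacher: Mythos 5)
The paper does not prove Theorem~\ref{thm_ST}; it imports it from Schmerl and Trotter, so your proposal has to stand on its own. Your overall architecture (reduce to the critical case via the classification, i.e.\ your statement $(\star)$) is the right spirit, and the critical case and the ``two non-critical vertices'' case are fine. But there is a genuine gap in the remaining case, where $\sigma$ is prime, $v$ is non-critical, and $\tau=\sigma-v$ is prime and critical.

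There you take the prime set $X\subseteq V(\tau)$ with $|X|=n-3$ given by $(\star)$, observe that $\overline{X}=\{v,a,b\}$ has three elements, and invoke Corollary~\ref{cor_EHR} to get \emph{distinct} $v',w'\in\overline{X}$ with $\sigma-\{v',w'\}$ prime. Corollary~\ref{cor_EHR} does not deliver this. Its proof is the iteration of Theorem~\ref{tEHR}, which grows $X$ two vertices at a time; when $|\overline{X}|$ is odd this terminates with a \emph{single} vertex $u\in\overline{X}$ such that $\sigma-u$ is prime (note the corollary, unlike Theorems~\ref{tEHR}, \ref{thm_ST} and \ref{thm_pi}, does not say ``distinct'' --- it is the ``remove one or two vertices'' statement from the introduction). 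The distinct-pair version for $|\overline{X}|=3$ is in fact false in general: removing two of the three elements of $\overline{X}$ leaves $\sigma[X\cup\{u\}]$ for the third element $u$, so a distinct pair exists if and only if ${\rm Ext}_\sigma(X)\neq\emptyset$, and one can easily build prime $\sigma$ with prime $\sigma[X]$, $|\overline{X}|=3$ and ${\rm Ext}_\sigma(X)=\emptyset$. (This is also why the paper needs $|\overline{X}|\geq 6$ in Theorem~\ref{thm_pi} and an extra hypothesis in Theorem~\ref{thm_mys_ext}.) Your own follow-up argument makes the problem visible: since $\tau$ is critical, neither $\{v,a\}$ nor $\{v,b\}$ can work, so the pair would have to be $\{a,b\}$, which forces $\sigma[X\cup\{v\}]$ to be prime --- and nothing guarantees that $v\in{\rm Ext}_\sigma(X)$; the vertex $v$ may well lie in $\langle X\rangle_\sigma$ or in some $X_\sigma(\alpha)$. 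This case --- a prime structure that becomes critical after deleting one vertex --- is exactly where the substance of the Schmerl--Trotter argument lies beyond the classification itself; it needs a finer analysis of how $v$ attaches to the critical structure $\tau$ (e.g.\ via the outside partition of Lemma~\ref{lem_EHR} and the pairing of critical vertices as in Theorem~\ref{thm_main_4}), not an appeal to Corollary~\ref{cor_EHR}.
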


In the next theorem, Ille~\cite{I97} succeeded in localizing a non-critical unordered pair outside a prime 2-substructure. 
Initially, it was established for finite digraphs. 
The same proof holds for finite 2-structures. 

\begin{thm}\label{thm_pi}
Given a prime 2-structure $\sigma$, consider $X\subsetneq V(\sigma)$ such that 
$\sigma[X]$ is prime. 
If $\overline{X}$ is finite and $|\overline{X}|\geq 6$, then there exist distinct $v,w\in\overline{X}$ such that $\sigma-\{v,w\}$ is prime. 
\end{thm}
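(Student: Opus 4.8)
The plan is to argue by induction on $|\overline{X}|$, starting from the base cases $|\overline{X}|\in\{6,7\}$, where Theorem~\ref{thm_ST} does the work. Indeed, if $v(\sigma)\geq 7$, then Theorem~\ref{thm_ST} already yields distinct vertices $v,w$ of $\sigma$ with $\sigma-\{v,w\}$ prime — but this is not quite what we want, since we need $v,w\in\overline{X}$. So the base of the induction must instead be handled directly: when $|\overline{X}|=6$, one works with the outside graph $\Gamma_{(\sigma,\overline{X})}$ and the outside partition $p_{(\sigma,\overline{X})}$, using Lemma~\ref{lem_EHR} to control how the six outside vertices interact. The inductive step is the natural one: given $|\overline{X}|\geq 7$, apply Corollary~\ref{cor_EHR} to find $a,b\in\overline{X}$ with $\sigma-\{a,b\}$ prime; inside $\sigma-\{a,b\}$ the set $X$ is still a prime induced $2$-substructure and $|\overline{X}\setminus\{a,b\}|\geq 5$. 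If this quantity is $\geq 6$ we recurse; the delicate residue is the case where it equals exactly $5$, i.e. $|\overline{X}|=7$, which we fold into the base analysis.

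So the heart of the matter is the finite combinatorial analysis when $|\overline{X}|$ is $6$ or $7$. Here is how I would run it. Fix $Y\subseteq\overline{X}$ with $\sigma[X\cup Y]$ prime and $Y$ as large as possible (such $Y$ is nonempty by Theorem~\ref{tEHR}, after noting $\{v,w\}$-type pairs exist). Set $X'=X\cup Y$ and $Z=\overline{X'}=\overline{X}\setminus Y$. If $|Y|\geq 2$, then replacing $X$ by $X'$ shrinks the outside set, and we may assume by the choice of $Y$ that $|Z|\geq 1$; the point is that $Z\subseteq{\rm Ext}_\sigma(X')$ is impossible to extend, so every $z\in Z$ lies in $\langle X'\rangle_\sigma$ or in some $X'_\sigma(\alpha)$. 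Using part~(1) and part~(2) of Lemma~\ref{lem_EHR}, for any two elements $z,z'\in Z$ the $2$-substructure $\sigma[X'\cup\{z,z'\}]$ is non-prime only through a module of a very restricted shape ($X'\cup\{z'\}$, or $\{\alpha,z\}$), and one checks that $\sigma$ being prime forces these potential modules to be destroyed by vertices of $X'$ or of $Y$ — contradiction unless $|Z|$ is small, at which point a direct inspection closes the argument. If instead every prime extension $X\cup Y$ has $|Y|=1$, i.e. only singletons in ${\rm Ext}_\sigma(X)$ extend and no pair does beyond $\Gamma_{(\sigma,\overline{X})}$ itself, then the outside graph on $6$ or $7$ vertices, being nonempty, still lets us pick an edge $\{v,w\}$; the maximality of the prime $2$-substructure together with Lemma~\ref{lem_EHR}(3) pins down the modules of $\sigma-\{v,w\}$, and since $\sigma$ is prime one of these must actually survive, giving the desired $\{v,w\}$ (possibly after swapping for an adjacent edge).

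The main obstacle, as in the digraph case treated in~\cite{I97}, is the bookkeeping of the outside partition: one must verify that the partition classes ${\rm Ext}_\sigma(X)$, $\langle X\rangle_\sigma$, and the $X_\sigma(\alpha)$ cannot conspire, over a set of only $6$ or $7$ outside vertices, to make \emph{every} pair critical while $\sigma$ remains prime. Concretely, the hard step is to show that if all $\binom{|\overline{X}|}{2}$ pairs were critical, the modules produced by Lemma~\ref{lem_EHR} would overlap and merge (two modules with nonempty intersection union to a module) into a nontrivial module of $\sigma$ itself, contradicting primality. This overlap argument is what makes the threshold $6$ sharp — with $5$ outside vertices there is just enough room to avoid the forced overlap, which is exactly why the bound cannot be lowered and why the infinite finitely-critical $2$-structures exist. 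I expect the write-up to devote most of its length to this case analysis, with the induction and the reduction to $|\overline{X}|\in\{6,7\}$ being short.
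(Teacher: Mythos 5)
Your proposal has a genuine gap, and it sits exactly where the theorem's content lives. First, the inductive skeleton does not work. Corollary~\ref{cor_EHR} does \emph{not} assert that $v$ and $w$ are distinct: iterating Theorem~\ref{tEHR} from $\sigma[X]$ grows $X$ by two vertices at a time, so when $|\overline{X}|$ is odd it terminates with a \emph{single} vertex $u$ such that $\sigma-u$ is prime, i.e.\ $v=w$. If Corollary~\ref{cor_EHR} did produce a distinct pair, Theorem~\ref{thm_pi} would follow instantly with threshold $2$ and no induction at all --- which is false (the threshold $6$ is sharp; cf.\ Theorems~\ref{thm_mys} and \ref{thm_mys_ext}). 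And in the degenerate case $a=b$, recursing inside $\sigma-a$ yields distinct $v,w$ with $\sigma-\{a,v,w\}$ prime, which removes three vertices of $\overline{X}$ and is not the conclusion for $\sigma$; the recursion does not preserve the statement being proved. The same defect reappears in your ``maximal prime extension'' setup: if $Y\subsetneq\overline{X}$ is a maximal proper subset with $\sigma[X\cup Y]$ prime, then Theorem~\ref{tEHR} forces $|Z|\le 2$; the case $|Z|=2$ is already the desired conclusion, so the only case left is $|Z|=1$, and your analysis of ``two elements $z,z'\in Z$'' has nothing to say there. In short: the even case of $|\overline{X}|$ is trivial by iterating Theorem~\ref{tEHR}, and the entire difficulty is the odd case (for \emph{every} odd $|\overline{X}|\ge 7$, not just $7$), where one must manufacture a prime extension of odd size $1$, $3$ or $5$ so that the leftover set becomes even.

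That is precisely what the paper proves, and what your sketch replaces with ``one checks'' and ``a direct inspection closes the argument.'' The paper assumes for contradiction that every prime extension $X\cup Y$ with $Y\subsetneq\overline{X}$ leaves an odd complement; then $|\overline{X}|$ is odd, Statement (S5) holds, and some component $C$ of $\Gamma_{(\sigma,\overline{X})}$ has odd order. Theorem~\ref{thm_main_1} makes $\sigma[X\cup V(C)]$ prime, so the contradiction hypothesis forces $V(C)=\overline{X}$; then $\Gamma_{(\sigma,\overline{X})}$ is connected, prime, bipartite (Proposition~\ref{prop_component}) and $P_5$-free (Lemma~\ref{lem1_component}), hence a half graph by Proposition~\ref{prop1_half_graph} --- and half graphs have even order, contradicting $|\overline{X}|$ odd. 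Your proposed mechanism (``modules produced by Lemma~\ref{lem_EHR} overlap and merge into a nontrivial module of $\sigma$'') is not developed into an argument and does not explain why $6$ is the right threshold; the asserted link between the sharpness at $5$ and the existence of infinite finitely critical $2$-structures is also not a substitute for a proof. To salvage your approach you would need an independent, self-contained case analysis showing that a prime $\sigma$ with $|\overline{X}|$ odd and $\ge 7$ always admits an odd-size prime extension; without the half-graph machinery this is essentially Ille's original argument, none of which appears in the proposal.
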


Sayar~\cite{S11} improved Theorem~\ref{thm_pi} for finite tournaments as follows. 

\begin{thm}\label{thm_mys}
Given a prime tournament $T$, consider $X\subsetneq V(T)$ such that 
$T[X]$ is prime. 
If $\overline{X}$ is finite and $|\overline{X}|\geq 4$, then there exist distinct $v,w\in\overline{X}$ such that $T-\{v,w\}$ is prime. 
\end{thm}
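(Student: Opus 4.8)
The plan is to reduce the statement to the single case $|\overline{X}|=5$, which is the only place where it matters that $T$ is a tournament. If $|\overline{X}|\geq 6$, Theorem~\ref{thm_pi} already provides distinct $v,w\in\overline{X}$ with $T-\{v,w\}$ prime, so there is nothing to prove. If $|\overline{X}|=4$, one application of Theorem~\ref{tEHR} to $X$ yields distinct $a,b\in\overline{X}$ with $T[X\cup\{a,b\}]$ prime; then the two remaining vertices $\{v,w\}=\overline{X}\setminus\{a,b\}$ are distinct and $T-\{v,w\}=T[X\cup\{a,b\}]$ is prime. So everything comes down to $|\overline{X}|=5$. (For general $2$-structures this case really fails, which is why the threshold in Theorem~\ref{thm_pi} is $6$; the tournament hypothesis is what repairs it.)

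Assume $|\overline{X}|=5$ and, aiming at a contradiction, that $T-\{v,w\}$ is not prime for every pair of distinct $v,w\in\overline{X}$. Applying Theorem~\ref{tEHR} to $X$ (legitimate since $|\overline{X}|\geq 2$) gives distinct $a,b\in\overline{X}$ with $T[X\cup\{a,b\}]$ prime. Put $W=X\cup\{a,b\}$; then $T[W]$ is prime and $\overline{W}=\overline{X}\setminus\{a,b\}$ has three elements, say $\overline{W}=\{c,d,e\}$. Applying Theorem~\ref{tEHR} to $W$ and renaming $c,d,e$ if need be, we may assume $T[W\cup\{c,d\}]$ is prime, that is, $T-e$ is prime. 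The situation is then: $T[W]$, $T-e=T[W\cup\{c,d\}]$ and $T=T[W\cup\{c,d,e\}]$ are prime, while $T[W\cup\{c\}]=T-\{d,e\}$, $T[W\cup\{d\}]=T-\{c,e\}$ and $T[W\cup\{e\}]=T-\{c,d\}$ are not prime, these last three being removals of pairs from $\overline{X}$. In particular none of $c,d,e$ lies in ${\rm Ext}_T(W)$, so, since $p_{(T,\overline{W})}$ is a partition of $\overline{W}$ by Lemma~\ref{lem_EHR} and Notation~\ref{nota_p}, each of $c,d,e$ lies in $\langle W\rangle_T$ or in $W_T(\alpha)$ for some $\alpha\in W$.

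The remaining step is a case analysis on these memberships, and this is where I expect the real difficulty to be; it is also the only step using that $T$ is a tournament. One quick remark to begin: $c$ and $d$ cannot both lie in $\langle W\rangle_T$, since then $W$ would be a module of $T[W\cup\{c\}]$ and of $T[W\cup\{d\}]$, hence a non-trivial module of $T[W\cup\{c,d\}]=T-e$, contradicting primality of $T-e$. For the surviving configurations I would use parts~1--3 of Lemma~\ref{lem_EHR} (with $W$ in the role of $X$, and the pairs taken from $\{c,d,e\}$) to locate the forced modules of the various $T[W\cup\{x,y\}]$, and then translate everything into orientation data: in a tournament, $x\in\langle W\rangle_T$ means $x$ dominates all of $W$ or is dominated by all of $W$, and $\{\alpha,x\}$ being a module of $T[W\cup\{x\}]$ means $\alpha$ and $x$ have the same out-neighbours and the same in-neighbours within $W$. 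These constraints propagate among $c,d,e$ and the two vertices $a,b$ of $\overline{X}$ that lie in $W$, and the crux is that for a tournament they cannot coexist with the requirement that $T[W\cup\{c\}]$, $T[W\cup\{d\}]$, $T[W\cup\{e\}]$ be all non-prime while $T-e$ and $T$ are prime: each branch should collapse either to a non-trivial module of one of the prime tournaments $T[W]$, $T-e$, $T$, or to a pair of $\overline{X}$ whose removal keeps $T$ prime --- a contradiction in either case. Running the same bookkeeping for a general $2$-structure leaves genuine room and only excludes $|\overline{X}|\geq 6$, so the improvement to $|\overline{X}|\geq 4$ is exactly what the tournament orientations buy.
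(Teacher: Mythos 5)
Your reductions are fine: $|\overline{X}|\geq 6$ is Theorem~\ref{thm_pi}, $|\overline{X}|=4$ is one application of Theorem~\ref{tEHR}, and the setup for $|\overline{X}|=5$ (building $W=X\cup\{a,b\}$ with $T[W]$ prime, then $T-e=T[W\cup\{c,d\}]$ prime, with $T[W\cup\{c\}]$, $T[W\cup\{d\}]$, $T[W\cup\{e\}]$ all non-prime) is correct. But the proof stops exactly where the theorem begins. The entire content of the statement is the claim that, for a tournament, the configuration you have isolated cannot occur, and you only announce a plan for showing this (``I would use parts~1--3 of Lemma~\ref{lem_EHR}\dots each branch should collapse\dots''). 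Nothing in what you wrote actually rules the configuration out: within the $W$-relative data alone it is perfectly consistent (e.g.\ $\{e,c\}\in\varepsilon_{(T,\overline{W})}$ only tells you $T-d$ is prime, which contradicts nothing), so any successful case analysis must also exploit the non-primality of the pairs involving $a$ and $b$, i.e.\ of $T-\{a,b\}$, $T-\{a,c\}$, etc. You never bring these into play, and you give no indication of where the tournament hypothesis actually bites. This is a genuine gap, not a routine verification left to the reader.

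For comparison, the paper's argument (Theorem~\ref{thm_mys_ext} in Appendix~\ref{A_thm_pi}, of which the tournament case is an instance since a tournament has no symmetric blocks, so $q_{(\sigma,\overline{X})}^a\neq\emptyset$ whenever ${\rm Ext}_\sigma(X)=\emptyset$) keeps $X$ as the base rather than enlarging it. If no $3$-subset $Y$ of $\overline{X}$ has $\sigma[X\cup Y]$ prime, i.e.\ Statement~(S3) holds, then Theorem~\ref{thm_main_1} forces every component of $\Gamma_{(\sigma,\overline{X})}$ to have $2$ or at least $4$ vertices, so on $5$ vertices the outside graph is connected and prime; being bipartite (Proposition~\ref{prop_component}) it must be $P_5$, hence contains $K_2\oplus K_2$, and Fact~\ref{fac3_first_results} then forces a symmetric block $\langle X\rangle_\sigma^{(e,e)}$ or $X_\sigma^{(e,e)}(\alpha)$ --- impossible in a tournament. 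So Statement~(S3) fails, giving a $3$-set $Y$ with $T[X\cup Y]$ prime, and $\overline{X}\setminus Y$ is the desired pair. The place where orientation is used is thus a single structural fact (no symmetric pairs), not a branching case analysis; if you want to salvage your bare-hands route you would need to carry out the full analysis, including the pairs through $a$ and $b$, which is considerably more work than your sketch suggests.
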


We extend Theorem~\ref{thm_mys} to particular 2-structures in 
Appendix~\ref{A_thm_pi} (see Theorem~\ref{thm_mys_ext}).

\begin{rem}\label{rem1_conditions_Ck}
Given a 2-structure $\sigma$, consider $X\subsetneq V(\sigma)$ such that $\sigma[X]$ is prime, and 
$$\text{$\overline{X}$ is finite.}$$ 
Suppose that $\sigma$ is $\overline{X}\!$-critical. 
For a contradiction, suppose that $|\overline{X}|$ is odd. 
By applying several times Theorem~\ref{tEHR} from $\sigma[X]$, we obtain 
a non-critical vertex $v$ of $\sigma$ such that $v\in\overline{X}$,  
which contradicts the fact that $\sigma$ is $\overline{X}\!$-critical. 
It follows that $|\overline{X}|$ is even. 

Now, consider $Y\subsetneq\overline{X}$ such that $\sigma[X\cup Y]$ is prime. 
Since $\sigma$ is $\overline{X}\!$-critical, $\sigma$ is $\overline{(X\cup Y)}$-critical as well. 
Therefore $|\overline{X\cup Y}|$ is even. 
Since $|\overline{X}|$ is even, $|Y|$ is even too. 
Consequently, for each $k\in\{1,\ldots,|\overline{X}|-1\}$ such that $k$ is odd, 
we have the following statement  
\begin{equation}\label{E1_thm_main_1}
\{Y\in\varepsilon_{(\sigma,\overline{X})}:|Y|=k\}=\emptyset.
\tag{{\rm Sk}}
\end{equation}
Clearly, ${\rm Ext}_\sigma(X)=\emptyset$ means that Statement (S1) holds. 

Lastly, consider $k\in\{1,\ldots,|\overline{X}|-1\}$ such that $k$ is odd. 
Suppose that Statement~(Sk) holds. 
It follows from Theorem~\ref{tEHR} that Statement~(Sm) holds for 
every odd integer $m\in\{1,\ldots,k-2\}$. 
\end{rem}

\subsection{Infinite and prime 2-structures}

Concerning infinite and prime 2-structures, Ille~\cite{I94,I05b} obtained the following two theorems. 
Initially, they were proved for digraphs. 
The same proofs hold for 2-structures. 

\begin{thm}\label{thm1_pi_infinite}
Given a prime 2-structure $\sigma$, consider $X\subsetneq V(\sigma)$ such that 
$\sigma[X]$ is prime. 
For each $x\in\overline{X}$, there exists $F\in\varepsilon_{(\sigma,\overline{X})}$ such that $F$ is finite and $x\in F$. 
\end{thm}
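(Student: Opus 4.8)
The plan is to reduce the infinite statement to the finite parity theorem (Theorem~\ref{tEHR}) by a transfinite/iterative construction that absorbs the vertex $x$ into a finite prime induced substructure. First I would fix $x\in\overline{X}$ and try to build an increasing chain of finite subsets $X=X_0\subsetneq X_1\subsetneq\cdots$ of $V(\sigma)$, each with $\sigma[X_n]$ prime, together with a sequence of vertices so that $x$ eventually lands inside one of them while primality is preserved at each stage. The naive attempt — apply Theorem~\ref{tEHR} to $X$ with $\overline{X}$ in the role of the outside, getting a prime pair $\{v,w\}$ — does not obviously capture $x$, so the real content is to force $x$ in. The key device is to work instead with the outside set $\overline{X\cup\{x\}}$ when that set has at least two elements: since $\sigma$ is prime and $\sigma[X]$ is prime, I would like to iterate Theorem~\ref{tEHR} in a way that either directly produces a finite prime $F\subseteq\overline{X}$ containing $x$, or produces an infinite strictly increasing chain whose union is all of $V(\sigma)$, which is impossible once we have localized the obstruction to a finite region.

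More concretely, here is the iteration I would run. Suppose toward a contradiction that no finite $F\in\varepsilon_{(\sigma,\overline{X})}$ contains $x$. Using Theorem~\ref{tEHR} repeatedly starting from $\sigma[X]$, build finite prime substructures $\sigma[Y_n]$ with $X\subseteq Y_n$, $|Y_{n+1}|=|Y_n|+2$, and $x\notin Y_n$ for all $n$ — this is possible because at each step Theorem~\ref{tEHR} gives \emph{a} prime pair in $\overline{Y_n}$, and if one could never avoid $x$ that would already give the desired finite $F$. Now consider $Y=\bigcup_n Y_n$; this is a (possibly infinite) subset with $x\notin Y$ and every finite subset of it sitting inside a prime $\sigma[Y_n]$. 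A compactness/Zorn argument, using that modules of $\sigma[Y]$ are detected on finite subsets, shows $\sigma[Y]$ is prime, so $Y\subsetneq V(\sigma)$ with $\sigma[Y]$ prime and $x\in\overline{Y}$. Then $\{x\}\cup$ (a suitable pair) analysis via Lemma~\ref{lem_EHR} applied to $Y$ lets me pull $x$ into a finite prime substructure after all, contradicting the assumption.

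The main obstacle, and the step to handle carefully, is the compactness argument showing that the union $\sigma[Y]$ of an increasing chain of finite prime substructures is prime, and more importantly that one can \emph{choose} the chain so that $x$ becomes reachable. Primality is not preserved under unions of chains in general — a module of the union need not be a module of any finite member — so the construction of $Y_n$ must be done with foresight: at stage $n$ one must use Lemma~\ref{lem_EHR}, together with the fact that $x$ belongs to exactly one block of the outside partition $p_{(\sigma,\overline{Y_n})}$, to ensure that $x$ does not get "trapped" in a block (such as $\langle Y_n\rangle_\sigma$ or some $Y_n{}_\sigma(\alpha)$) that persists forever. The heart of the proof is a careful bookkeeping argument showing that repeated application of Theorem~\ref{tEHR} and the three cases of Lemma~\ref{lem_EHR} eventually absorbs $x$; once $x$ is in the interior, finiteness of the constructed $F$ is automatic since each application of Theorem~\ref{tEHR} adds only two vertices and we stop as soon as $x$ enters.
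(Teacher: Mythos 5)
There is a genuine gap, and it sits exactly where you yourself locate ``the heart of the proof'': the mechanism that forces $x$ into a finite prime extension is never supplied, and the contradiction you set up cannot be reached. Your reductio produces an increasing chain $X=Y_0\subsetneq Y_1\subsetneq\cdots$ of finite prime extensions avoiding $x$, and the union $Y=\bigcup_nY_n$ does induce a prime 2-substructure --- your worry on this point is the one unfounded worry, since a nontrivial module $M$ of $\sigma[Y]$ is witnessed by three vertices $u,v\in M$ and $w\in Y\setminus M$ lying in a common $Y_n$, and $M\cap Y_n$ is then a nontrivial module of $\sigma[Y_n]$. But the configuration you arrive at ($\sigma[Y]$ prime, $x\in\overline{Y}$, $Y$ infinite) carries no contradiction whatsoever: it is a perfectly consistent situation that occurs even when the conclusion of the theorem holds. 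In the 2-structure of Remark~\ref{rem_thm_24}, for instance, with $x=(0,0)$, one can build such a chain forever using only rationals distinct from $0$, its union is prime and omits $x$, and yet $F=\{(0,0),(1,0)\}$ is a finite member of $\varepsilon_{(\sigma,\overline{X})}$ containing $x$. Worse, the closing step --- ``Lemma~\ref{lem_EHR} applied to $Y$ lets me pull $x$ into a finite prime substructure'' --- cannot even in principle deliver the conclusion, because every set of the form $Y\cup\{x,w\}$ is infinite over $X$; establishing primality of $\sigma[Y\cup\{x,w\}]$ would not produce a \emph{finite} element of $\varepsilon_{(\sigma,\overline{X})}$ containing $x$. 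So the argument defers the entire content of the theorem to a sentence that does not, and cannot, do the work.

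What is actually required is a construction that moves \emph{toward} $x$ rather than away from it. Start from $X\cup\{x\}$; if it is not prime, Lemma~\ref{lem_EHR} pins down its canonical nontrivial partial module ($X$ itself when $x\in\langle X\rangle_\sigma$, or $\{\alpha,x\}$ when $x\in X_\sigma(\alpha)$); primality of $\sigma$ supplies a vertex splitting that partial module; one adjoins it and repeats, and the substance of the proof is the bookkeeping, via the three cases of Lemma~\ref{lem_EHR} applied to the evolving outside partition, showing that this recursion terminates in a finite prime $\sigma[X\cup F]$ with $x\in F$. Note that the minimal such $F$ can be arbitrarily large (in $P_\mathbb{Z}$ with $X=\{z\in\mathbb{Z}:z\leq 0\}$ and $x=k>0$, the smallest choice is $F=\{1,\ldots,k\}$), so no bounded number of applications of Theorem~\ref{tEHR} can substitute for this induction. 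The paper itself does not reprove the statement but imports it from Ille's work, so your proposal has to stand on its own, and as written it does not.
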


The next result follows from Theorem~\ref{thm1_pi_infinite}. 

\begin{cor}\label{cor1_pi_infinite}
Given a 2-structure $\sigma$, consider $X\subsetneq V(\sigma)$ such that 
$\sigma[X]$ is prime. 
The following two assertions are equivalent 
\begin{enumerate}
\item $\sigma$ is prime;
\item for each finite subset $F$ of $\overline{X}$, there exists $F'\in\varepsilon_{(\sigma,\overline{X})}$ such that $F'$ is finite and 
$F\subseteq F'$. 
\end{enumerate}
\end{cor}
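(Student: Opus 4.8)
The plan is to prove the equivalence by showing (1)$\Rightarrow$(2) directly from Theorem~\ref{thm1_pi_infinite}, and (2)$\Rightarrow$(1) by a compactness-style argument using the characterization of modules via partitions. For (1)$\Rightarrow$(2), assume $\sigma$ is prime and let $F\subseteq\overline{X}$ be finite. If $F=\emptyset$, take $F'$ to be any member of $\varepsilon_{(\sigma,\overline{X})}$ (for instance, a singleton from ${\rm Ext}_\sigma(X)$ if it is nonempty, or an unordered pair supplied by Theorem~\ref{tEHR} when $|\overline X|\ge 2$; if $|\overline X|\le 1$ one checks the statement trivially). If $F\neq\emptyset$, the idea is to enlarge $F$ one element at a time: starting from $X$ and $F=\{x_1,\dots,x_n\}$, apply Theorem~\ref{thm1_pi_infinite} to get a finite $F_1\in\varepsilon_{(\sigma,\overline X)}$ with $x_1\in F_1$; then $\sigma[X\cup F_1]$ is prime, so applying Theorem~\ref{thm1_pi_infinite} with $X\cup F_1$ in place of $X$ (note $x_2\notin X\cup F_1$ may fail, in which case we already have $x_2$ covered) yields a finite set covering $x_2$, and so on. After finitely many steps we obtain a finite $F'\supseteq F$ with $\sigma[X\cup F']$ prime, i.e.\ $F'\in\varepsilon_{(\sigma,\overline X)}$.

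For (2)$\Rightarrow$(1), I would argue by contraposition: suppose $\sigma$ is not prime, so $\sigma$ has a nontrivial module $M$. The goal is to produce a finite $F\subseteq\overline{X}$ that cannot be extended to a finite $F'\in\varepsilon_{(\sigma,\overline X)}$. The key observation is that if $Y\subseteq\overline X$ satisfies $\sigma[X\cup Y]$ prime, then $M\cap(X\cup Y)$ must be a trivial module of $\sigma[X\cup Y]$, because a module of $\sigma$ intersected with any subset is a module of the induced substructure. So $M\cap(X\cup Y)\in\{\emptyset, X\cup Y, \{z\}\}$ for some $z$. Since $X\subseteq X\cup Y$ and $\sigma[X]$ is prime with $|X|\geq 3$, if $M\cap X$ had at least two elements it would be a nontrivial module of $\sigma[X]$ unless $M\cap X=X$, and if $M\cap X$ were a singleton or empty we still get constraints. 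The plan is to choose witnesses from $M$ and from $\overline M$ appropriately: pick $a,b\in M$ distinct (possible since $M$ nontrivial) and $c\in\overline M$, then set $F$ to be $(\{a,b,c\}\setminus X)$ together with enough extra vertices to force, via Lemma~\ref{lem_EHR} and the outside-partition structure, that any finite prime extension $X\cup F'$ with $F\subseteq F'$ would have $M\cap(X\cup F')$ simultaneously nontrivial (it contains $a,b$ but not $c$) — a contradiction.

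The delicate point, and what I expect to be the main obstacle, is handling the interaction of $M$ with $X$ itself. If $M\supseteq X$, then $M$ being a nontrivial module of $\sigma$ with $\sigma[X]$ prime forces $M\setminus X\neq\emptyset$ and one works with a vertex of $M\setminus X$ versus a vertex of $\overline M$. If $M\cap X$ is a proper nonempty subset of $X$, then since $\sigma[X]$ is prime, $M\cap X$ is trivial in $\sigma[X]$, hence $|M\cap X|\leq 1$; combined with $|M|\geq 2$ this gives at least one element of $M$ outside $X$, and we again have the needed separating triple. The bookkeeping that ensures the chosen finite $F$ genuinely blocks \emph{all} finite extensions — not just one — is where care is required: one wants that for every finite $F'\supseteq F$ the set $M\cap(X\cup F')$ contains two points but misses a third, so it is neither $\emptyset$, nor a singleton, nor all of $X\cup F'$, contradicting primality of $\sigma[X\cup F']$. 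Once this is set up correctly, the argument closes, since then no finite $F'\in\varepsilon_{(\sigma,\overline X)}$ can contain $F$, falsifying assertion~(2).
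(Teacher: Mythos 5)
Your proposal is correct and follows the route the paper intends (the corollary is stated as a direct consequence of Theorem~\ref{thm1_pi_infinite}, with the converse being the standard module-restriction argument). The ``delicate point'' you flag is in fact vacuous: taking $F=\{a,b,c\}\setminus X$ with $a,b\in M$ distinct and $c\in\overline{M}$ already suffices, since for any finite $F'\in\varepsilon_{(\sigma,\overline{X})}$ with $F\subseteq F'$ the set $M\cap(X\cup F')$ is a module of $\sigma[X\cup F']$ containing $a,b$ but not $c$, hence nontrivial --- no extra vertices, no case analysis on $M\cap X$, and no appeal to Lemma~\ref{lem_EHR} are needed.
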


The next compactness result follows from Corollary~\ref{cor1_pi_infinite}. 

\begin{thm}\label{thm2_pi_infinite}
Given an infinite 2-structure $\sigma$, the following two assertions are equivalent
\begin{enumerate}
\item $\sigma$ is prime;
\item for each finite subset $F$ of $V(\sigma)$, there exists a finite subset $F'$ of $V(\sigma)$ satisfying $F\subseteq F'$ and $\sigma[F']$ is prime. 
\end{enumerate}
\end{thm}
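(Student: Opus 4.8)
The plan is to deduce the equivalence directly from Corollary~\ref{cor1_pi_infinite}; the only extra ingredient is the classical fact that every prime 2-structure contains a prime induced substructure on three or four vertices (for our purposes, all that matters is that it has \emph{some} finite prime induced substructure). Throughout, $\sigma$ denotes the given infinite 2-structure.

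To prove $(1)\Rightarrow(2)$, I would first invoke the classical fact above to fix some $X\subsetneq V(\sigma)$ with $X$ finite and $\sigma[X]$ prime; as $\sigma$ is infinite, $\overline{X}\neq\emptyset$. Let $F$ be a finite subset of $V(\sigma)$. Then $F\setminus X$ is a finite subset of $\overline{X}$, so Corollary~\ref{cor1_pi_infinite} (used in the direction $(1)\Rightarrow(2)$, since $\sigma$ is prime) provides a finite $G\in\varepsilon_{(\sigma,\overline{X})}$ with $F\setminus X\subseteq G$. Setting $F'=X\cup G$, the set $F'$ is finite, contains $F$, and $\sigma[F']=\sigma[X\cup G]$ is prime because $G\in\varepsilon_{(\sigma,\overline{X})}$; this is assertion $(2)$.

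To prove $(2)\Rightarrow(1)$, I would apply $(2)$ to $F=\emptyset$ to obtain a finite $F_0\subseteq V(\sigma)$ with $\sigma[F_0]$ prime; since $\sigma[F_0]$ has at least three vertices and $\sigma$ is infinite, $X:=F_0$ is a proper subset of $V(\sigma)$ with $\sigma[X]$ prime, so Corollary~\ref{cor1_pi_infinite} applies with this $X$. It remains to check assertion $(2)$ of that corollary: given a finite subset $F$ of $\overline{X}$, apply $(2)$ of the present theorem to the finite set $X\cup F$ (enlarging it by one vertex of $\overline{X}$ beforehand if one wishes to guarantee a nonempty outcome outside $X$) to get a finite $F''\supseteq X\cup F$ with $\sigma[F'']$ prime; then $G:=F''\setminus X$ is a finite nonempty subset of $\overline{X}$ containing $F$, and $\sigma[X\cup G]=\sigma[F'']$ is prime, so $G\in\varepsilon_{(\sigma,\overline{X})}$. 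Thus assertion $(2)$ of Corollary~\ref{cor1_pi_infinite} holds, and hence $\sigma$ is prime.

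The whole argument is bookkeeping around Corollary~\ref{cor1_pi_infinite}, which in turn rests on Theorem~\ref{thm1_pi_infinite}. The only delicate point is the bootstrap in $(1)\Rightarrow(2)$: one must exhibit \emph{some} proper finite prime induced substructure of $\sigma$ before the corollary can be brought to bear, and this is precisely where the classical "small prime induced substructure" result is used. In the direction $(2)\Rightarrow(1)$ no such input is needed, since hypothesis $(2)$ applied to $F=\emptyset$ already supplies a finite prime induced substructure.
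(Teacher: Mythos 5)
Your proof is correct and follows exactly the route the paper indicates (the paper gives no written proof, merely stating that the theorem follows from Corollary~\ref{cor1_pi_infinite}): both directions are bookkeeping around that corollary, and you rightly handle the two delicate points, namely the need for \emph{some} finite prime induced substructure to serve as $X$ in the direction $(1)\Rightarrow(2)$ (supplied by the classical Ehrenfeucht--Rozenberg fact that a prime 2-structure contains a prime induced substructure on 3 or 4 vertices, available from reference \cite{EHR99}) and the nonemptiness requirement in the definition of $\varepsilon_{(\sigma,\overline{X})}$ in the converse direction. No gaps.
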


\begin{defi}\label{defi_finitely}
Given an infinite and prime 2-structure $\sigma$, $\sigma$ is {\em finitely critical} if 
$\sigma-F$ is not prime for every nonempty and finite subset $F$ of $V(\sigma)$. 
It follows from Theorem~\ref{tEHR} that a prime 2-structure $\sigma$ is finitely critical if and only if $\sigma-\{v,w\}$ is not prime for any $v,w\in V(\sigma)$. 
\end{defi}

Boudabbous and Ille~\cite{BI07} characterized the critical digraphs  
that are not finitely critical, that is, the infinite and prime digraphs 
$D$ satisfying 
\begin{itemize}
\item for each $v\in V(D)$, $D-v$ is not prime;
\item there exist (distinct) $v,w\in V(D)$ such that $D-\{v,w\}$ is prime. 
\end{itemize}

\subsection{Main results}\label{Main results}

We begin with a hereditary property of primality through the components of the outside graph, which constitutes the central result of the paper. 

\begin{thm}\label{thm_main_1}
Given a 2-structure $\sigma$, consider $X\subsetneq V(\sigma)$ such that $\sigma[X]$ is prime. 
Suppose that Statement (S3) holds. 
The following three assertions are equivalent 
\begin{enumerate}
\item $\sigma$ is prime;
\item for each component $C$ of $\Gamma_{(\sigma,\overline{X})}$, $\sigma[X\cup V(C)]$ is prime;
\item for each component $C$ of $\Gamma_{(\sigma,\overline{X})}$, $v(C)=2$ or $v(C)\geq 4$ and $C$ is prime. 
\end{enumerate}
\end{thm}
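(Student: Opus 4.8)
The plan is to prove the cycle of implications $(1)\Rightarrow(2)\Rightarrow(3)\Rightarrow(1)$, with the bulk of the work concentrated in $(1)\Rightarrow(2)$ and $(3)\Rightarrow(1)$; the step $(2)\Rightarrow(3)$ should be essentially a reduction to the global criticality characterization. Throughout, write $\Gamma=\Gamma_{(\sigma,\overline{X})}$ and recall that Statement (S3), by the last paragraph of Remark~\ref{rem1_conditions_Ck}, also forces Statement (S1), i.e. ${\rm Ext}_\sigma(X)=\emptyset$; hence every edge of $\Gamma$ is a ``genuine'' prime-producing pair and no single outside vertex extends $\sigma[X]$ primitively.

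For $(1)\Rightarrow(2)$, fix a component $C$ of $\Gamma$ and suppose for contradiction that $\sigma[X\cup V(C)]$ is not prime. Since $\sigma$ is prime, I would use Corollary~\ref{cor1_pi_infinite} (applied with $X$ as the prime part) to build, step by step, an increasing chain of finite prime induced substructures $\sigma[X\cup F]$ exhausting $X\cup V(C)$ together with enough outside vertices; the key point to extract is that whenever $\sigma[X\cup W]$ is prime with $W\subseteq\overline X$, Lemma~\ref{lem_EHR} together with (S1) and (S3) pins down exactly how a vertex $w\notin W$ attaches — it lies in one of the classes of $p_{(\sigma,\overline X)}$, and primality of the enlargement is governed precisely by edges of $\Gamma$. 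The crucial combinatorial lemma to isolate here is: if $Y\in\varepsilon_{(\sigma,\overline X)}$ then $\sigma[Y]=\Gamma[Y]$-like behaviour holds, and more importantly the "prime trace'' of $Y$ on $\overline X$ is connected in $\Gamma$. From this, if $\sigma[X\cup V(C)]$ were imprimitive, a non-trivial module $M$ of it would have to be located using Lemma~\ref{lem_EHR}; one shows $M\subseteq V(C)$ or $M\supseteq X$, and then (S3) (the vanishing of odd extensions of size $3$) is exactly what is needed to propagate $M$ into a non-trivial module of $\sigma$ itself — contradiction. This propagation argument, reconciling the local module $M$ inside the component with global primality, is where I expect the main difficulty to lie.

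For $(2)\Rightarrow(3)$, assume each $\sigma[X\cup V(C)]$ is prime. Fix a component $C$. First, using (S1) and Theorem~\ref{tEHR} inside $\sigma[X\cup V(C)]$ one gets $v(C)\neq 1$; and if $v(C)=2$ we are in the first alternative. So suppose $v(C)\geq 3$. I would argue that $C$ itself (as a graph, equivalently as a $2$-structure) must be prime: a non-trivial module $N$ of $C$ would, via the description of how pairs of $\overline X$-vertices in the same component interact (again Lemma~\ref{lem_EHR} and (S3)), lift to a non-trivial module of $\sigma[X\cup V(C)]$, contradicting $(2)$. Finally the parity constraint coming from Remark~\ref{rem1_conditions_Ck} applied to $X\cup V(C')$ for the other components $C'$, or rather the direct observation that $v(C)$ cannot be $3$ because (S3) kills all $3$-element prime extensions while a prime $3$-vertex structure would furnish one, rules out $v(C)=3$; hence $v(C)\geq 4$ and $C$ prime.

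For $(3)\Rightarrow(1)$, I would verify assertion (2) of Corollary~\ref{cor1_pi_infinite}: given a finite $F\subseteq\overline X$, I must find a finite $F'\in\varepsilon_{(\sigma,\overline X)}$ containing $F$. Decompose $F$ according to the components of $\Gamma$ it meets; for each such component $C$, either $v(C)=2$ (take $F'_C=V(C)$, a single edge of $\Gamma$, which is prime by definition) or $C$ is prime with $v(C)\geq 4$, in which case — using that $\sigma[X\cup V(C)]$ is prime whenever a finite prime sub-union is, plus Theorem~\ref{thm1_pi_infinite} to find finite prime extensions inside $C$ — pick a finite $F'_C\subseteq V(C)$ with $F\cap V(C)\subseteq F'_C$ and $\sigma[X\cup F'_C]$ prime. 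The remaining task, and the technical heart of this direction, is to glue: show that if $\sigma[X\cup Y_1]$ and $\sigma[X\cup Y_2]$ are prime with $Y_1,Y_2$ lying in distinct components of $\Gamma$, then $\sigma[X\cup Y_1\cup Y_2]$ is prime. This uses (S3) once more: any non-trivial module of the union is constrained by Lemma~\ref{lem_EHR} to sit inside one of the $Y_i$ or to contain $X\cup Y_j$, and the inter-component non-adjacency in $\Gamma$ contradicts such a configuration. Iterating the gluing over the finitely many components met by $F$ yields the desired $F'$, and Corollary~\ref{cor1_pi_infinite} gives primality of $\sigma$. The gluing lemma — that the outside graph's component structure is exactly the obstruction pattern, so that primality is "additive over components'' — is the single fact that, once proved, makes all three implications fall out, and proving it rigorously under only the hypothesis (S3) is the step I expect to be the main obstacle.
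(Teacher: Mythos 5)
Your overall plan (the cycle $(1)\Rightarrow(2)\Rightarrow(3)\Rightarrow(1)$, a ``gluing over components'' lemma, and compactness for the last step) is reasonable in outline, but two load-bearing steps are respectively false as stated and circular, and they sit exactly where the paper has to work hardest. The ``crucial combinatorial lemma'' you isolate for $(1)\Rightarrow(2)$ --- that $Y\in\varepsilon_{(\sigma,\overline X)}$ has connected trace in $\Gamma_{(\sigma,\overline X)}$ --- is false: the union of two edges of $\Gamma_{(\sigma,\overline X)}$ taken from two distinct components belongs to $\varepsilon_{(\sigma,\overline X)}$ (this is precisely the gluing content of the theorem) and is disconnected. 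More seriously, in $(1)\Rightarrow(2)$ and $(2)\Rightarrow(3)$ you assert that a nontrivial module ``propagates'' between $C$, $\sigma[X\cup V(C)]$ and $\sigma$. The delicate direction --- from a module $M$ of the \emph{graph} $C$ to a module of a 2-structure --- fails as literally stated: when $M\subseteq B_q$ with $B_q\in q_{(\sigma,\overline{X})}^a$, the vertices of $B_q\setminus M$ are never adjacent to $M$ in $\Gamma_{(\sigma,\overline X)}$, yet they can separate $M$ inside $\sigma$, because such a block carries a linear order (Lemma~\ref{l3_p_and_q}) of which a graph-module of $C$ need not be an interval. The paper repairs this by replacing $M$ with the saturation $\widetilde M=\bigcup\{M':M'\ \text{nontrivial module of}\ C,\ M\subseteq M'\}$, proving via Corollary~\ref{cor0_first_results} that $\widetilde M$ is a module of $\sigma[B_q]$, and only then invoking Corollary~\ref{cor1_oppo_modules}; nothing in your sketch plays this role.

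Your $(3)\Rightarrow(1)$ is circular. To produce a finite $F'_C\subseteq V(C)$ with $\sigma[X\cup F'_C]$ prime you appeal to Theorem~\ref{thm1_pi_infinite} and Corollary~\ref{cor1_pi_infinite}, but both presuppose that the ambient 2-structure ($\sigma$, or $\sigma[X\cup V(C)]$) is already prime --- which is what you are trying to prove. From ``$C$ is a prime graph'' those results can at best give a finite prime induced subgraph $C'$ of $C$ containing $F\cap V(C)$; passing from that to ``$\sigma[X\cup V(C')]$ is prime'' is exactly the finite, single-component case of $(3)\Rightarrow(2)$, i.e.\ the heart of the theorem, and your proposal supplies no argument for it. (Your inter-component gluing step, by contrast, is correct and can be carried out with Lemma~\ref{lem_EHR} and Fact~\ref{fac2_first_results}.) The paper sidesteps all of this by running the cycle the other way, contrapositively: from a nontrivial module of $\sigma$ it locates either an isolated vertex of $\Gamma_{(\sigma,\overline X)}$ or, via Lemma~\ref{lem2_modules}, a nontrivial module of $\Gamma_{(\sigma,\overline X)}$ trapped inside a single block and hence inside a single component; no compactness, and no a priori construction of finite prime extensions, is needed.
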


Theorem~\ref{thm_main_1} allows us to provide a simple and short proof of Theorem~\ref{thm_pi} 
(see Appendix~\ref{A_thm_pi}). 
Furthermore, Theorem~\ref{thm_main_1} is proved for finite graphs in \cite{IR14} 
(see \cite[Theorem 17]{IR14} and \cite[Corollary 18]{IR14}). 
We pursue with a hereditary property of partial criticality through the components of the outside graph. 

\begin{thm}\label{thm_main_2}
Given a 2-structure $\sigma$, consider $X\subsetneq V(\sigma)$ such that $\sigma[X]$ is prime. 
Suppose that Statement (S5) holds. 
The following three assertions are equivalent 
\begin{enumerate}
\item $\sigma$ is $\overline{X}\!$-critical;
\item for each component $C$ of $\Gamma_{(\sigma,\overline{X})}$, $\sigma[X\cup V(C)]$ is 
$V(C)$-critical;
\item for each component $C$ of $\Gamma_{(\sigma,\overline{X})}$, $v(C)=2$ or $v(C)\geq 4$ and $C$ is critical. 
\end{enumerate}
\end{thm}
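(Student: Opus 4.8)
The strategy is to mimic the proof of Theorem~\ref{thm_main_1}, replacing ``prime'' by ``critical'' throughout, and to exploit Theorem~\ref{thm_main_1} itself wherever primality of intermediate induced substructures is needed. First note that Statement~(S5) implies Statement~(S3) (by the last paragraph of Remark~\ref{rem1_conditions_Ck}), so Theorem~\ref{thm_main_1} is available, and in particular, under hypothesis~(1) or~(2), $\sigma$ and each $\sigma[X\cup V(C)]$ are prime. The implication $(1)\Rightarrow(2)$ is the easy direction: if $\sigma$ is $\overline{X}$-critical and $C$ is a component of $\Gamma_{(\sigma,\overline{X})}$, then for each $v\in V(C)$ the substructure $\sigma-v$ is not prime, so by Theorem~\ref{thm_main_1} applied to $\sigma-v$ (with the same $X$; note $X\subseteq V(\sigma)\setminus\{v\}$ since $v\in\overline X$) some component of $\Gamma_{(\sigma-v,\overline X\setminus\{v\})}$ fails the third assertion; one then argues that this defective component is contained in $V(C)\setminus\{v\}$ — because removing $v$ only affects the component containing $v$, the outside graph $\Gamma_{(\sigma-v,\overline X\setminus\{v\})}$ agrees with $\Gamma_{(\sigma,\overline X)}$ outside $V(C)$ — whence $\sigma[X\cup (V(C)\setminus\{v\})]=\sigma[X\cup V(C)]-v$ is not prime. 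Thus $\sigma[X\cup V(C)]$ is $V(C)$-critical.

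The equivalence $(2)\Leftrightarrow(3)$ is the half-graph/critical-bipartite-graph analysis: fix a component $C$ and apply Theorem~\ref{thm_main_1} to $\sigma[X\cup V(C)]$ to see that, since this substructure is prime, $C$ is either a single edge ($v(C)=2$) or a prime graph on at least $4$ vertices. It remains to show that ``$\sigma[X\cup V(C)]$ is $V(C)$-critical'' is equivalent to ``$v(C)=2$ or ($v(C)\geq 4$ and $C$ is critical)''. For $v(C)=2$ both conditions hold vacuously/trivially (removing either endpoint of the single edge leaves an induced substructure that is not prime, because the two-element edge was the unique prime extension). For $v(C)\ge 4$, one uses that for $v\in V(C)$, the graph $C-v=\Gamma_{(\sigma,\overline X)}[V(C)\setminus\{v\}]$ governs, via Theorem~\ref{thm_main_1}, whether $\sigma[X\cup(V(C)\setminus\{v\})]$ is prime — the point being that, by the interchange of quantifiers encoded in the outside-graph construction, $\Gamma_{(\sigma[X\cup (V(C)\setminus\{v\})],\,V(C)\setminus\{v\})}$ coincides with $C-v$. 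Hence $\sigma[X\cup V(C)]-v$ is non-prime iff $C-v$ has a component that is neither an edge nor a prime graph on $\ge4$ vertices, and running this over all $v\in V(C)$ gives exactly that $C$ is critical (here one invokes Proposition~\ref{prop1_half_graph} / Theorem~\ref{thm1_half_graph} to know critical bipartite graphs are half graphs, so that $C-v$ is handled uniformly).

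The remaining implication $(2)\Rightarrow(1)$ (equivalently $(3)\Rightarrow(1)$) is where the main work lies, and it is the step I expect to be the chief obstacle. One must show: if every component $C$ of $\Gamma_{(\sigma,\overline X)}$ is such that $\sigma[X\cup V(C)]$ is $V(C)$-critical, then every $v\in\overline X$ is a critical vertex of $\sigma$. Given $v$, it lies in exactly one component $C_0$, and by hypothesis $\sigma[X\cup V(C_0)]-v$ is not prime, i.e.\ it has a nontrivial module $N$. The difficulty is to lift $N$ to a nontrivial module of $\sigma-v$ itself — a priori $N$ need not remain a module once the vertices in the other components $\overline X\setminus(X\cup V(C_0))$ are reintroduced. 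The resolution should parallel the argument behind $(2)\Rightarrow(1)$ in Theorem~\ref{thm_main_1}: using Lemma~\ref{lem_EHR} and the partition $p_{(\sigma,\overline X)}$, one controls how vertices outside $C_0$ attach to $X$ and to $N$, showing that the defective module detected within $\sigma[X\cup V(C_0)]-v$ forces, via a compactness/finite-support argument (Theorem~\ref{thm2_pi_infinite} and Corollary~\ref{cor1_pi_infinite}, legitimate because~(S5) lets us work with finite witnesses of size at most $|X|+5$), a finite subset of $\overline X\setminus\{v\}$ over which no prime extension of $\sigma[X]$ exists, contradicting primality of $\sigma-v$. Concretely, I would argue contrapositively: assume $\sigma-v$ is prime; then by Theorem~\ref{thm_main_1} every component of $\Gamma_{(\sigma-v,\overline X\setminus\{v\})}$ is an edge or a prime graph on $\ge4$ vertices; since removing $v$ leaves all components other than $C_0$ untouched and replaces $C_0$ by the relevant components of $C_0-v$, this forces $C_0-v$ to have only such components, i.e.\ $\sigma[X\cup(V(C_0)\setminus\{v\})]$ is prime, contradicting $V(C_0)$-criticality. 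The only genuinely delicate point is justifying that the outside graph of $\sigma-v$ decomposes as claimed — that $v$'s removal is ``local'' to $C_0$ — which rests on Lemma~\ref{lem_EHR}(1)--(3) together with the observation that an edge of $\Gamma_{(\sigma,\overline X)}$ joining two different components would contradict the definition of component; this locality lemma is worth isolating as a preliminary claim before assembling the four implications.
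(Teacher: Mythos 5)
Your overall strategy --- reduce everything to Theorem~\ref{thm_main_1} via the identity $\Gamma_{(\sigma-v,\overline{X}\setminus\{v\})}=\Gamma_{(\sigma,\overline{X})}-v$ and the fact that deleting $v$ only alters the component containing it --- is exactly the paper's, and your $(1)\Rightarrow(2)$ together with your contrapositive argument for $(2)\Rightarrow(1)$ are sound and match the paper's $(1)\Rightarrow(2)$ and $(3)\Rightarrow(1)$ in all essentials (your ``locality lemma'' needs no appeal to Lemma~\ref{lem_EHR}: it is immediate from the definition of the outside graph).

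The one place where your sketch asserts more than it proves is the direction $(3)\Rightarrow(2)$ inside your claimed equivalence $(2)\Leftrightarrow(3)$. You write that $\sigma[X\cup V(C)]-v$ is non-prime iff $C-v$ has a component that is neither an edge nor a prime graph on at least $4$ vertices, ``and running this over all $v$ gives exactly that $C$ is critical.'' The ``iff'' coming from Theorem~\ref{thm_main_1} is fine, but ``$C-v$ is not prime'' does not by itself yield ``$C-v$ has a defective component'': a priori $C-v$ could be disconnected into several components each of which is an edge or a prime graph on at least $4$ vertices (e.g.\ $K_2\oplus K_2$), in which case $C-v$ is not prime and yet Theorem~\ref{thm_main_1} declares $\sigma[X\cup V(C)]-v$ prime. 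This is precisely where Statement (S5) is load-bearing: by Lemma~\ref{lem1_component} one has $P_5\not\leq C$, hence $K_2\oplus K_2\not\leq C$ by Lemma~\ref{fac1_half_graph}, so $C-v$ is either connected (and then, being non-prime on at least $3$ vertices, is itself the defective component) or has an isolated vertex (which is a defective component). Your parenthetical appeal to ``critical bipartite graphs are half graphs'' does not fill this in: that statement is only proved for finite graphs (Proposition~\ref{prop1_half_graph}), and in the infinite case Theorem~\ref{thm1_half_graph} requires $P_5\not\leq C$ as a hypothesis, which again comes from (S5) via Lemma~\ref{lem1_component}. Once this connectedness step is inserted your proof closes and coincides with the paper's; Remark~\ref{rem2_conditions_Ck} shows the theorem genuinely fails without (S5), so the step cannot be waved away.
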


Given a 2-structure $\sigma$, consider $X\subsetneq V(\sigma)$ such that 
$\sigma[X]$ is prime. 
Suppose that Statement (S5) holds. 
Suppose also that $\sigma$ is $\overline{X}\!$-critical. 
Consider a component $C$ of $\mathcal{C}(\Gamma_{(\sigma,\overline{X})})$ such that $v(C)\geq 4$. 
It follows from Theorem~\ref{thm_main_2} that $C$ is critical. 
Moreover, since Statement (S5) holds, $P_5\not\leq C$ (see Lemma~\ref{lem1_component}), where 
for $n\geq 2$, $P_n$ denotes the path on $n$ vertices. 
In Theorem~\ref{thm1_half_graph} below, we characterize the bipartite graphs 
$\Gamma$ such that 
$P_5\not\leq\Gamma$ and $\Gamma$ is critical. 
We need the following three definitions. 

\begin{defi}\label{defi_infinite_half_graph}
Given a bipartite graph $\Gamma$, with bipartition 
$\{X,Y\}$, $\Gamma$ is a {\em half graph} \cite{EH85} if there exist a linear order $L$ defined on $X$, and a bijection $\varphi$ from $X$ onto $Y$ such that 
\begin{equation}\label{E1_defi_infinite_half_graph}
E(\Gamma)=\{\{x,\varphi(x')\}:x\leq x'\hspace{-2mm}\mod L\}.
\end{equation}
\end{defi}

\begin{rem}\label{rem0_infinite_half_graph}
Given a bipartite graph $\Gamma$, with bipartition 
$\{X,Y\}$. 
Suppose that $\Gamma$ is a half graph. 
There exist a linear order $L$ defined on $X$, and a bijection $\varphi$ from $X$ onto $Y$ such that 
\eqref{E1_defi_infinite_half_graph} holds. 
Given $x,y\in X$, we obtain that 
$$\text{$x\leq y\hspace{-2mm}\mod L$ if and only if $N_\Gamma(x)\supseteq N_\Gamma(y)$}.$$ 
Therefore, the linear order $L$ is unique. 
\end{rem}

\begin{defi}\label{defi_discrete}
A linear order $L$ is {\em discrete} \cite{R82} if the following two conditions are satisfied
\begin{enumerate}
\item for every $v\in V(L)$, if $v$ is not the smallest element of $L$, then $v$ admits a predecessor;
\item for every $v\in V(L)$, if $v$ is not the largest element of $L$, then $v$ admits a successor.
\end{enumerate}
\end{defi}

\begin{defi}\label{defi_discrete_half}
A half graph is {\em discrete} if the linear order $L$ in Definition~\ref{defi_infinite_half_graph} is discrete. 
\end{defi}

\begin{thm}\label{thm1_half_graph}
Given a bipartite graph $\Gamma$, with $v(\Gamma)\geq 4$, the following assertions are equivalent 
\begin{enumerate}
\item $\Gamma$ is a discrete half graph;
\item $P_5\not\leq \Gamma$ and $\Gamma$ is critical. 
\end{enumerate}
\end{thm}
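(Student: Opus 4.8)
The plan is to establish the two implications separately, working throughout with the characterization of criticality for bipartite graphs. The key observation is that a bipartite graph, viewed as a 2-structure, can have its criticality analyzed vertex by vertex: a vertex $v$ is critical in a prime bipartite graph $\Gamma$ precisely when $\Gamma - v$ has a nontrivial module, and for bipartite graphs such modules are tightly constrained by the bipartition. For the direction $(1)\Rightarrow(2)$, suppose $\Gamma$ is a discrete half graph with bipartition $\{X,Y\}$, linear order $L$ on $X$, and bijection $\varphi\colon X\to Y$ realizing \eqref{E1_defi_infinite_half_graph}. First I would check that $\Gamma$ is prime: the neighborhood-containment characterization from Remark~\ref{rem0_infinite_half_graph} shows that distinct vertices of $X$ have distinct neighborhoods, likewise for $Y$, and a straightforward argument rules out nontrivial modules meeting both parts (if $M$ contains $x\in X$ and $y\in Y$, then since $v(\Gamma)\geq 4$ there is a vertex $v$ outside $M$ distinguishing them). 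Then $P_5\not\leq\Gamma$ follows because an induced $P_5$ in a bipartite graph forces a vertex whose neighborhood is incomparable in a way half graphs forbid — concretely, the middle three vertices of a $P_5$ with both endpoints attached give neighborhoods on one side that are pairwise incomparable, contradicting the total preorder induced by $L$. Finally, criticality: for each $x\in X$, removing $x$ makes $\{x^-,x\}$ or the appropriate pair a module — more precisely, using discreteness, if $x$ is not the $L$-largest element then after deleting $\varphi(x)$... actually I would delete a vertex of $X$ and show that its $L$-neighbor (predecessor or successor, which exists by discreteness) together with the deleted vertex's image becomes indistinguishable; symmetrically for vertices of $Y$. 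The role of discreteness is exactly to guarantee these neighbors exist, so that every vertex is critical.

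For the direction $(2)\Rightarrow(1)$, assume $\Gamma$ is bipartite, $v(\Gamma)\geq 4$, $P_5$-free, and critical (hence in particular prime). I would first argue that $\Gamma$ is connected: if not, a component is a module, contradicting primality (for $v(\Gamma)\geq 4$ a proper nonempty union of components is a nontrivial module unless there are exactly two singleton components, which is not prime either). With $\{X,Y\}$ the (unique, since connected) bipartition, the $P_5$-freeness should force the neighborhoods within each part to be linearly ordered by inclusion. This is the crucial structural step: in a connected bipartite graph, if two vertices $x,x'\in X$ have incomparable neighborhoods, pick $a\in N(x)\setminus N(x')$ and $b\in N(x')\setminus N(x)$; connectivity lets one find a short path, and a careful case analysis produces an induced $P_5$. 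Once neighborhoods in $X$ are totally ordered by $\supseteq$, and likewise in $Y$, define $L$ on $X$ by $x\leq x'$ iff $N(x)\supseteq N(x')$; primality forces this to be a strict linear order (no two vertices share a neighborhood) and forces $|X|=|Y|$ with the natural bijection $\varphi$ sending the $k$-th element of $X$ to the $k$-th element of $Y$, yielding the half-graph structure \eqref{E1_defi_infinite_half_graph}. The last task is to show $L$ is discrete: if some non-largest $x$ had no successor, then the vertex $\varphi(x)$ (or rather the vertex of $Y$ just above it) would turn out to be non-critical — removing it leaves all remaining neighborhoods still distinct because the "gap" above $x$ provides a distinguishing vertex — contradicting criticality. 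Symmetrically for predecessors.

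The main obstacle I expect is the structural implication ``$P_5$-free and connected bipartite $\Rightarrow$ neighborhoods linearly ordered by inclusion within each part.'' The finite case is a known and not-too-hard fact, but here $\Gamma$ may be infinite, so I must be careful that the path-finding argument producing the forbidden $P_5$ uses only local, bounded-length structure and does not implicitly assume finiteness or the existence of extremal elements. The right formulation is purely local: for any two vertices in the same part with incomparable neighborhoods, exhibit five specific vertices inducing $P_5$ — and this should go through once one has $a\in N(x)\setminus N(x')$, $b\in N(x')\setminus N(x)$, and a common neighbor or a short connecting path, which connectedness supplies. A secondary delicate point is ensuring the converse direction's criticality argument correctly handles the endpoints of $L$ (the $L$-smallest and $L$-largest elements, if they exist), since their ``critical witness'' uses a neighbor on the one available side; the bipartition's two-sided nature is what saves this, and discreteness must be invoked in exactly the right places. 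I would also double-check the low-cardinality boundary cases ($v(\Gamma)=4$) directly, since the general arguments sometimes need a few vertices of slack.
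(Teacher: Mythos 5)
Your direction $(1)\Rightarrow(2)$ is essentially the paper's argument (the paper gets primality of an infinite half graph by compactness via Corollary~\ref{cor1_infinite_half_graph}, while you argue directly from the nesting of neighborhoods; both work), and your structural step for $(2)\Rightarrow(1)$ --- incomparable neighborhoods within one part of a connected bipartite graph yield $K_2\oplus K_2$ and hence an induced $P_5$ --- is exactly Definition~\ref{defi_L_half_graph} combined with Lemma~\ref{fac1_half_graph}, and is a purely local argument, so your worry about finiteness there is unfounded.

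The genuine gap is in the construction of the bijection $\varphi$ in $(2)\Rightarrow(1)$. You define $\varphi$ as ``the natural bijection sending the $k$-th element of $X$ to the $k$-th element of $Y$,'' but the theorem is only interesting in the infinite case (Proposition~\ref{prop1_half_graph} already disposes of the finite one), and for an infinite order such as $\omega^\star+\omega$ there is no $k$-th element. Worse, knowing that the neighborhoods are nested only tells you that each $N_\Gamma(y)$, $y\in Y$, is an initial segment of $L$; to get the half-graph identity \eqref{E1_defi_infinite_half_graph} you must show that each such initial segment is \emph{principal}, i.e.\ that $N_\Gamma(y)$ has an $L$-maximum, and nothing in $P_5$-freeness plus primality alone gives this. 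This is precisely where criticality must be used \emph{before} you have the half-graph structure, not after: the paper shows (Claims~\ref{cla1_half_graph}--\ref{cla3_half_graph}) that for each $x\in X$ the graph $\Gamma-x$ has either a unique isolated vertex $i_x$ or a unique nontrivial module $\{x^-,x^+\}$ with $\{x,x^+\}\in E(\Gamma)$, defines $\varphi(x)$ to be $i_x$ or $x^+$ accordingly, and only then proves bijectivity (surjectivity again uses criticality, this time of the vertices of $Y$), the half-graph identity (Claim~\ref{cla7_half_graph}), and finally discreteness (Claims~\ref{cla8_half_graph}--\ref{cla11_half_graph}). Your concluding discreteness argument also presupposes that $\varphi$ is already in hand, so it cannot be used to bootstrap the construction. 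Without an intrinsic, criticality-based definition of $\varphi$, the proof of $(2)\Rightarrow(1)$ does not go through for infinite $\Gamma$.
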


We establish Theorem~\ref{thm1_half_graph} in Section~\ref{section_Half_graphs}. 
The next result follows from Theorems~\ref{thm_main_1} and \ref{thm_main_2}, Proposition~\ref{prop1_half_graph}, and Lemma~\ref{lem1_component}. 

\begin{cor}\label{cor1_thm_main_2}
Given a 2-structure $\sigma$, consider $X\subsetneq V(\sigma)$ such that $\sigma[X]$ is prime. 
Suppose that $$\text{$\overline{X}$ is finite.}$$
The following two assertions are equivalent 
\begin{enumerate}
\item Statement (S5) holds, and $\sigma$ is prime;
\item $\sigma$ is $\overline{X}\!$-critical. 
\end{enumerate}
\end{cor}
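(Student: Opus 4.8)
The plan is to deduce the equivalence directly from the two main theorems together with the structural results on half graphs, treating $\overline{X}$ finite throughout. First I would prove the implication $(1)\Rightarrow(2)$. Assume Statement~(S5) holds and $\sigma$ is prime. Since $\overline{X}$ is finite, I want to show that every vertex of $\overline{X}$ is critical. By Theorem~\ref{thm_main_1} (applicable because (S5) implies (S3), via the last paragraph of Remark~\ref{rem1_conditions_Ck}), each component $C$ of $\Gamma_{(\sigma,\overline{X})}$ satisfies $v(C)=2$, or $v(C)\geq 4$ and $C$ is prime. For such a component with $v(C)\geq 4$, I would invoke Lemma~\ref{lem1_component}: since (S5) holds, $P_5\not\leq C$. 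Then Theorem~\ref{thm1_half_graph} applied to the bipartite graph $C$ — note $\Gamma_{(\sigma,\overline{X})}$ is bipartite because (S1) and (S3) hold, so its edges join vertices in parts of the outside partition with the right parity — shows $C$ is a discrete half graph, hence in particular $C$ is critical (again by Theorem~\ref{thm1_half_graph}, $(1)\Rightarrow(2)$, or by Proposition~\ref{prop1_half_graph}). Now apply Theorem~\ref{thm_main_2} in the direction $(3)\Rightarrow(1)$: since every component $C$ has $v(C)=2$ or ($v(C)\geq 4$ and $C$ critical), $\sigma$ is $\overline{X}\!$-critical.

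Next I would prove the converse $(2)\Rightarrow(1)$. Suppose $\sigma$ is $\overline{X}\!$-critical. In particular $\sigma$ is prime, so the prime part of (1) is immediate. It remains to establish Statement~(S5), i.e. $\{Y\in\varepsilon_{(\sigma,\overline{X})}:|Y|=5\}=\emptyset$. By Remark~\ref{rem1_conditions_Ck}, since $\overline{X}$ is finite and $\sigma$ is $\overline{X}\!$-critical, $|\overline{X}|$ is even, and moreover for every $Y\subsetneq\overline{X}$ with $\sigma[X\cup Y]$ prime, $|Y|$ is even; hence Statement~(Sk) holds for every odd $k\in\{1,\ldots,|\overline{X}|-1\}$. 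In particular, if $|\overline{X}|\geq 6$, then $5\in\{1,\ldots,|\overline{X}|-1\}$ is odd, so (S5) holds. If $|\overline{X}|<6$, then there is no $Y\subseteq\overline{X}$ with $|Y|=5$ at all unless $|\overline{X}|=5$, which is odd and excluded; so $\{Y\in\varepsilon_{(\sigma,\overline{X})}:|Y|=5\}=\emptyset$ vacuously. Thus (S5) holds in every case, completing $(2)\Rightarrow(1)$.

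The only genuinely delicate point is making sure the hypotheses of Theorem~\ref{thm_main_1}, Theorem~\ref{thm_main_2}, Theorem~\ref{thm1_half_graph} and Lemma~\ref{lem1_component} are all legitimately in force simultaneously in the $(1)\Rightarrow(2)$ direction: one must check that (S5) really does imply (S3) (so that Theorem~\ref{thm_main_1} applies), that the outside graph restricted to a component is genuinely bipartite so that Theorem~\ref{thm1_half_graph} is applicable, and that criticality of $C$ as a stand-alone graph is exactly what Theorem~\ref{thm_main_2}(3) requires. Each of these is essentially bookkeeping given the earlier results, and Remark~\ref{rem1_conditions_Ck} supplies the parity facts needed for the bipartiteness and for the descent among the Statements~(Sk). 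I expect no real obstacle beyond this careful chaining; the substance has already been absorbed into Theorems~\ref{thm_main_1}, \ref{thm_main_2} and \ref{thm1_half_graph}.
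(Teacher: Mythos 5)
Your proof is correct and follows essentially the same route as the paper: Remark~\ref{rem1_conditions_Ck} for $(2)\Rightarrow(1)$, and Theorem~\ref{thm_main_1}, Lemma~\ref{lem1_component}, Proposition~\ref{prop1_half_graph} and Theorem~\ref{thm_main_2} for $(1)\Rightarrow(2)$. One caution: in the $(1)\Rightarrow(2)$ direction you cannot invoke Theorem~\ref{thm1_half_graph} to deduce that $C$ is a discrete half graph, since its second assertion requires $C$ to be \emph{critical}, which is exactly what you are trying to establish; the valid step is the one you give parenthetically, namely Proposition~\ref{prop1_half_graph} (applicable because $\overline{X}$, hence $C$, is finite), which converts ``$P_5\not\leq C$ and $C$ prime'' directly into ``$C$ critical'', as in the paper.
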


\begin{rem}\label{rem2_conditions_Ck}
Consider the path $P_\mathbb{Z}=(\mathbb{Z},\{(p,q):|p-q|=1\})$. 
We show that $P_\mathbb{Z}$ is prime by using Theorem~\ref{thm2_pi_infinite}. 
Indeed, let $F$ be a finite and nonempty subset of $P_\mathbb{Z}$. 
There exist $p,q\in\mathbb{Z}$ such that $p\leq\min(F)$, $q\geq\max(F)$ and $q-p\geq 3$. 
Clearly, $F\subseteq\{p,\ldots,q\}$, and $P_\mathbb{Z}[\{p,\ldots,q\}]\simeq P_{q-p+1}$. 
Since $q-p+1\geq 4$, $P_{q-p+1}$ and hence $P_\mathbb{Z}[\{p,\ldots,q\}]$ are prime. 
By Theorem~\ref{thm2_pi_infinite}, $P_\mathbb{Z}$ is prime. 

For every $z\in\mathbb{Z}$, $P_\mathbb{Z}-z$ is disconnected, and hence 
$P_\mathbb{Z}-z$ is not prime. 
Consequently $P_\mathbb{Z}$ is critical. 
In fact, $P_\mathbb{Z}$ is finitely critical. 

Set $X=\{z\in\mathbb{Z}:z\leq 0\}$. 
By Theorem~\ref{thm2_pi_infinite}, $P_\mathbb{Z}[X]$ is prime. 
Since $P_\mathbb{Z}$ is critical, $P_\mathbb{Z}$ is $\overline{X}$-critical. 
For every $k>0$, $P_\mathbb{Z}[X\cup\{1,\ldots,k\}]$ is prime by 
Theorem~\ref{thm2_pi_infinite}. 
Consequently, for every $k>0$, Statement (Sk) does not hold. 
Moreover, $\{1,2\}$ is the only edge of $\Gamma_{(P_\mathbb{Z},\overline{X})}$. 
Hence, for every $z\geq 3$, $z$ is an isolated vertex of 
$\Gamma_{(P_\mathbb{Z},\overline{X})}$. 
It follows that Theorem~\ref{thm_main_1} does not hold when Statement (S3) is not satisfied. 
Similarly, Theorem~\ref{thm_main_2} does not hold when Statement (S5) is not satisfied. 
\end{rem}

Corollary~\ref{cor1_pi_infinite} and the fact  that Statement (S5)  is supposed to be satisfied in Theorem~\ref{thm_main_2} lead us to introduce the next definition. 
The next definition is a weakening of the partial criticality (see Theorem~\ref{cor1_thm_main_1}). 

\begin{defi}\label{defi_finitely}
Given a 2-structure $\sigma$, consider $X\subsetneq V(\sigma)$ such that 
$\sigma[X]$ is prime. 
We say that $\sigma$ is {\em finitely $\overline{X}$-critical} if for each finite subset $F$ of $\overline{X}$, there exists a finite subset $F'$ of 
$\overline{X}$ such that $F\subseteq F'$ and $\sigma[X\cup F']$ is 
$(F')$-critical. 
\end{defi}

The next result follows from Corollaries~\ref{cor1_pi_infinite} and \ref{cor1_thm_main_2}.

\begin{thm}\label{cor1_thm_main_1}
Given a 2-structure $\sigma$, consider $X\subsetneq V(\sigma)$ such that $\sigma[X]$ is prime. 
The following two assertions are equivalent 
\begin{enumerate}
\item Statement (S5) holds, and $\sigma$ is prime;
\item $\sigma$ is finitely $\overline{X}$-critical. 
\end{enumerate}
\end{thm}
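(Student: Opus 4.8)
The plan is to derive the equivalence from Corollaries~\ref{cor1_pi_infinite} and \ref{cor1_thm_main_2} by a localization argument, the only point needing care being the behaviour of Statement~(S5) under restriction to the induced 2-substructures $\sigma[X\cup F']$ with $F'\subseteq\overline{X}$. I would first record the elementary observation that, for any $F'\subseteq\overline{X}$, one has $(\sigma[X\cup F'])[X\cup Y]=\sigma[X\cup Y]$ for every $Y\subseteq F'$, so that $\varepsilon_{(\sigma[X\cup F'],F')}=\{Y\in\varepsilon_{(\sigma,\overline{X})}:Y\subseteq F'\}$. Consequently, Statement~(S5) for $(\sigma,\overline{X})$ implies Statement~(S5) for $(\sigma[X\cup F'],F')$; and, conversely, any $Y\in\varepsilon_{(\sigma,\overline{X})}$ with $|Y|=5$ and $Y\subseteq F'$ witnesses the failure of Statement~(S5) for $(\sigma[X\cup F'],F')$.

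For $(1)\Rightarrow(2)$, I would assume Statement~(S5) holds and $\sigma$ is prime, and take a finite subset $F$ of $\overline{X}$, which (enlarging $F$ if needed; note $\overline{X}\neq\emptyset$ since $X\subsetneq V(\sigma)$) we may assume to be nonempty. By Corollary~\ref{cor1_pi_infinite} there is a finite $F'\in\varepsilon_{(\sigma,\overline{X})}$ with $F\subseteq F'$, hence $\sigma[X\cup F']$ is prime. By the observation above, Statement~(S5) holds for $(\sigma[X\cup F'],F')$, and $F'$ is finite, so Corollary~\ref{cor1_thm_main_2}, applied to $\sigma[X\cup F']$ with the same prime substructure $\sigma[X]$ and with $F'$ in the role of $\overline{X}$, gives that $\sigma[X\cup F']$ is $F'$-critical. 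This is exactly what the definition of finitely $\overline{X}$-critical requires, so $\sigma$ is finitely $\overline{X}$-critical.

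For $(2)\Rightarrow(1)$, I would assume $\sigma$ is finitely $\overline{X}$-critical and argue in two steps. First, for a nonempty finite $F\subseteq\overline{X}$ there is, by hypothesis, a finite $F'\subseteq\overline{X}$ with $F\subseteq F'$ and $\sigma[X\cup F']$ being $F'$-critical, hence prime, so $F'\in\varepsilon_{(\sigma,\overline{X})}$; Corollary~\ref{cor1_pi_infinite} then yields that $\sigma$ is prime. Second, to show Statement~(S5) holds for $(\sigma,\overline{X})$, I would suppose the contrary and pick $Y\in\varepsilon_{(\sigma,\overline{X})}$ with $|Y|=5$; since $\sigma$ is finitely $\overline{X}$-critical there is a finite $F'\subseteq\overline{X}$ with $Y\subseteq F'$ and $\sigma[X\cup F']$ being $F'$-critical. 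Since $F'$ is finite, Corollary~\ref{cor1_thm_main_2} (applied as before) forces Statement~(S5) for $(\sigma[X\cup F'],F')$, contradicting the facts that $Y\subseteq F'$, $|Y|=5$ and $\sigma[X\cup Y]$ is prime.

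The proof is essentially bookkeeping, and the only step I expect to require attention — the "main obstacle", mild though it is — is keeping track of which 2-structure and which outside set each invocation of Corollaries~\ref{cor1_pi_infinite} and \ref{cor1_thm_main_2} refers to, together with the routine verification that Statement~(S5) is inherited by the substructures $\sigma[X\cup F']$.
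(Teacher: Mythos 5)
Your proposal is correct and follows essentially the same route as the paper: Corollary~\ref{cor1_pi_infinite} combined with Corollary~\ref{cor1_thm_main_2} applied to the finite substructures $\sigma[X\cup F']$, after noting that Statement~(S5) restricts to (and is detected by) these substructures. The only cosmetic difference is that for the converse direction the paper extracts Statement~(S5) directly from Remark~\ref{rem1_conditions_Ck}, whereas you route it through the corresponding direction of Corollary~\ref{cor1_thm_main_2}, which amounts to the same thing.
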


Theorem~\ref{cor1_thm_main_1} is discussed in Remark~\ref{rem_thm_24}. 
Precisely, in Remark~\ref{rem_thm_24}, we provide a prime 2-structure showing that we do not have a compactness theorem with partial criticality. 

The last main result is an immediate consequence of Theorem~\ref{thm_main_2} and Claim~\ref{cla4_half_graph}. 

\begin{thm}\label{thm_main_4}
Given a 2-structure $\sigma$, consider $X\subsetneq V(\sigma)$ such that $\sigma[X]$ is prime. 
Suppose that Statement (S5) holds. 
Suppose also that $\sigma$ is $\overline{X}\!$-critical. 
For each $x\in\overline{X}$, there exists $y\in\overline{X}\setminus\{x\}$ such that 
$\sigma-\{x,y\}$ is $(\overline{X}\setminus\{x,y\})\!$-critical. 
\end{thm}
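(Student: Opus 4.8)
The plan is to derive Theorem~\ref{thm_main_4} by combining the component decomposition of Theorem~\ref{thm_main_2} with the structure of critical components supplied by Theorem~\ref{thm1_half_graph} (via Claim~\ref{cla4_half_graph}, which presumably records a local criticality property of discrete half graphs). First I would fix $x\in\overline{X}$ and let $C$ be the unique component of $\Gamma_{(\sigma,\overline{X})}$ with $x\in V(C)$. By Theorem~\ref{thm_main_2}, since Statement (S5) holds and $\sigma$ is $\overline{X}$-critical, assertion~(3) gives either $v(C)=2$, or $v(C)\geq 4$ and $C$ is critical. The two cases are handled separately, and in each I want to produce a vertex $y\in V(C)\setminus\{x\}$ such that $C-\{x,y\}$ (read as an induced subgraph of the outside graph) is again the outside graph associated to $\sigma-\{x,y\}$ and $X$, and such that it still satisfies the hypotheses of Theorem~\ref{thm_main_2}; then reapplying Theorem~\ref{thm_main_2} to $\sigma-\{x,y\}$ will give that it is $(\overline{X}\setminus\{x,y\})$-critical.

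Second, I would treat the easy case $v(C)=2$, say $V(C)=\{x,y\}$. Then removing $\{x,y\}$ simply deletes one full component of $\Gamma_{(\sigma,\overline{X})}$, the remaining components are unchanged, and each still satisfies "$v(C')=2$ or $v(C')\geq 4$ and $C'$ critical". One must check that $\Gamma_{(\sigma-\{x,y\},\overline{X})}$ equals $\Gamma_{(\sigma,\overline{X})}-\{x,y\}$: this is where I would invoke that a prime $\sigma[X\cup F]$ for $F\subseteq\overline{X}\setminus\{x,y\}$ does not involve $x$ or $y$, so membership in $\varepsilon$ is insensitive to their removal; and $\sigma-\{x,y\}$ is still prime because deleting a whole component of the outside graph preserves primality by Theorem~\ref{thm_main_1} (its hypothesis, Statement (S3), follows from (S5) together with Remark~\ref{rem1_conditions_Ck}). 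Then Theorem~\ref{thm_main_2} applied to $\sigma-\{x,y\}$ with the same $X$ yields the conclusion, provided we also re-verify Statement (S5) for $(\sigma-\{x,y\},\overline{X}\setminus\{x,y\})$; this again follows because no prime $\sigma[X\cup F]$ with $|F|\le 5$ can use $x$ or $y$ once those vertices are deleted, so the relevant $\varepsilon$-sets are unchanged.

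Third, the substantive case is $v(C)\geq 4$ with $C$ critical. By Theorem~\ref{thm_main_2} (equivalence of (1)-(3)) and Theorem~\ref{thm1_half_graph}, $C$ is a discrete half graph, so in particular $P_5\not\leq C$. I would then appeal to Claim~\ref{cla4_half_graph}, which (from the structure of discrete half graphs) should state that for each vertex $x$ of a discrete half graph $C$ with $v(C)\geq 4$ there is $y\in V(C)\setminus\{x\}$ such that $C-\{x,y\}$ is again a discrete half graph (concretely, take $y$ to be the partner of $x$ under the bijection $\varphi$, or an order-adjacent vertex, so that deleting the pair leaves a half graph on a discrete suborder — with the mild caveat that if $v(C)=4$ one may land on $v(C-\{x,y\})=2$, which is still acceptable in assertion~(3)). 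Then $C-\{x,y\}$ is either a single edge or a critical discrete half graph on $\geq 4$ vertices, so the component decomposition of $\Gamma_{(\sigma-\{x,y\},\overline{X}\setminus\{x,y\})}$ still satisfies assertion~(3) of Theorem~\ref{thm_main_2}. As in the previous paragraph, one checks that $\Gamma_{(\sigma-\{x,y\},\overline{X}\setminus\{x,y\})}=\Gamma_{(\sigma,\overline{X})}-\{x,y\}$ (the other components untouched, $C$ replaced by $C-\{x,y\}$), that Statement (S5) persists, and that $\sigma-\{x,y\}$ is prime by Theorem~\ref{thm_main_1}; then Theorem~\ref{thm_main_2}, direction (3)$\Rightarrow$(1), gives that $\sigma-\{x,y\}$ is $(\overline{X}\setminus\{x,y\})$-critical.

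The main obstacle I anticipate is bookkeeping about how the outside graph, the outside partition $p_{(\sigma,\overline X)}$, and Statements (Sk) transform under deleting a pair $\{x,y\}\subseteq\overline X$ — specifically, showing cleanly that $\varepsilon_{(\sigma-\{x,y\},\overline{X}\setminus\{x,y\})}$ consists exactly of those members of $\varepsilon_{(\sigma,\overline{X})}$ avoiding $x$ and $y$, and consequently that the outside graph of the deleted structure is the induced subgraph $\Gamma_{(\sigma,\overline{X})}-\{x,y\}$ and that (S5) is inherited. Everything else is either immediate from Theorem~\ref{thm_main_2} or is packaged in Claim~\ref{cla4_half_graph}; the real content is confirming that the hypotheses of Theorem~\ref{thm_main_2} survive the deletion so that it can be re-applied.
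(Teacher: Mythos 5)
Your proposal is correct and follows essentially the same route as the paper, which derives Theorem~\ref{thm_main_4} directly from Theorem~\ref{thm_main_2} together with the half-graph structure of the components (the partner $y=\varphi_C(x)$ from Claim~\ref{cla4_half_graph} and Notation~\ref{nota_varphi}). Your only deviation is that Claim~\ref{cla4_half_graph} literally asserts uniqueness of the neighbour $\varphi(x)$ with $\Gamma-\{x,\varphi(x)\}$ prime rather than that the deletion leaves a discrete half graph, but you correctly supply and justify the latter fact (deleting $\{x,\varphi_C(x)\}$ restricts the discrete order and the bijection), which is exactly what is needed to reapply Theorem~\ref{thm_main_2} in the infinite case.
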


\begin{rem}\label{rem_thm_main_4}
Given a 2-structure $\sigma$, consider $X\subsetneq V(\sigma)$ such that $\sigma[X]$ is prime. 
Suppose that Statement (S5) holds. 
Suppose also that $\sigma$ is $\overline{X}\!$-critical. 
Lastly, suppose that $\overline{X}$ is infinite. 
Consider a finite and nonempty subset $F$ of $\overline{X}$. 
By applying several times Theorem~\ref{thm_main_4}, we obtain a finite 
subset $F'$ of $\overline{X}$ such that $F\subseteq F'$ and 
$\sigma-F'$ is $(\overline{X}\setminus F')$-critical. 
Furthermore, it follows from Theorem~\ref{tEHR} that $|F'|$ is even. 
\end{rem}

In Appendix~\ref{A_representation}, we describe simply partially critical 2-structures. 
A nice presentation of finite and partially critical tournaments is provided in~\cite{BBH15}. 

\begin{fore}
As mentioned at the beginning of Section~\ref{s_intro}, we adopt the same approach as that of \cite{BDI08} to examine finite and partially critical 2-structures. 
In what follows, we omit the proof of a result when it is closed to that provided in \cite{BDI08}. 
\end{fore}

\section{Preliminaries}

We use the following notation. 

\begin{nota}\label{not_link}
Let $\sigma$ be a 2-structure. 
For $W,W'\subseteq V(\sigma)$, with $W\cap W'=\emptyset$, 
$W\longleftrightarrow_\sigma W'$ 
signifies that $(v,v')\equiv_\sigma (w,w')$ and $(v',v)\equiv_\sigma (w',w)$ for any 
$v,w\in W$ and $v',w'\in W'$. 
Given $v\in V(\sigma)$ and $W\subseteq V(\sigma)\setminus\{v\}$, 
$\{v\}\longleftrightarrow_\sigma W$ is also denoted by 
$v\longleftrightarrow_\sigma W$.  
The negation is denoted by $v\not\longleftrightarrow_\sigma W$. 

Given distinct vertices $v$ and $w$ of $\sigma$, the equivalence class of $(v,w)$ is denoted by $(v,w)_\sigma$. 
If we consider $\sigma$ as the function from 
$(V(\sigma)\times V(\sigma))\setminus\{(v,v):v\in V(\sigma)\}$ to $E(\sigma)$, which maps $(v,w)$ to $(v,w)_\sigma$, then $\sigma$ becomes a 2-structure labeled by $E(\sigma)$. 
Given distinct vertices $v$ and $w$ of $\sigma$, set 
$$[v,w]_\sigma=((v,w)_\sigma,(w,v)_\sigma).$$
Given $W,W'\subseteq V(\sigma)$ such that 
$W\longleftrightarrow_\sigma W'$, $(W,W')_\sigma$ denotes 
the equivalence class of 
$(w,w')$, where $w\in W$ and $w'\in W'$. 
Furthermore, set $$[W,W']_\sigma=((W,W')_\sigma,(W',W)_\sigma).$$
Lastly, given $v\in V(\sigma)$ and $W\subseteq V(\sigma)\setminus\{v\}$ such that 
$v\longleftrightarrow_\sigma W$, $(\{v\},W)_\sigma$ is also denoted by 
$(v,W)_\sigma$, and 
$[\{v\},W]_\sigma$ is also denoted by $[v,W]_\sigma$. 
\end{nota}

Let $\sigma$ be a 2-structure. 
Using Notation~\ref{not_link}, a subset $M$ of $V(\sigma)$ is a module of $\sigma$ if and only if for each $v\in\overline{M}$, we have $v\longleftrightarrow_\sigma M$. 

To continue, we examine the isolated vertices of an outside graph. 
We utilize the following remark. 

\begin{rem}\label{rem_p}
Given a 2-structure $\sigma$, consider $X\subsetneq V(\sigma)$ such that $\sigma[X]$ is prime. 
Consider distinct $x,y\in\overline{X}$. 
If $x,y\in\langle X\rangle_\sigma$, then $X$ is a module of $\sigma[X\cup\{x,y\}]$. 
Given $\alpha\in X$, if $x,y\in X_\sigma(\alpha)$, then $\{\alpha,x,y\}$ is a module of 
$\sigma[X\cup\{x,y\}]$. 
Consequently, for each $B\in p_{(\sigma,\overline{X})}\setminus\{{\rm Ext}_\sigma(X)\}$, 
$\Gamma_{(\sigma,\overline{X})}[B]$ is empty. 
In other words, if ${\rm Ext}_\sigma(X)=\emptyset$, then $\Gamma_{(\sigma,\overline{X})}$ is multipartite with partition $p_{(\sigma,\overline{X})}$ (see Lemma~\ref{lem_EHR}). 
\end{rem}

The proof of the next lemma is analogous to that of \cite[Lemma~2.7]{BDI08}. 

\begin{lem}\label{lem1_modules}
Given a 2-structure $\sigma$, consider $X\subsetneq V(\sigma)$ such that $\sigma[X]$ is prime. 
\begin{enumerate}
\item If $M$ is a module of $\sigma$ such that $X\subseteq M$, then
the elements of $\overline{M}$ are isolated vertices of $\Gamma_{(\sigma,\overline{X})}$.
\item Given $\alpha\in X$, if $M$ is a module of $\sigma$ such that $M\cap X=\{\alpha\}$, then the elements of $M\setminus\{\alpha\}$ are isolated vertices of $\Gamma_{(\sigma,\overline{X})}$.
\end{enumerate}
\end{lem}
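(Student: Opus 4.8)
The statement to prove is Lemma~\ref{lem1_modules}. The two assertions are parallel, so I describe the approach for assertion~(1); assertion~(2) is handled identically after a minor modification. Suppose $M$ is a module of $\sigma$ with $X\subseteq M$, and let $v\in\overline{M}$. I want to show that $v$ is an isolated vertex of $\Gamma_{(\sigma,\overline{X})}$, i.e.\ that $\{v,w\}\notin E(\Gamma_{(\sigma,\overline{X})})$ for every $w\in\overline{X}\setminus\{v\}$. Equivalently, by Definition~\ref{outsideg}, I must show $\sigma[X\cup\{v,w\}]$ is not prime for every such $w$.

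First I would split on where $w$ lies. If $w\in\overline{M}$ as well, then $M$ being a module of $\sigma$ gives $\{v,w\}\subseteq\overline{M}$ with $M\supseteq X$; restricting the module $M$ to the induced substructure $\sigma[X\cup\{v,w\}]$, the set $X$ is a module of $\sigma[X\cup\{v,w\}]$ (since $v,w$ are outside $M$ and $X\subseteq M$, the defining relations $v\longleftrightarrow_\sigma M$, $w\longleftrightarrow_\sigma M$ restrict to $v\longleftrightarrow X$, $w\longleftrightarrow X$). As $|X|\ge 3$ (because $\sigma[X]$ is prime) and $X\subsetneq X\cup\{v,w\}$, this module $X$ is nontrivial, so $\sigma[X\cup\{v,w\}]$ is not prime. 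If instead $w\in M\setminus X$, then in $\sigma[X\cup\{v,w\}]$ the vertex $v$ lies outside $M\cap(X\cup\{v,w\}) = X\cup\{w\}$, and since $M$ is a module of $\sigma$ we get $v\longleftrightarrow_\sigma (X\cup\{w\})$, so $X\cup\{w\}$ is a module of $\sigma[X\cup\{v,w\}]$; it has size $\ge 4$ and is not all of $X\cup\{v,w\}$, hence is nontrivial, so again $\sigma[X\cup\{v,w\}]$ is not prime. In both cases $\{v,w\}$ is not an edge, so $v$ is isolated.

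For assertion~(2), with $M$ a module of $\sigma$ such that $M\cap X=\{\alpha\}$ and $v\in M\setminus\{\alpha\}$, I take any $w\in\overline{X}\setminus\{v\}$ and again split: if $w\in M$, then in $\sigma[X\cup\{v,w\}]$ the set $M\cap(X\cup\{v,w\})=\{\alpha,v,w\}$ (or $\{\alpha,v\}$ if $w\notin M$, handled next) is a module, and since $X\setminus\{\alpha\}\ne\emptyset$ it is nontrivial; if $w\notin M$, then $\{\alpha,v\}=M\cap(X\cup\{v,w\})$ is a module of $\sigma[X\cup\{v,w\}]$, nontrivial because $X\cup\{v,w\}$ properly contains it and has size $\ge 4$. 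Either way $\sigma[X\cup\{v,w\}]$ is not prime, so $v$ is isolated. The one point needing a line of care is verifying that the restriction of a module of $\sigma$ to an induced substructure is again a module of that substructure, which is immediate from the definition since the "linked in the same way" condition only involves pairs and is inherited by induced substructures; I would state this once as a standing observation (it is essentially already recorded after Notation~\ref{not_link}).

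\textbf{Main obstacle.} There is no serious obstacle here; the lemma is a routine bookkeeping argument. The only thing to be careful about is the case analysis on the location of the second vertex $w$ (inside $M$, inside $M\setminus X$, or in $\overline{M}$) and making sure that in each case the module exhibited in $\sigma[X\cup\{v,w\}]$ is genuinely nontrivial, which uses $|X|\ge 3$ for assertion~(1) and $|X\setminus\{\alpha\}|\ge 1$ together with $|X|\ge 3$ for assertion~(2). Since the paper explicitly says the proof is analogous to \cite[Lemma~2.7]{BDI08}, I would keep the write-up short.
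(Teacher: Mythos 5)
Your proof is correct, and it is exactly the routine case analysis the paper has in mind when it says the proof is analogous to \cite[Lemma~2.7]{BDI08} (the paper itself omits the argument): in every case you exhibit the trace of $M$ on $X\cup\{v,w\}$ (namely $X$, $X\cup\{w\}$, $\{\alpha,v\}$, or $\{\alpha,v,w\}$) as a nontrivial module, using $|X|\geq 3$ to rule out triviality. Nothing is missing.
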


The next result is an immediate consequence of Lemma~\ref{lem1_modules}. 

\begin{cor}\label{cor1_modules}
Given a 2-structure $\sigma$, consider $X\subsetneq V(\sigma)$ such that $\sigma[X]$ is prime. 
If $\sigma$ admits a nontrivial module $M$ such
that $M\cap X\neq\emptyset$, then $\Gamma_{(\sigma,\overline{X})}$ possesses isolated vertices.
\end{cor}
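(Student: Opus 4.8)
\textbf{Proof plan for Corollary~\ref{cor1_modules}.}
The plan is to derive the statement directly from Lemma~\ref{lem1_modules} by a short case analysis on the intersection $M\cap X$, using the primality of $\sigma[X]$ to rule out the only remaining possibility. First I would observe that since $\sigma[X]$ is prime, its only modules are the trivial ones: $\emptyset$, $X$ itself, and the singletons $\{\alpha\}$ for $\alpha\in X$. Now let $M$ be a nontrivial module of $\sigma$ with $M\cap X\neq\emptyset$. A standard fact (which I would either invoke or prove in one line) is that the trace $M\cap X$ of a module $M$ of $\sigma$ on any subset $X$ is a module of $\sigma[X]$; applying this here, $M\cap X$ is a module of the prime 2-structure $\sigma[X]$, hence $M\cap X$ is trivial in $\sigma[X]$. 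Since $M\cap X\neq\emptyset$ by hypothesis, either $M\cap X=X$ or $M\cap X=\{\alpha\}$ for some $\alpha\in X$.

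In the first case, $X\subseteq M$, and since $M$ is a nontrivial module of $\sigma$ we have $M\neq V(\sigma)$, so $\overline{M}\neq\emptyset$; by part~(1) of Lemma~\ref{lem1_modules}, every element of the nonempty set $\overline{M}\subseteq\overline{X}$ is an isolated vertex of $\Gamma_{(\sigma,\overline{X})}$, which gives the conclusion. In the second case, $M\cap X=\{\alpha\}$; since $M$ is nontrivial it is not equal to $\{\alpha\}$, so $M\setminus\{\alpha\}\neq\emptyset$, and moreover $M\setminus\{\alpha\}\subseteq\overline{X}$ because $M\cap X=\{\alpha\}$. By part~(2) of Lemma~\ref{lem1_modules}, every element of the nonempty set $M\setminus\{\alpha\}$ is an isolated vertex of $\Gamma_{(\sigma,\overline{X})}$, again giving the conclusion. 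In both cases $\Gamma_{(\sigma,\overline{X})}$ possesses at least one isolated vertex, which completes the proof.

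I do not expect any real obstacle here: the argument is essentially a bookkeeping exercise distinguishing the two ways a nontrivial module can meet $X$, together with the elementary observation that the trace of a module on an induced substructure is a module. The only point requiring mild care is to make sure the relevant sets ($\overline{M}$ in the first case, $M\setminus\{\alpha\}$ in the second) are nonempty, which is exactly where the nontriviality of $M$ is used; without that hypothesis the corollary would fail (take $M=X$ or $M=\{\alpha\}$, which produce no isolated vertices). Since Lemma~\ref{lem1_modules} does all the structural work, the corollary is genuinely immediate once the case split is set up.
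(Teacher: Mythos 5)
Your proof is correct and is exactly the argument the paper intends: the corollary is stated there as an immediate consequence of Lemma~\ref{lem1_modules}, and your case split on the trace $M\cap X$ (which must be $X$ or a singleton by primality of $\sigma[X]$), together with the nonemptiness checks on $\overline{M}$ and $M\setminus\{\alpha\}$ coming from nontriviality of $M$, is precisely the omitted bookkeeping.
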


Now, we study the modules of the outside graph. 
We need the following refinement of the outside partition (see Notation~\ref{nota_p}). 

\begin{nota}\label{nota_q}
Given a 2-structure $\sigma$, consider $X\subsetneq V(\sigma)$ such that $\sigma[X]$ is prime. 
We consider the following subsets of $\overline{X}$ 
\begin{itemize}
\item for $e,f\in E(\sigma)$, $\langle X\rangle_\sigma^{(e,f)}$ is the set of 
$v\in\langle X\rangle_\sigma$ such that $(v,\alpha)\in e$ and $(\alpha,v)\in f$, where $\alpha\in X$;
\item for $e,f\in E(\sigma)$ and $\alpha\in X$, $X_\sigma^{(e,f)}(\alpha)$ is the set of 
$v\in X_\sigma(\alpha)$ such that $(v,\alpha)\in e$ and $(\alpha,v)\in f$. 
\end{itemize}
The set $\{{\rm Ext}_\sigma(X)\}\cup\{\langle X\rangle_\sigma^{(e,f)}:e,f\in E(\sigma)\}\cup\{X_\sigma^{(e,f)}(\alpha):e,f\in E(\sigma),\alpha\in X\}$ is denoted by 
$q_{(\sigma,\overline{X})}$. 
\end{nota}

\begin{lem}\label{lem2_modules}
Given a 2-structure $\sigma$, consider $X\subsetneq V(\sigma)$ such that $\sigma[X]$ is prime. 
Suppose that Statement (S1) holds. 
Given $M\subseteq\overline{X}$, 
if $M$ is a module of $\sigma$, then 
$M$ is a module of $\Gamma_{(\sigma,\overline{X})}$, and there exist 
$B_p\in p_{(\sigma,\overline{X})}$ and $B_q\in q_{(\sigma,\overline{X})}$ such that 
$M\subseteq B_q\subseteq B_p$, and $M$ is a module of $\sigma[B_p]$. 
\end{lem}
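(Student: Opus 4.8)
I plan to prove the three assertions of the lemma in the order: (i) $M$ is a module of $\Gamma_{(\sigma,\overline{X})}$; (ii) $M$ is contained in a single block $B_p$ of $p_{(\sigma,\overline{X})}$; (iii) $M$ is contained in a single block $B_q$ of $q_{(\sigma,\overline{X})}$; (iv) $M$ is a module of $\sigma[B_p]$. Throughout, $M\subseteq\overline{X}$ is a module of $\sigma$, and Statement (S1) says ${\rm Ext}_\sigma(X)=\emptyset$, i.e.\ no single vertex of $\overline{X}$ extends $\sigma[X]$ to a prime 2-substructure.

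\textbf{Step (i): $M$ is a module of the outside graph.} Let $u\in\overline{X}\setminus M$ and $x,y\in M$; I must show $\{u,x\}\in E(\Gamma_{(\sigma,\overline{X})})$ iff $\{u,y\}\in E(\Gamma_{(\sigma,\overline{X})})$, i.e.\ $\sigma[X\cup\{u,x\}]$ is prime iff $\sigma[X\cup\{u,y\}]$ is prime. Since $M$ is a module of $\sigma$, the bijection $\pi$ of $X\cup\{u,x\}$ onto $X\cup\{u,y\}$ fixing $X\cup\{u\}$ and sending $x$ to $y$ is an isomorphism from $\sigma[X\cup\{u,x\}]$ onto $\sigma[X\cup\{u,y\}]$: indeed $x\longleftrightarrow_\sigma\{v\}$ has the same label as $y\longleftrightarrow_\sigma\{v\}$ for every $v\in X\cup\{u\}$ because $\{v\}\subseteq\overline{M}$, and the $\sigma$-labels on pairs inside $X\cup\{u\}$ are untouched. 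Hence one is prime iff the other is, giving the module condition; this settles (i).

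\textbf{Steps (ii)--(iii): localization into blocks of $p$ and of $q$.} Since $p_{(\sigma,\overline{X})}$ and $q_{(\sigma,\overline{X})}$ are partitions of $\overline{X}$ and $q$ refines $p$, it suffices to show $M$ meets at most one block of $q_{(\sigma,\overline{X})}$ (then the $p$-block follows, and one may take $B_q$, $B_p$ to be those blocks; if $M=\emptyset$ any blocks work, if $|M|=1$ its own block works). So suppose $x,y\in M$ are distinct; I want $x,y$ to lie in the same $q$-block. By Statement (S1), ${\rm Ext}_\sigma(X)=\emptyset$, so $x,y\in\overline{X}\setminus{\rm Ext}_\sigma(X)$, hence each of $x,y$ lies either in $\langle X\rangle_\sigma$ or in some $X_\sigma(\alpha)$ (Lemma~\ref{lem_EHR}). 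The isomorphism $\pi$ from Step (i), now taken with $u$ ranging over $X$ only (i.e.\ the isomorphism of $\sigma[X\cup\{x\}]$ onto $\sigma[X\cup\{y\}]$ fixing $X$), shows that $x$ and $y$ occupy the ``same position'' relative to $X$: $X$ is a module of $\sigma[X\cup\{x\}]$ iff it is a module of $\sigma[X\cup\{y\}]$, and $\{\alpha,x\}$ is a module of $\sigma[X\cup\{x\}]$ iff $\{\alpha,y\}$ is a module of $\sigma[X\cup\{y\}]$, for each $\alpha\in X$; so $x,y$ lie in the same block of $p_{(\sigma,\overline{X})}$, say $B_p$. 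For the finer partition, recall the $q$-blocks inside $B_p$ are cut out by the labels $(v,\alpha)_\sigma,(\alpha,v)_\sigma$ (for the fixed $\alpha$ defining $B_p$, or any $\alpha\in X$ when $B_p=\langle X\rangle_\sigma$-type). Again because $M$ is a module of $\sigma$, for every $\alpha\in X\subseteq\overline{M}$ we have $[\alpha,x]_\sigma=[\alpha,y]_\sigma$; hence $x$ and $y$ have the same pair of labels with $\alpha$, so they fall into the same $q$-block $B_q$. This proves $M\subseteq B_q\subseteq B_p$.

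\textbf{Step (iv): $M$ is a module of $\sigma[B_p]$.} Let $u\in B_p\setminus M$ and $x,y\in M$; I must show $[u,x]_\sigma=[u,y]_\sigma$. If $u\notin\overline{M}$ this is vacuous, and since $u\in B_p\subseteq\overline{X}$ and $M\subseteq\overline{X}$, indeed $u\in\overline{M}$, so $[u,x]_\sigma=[u,y]_\sigma$ directly because $M$ is a module of $\sigma$. That is all that is required. The only genuinely non-formal point, and the one I expect to be the main obstacle, is making sure the localization in Steps (ii)--(iii) is correctly organized: one must be careful that ``$X$ a module of $\sigma[X\cup\{x\}]$'' versus ``$\{\alpha,x\}$ a module'' are mutually exclusive and exhaustive once ${\rm Ext}_\sigma(X)=\emptyset$ (this is exactly the content of the partition $p_{(\sigma,\overline{X})}$ from Lemma~\ref{lem_EHR}, which relies on (S1) to discard ${\rm Ext}_\sigma(X)$), and that the isomorphism $\pi$ transports the defining property of a block faithfully — including, in the $\langle X\rangle_\sigma^{(e,f)}$ case, that the labels $(x,\alpha)_\sigma$ and $(\alpha,x)_\sigma$ are independent of the choice of $\alpha\in X$ when $x\in\langle X\rangle_\sigma$, which is built into Notation~\ref{nota_q}. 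Once these bookkeeping facts are in place, all four steps reduce to the single observation that $X$ (and any $u\in B_p\setminus M$) lies in $\overline{M}$, so $M$ being a $\sigma$-module forces the relevant labels through $M$ to agree.
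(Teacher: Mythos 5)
Your proof is correct, and for the central claim --- that $M$ is a module of $\Gamma_{(\sigma,\overline{X})}$ --- you take a genuinely different route from the paper. You observe that, since $X\cup\{u\}\subseteq\overline{M}$, the map fixing $X\cup\{u\}$ and sending $x$ to $y$ is a label-preserving isomorphism from $\sigma[X\cup\{u,x\}]$ onto $\sigma[X\cup\{u,y\}]$, so the two substructures are simultaneously prime or not; this settles the module condition for the outside graph in one stroke, without Statement (S1) and without Lemma~\ref{lem_EHR}. The paper instead argues by cases on the position of $v\in\overline{X}\setminus M$: for $v$ in the same $p$-block it invokes Remark~\ref{rem_p}, and for $v$ outside it uses the first two assertions of Lemma~\ref{lem_EHR} to tie the absence of an edge $\{y,v\}$ to a specific module of $\sigma[X\cup\{y,v\}]$ and then propagates that configuration across $M$. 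Your argument is shorter and more robust (it shows the graph-module claim holds even without (S1)); the paper's case analysis is heavier but records exactly which module witnesses each non-edge, information that is reused later (e.g.\ in Corollary~\ref{cor1_oppo_modules} and Lemma~\ref{l3_p_and_q}). Your treatment of the localization $M\subseteq B_q\subseteq B_p$ and of ``$M$ is a module of $\sigma[B_p]$'' coincides with the paper's: both rest on the fact that $X\subseteq\overline{M}$ forces all elements of $M$ to carry the same labels toward $X$, hence to lie in a common block of $q_{(\sigma,\overline{X})}$, and that a module of $\sigma$ restricts to a module of any induced substructure containing it.
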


\begin{proof}
Consider a module $M$ of $\sigma$ such that $M\cap X=\emptyset$. 
Let $x\in M$. 
Denote by $B_q$ the unique block of $q_{(\sigma,\overline{X})}$ containing $x$. 
Consider $y\in M\setminus\{x\}$. 
Since $M$ is a module of $\sigma$ such that $M\cap X=\emptyset$, we have 
$\alpha\longleftrightarrow_\sigma \{x,y\}$ for every $\alpha\in X$. 
It follows that $y\in B_q$. 
Consequently $M\subseteq B_q$. 
Denote by $B_p$ the unique block of $p_{(\sigma,\overline{X})}$ containing $B_q$. 
We obtain $$M\subseteq B_q\subseteq B_p.$$
Since $M$ is a module of $\sigma$, $M$ is a module of 
$\sigma[B_p]$. 

Lastly, we prove that $M$ is a module of $\Gamma_{(\sigma,\overline{X})}$. 
Let $v\in\overline{X}\setminus M$. 
Recall that ${\rm Ext}_\sigma(X)=\emptyset$ because Statement (S1) holds. 
If $v\in B_p$, then it follows from Remark~\ref{rem_p} that 
$\{y,v\}\not\in E(\Gamma_{(\sigma,\overline{X})})$ for every $y\in M$. 
Hence suppose that $v\in\overline{X}\setminus B_p$. 
Since ${\rm Ext}_\sigma(X)=\emptyset$, we distinguish the following two cases. 
\begin{itemize}
\item Suppose that $B_p=\langle X\rangle_\sigma$. 
Let $\alpha\in X$. 
Recall that $x\in M$. 

First, suppose that $x\longleftrightarrow_\sigma\{\alpha,v\}$. 
Let $y\in M$. 
Since $M$ is a module of $\sigma$, we obtain $y\longleftrightarrow_\sigma\{\alpha,v\}$. 
Since $y\longleftrightarrow_\sigma X$, we obtain $y\longleftrightarrow_\sigma X\cup\{v\}$. 
Hence $X\cup\{v\}$ is a module of $\sigma[X\cup\{y,v\}]$. 
It follows that 
$\{y,v\}\not\in E(\Gamma_{(\sigma,\overline{X})})$ for every $y\in M$. 

Second, suppose that $x\not\longleftrightarrow_\sigma\{\alpha,v\}$. 
Let $y\in M$. 
Since $M$ is a module of $\sigma$, we obtain $y\not\longleftrightarrow_\sigma\{\alpha,v\}$. 
Hence $X\cup\{v\}$ is not a module of $\sigma[X\cup\{y,v\}]$. 
It follows from the first assertion of Lemma~\ref{lem_EHR} that 
$\{y,v\}\in E(\Gamma_{(\sigma,\overline{X})})$ for every $y\in M$. 
\item Suppose that $B_p=X_\sigma(\alpha)$, where $\alpha\in X$. 
Recall that $x\in M$. 

First, suppose that $v\longleftrightarrow_\sigma\{\alpha,x\}$. 
Let $y\in M$. 
Since $M$ is a module of $\sigma$, we obtain $v\longleftrightarrow_\sigma\{\alpha,y\}$. 
Since $\{\alpha,y\}$ is a module of $\sigma[X\cup\{y\}]$, 
$\{\alpha,y\}$ is a module of $\sigma[X\cup\{y,v\}]$. 
It follows that 
$\{y,v\}\not\in E(\Gamma_{(\sigma,\overline{X})})$ for every $y\in M$. 

Second, suppose that $v\not\longleftrightarrow_\sigma\{\alpha,x\}$. 
Let $y\in M$. 
Since $M$ is a module of $\sigma$, we obtain $v\not\longleftrightarrow_\sigma\{\alpha,y\}$. 
Thus 
$\{\alpha,y\}$ is not a module of $\sigma[X\cup\{y,v\}]$. 
It follows from the second assertion of Lemma~\ref{lem_EHR} that 
$\{y,v\}\in E(\Gamma_{(\sigma,\overline{X})})$ for every $y\in M$. \qedhere
\end{itemize}
\end{proof}

The opposite direction in Lemma~\ref{lem2_modules} is false. 
Nevertheless, it is true for (finite) graphs (see the second assertion of \cite[Lemma~2.6]{BDI08}). 
Moreover, the opposite direction in Lemma~\ref{lem2_modules} is true if we 
require that 
Statement~(S3) holds (see Corollary~\ref{cor1_oppo_modules} below). 

\section{The first results}\label{S_first_results}

The proof of the next fact is analogous to that of \cite[Lemma~4.3]{BDI08}. 

\begin{fac}\label{fac1_first_results}
Given a 2-structure $\sigma$, consider $X\subsetneq V(\sigma)$ such that $\sigma[X]$ is prime. 
Suppose that Statement (S3) holds. 
Given distinct elements $x,y,z$ of $\overline{X}$, 
if $\{x,y\},\{x,z\}\in E(\Gamma_{(\sigma,\overline{X})})$, then $\{y,z\}$ is a module 
of $\sigma[X\cup\{x,y,z\}]$, and hence there exists 
$B_q\in q_{(\sigma,\overline{X})}$ such that $y,z\in B_q$. 
\end{fac}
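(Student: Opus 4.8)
The plan is to work inside the finite prime 2-substructure $\sigma[X\cup\{x\}]$ and to exploit the maximal primality of $X\cup\{x,y\}$ and $X\cup\{x,z\}$ given by the hypothesis $\{x,y\},\{x,z\}\in E(\Gamma_{(\sigma,\overline X)})$. First I would observe that Statement~(S3) forces $\mathrm{Ext}_\sigma(X)=\emptyset$ (by Remark~\ref{rem1_conditions_Ck} applied informally, or directly: if some $w\in\mathrm{Ext}_\sigma(X)$, Statement~(S1) fails, and by the last paragraph of Remark~\ref{rem1_conditions_Ck} (S3) implies (S1)), so $\langle X\rangle_\sigma$ and the sets $X_\sigma(\alpha)$ for $\alpha\in X$ partition $\overline X$. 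Since $\{x,y\}\in E(\Gamma_{(\sigma,\overline X)})$, Remark~\ref{rem_p} shows $x$ and $y$ lie in different blocks of $p_{(\sigma,\overline X)}$; likewise $x$ and $z$. The key is then to locate $y$ and $z$ relative to the block $B_x\in p_{(\sigma,\overline X)}$ containing $x$.

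Next I would consider the 2-substructure $\sigma[X\cup\{x,y,z\}]$ and show it is not prime, which gives a nontrivial module $M$. The reason it is not prime: $v(\sigma[X\cup\{x,y,z\}])=v(X)+3$, and $v(\sigma[X\cup\{x\}])=v(X)+1$; we have $X\cup\{x,y\}$ and $X\cup\{x,z\}$ prime, so $\{y\},\{z\}\in\varepsilon_{(\sigma[X\cup\{x,y,z\}],\{y,z\})}$ relative to the prime part $X\cup\{x\}$, i.e.\ neither $y$ alone nor $z$ alone destroys primality when added to $X\cup\{x\}$. If $\sigma[X\cup\{x,y,z\}]$ were prime, then $\{y,z\}$ would belong to $\mathrm{Ext}$ of $X\cup\{x\}$ inside $\sigma[X\cup\{x,y,z\}]$, contradicting $|\overline{X\cup\{x\}}|$-parity — more precisely, by the parity argument of Remark~\ref{rem1_conditions_Ck}: Statement~(S3) for $(\sigma,\overline X)$ implies Statement~(S1) for $(\sigma,\overline{X\cup\{x\}})$ as well (since $\{x,y\}$ prime means $x\in\overline X$ is non-critical-friendly... ), hence adding one vertex $y$ to the prime $\sigma[X\cup\{x\}]$ cannot yield a prime structure, so $\sigma[X\cup\{x,y\}]$ prime already uses up an even addition, and adding the single further vertex $z$ to $\sigma[X\cup\{x,y\}]$ cannot restore/keep primality. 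This needs care; the clean way is: $z\notin\mathrm{Ext}_{\sigma}(X\cup\{x,y\})$ because Statement~(S1) holds for the pair $(\sigma,\overline{X\cup\{x,y\}})$, which follows from (S3) for $(\sigma,\overline X)$ via Theorem~\ref{tEHR} and the last line of Remark~\ref{rem1_conditions_Ck}. So $\sigma[X\cup\{x,y,z\}]=\sigma[(X\cup\{x,y\})\cup\{z\}]$ is not prime.

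Then I would analyze the nontrivial module $M$ of $\sigma[X\cup\{x,y,z\}]$. Since $\sigma[X\cup\{x\}]$, $\sigma[X\cup\{x,y\}]$, $\sigma[X\cup\{x,z\}]$ are all prime, $M$ cannot be contained in any of $X\cup\{x\}$, $X\cup\{x,y\}$, $X\cup\{x,z\}$; also $M\cap(X\cup\{x\})$ must be a module of the prime $\sigma[X\cup\{x\}]$ (restricting a module), hence trivial. Enumerating: $M$ meets $\{y,z\}$ and is not inside $X\cup\{x,y\}$ nor $X\cup\{x,z\}$, so $\{y,z\}\subseteq M$; then $M\cap(X\cup\{x\})$ is a trivial module of $\sigma[X\cup\{x\}]$, i.e.\ $\emptyset$, a singleton, or all of $X\cup\{x\}$. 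If $M\cap(X\cup\{x\})=X\cup\{x\}$, then $\{y,z\}=\overline{M}$ relative to $X\cup\{x,y,z\}$... that makes $X\cup\{x\}$ a module, contradicting $\sigma[X\cup\{x,y\}]$ prime (as $X\cup\{x\}\subsetneq X\cup\{x,y\}$ would be a module of size $\geq 2$ — wait, it could still be a proper module of the wrong kind; I would instead note $\sigma[X\cup\{x\}]$ prime plus $X\cup\{x\}$ a module of $\sigma[X\cup\{x,y,z\}]$ forces $x\leftrightarrow$ contradictions with $\{x,y\}$ being an edge). If $M\cap(X\cup\{x\})$ is a singleton $\{\gamma\}$, then $M=\{\gamma,y,z\}$ with $\gamma\in X$ or $\gamma=x$; the case $\gamma=x$ is excluded since $x\leftrightarrow\{y,z\}$ would contradict $\{x,y\}\in E(\Gamma)$; the case $\gamma\in X$ gives a module $\{\gamma,y,z\}$ of $\sigma[X\cup\{x,y,z\}]$, but restricting to $X\cup\{x,y\}$ yields $\{\gamma,y\}$ a module there, hence $y\in X_\sigma(\gamma)$ — still possible, and then $\{y,z\}$ a module of $\sigma[X\cup\{x,y,z\}]$ by taking $M\cap\{x,y,z\}$? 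No: $M\cap\{x,y,z\}=\{y,z\}$ need not be a module. Here the main obstacle lies: ruling out the mixed case $M=\{\gamma,y,z\}$ and deducing nonetheless that $\{y,z\}$ is a module of $\sigma[X\cup\{x,y,z\}]$. I expect to handle it by the same argument as \cite[Lemma~4.3]{BDI08}: show $\{\gamma,y,z\}$ being a module forces $\gamma\in X$ and then, since $\sigma[X\cup\{x,z\}]$ is prime while $\{\gamma,z\}\subseteq\{\gamma,y,z\}$ restricts there, we get $\{\gamma,z\}$ a module of $\sigma[X\cup\{x,z\}]$, i.e.\ $z\in X_\sigma(\gamma)$; combined with $y\in X_\sigma(\gamma)$ and Remark~\ref{rem_p}, $\{y,z\}$ is contained in the block $X_\sigma(\gamma)$ and $\Gamma_{(\sigma,\overline X)}[X_\sigma(\gamma)]$ is empty — but we never assumed $\{y,z\}$ is an edge, so no contradiction; rather, in that situation $\{y,z\}$ is directly a module of $\sigma[X\cup\{x,y,z\}]$ because $\{\gamma,y,z\}$ being a module and $\gamma$ linked to $y,z$ in the same way forces... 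I would verify that $\{y,z\}$ itself is a module of $\sigma[X\cup\{x,y,z\}]$ in every surviving case, and the remaining case $M=\{y,z\}$ is the conclusion outright. Finally, once $\{y,z\}$ is a module of $\sigma[X\cup\{x,y,z\}]$, it is in particular a module of $\sigma[X\cup\{y,z\}]$, so $y$ and $z$ lie in the same block $B_q\in q_{(\sigma,\overline X)}$: either both in some $X_\sigma^{(e,f)}(\alpha)$ or both in some $\langle X\rangle_\sigma^{(e,f)}$, by Notation~\ref{nota_q}. I expect the case analysis of $M$ to be the delicate point, exactly as in \cite{BDI08}, and since the paper explicitly says the proof is analogous to \cite[Lemma~4.3]{BDI08}, I would carry it out by transcribing that argument into 2-structure language using Lemma~\ref{lem_EHR} and Remark~\ref{rem_p}.
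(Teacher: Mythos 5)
Your overall skeleton is the right one (show that $\sigma[X\cup\{x,y,z\}]$ is not prime, take a nontrivial module $M$, and force $M=\{y,z\}$ by a case analysis), but the write-up contains a load-bearing false claim and leaves the decisive case open. The false claim: you repeatedly use the primality of $\sigma[X\cup\{x\}]$ (``work inside the finite prime 2-substructure $\sigma[X\cup\{x\}]$'', ``$M\cap(X\cup\{x\})$ must be a module of the prime $\sigma[X\cup\{x\}]$, hence trivial''). This contradicts the very first observation you made: Statement (S3) implies Statement (S1), i.e. ${\rm Ext}_\sigma(X)=\emptyset$, so $\sigma[X\cup\{v\}]$ is \emph{not} prime for any $v\in\overline{X}$, in particular for $v=x$. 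Relatedly, the non-primality of $\sigma[X\cup\{x,y,z\}]$ needs none of your parity gymnastics: $\{x,y,z\}$ is a $3$-element subset of $\overline{X}$, so Statement (S3) says directly that $\{x,y,z\}\notin\varepsilon_{(\sigma,\overline{X})}$.

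The open case: having reached $M=\{\gamma,y,z\}$ with $\gamma\in X$, you write that you ``expect to handle it'' and ``would verify'' the conclusion without an argument that closes it, and your exclusion of $\gamma=x$ (``$x\leftrightarrow\{y,z\}$ would contradict $\{x,y\}\in E(\Gamma_{(\sigma,\overline{X})})$'') is a non sequitur. The missing idea is to restrict $M$ to the two prime substructures you actually have. If $M$ is any nontrivial module of $\sigma[X\cup\{x,y,z\}]$, then $M\cap(X\cup\{x,y\})$ is a module of the prime $\sigma[X\cup\{x,y\}]$, hence is $\emptyset$, a singleton, or all of $X\cup\{x,y\}$. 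The last option forces $M=X\cup\{x,y\}$ (since $M\neq X\cup\{x,y,z\}$), and then $M\cap(X\cup\{x,z\})=X\cup\{x\}$ is a nontrivial module of the prime $\sigma[X\cup\{x,z\}]$, a contradiction. Otherwise $M=\{u,z\}$ for some $u\in X\cup\{x,y\}$; if $u\neq y$, then $M\subseteq X\cup\{x,z\}$ is a two-element, hence nontrivial, module of the prime $\sigma[X\cup\{x,z\}]$, again a contradiction. Thus $M=\{y,z\}$, and in particular the mixed case $M=\{\gamma,y,z\}$ never occurs, because its trace $\{\gamma,y\}$ on $X\cup\{x,y\}$ would already be a nontrivial module there. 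Your final step, reading off from $\{y,z\}$ being a module of $\sigma[X\cup\{y,z\}]$ that $y$ and $z$ lie in a common block of $q_{(\sigma,\overline{X})}$, is fine.
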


The proof of the next fact is analogous to that of \cite[Lemma~4.4]{BDI08}.

\begin{fac}\label{fac2_first_results}
Given a 2-structure $\sigma$, consider $X\subsetneq V(\sigma)$ such that $\sigma[X]$ is prime. 
Suppose that Statement (S3) holds. 
Given $B_p,D_p\in p_{(\sigma,\overline{X})}$, consider $x\in B_p$ and 
$y,z\in D_p$ such that $\{x,y\}\in E(\Gamma_{(\sigma,\overline{X})})$ and 
$\{x,z\}\not\in E(\Gamma_{(\sigma,\overline{X})})$. 
\begin{enumerate}
\item If $D_p=\langle X\rangle_\sigma$, then $X\cup\{x,y\}$ is a module of $\sigma[X\cup\{x,y,z\}]$.
\item If $D_p=X_\sigma(\alpha)$, where $\alpha\in X$, then $\{\alpha,z\}$ is 
a module of $\sigma[X\cup\{x,y,z\}]$. 
\end{enumerate}
\end{fac}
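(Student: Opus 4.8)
The plan is to treat the two items in parallel, exploiting Lemma~\ref{lem_EHR} together with the fact that Statement~(S3) forces $\sigma[X\cup\{x,y,z\}]$ to be non-prime whenever it has at least $5$ vertices more than... no: the crucial use of (S3) is subtler. First I would observe that since $\{x,y\}\in E(\Gamma_{(\sigma,\overline{X})})$ we have $\sigma[X\cup\{x,y\}]$ prime; since $\{x,z\}\notin E(\Gamma_{(\sigma,\overline{X})})$, $\sigma[X\cup\{x,z\}]$ is not prime, and by Lemma~\ref{lem_EHR} (applied with the block $D_p$ to which $z$ belongs, and noting $x\notin D_p$ because $x\in B_p$ while if $B_p=D_p$ then Remark~\ref{rem_p} already gives $\{x,z\}\in E$ only when the block is ${\rm Ext}_\sigma(X)$ — so either $B_p\ne D_p$ or both lie in ${\rm Ext}_\sigma(X)$; but (S3) forbids the latter via (S1), so $B_p\ne D_p$) we get: if $D_p=\langle X\rangle_\sigma$ then $X\cup\{x\}$ is a module of $\sigma[X\cup\{x,z\}]$; if $D_p=X_\sigma(\alpha)$ then $\{\alpha,x\}$ is a module of $\sigma[X\cup\{x,z\}]$. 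Wait — that is the wrong pairing; Lemma~\ref{lem_EHR}(1) says $X\cup\{w\}$ is a module where $w$ is the element \emph{outside} $\langle X\rangle_\sigma$, i.e.\ $w=x$ here and $v=z\in\langle X\rangle_\sigma$. So indeed $X\cup\{x\}$ is a module of $\sigma[X\cup\{x,z\}]$, equivalently $z\longleftrightarrow_\sigma X\cup\{x\}$; and in the second case $\{\alpha,z\}$ is a module of $\sigma[X\cup\{x,z\}]$, i.e.\ $x\longleftrightarrow_\sigma\{\alpha,z\}$.

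Next I would bring in the third vertex $y$. In case (1): we know $z\longleftrightarrow_\sigma X\cup\{x\}$ in $\sigma[X\cup\{x,z\}]$, and we want $X\cup\{x,y\}$ to be a module of $\sigma[X\cup\{x,y,z\}]$, i.e.\ $z\longleftrightarrow_\sigma X\cup\{x,y\}$. We already have $z\longleftrightarrow_\sigma X\cup\{x\}$, so it remains to show $[z,x]_\sigma=[z,y]_\sigma$. Since $y,z\in D_p=\langle X\rangle_\sigma$, Remark~\ref{rem_p} gives that $X$ is a module of $\sigma[X\cup\{y,z\}]$, so $z\longleftrightarrow_\sigma X$ in that substructure as well — but this does not directly compare $z$ with $x$ and $y$ simultaneously. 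The right move is to consider $\sigma[X\cup\{y,z\}]$: since $y,z\in\langle X\rangle_\sigma$, Remark~\ref{rem_p} says $X$ is a module there; combining the two pieces of information about how $z$ sees $X\cup\{x\}$ and how $y$ sees $X$, and using that $\sigma[X\cup\{x,y\}]$ is prime so $x$ is \emph{not} equivalent over $X$ to the constant behaviour... Actually the cleanest route: apply Lemma~\ref{lem_EHR}(1) to the triple, i.e.\ to $v=z\in\langle X\rangle_\sigma$ and the whole of $Y=\{x,y\}$, after first checking $\sigma[X\cup\{x,y,z\}]$ is not prime. That non-primality is exactly where (S3) enters: $|\{x,y,z\}|=3$ is odd, so Statement~(S3) says $\{x,y,z\}\notin\varepsilon_{(\sigma,\overline X)}$, hence $\sigma[X\cup\{x,y,z\}]$ is not prime. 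Then by Lemma~\ref{lem_EHR}(1), since $z\in\langle X\rangle_\sigma$ and $\sigma[X\cup\{x,y\}]$ is prime (so $\{x,y\}$ is not a module and $X\cup\{x,y\}$ can fail to be... ) — more precisely Lemma~\ref{lem_EHR}(1) with $v=z$, $w$ ranging: it states that if $\sigma[X\cup\{v,w\}]$ is not prime then $X\cup\{w\}$ is a module; here both $w\in\{x,y\}$ could a priori apply, but $\{x,y\}\in E$ means $\sigma[X\cup\{x,y\}]$ \emph{is} prime, forcing $y\notin\langle X\rangle_\sigma$ — contradiction with $y\in D_p=\langle X\rangle_\sigma$. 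So in fact case (1) can only occur after we have correctly identified $D_p$; I will need to recheck that $y,z\in\langle X\rangle_\sigma$ is consistent with $\{x,y\}\in E$, which it is only because $x\notin\langle X\rangle_\sigma$ and crossing edges between blocks are allowed. The honest argument is: $\sigma[X\cup\{x,y,z\}]$ is not prime by (S3); it therefore has a nontrivial module $N$; analyze $N\cap\{x,y,z\}$ and $N\cap X$ using primality of $\sigma[X\cup\{x,y\}]$ and of $\sigma[X]$ to pin down $N=X\cup\{x,y\}$ (resp.\ $N=\{\alpha,z\}$), exactly as one does in the proof of Lemma~\ref{lem_EHR} and as in \cite[Lemma~4.4]{BDI08}.

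So the real skeleton is: (i) $\sigma[X\cup\{x,y,z\}]$ is non-prime by Statement~(S3); (ii) let $N$ be a nontrivial module of it; (iii) since $\sigma[X]$ is prime, either $X\subseteq N$, or $|N\cap X|\le 1$; (iv) since $\sigma[X\cup\{x,y\}]$ is prime, $N$ is not contained in $X\cup\{x,y\}$ unless $N=X\cup\{x,y\}$ exactly, and $N$ cannot meet $\{x,y\}$ in a single point without containing an accompanying element of $\langle X\rangle$- or $X(\alpha)$-type; (v) the hypothesis $\{x,z\}\notin E$ versus $\{x,y\}\in E$ breaks the symmetry between $y$ and $z$ and forces $z\in N$; (vi) reading off which block $D_p$ equals tells us whether $N$ picks up all of $X$ (giving $N=X\cup\{x,y\}$) or exactly one $\alpha\in X$ (giving $N=\{\alpha,z\}$, possibly $N\supseteq\{\alpha,z\}$, then shrink). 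The main obstacle I expect is step (v)–(vi): carefully using the \emph{difference} between the two edge-relations $\{x,y\}\in E$, $\{x,z\}\notin E$ to locate $N$ uniquely, rather than just getting "some nontrivial module". This is precisely the bookkeeping carried out in \cite[Lemma~4.4]{BDI08}, and since the paper's \textbf{Warning} permits it, I would present this fact by invoking that the proof is analogous to \cite[Lemma~4.4]{BDI08}, after spelling out the one substantive point that is genuinely 2-structure-specific: that (S3), not parity of a prime extension, is what delivers non-primality of the $3$-vertex extension, and that Lemma~\ref{lem_EHR}(1)–(2) are the tools replacing the graph-theoretic module analysis.
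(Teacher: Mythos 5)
The paper gives no proof of this fact beyond the remark that it is analogous to \cite[Lemma~4.4]{BDI08}, so the only substantive comparison is with the two ingredients you single out, and those you do identify correctly: Statement~(S3) applied to the $3$-element set $\{x,y,z\}$ (legitimate because $B_p\neq D_p$, which you correctly derive from Remark~\ref{rem_p} together with the fact that (S3) implies (S1), so ${\rm Ext}_\sigma(X)=\emptyset$) yields that $\sigma[X\cup\{x,y,z\}]$ is not prime, and Lemma~\ref{lem_EHR} applied to the prime substructure $\sigma[X\cup\{x,y\}]$ then locates $z$ in the outside partition $p_{(\sigma,\overline{X\cup\{x,y\}})}$. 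Up to that point your plan follows the route the paper intends.

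However, the proposal stops exactly at the step that actually proves the fact, and that step is not mere bookkeeping: one must exclude the possibilities $z\in (X\cup\{x,y\})_\sigma(x)$ and $z\in(X\cup\{x,y\})_\sigma(y)$, and this is the only place where the asymmetry $\{x,y\}\in E(\Gamma_{(\sigma,\overline X)})$ versus $\{x,z\}\notin E(\Gamma_{(\sigma,\overline X)})$ enters. Concretely: if $\{x,z\}$ were a module of $\sigma[X\cup\{x,y,z\}]$, then $x$ and $z$ would be linked identically to $X$, which together with $z\in D_p$ forces $x\in D_p$, contrary to $x\in B_p$ and $B_p\neq D_p$; and if $\{y,z\}$ were a module, then the transposition of $y$ and $z$ would be an isomorphism from the prime $\sigma[X\cup\{x,y\}]$ onto the non-prime $\sigma[X\cup\{x,z\}]$, a contradiction. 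The remaining possibilities ($z\in\langle X\cup\{x,y\}\rangle_\sigma$, or $z\in(X\cup\{x,y\})_\sigma(\beta)$ with $\beta\in X$) reduce, by restriction to $X\cup\{z\}$ and disjointness of the blocks of $p_{(\sigma,\overline X)}$, to exactly the two stated conclusions according to whether $D_p=\langle X\rangle_\sigma$ or $D_p=X_\sigma(\alpha)$ (in the latter case $\beta=\alpha$ is forced). You flag this as ``the main obstacle'' and defer it, so the proof is not actually there. Two smaller points: step (v) of your skeleton (``forces $z\in N$'') is false for assertion~1, where the module obtained is $X\cup\{x,y\}$, which does not contain $z$; and the purported contradiction ``forcing $y\notin\langle X\rangle_\sigma$'' comes from reading the first assertion of Lemma~\ref{lem_EHR} as a biconditional --- you retract it, but it should be deleted.
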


The next result follows from Fact~\ref{fac1_first_results}. 

\begin{cor}\label{cor1_oppo_modules}
Given a 2-structure $\sigma$, consider $X\subsetneq V(\sigma)$ such that $\sigma[X]$ is prime. 
Suppose that Statement (S3) holds. 
Consider $M\subseteq\overline{X}$ such that there exist 
$B_p\in p_{(\sigma,\overline{X})}$ and $B_q\in q_{(\sigma,\overline{X})}$ with 
$M\subseteq B_q\subseteq B_p$. 
Suppose that $M$ is a module of $\sigma[B_p]$. 
If $M$ is a module of $\Gamma_{(\sigma,\overline{X})}$, then 
$M$ is a module of $\sigma$. 
\end{cor}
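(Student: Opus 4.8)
\textbf{Proof proposal for Corollary~\ref{cor1_oppo_modules}.}
The plan is to verify directly that $M$ is a module of $\sigma$ by checking, for every vertex $v\in\overline{M}=V(\sigma)\setminus M$, that $v\longleftrightarrow_\sigma M$. Since we already know $M$ is a module of $\sigma[B_p]$, the link condition holds for every $v\in B_p\setminus M$ (in particular for every $v\in X$, because $X\subseteq B_p$ whenever $B_p\in\{\langle X\rangle_\sigma\}\cup\{X_\sigma(\alpha):\alpha\in X\}$; and if $B_p={\rm Ext}_\sigma(X)$ then Statement~(S1) holds, hence $B_p=\emptyset$, contradicting $M\neq\emptyset$, so this case does not occur and $X\subseteq B_p$). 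Thus the only vertices left to handle are $v\in\overline{X}\setminus B_p$, and for those I must show $v\longleftrightarrow_\sigma M$, i.e. that $v$ is linked in the same way to all of $M$.

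The key step is to exploit that $M$ is a module of $\Gamma_{(\sigma,\overline{X})}$ together with Fact~\ref{fac1_first_results}. Fix $v\in\overline{X}\setminus B_p$ and two distinct $x,y\in M$ (if $|M|\le 1$ there is nothing to prove). Because $M$ is a module of the outside graph and $v\notin M$, either $\{v,x\},\{v,y\}\in E(\Gamma_{(\sigma,\overline{X})})$ or $\{v,x\},\{v,y\}\notin E(\Gamma_{(\sigma,\overline{X})})$. In the first case, apply Fact~\ref{fac1_first_results} with the triple $\{v,x,y\}$: since $\{v,x\},\{v,y\}\in E(\Gamma_{(\sigma,\overline{X})})$, $\{x,y\}$ is a module of $\sigma[X\cup\{v,x,y\}]$, which gives $v\longleftrightarrow_\sigma\{x,y\}$, and since $x,y\in M$ were arbitrary we conclude $v\longleftrightarrow_\sigma M$. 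In the second case ($v$ adjacent to neither $x$ nor $y$), I would argue via Fact~\ref{fac2_first_results} / Lemma~\ref{lem_EHR}: the nonedges $\{v,x\},\{v,y\}$ mean the relevant partial module ($X\cup\{x\}$ if $B_p=\langle X\rangle_\sigma$, resp. $\{\alpha,x\}$ if $B_p=X_\sigma(\alpha)$) is forced, and comparing the induced link of $v$ on $X\cup\{x\}$ (resp. on $\{\alpha,x\}$) with that on $X\cup\{y\}$ (resp. $\{\alpha,y\}$) — using that $M$ is a module of $\sigma[B_p]$, so $x$ and $y$ have the same link to everything outside $B_p$ that lies in $B_p\cup X$... — yields $v\longleftrightarrow_\sigma\{x,y\}$.

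A cleaner route for the ``both nonedges'' case, which I expect to be the main obstacle, is the following. If $v\in B_p'$ for some $B_p'\in p_{(\sigma,\overline{X})}$ with $B_p'\neq B_p$, then by Remark~\ref{rem_p} the edges of $\Gamma_{(\sigma,\overline{X})}$ inside a single block other than ${\rm Ext}_\sigma(X)$ are absent, and the cross-block adjacency pattern is governed by whether the relevant partial module is destroyed; one shows that ``$v$ non-adjacent to $x$'' precisely encodes ``$v\longleftrightarrow_\sigma$ (the partial module witnessed by $x$)'', and since $M$ lies inside one $B_q$-block, all of $M$ witnesses the same partial module with the same labels $(e,f)$, so $v$'s link to $x$ and to $y$ coincide. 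Concretely: when $B_p=\langle X\rangle_\sigma$, $\{v,x\}\notin E(\Gamma_{(\sigma,\overline{X})})$ forces $X\cup\{x\}$ or $\{v\}$-type module in $\sigma[X\cup\{v,x\}]$; the analysis of which one, done uniformly over $x\in M$, pins down $[v,x]_\sigma$ in terms of data depending only on $B_q$ and on $v$, not on the choice of $x\in M$. The delicate point is to make sure this ``uniformity over $M$'' is genuine, i.e. to check that two elements $x,y$ of the \emph{same} block $B_q$ that are additionally in the same module $M$ of $\sigma[B_p]$ really do receive identical links from every outside $v$; this is where the hypothesis ``$M$ is a module of $\sigma[B_p]$'' (as opposed to merely $M\subseteq B_q$) is essential and must be invoked carefully. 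Once every $v\in\overline{M}$ is handled, $M$ is a module of $\sigma$ and the proof is complete.
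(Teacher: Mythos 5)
Your overall skeleton matches the paper's: verify $v\longleftrightarrow_\sigma M$ by splitting on whether $v$ lies in $B_p\setminus M$, in $X$, or in $\overline{X}\setminus B_p$, and in the last case use the module property of $M$ in $\Gamma_{(\sigma,\overline{X})}$ to reduce to ``both edges'' (handled by Fact~\ref{fac1_first_results}) or ``both non-edges'' (handled by Lemma~\ref{lem_EHR}). However, your treatment of the case $v\in X$ rests on a false claim. You assert that $X\subseteq B_p$ whenever $B_p\in\{\langle X\rangle_\sigma\}\cup\{X_\sigma(\alpha):\alpha\in X\}$, and then dispose of $v\in X$ via ``$M$ is a module of $\sigma[B_p]$''. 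But $p_{(\sigma,\overline{X})}$ is a partition of $\overline{X}$ (Lemma~\ref{lem_EHR}), so every block $B_p$ is \emph{disjoint} from $X$; $\langle X\rangle_\sigma$ is the set of vertices $v\in\overline{X}$ for which $X$ is a module of $\sigma[X\cup\{v\}]$, not a set containing $X$. Consequently the case $v\in X$ is simply not covered by your argument. The correct handling, and the reason the hypothesis $M\subseteq B_q$ appears in the statement, is that two vertices lying in the same block $\langle X\rangle_\sigma^{(e,f)}$ (resp. $X_\sigma^{(e,f)}(\alpha)$) of $q_{(\sigma,\overline{X})}$ receive identical links from every $\alpha\in X$, whence $\alpha\longleftrightarrow_\sigma\{x,y\}$ for all $x,y\in M$ and $\alpha\in X$. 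Your proof never uses the refinement $q_{(\sigma,\overline{X})}$ for this purpose.

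For $v\in\overline{X}\setminus B_p$ with both $\{v,x\}$ and $\{v,y\}$ non-edges, your first attempt misattributes the needed uniformity to ``$M$ is a module of $\sigma[B_p]$''; what actually does the work is again $M\subseteq B_q$ (when $B_p=\langle X\rangle_\sigma$: Lemma~\ref{lem_EHR} forces $X\cup\{v\}$ to be a module of $\sigma[X\cup\{x,v\}]$, so $[v,x]_\sigma$ equals the fixed pair $(f,e)$ attached to $B_q$, independently of $x\in M$) or merely $M\subseteq X_\sigma(\alpha)$ (when $B_p=X_\sigma(\alpha)$: the forced module $\{\alpha,x\}$ gives $[v,x]_\sigma=[v,\alpha]_\sigma$ for every $x\in M$). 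Your ``cleaner route'' paragraph does state this idea correctly in outline, so that case is recoverable; the unhandled case $v\in X$ is the genuine gap.
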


\begin{proof}
Consider $x,y\in M$ and $v\in\overline{M}$. 
It suffices to verify that 
\begin{equation}\label{E1_cor1_oppo_modules}
v\longleftrightarrow_\sigma\{x,y\}.
\end{equation}
Since $M$ is a module of $\sigma[B_p]$, \eqref{E1_cor1_oppo_modules} holds when 
$v\in B_p\setminus M$. 
Furthermore, since $x$ and $y$ belong to the same block of $q_{(\sigma,\overline{X})}$, 
\eqref{E1_cor1_oppo_modules} holds when 
$v\in X$. 

Now, suppose that $v\in\overline{X\cup B_p}$. 
Since $M$ is a module of $\Gamma_{(\sigma,\overline{X})}$, we have 
\begin{align}\label{E2_cor1_oppo_modules}
&\{x,v\},\{y,v\}\in E(\Gamma_{(\sigma,\overline{X})})\nonumber\\ 
&\text{or}\\
&\{x,v\},\{y,v\}\not\in E(\Gamma_{(\sigma,\overline{X})}).  \nonumber
\end{align}

Suppose that $\{x,v\},\{y,v\}\in E(\Gamma_{(\sigma,\overline{X})})$. 
By Fact~\ref{fac1_first_results}, $\{x,y\}$ is a module of $\sigma[X\cup\{x,y,v\}]$, so 
$v\longleftrightarrow_\sigma\{x,y\}$. 

Lastly, 
suppose that $\{x,v\},\{y,v\}\not\in E(\Gamma_{(\sigma,\overline{X})})$. 
Since Statement (S3) holds, Statement (S1) holds by Remark~\ref{rem1_conditions_Ck}. 
Hence ${\rm Ext}_\sigma(X)=\emptyset$, and we distinguish the following two cases. 
\begin{itemize}
\item Suppose that $B_p=\langle X\rangle_\sigma$. 
Since $\{x,v\},\{y,v\}\not\in E(\Gamma_{(\sigma,\overline{X})})$, 
it follows from the first assertion of Lemma~\ref{lem_EHR} that $X\cup\{v\}$ is a module of 
$\sigma[X\cup\{x,v\}]$ and $\sigma[X\cup\{y,v\}]$. 
Given $\alpha\in X$, we obtain $x\longleftrightarrow_\sigma\{\alpha,v\}$ and 
$y\longleftrightarrow_\sigma\{\alpha,v\}$. 
Since $x,y\in B_q$ and $B_q\subseteq\langle X\rangle_\sigma$, 
$\alpha\longleftrightarrow_\sigma\{x,y\}$. 
It follows that $v\longleftrightarrow_\sigma\{x,y\}$. 

Consequently, \eqref{E1_cor1_oppo_modules} holds when $v\in\overline{X\cup B_p}$ and 
$B_p=\langle X\rangle_\sigma$. 

\item Suppose that $B_p=X_\sigma(\alpha)$, where $\alpha\in X$. 
Since $\{x,v\},\{y,v\}\not\in E(\Gamma_{(\sigma,\overline{X})})$, 
it follows from the second assertion of Lemma~\ref{lem_EHR} that $\{\alpha,x\}$ is a module of 
$\sigma[X\cup\{x,v\}]$, and $\{\alpha,y\}$ is a module of $\sigma[X\cup\{y,v\}]$. 
Therefore 
$v\longleftrightarrow_\sigma\{\alpha,x\}$ and $v\longleftrightarrow_\sigma\{\alpha,y\}$. 
It follows that $v\longleftrightarrow_\sigma\{x,y\}$. 

Consequently, \eqref{E1_cor1_oppo_modules} holds when $v\in\overline{X\cup B_p}$ and 
$B_p=X_\sigma(\alpha)$. \qedhere
\end{itemize}
\end{proof}

The next two results follow from Fact~\ref{fac2_first_results}.

\begin{cor}\label{cor0_first_results}
Given a 2-structure $\sigma$, consider $X\subsetneq V(\sigma)$ such that $\sigma[X]$ is prime. 
Suppose that Statement (S3) holds. 
Let $B_q\in q_{(\sigma,\overline{X})}$. 
For each $v\in\overline{X}\setminus B_q$, 
$\{x\in B_q:\{x,v\}\in E(\Gamma_{(\sigma,\overline{X})})\}$ and 
$\{x\in B_q:\{x,v\}\not\in E(\Gamma_{(\sigma,\overline{X})})\}$ are modules of 
$\sigma[B_q]$. 
Precisely, if 
$\{x\in B_q:\{x,v\}\in E(\Gamma_{(\sigma,\overline{X})})\}\neq\emptyset$ and 
$\{x\in B_q:\{x,v\}\not\in E(\Gamma_{(\sigma,\overline{X})})\}\neq\emptyset$, 
then the following two assertions hold. 
\begin{enumerate}
\item If $B_q=\langle X\rangle_\sigma^{(e,f)}$, where $e,f\in E(\sigma)$, then 
$$[\{x\in B_q:\{x,v\}\not\in E(\Gamma_{(\sigma,\overline{X})})\},
\{x\in B_q:\{x,v\}\in E(\Gamma_{(\sigma,\overline{X})})\}]_\sigma=(e,f).$$ 
\item If $B_q=X_\sigma^{(e,f)}(\alpha)$, where $\alpha\in X$ and $e,f\in E(\sigma)$, then 
$$[\{x\in B_q:\{x,v\}\not\in E(\Gamma_{(\sigma,\overline{X})})\},
\{x\in B_q:\{x,v\}\in E(\Gamma_{(\sigma,\overline{X})})\}]_\sigma=(f,e).$$ 
\end{enumerate}
\end{cor}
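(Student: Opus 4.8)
The plan is to read everything off Fact~\ref{fac2_first_results} once $B_q$ has been located inside the outside partition. Since Statement~(S3) holds, Statement~(S1) holds by Remark~\ref{rem1_conditions_Ck}, so ${\rm Ext}_\sigma(X)=\emptyset$; hence $B_q$ has one of the two forms $\langle X\rangle_\sigma^{(e,f)}$ (with $e,f\in E(\sigma)$) or $X_\sigma^{(e,f)}(\alpha)$ (with $e,f\in E(\sigma)$ and $\alpha\in X$), and in both cases $B_q$ lies in a single block $D_p$ of $p_{(\sigma,\overline{X})}$, namely $\langle X\rangle_\sigma$ or $X_\sigma(\alpha)$ respectively. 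Put $A=\{x\in B_q:\{x,v\}\in E(\Gamma_{(\sigma,\overline{X})})\}$ and $N=\{x\in B_q:\{x,v\}\notin E(\Gamma_{(\sigma,\overline{X})})\}$, so that $B_q=A\cup N$ with $A\cap N=\emptyset$ (recall $v\notin B_q$). If $A=\emptyset$ or $N=\emptyset$, then one of $A,N$ equals $B_q$ and the other equals $\emptyset$, so both are trivially modules of $\sigma[B_q]$ and there is nothing to prove; henceforth I would assume $A\neq\emptyset$ and $N\neq\emptyset$, and fix some $y\in A$.

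Now I would apply Fact~\ref{fac2_first_results} with $x:=v$ (in its own block of $p_{(\sigma,\overline{X})}$), the block $D_p$, the point $y\in A$, and an arbitrary $z\in N$; the required hypotheses $\{v,y\}\in E(\Gamma_{(\sigma,\overline{X})})$ and $\{v,z\}\notin E(\Gamma_{(\sigma,\overline{X})})$ hold by the definitions of $A$ and $N$. Fact~\ref{fac2_first_results} then yields that $X\cup\{v,y\}$ is a module of $\sigma[X\cup\{v,y,z\}]$ (when $D_p=\langle X\rangle_\sigma$) or that $\{\alpha,z\}$ is a module of $\sigma[X\cup\{v,y,z\}]$ (when $D_p=X_\sigma(\alpha)$). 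In the first case, $z\longleftrightarrow_\sigma X\cup\{v,y\}$ together with $z\in\langle X\rangle_\sigma^{(e,f)}$ gives, for any $\alpha\in X$, that $(z,y)_\sigma=(z,\alpha)_\sigma=e$ and $(y,z)_\sigma=(\alpha,z)_\sigma=f$; in the second case, $y\longleftrightarrow_\sigma\{\alpha,z\}$ together with $y\in X_\sigma^{(e,f)}(\alpha)$ gives $(y,z)_\sigma=(y,\alpha)_\sigma=e$ and $(z,y)_\sigma=(\alpha,y)_\sigma=f$. The crucial point is that in either case the ordered pair $[z,y]_\sigma$ does not depend on the choice of $z\in N$ or of $y\in A$: it equals $(e,f)$ in the first case and $(f,e)$ in the second.

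From this constancy the conclusion is immediate. For $y_1,y_2\in A$ and $t\in B_q\setminus A=N$ we get $[t,y_1]_\sigma=[t,y_2]_\sigma$, i.e.\ $t\longleftrightarrow_\sigma\{y_1,y_2\}$, so $A$ is a module of $\sigma[B_q]$; symmetrically, for $z_1,z_2\in N$ and $t\in B_q\setminus N=A$ we get $[z_1,t]_\sigma=[z_2,t]_\sigma$, i.e.\ $t\longleftrightarrow_\sigma\{z_1,z_2\}$, so $N$ is a module of $\sigma[B_q]$. Finally, since $A$ and $N$ are nonempty, $[N,A]_\sigma$ equals the common value of $[z,y]_\sigma$ computed above, which is $(e,f)$ when $B_q=\langle X\rangle_\sigma^{(e,f)}$ and $(f,e)$ when $B_q=X_\sigma^{(e,f)}(\alpha)$; this is exactly the ``precisely'' part of the statement.

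There is no serious obstacle here; the work is entirely bookkeeping. The two points to be careful about are (i) checking that Fact~\ref{fac2_first_results} really applies with $x=v$ --- which is automatic once $A\neq\emptyset$, since $\{v,y\}\in E(\Gamma_{(\sigma,\overline{X})})$ with $y\in D_p$ together with Remark~\ref{rem_p} (emptiness of $\Gamma_{(\sigma,\overline{X})}[D_p]$) forces $v\notin D_p$ --- and (ii) converting the two module conclusions of Fact~\ref{fac2_first_results} correctly into the label equalities above by invoking the defining conditions of the $q$-block containing $z$ and $y$ (Notation~\ref{nota_q}).
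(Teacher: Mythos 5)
Your proof is correct and follows essentially the same route as the paper's: apply Fact~\ref{fac2_first_results} to the triple $v,y,z$ with $y$ adjacent and $z$ non-adjacent to $v$, then read off $[z,y]_\sigma$ from the defining labels of the $q$-block, and deduce the module property from the constancy of that pair. The only additions are your explicit handling of the degenerate case and the check that $v$ lies outside the $p$-block containing $B_q$, both of which the paper leaves implicit.
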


\begin{proof}
Let $v\in\overline{X}\setminus B_q$. 
Suppose that $\{x\in B_q:\{x,v\}\in E(\Gamma_{(\sigma,\overline{X})})\}\neq\emptyset$ and 
$\{x\in B_q:\{x,v\}\not\in E(\Gamma_{(\sigma,\overline{X})})\}\neq\emptyset$. 
Consider $x^+,z^-\in B_q$ such that 
$\{x^+,v\}\in E(\Gamma_{(\sigma,\overline{X})})$ and 
$\{z^-,v\}\not\in E(\Gamma_{(\sigma,\overline{X})})$. 
We distinguish the following two cases. 
\begin{enumerate}
\item Suppose that $B_q=\langle X\rangle_\sigma^{(e,f)}$, where $e,f\in E(\sigma)$. 
By the first assertion of Fact~\ref{fac2_first_results} applied to $x^+,z^-,v$, $X\cup\{x^+,v\}$ is a module of $\sigma[X\cup\{x^+,z^-,v\}]$. 
Since $z^-\in\langle X\rangle_\sigma^{(e,f)}$, we obtain $[z^-,x^+]_\sigma=(e,f)$. 
\item Suppose that $B_q=X_\sigma^{(e,f)}(\alpha)$, where $\alpha\in X$ and $e,f\in E(\sigma)$. 
By the second assertion of Fact~\ref{fac2_first_results} applied to $x^+,z^-,v$, $\{\alpha,z^-\}$ is a module of $\sigma[X\cup\{x^+,z^-,v\}]$. 
Hence $[z^-,x^+]_\sigma=[\alpha,x^+]_\sigma$. 
Since $x^+\in X_\sigma^{(e,f)}(\alpha)$, we obtain $[\alpha,x^+]_\sigma=(f,e)$, so 
$[z^-,x^+]_\sigma=(f,e)$. \qedhere
\end{enumerate}
\end{proof}

The proof of the next corollary is analogous to that of \cite[Corollary~4.5]{BDI08}. 
It follows from Lemma~\ref{lem_EHR} and Fact~\ref{fac2_first_results}. 

\begin{cor}\label{cor1_first_results}
Given a 2-structure $\sigma$, consider $X\subsetneq V(\sigma)$ such that $\sigma[X]$ is prime. 
Suppose that Statement (S3) holds. 
If $\sigma$ is prime, then $\Gamma_{(\sigma,\overline{X})}$ has no isolated vertices.
\end{cor}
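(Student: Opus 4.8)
We must show: under Statement (S3), if $\sigma$ is prime, then $\Gamma_{(\sigma,\overline{X})}$ has no isolated vertices. Equivalently (contrapositive), if some vertex of $\overline{X}$ is isolated in the outside graph, then $\sigma$ is not prime.

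The plan is to argue by contraposition. Suppose $v \in \overline{X}$ is an isolated vertex of $\Gamma_{(\sigma,\overline{X})}$, and let $B_p \in p_{(\sigma,\overline{X})}$ be the block containing $v$; since (S3) holds, (S1) holds by Remark~\ref{rem1_conditions_Ck}, so ${\rm Ext}_\sigma(X) = \emptyset$ and hence $B_p$ is either $\langle X\rangle_\sigma$ or $X_\sigma(\alpha)$ for some $\alpha \in X$. First I would exhibit a small candidate module associated with $v$ inside $X \cup \{v\}$: namely $X \cup \{v\}$ when $v \in \langle X\rangle_\sigma$, or $\{\alpha, v\}$ when $v \in X_\sigma(\alpha)$. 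Call this candidate $M_0$; by definition of $\langle X\rangle_\sigma$ and $X_\sigma(\alpha)$, $M_0$ is a module of $\sigma[X \cup \{v\}]$. The task is to show $M_0$ (or a related set) is in fact a module of all of $\sigma$, contradicting primality of $\sigma$ (note $M_0$ is nontrivial since $v(\sigma[X]) \geq 3$).

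The key step is to check that adding any further vertex $w \in \overline{X} \setminus (X \cup \{v\})$ does not destroy the module. Since $v$ is isolated, $\{v,w\} \notin E(\Gamma_{(\sigma,\overline{X})})$, i.e. $\sigma[X \cup \{v,w\}]$ is not prime. Now I split on which block contains $w$. If $w \in B_p$, then by Remark~\ref{rem_p} the pair $\{v,w\}$ sits in a bigger module ($X$ a module of $\sigma[X\cup\{v,w\}]$ when $B_p = \langle X\rangle_\sigma$, or $\{\alpha,v,w\}$ a module when $B_p = X_\sigma(\alpha)$), and in either case $M_0$ is a module of $\sigma[X\cup\{v,w\}]$. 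If $w \notin B_p$: when $B_p = \langle X\rangle_\sigma$, apply assertion (1) of Lemma~\ref{lem_EHR} to the non-prime $\sigma[X\cup\{v,w\}]$ — but one must be careful since $v$ plays the role of the ``$\langle X\rangle_\sigma$ vertex'' and $w$ the outside one, giving that $X\cup\{w\}$ is a module of $\sigma[X\cup\{v,w\}]$, whence $X\cup\{v\}$ is again a module of $\sigma[X\cup\{v,w\}]$. When $B_p = X_\sigma(\alpha)$, apply assertion (2) of Lemma~\ref{lem_EHR}: $\{\alpha,v\}$ is a module of $\sigma[X\cup\{v,w\}]$, which is exactly $M_0$. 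In all cases $M_0$ remains a module after adjoining one outside vertex.

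To finish, I would promote this ``one vertex at a time'' stability to a genuine module of $\sigma$. The cleanest route: fix distinct $x, y \in M_0$ and an arbitrary $z \in \overline{M_0}$; we need $z \longleftrightarrow_\sigma \{x,y\}$. If $z \in X$ this is immediate from $M_0$ being a module of $\sigma[X\cup\{v\}]$ (note $|M_0 \cap X| \le 1$ in the $X_\sigma(\alpha)$ case, so really only $\{x,y\} = \{\alpha,v\}$ matters, and $z \in X\setminus\{\alpha\}$ is handled by $\{\alpha,v\}$ being a module of $\sigma[X\cup\{v\}]$; in the $\langle X\rangle_\sigma$ case $x,y \in X\cup\{v\}$ and $M_0$ is a module of $\sigma[X\cup\{v\}]$). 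If $z \in \overline{X}\setminus\{v\}$, apply the previous paragraph with $w = z$: $M_0$ is a module of $\sigma[X \cup \{v, z\}]$, hence $z \longleftrightarrow_\sigma M_0 \setminus \{z\} \supseteq \{x,y\}$ provided $z \notin M_0$ — which holds since $M_0 \subseteq X \cup \{v\}$ and $z \neq v$. Thus $M_0$ is a nontrivial module of $\sigma$, so $\sigma$ is not prime. The main obstacle is the bookkeeping in applying Lemma~\ref{lem_EHR} correctly in the $w \notin B_p$ case — specifically keeping straight which of $v, w$ plays the ``special'' role and verifying that the conclusion about $X \cup \{w\}$ (resp. $\{\alpha, v\}$) being a module translates back into $M_0$ being a module; this is a routine but error-prone case analysis, and is where I would be most careful.
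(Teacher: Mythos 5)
There is a genuine gap, and it sits exactly where you flagged you would need to be most careful. The candidate module $M_0$ is wrong, and the verification breaks at several points. In the case $B_p=\langle X\rangle_\sigma$ with $w\in B_p$, Remark~\ref{rem_p} only gives that $X$ is a module of $\sigma[X\cup\{v,w\}]$; this constrains how $v$ and $w$ are each linked to $X$ but says nothing about how $w$ is linked to $v$, so you cannot conclude that $X\cup\{v\}$ is a module of $\sigma[X\cup\{v,w\}]$ (in the graph case, take $v$ adjacent to nothing of $X$, $w$ adjacent to all of $X$, and $v\nsim w$). The same confusion recurs in the case $w\notin B_p$, $B_p=\langle X\rangle_\sigma$: assertion (1) of Lemma~\ref{lem_EHR} yields that $X\cup\{w\}$ is a module of $\sigma[X\cup\{v,w\}]$, which is a statement about $v$'s links to $X\cup\{w\}$, and the step ``whence $X\cup\{v\}$ is again a module'' is a non sequitur --- indeed it is false, since $X\cup\{v\}$ being a module of $\sigma[X\cup\{v,w\}]$ would force $w\longleftrightarrow_\sigma X$, i.e.\ $w\in\langle X\rangle_\sigma$, contradicting $w\notin B_p$. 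This last observation shows the failure is not mere bookkeeping: $X\cup\{v\}$ can be a module of $\sigma$ only when every vertex of $\overline{X}\setminus\{v\}$ lies in $\langle X\rangle_\sigma$ (and even then only if each such vertex is linked to $v$ as to $X$), because any $w\in X_\sigma(\beta)$ is, by definition of the partition $p_{(\sigma,\overline{X})}$, not uniformly linked to $X$. Similarly, $\{\alpha,v\}$ fails to be a module as soon as some other $w\in X_\sigma(\alpha)$ is linked to $v$ differently than to $\alpha$, which Remark~\ref{rem_p} does not exclude (it only gives $\{\alpha,v,w\}$ as a module of $\sigma[X\cup\{v,w\}]$, and a subset of a module need not be a module). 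So the choice of $M_0$ itself is untenable, not just its verification.

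The only sub-case you handle correctly is $B_p=X_\sigma(\alpha)$ with $w\notin B_p$, where assertion (2) of Lemma~\ref{lem_EHR} genuinely yields $w\longleftrightarrow_\sigma\{\alpha,v\}$. To repair the argument one must (i) change the candidate in the $\langle X\rangle_\sigma$ case --- the natural one is $V(\sigma)\setminus\{v\}$, i.e.\ one shows that $v$ does not separate any two vertices of $\sigma-v$ --- and (ii) control the links between $v$ and the \emph{other vertices of its own block}. Point (ii) cannot be done with Lemma~\ref{lem_EHR} and Remark~\ref{rem_p} alone; it is precisely what Fact~\ref{fac2_first_results} is for: for a non-isolated $u$ in the block of $v$, one picks a $\Gamma_{(\sigma,\overline{X})}$-neighbour $y$ of $u$ (necessarily outside the block), notes that $\{v,y\}$ is a non-edge because $v$ is isolated, and applies Fact~\ref{fac2_first_results} to $y,u,v$ to pin down $[v,u]_\sigma$. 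This is why the paper derives Corollary~\ref{cor1_first_results} from Lemma~\ref{lem_EHR} \emph{together with} Fact~\ref{fac2_first_results}. Even then, the possibility that several vertices of the same block are simultaneously isolated requires separate treatment, since Fact~\ref{fac2_first_results} needs a $\Gamma$-neighbour to work with.
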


We examine the blocks of the partitions $p_{(\sigma,\overline{X})}$ and $q_{(\sigma,\overline{X})}$ in the next three lemmas. 

\begin{lem}\label{l1_p_and_q}
Given a 2-structure $\sigma$, consider $X\subsetneq V(\sigma)$ such that $\sigma[X]$ is prime. 
Suppose that Statement (S3) holds. 
Consider $e,f\in E(\sigma)$, and $\alpha\in X$. 
If $\Gamma_{(\sigma,\overline{X})}$ does not have isolated vertices, then the following two assertions hold 
\begin{enumerate}
\item if $\langle X\rangle_\sigma^{(e,f)}\neq\emptyset$, then 
$\langle X\rangle_\sigma^{(e',f')}=\emptyset$ for any $e',f'\in E(\sigma)$ such that $\{e',f'\}\neq\{e,f\}$;
\item if $X_\sigma^{(e,f)}(\alpha)\neq\emptyset$, then 
$X_\sigma^{(e',f')}(\alpha)=\emptyset$ for any $e',f'\in E(\sigma)$ such that $\{e',f'\}\neq\{e,f\}$. 
\end{enumerate}
\end{lem}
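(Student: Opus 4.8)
The plan is to argue by contradiction, exploiting the fact that an absence of isolated vertices in $\Gamma_{(\sigma,\overline{X})}$ forces every vertex of a given block $B_q$ to have a neighbour, and then using Corollary~\ref{cor0_first_results} to read off the label $[z,x]_\sigma$ of pairs inside $B_q$. More precisely, for the first assertion, suppose towards a contradiction that there exist $e',f'\in E(\sigma)$ with $\{e',f'\}\neq\{e,f\}$ and both $\langle X\rangle_\sigma^{(e,f)}$ and $\langle X\rangle_\sigma^{(e',f')}$ nonempty. Pick $x\in\langle X\rangle_\sigma^{(e,f)}$ and $z\in\langle X\rangle_\sigma^{(e',f')}$; these are distinct since they lie in disjoint blocks of $q_{(\sigma,\overline{X})}$, and they both lie in the block $B_p=\langle X\rangle_\sigma$ of $p_{(\sigma,\overline{X})}$ (here I use that Statement~(S3) implies Statement~(S1) by Remark~\ref{rem1_conditions_Ck}, so ${\rm Ext}_\sigma(X)=\emptyset$, hence $\langle X\rangle_\sigma$ really is a block of $p_{(\sigma,\overline{X})}$). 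Since $x$ and $z$ are in the same block of $p_{(\sigma,\overline{X})}$, Remark~\ref{rem_p} gives $\{x,z\}\notin E(\Gamma_{(\sigma,\overline{X})})$.

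Now I would bring in a third vertex to get a contradiction. Since $\Gamma_{(\sigma,\overline{X})}$ has no isolated vertices, $x$ has a neighbour $v$; necessarily $v\in\overline{X}\setminus\langle X\rangle_\sigma$ again by Remark~\ref{rem_p}, and in particular $v\notin B_q=\langle X\rangle_\sigma^{(e,f)}$ and $v\neq z$. So within the block $B_q=\langle X\rangle_\sigma^{(e,f)}$ I have $x$ with $\{x,v\}\in E(\Gamma_{(\sigma,\overline{X})})$. The issue is that $z$ is not in $B_q$, so I cannot directly apply Corollary~\ref{cor0_first_results} to the triple $\{x,z,v\}$. Instead I apply the first assertion of Fact~\ref{fac2_first_results} with $B_p=D_p=\langle X\rangle_\sigma$, $x$ playing the role of the vertex with $\{x,v\}\in E$, and $z$ playing the role of the vertex with $\{z,v\}\notin E$ (recall $\{z,v\}\notin E(\Gamma_{(\sigma,\overline{X})})$ because $z,v\in\langle X\rangle_\sigma$ and Remark~\ref{rem_p} applies): wait, I need $v\in B_p$ too for that, which may fail. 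So the cleaner route is: choose the neighbour of $x$ to be some $w$, and then distinguish whether $w\in\langle X\rangle_\sigma$ or not — but $w$ adjacent to $x\in\langle X\rangle_\sigma$ forces $w\notin\langle X\rangle_\sigma$, so $w$ lies in some $X_\sigma(\beta)$. Then apply Fact~\ref{fac2_first_results}(2) to $w\in B_p=X_\sigma(\beta)$ and $x,z\in D_p=\langle X\rangle_\sigma$ with $\{w,x\}\in E$, $\{w,z\}\notin E$: this yields that $X\cup\{w\}$... no. Let me reconsider: the right instance is Fact~\ref{fac2_first_results}(1) with roles $D_p=\langle X\rangle_\sigma$ containing $x,z$ and $B_p=X_\sigma(\beta)$ containing $w$, giving that $X\cup\{w,x\}$ is a module of $\sigma[X\cup\{w,x,z\}]$, whence $z\longleftrightarrow_\sigma X\cup\{w,x\}$ and in particular $[z,\gamma]_\sigma=[x,\gamma]_\sigma$ for $\gamma\in X$, forcing $(z,\gamma)\in e$ and $(\gamma,z)\in f$, i.e. $z\in\langle X\rangle_\sigma^{(e,f)}$, contradicting $z\in\langle X\rangle_\sigma^{(e',f')}$ with $\{e',f'\}\neq\{e,f\}$.

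For the second assertion the argument is parallel, working inside $B_p=X_\sigma(\alpha)$: suppose $x\in X_\sigma^{(e,f)}(\alpha)$ and $z\in X_\sigma^{(e',f')}(\alpha)$ are both present with $\{e',f'\}\neq\{e,f\}$. Again $x\neq z$ and $\{x,z\}\notin E(\Gamma_{(\sigma,\overline{X})})$ by Remark~\ref{rem_p}. A neighbour $w$ of $x$ lies outside $X_\sigma(\alpha)$. Applying Fact~\ref{fac2_first_results} (the appropriate assertion according to which block of $p_{(\sigma,\overline{X})}$ contains $w$) to $w$ together with $x,z\in X_\sigma(\alpha)$, where $\{w,x\}\in E$ and $\{w,z\}\notin E$, yields that $\{\alpha,z\}$ is a module of $\sigma[X\cup\{w,x,z\}]$, hence $[z,\gamma]_\sigma=[\alpha,\gamma]_\sigma=[x,\gamma]_\sigma$ for $\gamma\in X$ (the last equality since $x,z\in X_\sigma(\alpha)$ both make $\{\alpha,\cdot\}$ a module), which forces $z\in X_\sigma^{(e,f)}(\alpha)$, the desired contradiction. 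The main obstacle I anticipate is the bookkeeping in selecting the auxiliary neighbour $w$ and checking that it genuinely lies in a different block of $p_{(\sigma,\overline{X})}$ so that Fact~\ref{fac2_first_results} applies cleanly — in particular handling uniformly the two possibilities ($w\in\langle X\rangle_\sigma$ versus $w\in X_\sigma(\beta)$ for some $\beta$) and making sure the relevant non-edges $\{z,w\}\notin E$ hold, which follow from Remark~\ref{rem_p} when $z$ and $w$ share a block of $p_{(\sigma,\overline{X})}$ and otherwise need a short separate check.
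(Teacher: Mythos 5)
Your overall strategy (no isolated vertices $\Rightarrow$ pick a neighbour, then invoke Fact~\ref{fac2_first_results} on a suitable triple) is the right one and matches the paper's, but the key deduction you make from the module conclusion is wrong, and the error is fatal to the argument as written.

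Consider your first assertion. From Fact~\ref{fac2_first_results}(1) applied to $w,x,z$ you correctly obtain that $X\cup\{w,x\}$ is a module of $\sigma[X\cup\{w,x,z\}]$. But this says only that the \emph{outside} vertex $z$ sees all of $X\cup\{w,x\}$ uniformly, i.e.\ $[z,x]_\sigma=[z,\gamma]_\sigma$ for $\gamma\in X$; it says nothing comparing how $x$ and $z$ each see $X$. Your claimed consequence ``$[z,\gamma]_\sigma=[x,\gamma]_\sigma$ for $\gamma\in X$, forcing $z\in\langle X\rangle_\sigma^{(e,f)}$'' does not follow. Indeed, if it did, your argument would apply verbatim with $(e',f')=(f,e)$ and $e\neq f$, proving that $\langle X\rangle_\sigma^{(e,f)}\neq\emptyset$ forces $\langle X\rangle_\sigma^{(f,e)}=\emptyset$ --- but the lemma deliberately allows both to be nonempty (this is exactly the situation treated in Lemma~\ref{l2_p_and_q}). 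The conclusion of the lemma is about the \emph{unordered} pair $\{e,f\}$, and a single application of Fact~\ref{fac2_first_results} can only ever yield the one-sided information $(z,x)\in e'$ and $(x,z)\in f'$. The same defect occurs in your second assertion: $\{\alpha,z\}$ being a module of $\sigma[X\cup\{w,x,z\}]$ constrains $[x,z]_\sigma$ and $[\gamma,z]_\sigma$ for $\gamma\in X\setminus\{\alpha\}$, but the block $X_\sigma^{(e',f')}(\alpha)$ is determined by $[z,\alpha]_\sigma$ itself, about which the module conclusion says nothing (both $\alpha$ and $z$ lie inside the module); your chain $[z,\gamma]_\sigma=[\alpha,\gamma]_\sigma=[x,\gamma]_\sigma$ is true but vacuous for the purpose of identifying $e',f'$.

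The fix, which is what the paper does, is to use \emph{two} auxiliary neighbours: one neighbour $y$ of $x$ and one neighbour $y'$ of $z$ (both exist since $\Gamma_{(\sigma,\overline{X})}$ has no isolated vertices). Applying Fact~\ref{fac2_first_results} to $x,z,y'$ gives $(x,z)\in e$ and $(z,x)\in f$; applying it to $x,z,y$ gives $(z,x)\in e'$ and $(x,z)\in f'$. Since distinct equivalence classes are disjoint, $e=f'$ and $f=e'$, whence $\{e,f\}=\{e',f'\}$ --- note this is genuinely only an equality of unordered pairs. One must also dispose of the degenerate configurations $y=y'$, $\{x,y'\}\in E(\Gamma_{(\sigma,\overline{X})})$, or $\{z,y\}\in E(\Gamma_{(\sigma,\overline{X})})$, where instead Fact~\ref{fac1_first_results} applies to the common neighbour and yields $(e,f)=(e',f')$ outright; you gesture at this case but do not carry it out.
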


\begin{proof}
Consider $e,f,e',f'\in E(\sigma)$. 
For the first assertion, suppose that there exist $x\in\langle X\rangle_\sigma^{(e,f)}$ and $x'\in\langle X\rangle_\sigma^{(e',f')}$. 
We have to prove that 
\begin{equation}\label{E1_p_and_q}
\{e,f\}=\{e',f'\}.
\end{equation} 
Since $x,x'\in\langle X\rangle_\sigma$, we have $\{x,x'\}\not\in E(\Gamma_{(\sigma,\overline{X})})$ by Remark~\ref{rem_p}. 
Furthermore, since $\Gamma_{(\sigma,\overline{X})}$ does not have isolated vertices, there exist $y,y'\in\overline{X}\setminus\{x,x'\}$ such that 
$\{x,y\},\{x',y'\}\in E(\Gamma_{(\sigma,\overline{X})})$. 
Suppose that $y=y'$. 
We obtain $\{y,x\},\{y,x'\}\in E(\Gamma_{(\sigma,\overline{X})})$. 
It follows from Fact~\ref{fac1_first_results} that $(e,f)=(e',f')$, so \eqref{E1_p_and_q} holds. 
We obtain the same conclusion when $\{x,y'\}\in E(\Gamma_{(\sigma,\overline{X})})$ or 
$\{x',y\}\in E(\Gamma_{(\sigma,\overline{X})})$. 
Thus, suppose that $y\neq y'$, and 
$\{x,y'\},\{x',y\}\not\in E(\Gamma_{(\sigma,\overline{X})})$. 
It follows from the first assertion of Fact~\ref{fac2_first_results} applied to $x,x',y'$ that 
$X\cup\{x',y'\}$ is a module of $\sigma[X\cup\{x,x',y'\}]$. 
Since $x\in\langle X\rangle_\sigma^{(e,f)}$, we obtain $(x,x')\in e$ and $(x',x)\in f$. 
Similarly, it follows from the first assertion of Fact~\ref{fac2_first_results} applied to $x,x',y$ that 
$(x',x)\in e'$ and $(x,x')\in f'$. 
Therefore $e=f'$ and $e'=f$. 
Consequently \eqref{E1_p_and_q} holds. 

For the second assertion, suppose that there exist $x\in X_\sigma^{(e,f)}(\alpha)$ and 
$x'\in X_\sigma^{(e',f')}(\alpha)$, where $\alpha\in X$. 
We have to prove that \eqref{E1_p_and_q} holds. 
Since $x,x'\in X_\sigma(\alpha)$, we have $\{x,x'\}\not\in E(\Gamma_{(\sigma,\overline{X})})$ by Remark~\ref{rem_p}. 
Furthermore, since $\Gamma_{(\sigma,\overline{X})}$ does not have isolated vertices, there exist $y,y'\in\overline{X}\setminus\{x,x'\}$ such that 
$\{x,y\},\{x',y'\}\in E(\Gamma_{(\sigma,\overline{X})})$. 
Suppose that $y=y'$. 
We obtain $\{y,x\},\{y,x'\}\in E(\Gamma_{(\sigma,\overline{X})})$. 
By Fact~\ref{fac1_first_results}, $(e,f)=(e',f')$, so \eqref{E1_p_and_q} holds. 
We obtain the same conclusion when $\{x,y'\}\in E(\Gamma_{(\sigma,\overline{X})})$ or 
$\{x',y\}\in E(\Gamma_{(\sigma,\overline{X})})$. 
Now, suppose that $y\neq y'$, and 
$\{x,y'\},\{x',y\}\not\in E(\Gamma_{(\sigma,\overline{X})})$. 
It follows from the second assertion of Fact~\ref{fac2_first_results} applied to $x,x',y'$ that 
$\{\alpha,x\}$ is a module of $\sigma[X\cup\{x,x',y'\}]$. 
Hence $(x',\alpha)\equiv_\sigma(x',x)$ and $(\alpha,x')\equiv_\sigma(x,x')$. 
Since $x'\in X_\sigma^{(e',f')}(\alpha)$, we obtain $(x',x)\in e'$ and $(x,x')\in f'$. 
Similarly, it follows from the second assertion of Fact~\ref{fac2_first_results} applied to $x,x',y$ that 
$(x,x')\in e$ and $(x',x)\in f$. 
Thus $e=f'$ and $e'=f$. 
Consequently \eqref{E1_p_and_q} holds. 
\end{proof}

\begin{lem}\label{l2_p_and_q}
Given a 2-structure $\sigma$, consider $X\subsetneq V(\sigma)$ such that $\sigma[X]$ is prime. 
Suppose that Statement (S3) holds. 
Consider distinct $e,f\in E(\sigma)$, and $\alpha\in X$. 
If $\Gamma_{(\sigma,\overline{X})}$ does not have isolated vertices, then 
the following two assertions hold 
\begin{enumerate}
\item if $\langle X\rangle_\sigma^{(e,f)}\neq\emptyset$ and 
$\langle X\rangle_\sigma^{(f,e)}\neq\emptyset$, then 
$\langle X\rangle_\sigma^{(e,f)}\longleftrightarrow_\sigma \langle X\rangle_\sigma^{(f,e)}$, and 
$$[\langle X\rangle_\sigma^{(e,f)},\langle X\rangle_\sigma^{(f,e)}]_\sigma=(e,f);$$
\item if $X_\sigma^{(e,f)}(\alpha)\neq\emptyset$ and $X_\sigma^{(f,e)}(\alpha)\neq\emptyset$, then 
$X_\sigma^{(e,f)}(\alpha)\longleftrightarrow_\sigma X_\sigma^{(f,e)}(\alpha)$, and 
$$[X_\sigma^{(e,f)}(\alpha),X_\sigma^{(f,e)}(\alpha)]_\sigma=(e,f).$$
\end{enumerate}
\end{lem}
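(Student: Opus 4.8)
The plan is to mimic the structure of the proof of Lemma~\ref{l1_p_and_q}, but now tracking the actual labels on edges between the two (distinct) blocks $\langle X\rangle_\sigma^{(e,f)}$ and $\langle X\rangle_\sigma^{(f,e)}$. Fix $x\in\langle X\rangle_\sigma^{(e,f)}$ and $x'\in\langle X\rangle_\sigma^{(f,e)}$; it suffices to compute $[x,x']_\sigma$ and to show it does not depend on the choice of $x,x'$, since then $\langle X\rangle_\sigma^{(e,f)}\longleftrightarrow_\sigma\langle X\rangle_\sigma^{(f,e)}$ will follow by the usual two-step comparison argument (varying one endpoint at a time, each step being an instance of what we are about to prove for a single pair). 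By Remark~\ref{rem_p}, $\{x,x'\}\notin E(\Gamma_{(\sigma,\overline{X})})$ because both lie in $\langle X\rangle_\sigma$. Since $\Gamma_{(\sigma,\overline{X})}$ has no isolated vertices, pick $y,y'\in\overline{X}$ with $\{x,y\},\{x',y'\}\in E(\Gamma_{(\sigma,\overline{X})})$.

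Next I would split into the cases already used in Lemma~\ref{l1_p_and_q}. If $y=y'$, or more generally if $\{x,y'\}$ or $\{x',y\}$ lies in $E(\Gamma_{(\sigma,\overline{X})})$, then $x$ and $x'$ have a common neighbour $w$ in $\Gamma_{(\sigma,\overline{X})}$; Fact~\ref{fac1_first_results} forces $(e,f)=(f,e)$, i.e. $e=f$, contradicting $e\neq f$. Hence necessarily $y\neq y'$ and $\{x,y'\},\{x',y\}\notin E(\Gamma_{(\sigma,\overline{X})})$. Now apply the first assertion of Fact~\ref{fac2_first_results}: from the triple $x,x',y'$ (here $x$ plays the role of the vertex with a neighbour $y'$ in the block $D_p=\langle X\rangle_\sigma$ containing $x'$) we get that $X\cup\{x,y'\}$ is a module of $\sigma[X\cup\{x,x',y'\}]$, so $(x,x')$ and $(x',x)$ are determined by how $x'$ sees $X$; since $x'\in\langle X\rangle_\sigma^{(f,e)}$, reading off the labels gives $(x',x)\in e$ and $(x,x')\in f$. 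Symmetrically, applying the same assertion to the triple $x',x,y$ gives, from $x'\in\langle X\rangle_\sigma^{(f,e)}$ and $x\in\langle X\rangle_\sigma^{(e,f)}$, the relations $(x,x')\in e$-side consistency; combining, $[x,x']_\sigma=((x,x')_\sigma,(x',x)_\sigma)=(e,f)$. This value is the same for every admissible pair, which yields $\langle X\rangle_\sigma^{(e,f)}\longleftrightarrow_\sigma\langle X\rangle_\sigma^{(f,e)}$ with $[\langle X\rangle_\sigma^{(e,f)},\langle X\rangle_\sigma^{(f,e)}]_\sigma=(e,f)$.

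For the second assertion I would run the identical argument with $X_\sigma^{(e,f)}(\alpha)$ and $X_\sigma^{(f,e)}(\alpha)$ in place of the two $\langle X\rangle_\sigma$-blocks, using the second assertions of Fact~\ref{fac1_first_results}-type reasoning and of Fact~\ref{fac2_first_results}: a common $\Gamma_{(\sigma,\overline{X})}$-neighbour of $x\in X_\sigma^{(e,f)}(\alpha)$ and $x'\in X_\sigma^{(f,e)}(\alpha)$ would force $e=f$ via Fact~\ref{fac1_first_results}, so again $y\neq y'$ and $\{x,y'\},\{x',y\}\notin E(\Gamma_{(\sigma,\overline{X})})$; then the second assertion of Fact~\ref{fac2_first_results} applied to $x,x',y'$ gives that $\{\alpha,x'\}$ is a module of $\sigma[X\cup\{x,x',y'\}]$, hence $[x,x']_\sigma=[x,\alpha]_\sigma$, and since $x\in X_\sigma^{(e,f)}(\alpha)$ we read off $(x,x')\in f$ and $(x',x)\in e$; applying it to $x',x,y$ gives the complementary relations, so $[x,x']_\sigma=(e,f)$ for every admissible pair, whence $X_\sigma^{(e,f)}(\alpha)\longleftrightarrow_\sigma X_\sigma^{(f,e)}(\alpha)$ with the stated label. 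The one delicate point — the main thing to get right rather than a genuine obstacle — is the bookkeeping of which equivalence class is $(x,x')_\sigma$ versus $(x',x)_\sigma$ under the definitions in Notation~\ref{nota_q} (the order convention $(v,\alpha)\in e$, $(\alpha,v)\in f$), and the fact that in the module relation obtained from Fact~\ref{fac2_first_results} the roles of $x$ and $x'$ are \emph{not} symmetric, so each of the two applications pins down exactly one of the two ordered pairs; consistency of the two outputs (no contradiction, and together they give precisely $(e,f)$) is what makes the hypothesis $e\neq f$ usable rather than contradictory.
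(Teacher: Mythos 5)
Your overall strategy is the paper's: rule out a common $\Gamma_{(\sigma,\overline{X})}$-neighbour of $x$ and $x'$ via Fact~\ref{fac1_first_results} (which would force $e=f$), then apply Fact~\ref{fac2_first_results} to $x,x',y'$ and read $[x,x']_\sigma$ off the resulting module. (The paper needs only the single neighbour $y'$ of $x'$; your extra vertex $y$ and second application are redundant, because one application of Fact~\ref{fac2_first_results} already determines \emph{both} ordered pairs $(x,x')_\sigma$ and $(x',x)_\sigma$ --- a module condition is two-sided. Your closing claim that ``each of the two applications pins down exactly one of the two ordered pairs'' is therefore mistaken.)

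However, the computation itself --- which is the entire content of the lemma --- is carried out incorrectly. In Fact~\ref{fac2_first_results} the module contains the vertex of $D_p$ \emph{adjacent} to the outside vertex: applied to $y'$ (outside), $x'$ (adjacent), $x$ (non-adjacent), the first assertion yields that $X\cup\{x',y'\}$ is a module of $\sigma[X\cup\{x,x',y'\}]$, not $X\cup\{x,y'\}$ as you assert; likewise in the second assertion the module is $\{\alpha,x\}$, not $\{\alpha,x'\}$. The correct module gives $x\longleftrightarrow_\sigma X\cup\{x',y'\}$, hence $(x,x')\equiv_\sigma(x,\alpha)\in e$ and $(x',x)\equiv_\sigma(\alpha,x)\in f$, i.e.\ $[x,x']_\sigma=(e,f)$. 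Your read-off instead states ``$(x',x)\in e$ and $(x,x')\in f$'' (with the analogous swap in the second assertion), which under the definition $[x,x']_\sigma=((x,x')_\sigma,(x',x)_\sigma)$ says $[x,x']_\sigma=(f,e)$ --- the opposite of the claim, and genuinely different since $e\neq f$. The appeal to ``consistency'' of a second application cannot repair this: applying the Fact to $y,x,x'$ again determines both ordered pairs, and it returns the same answer as the first application, so if your first read-off were right the lemma would be false. Since deciding between $(e,f)$ and $(f,e)$ is precisely what the lemma asserts, this bookkeeping must be done correctly rather than deferred to a consistency check.
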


\begin{proof}
For the first assertion, consider $x\in\langle X\rangle_\sigma^{(e,f)}$ and 
$x'\in\langle X\rangle_\sigma^{(f,e)}$. 
Since $x,x'\in\langle X\rangle_\sigma$, we have $\{x,x'\}\not\in E(\Gamma_{(\sigma,\overline{X})})$ by Remark~\ref{rem_p}. 
Furthermore, 
since $\Gamma_{(\sigma,\overline{X})}$ does not have isolated vertices, 
there exists $y'\in\overline{X}\setminus\{x,x'\}$ such that 
$\{x',y'\}\in E(\Gamma_{(\sigma,\overline{X})})$. 
Suppose for a contradiction that $\{x,y'\}\in E(\Gamma_{(\sigma,\overline{X})})$. 
We obtain $\{x,y'\},\{x',y'\}\in E(\Gamma_{(\sigma,\overline{X})})$. 
It follows from Fact~\ref{fac1_first_results} that $e=f$, which contradicts our assumption. 
Therefore $\{x,y'\}\not\in E(\Gamma_{(\sigma,\overline{X})})$. 
It follows from the first assertion of Fact~\ref{fac2_first_results} applied to $x,x',y'$ that 
$X\cup\{x',y'\}$ is a module of $\sigma[X\cup\{x,x',y'\}]$. 
Since $x\in\langle X\rangle_\sigma^{(e,f)}$, we obtain $(x,x')\in e$ and $(x',x)\in f$. 
Consequently $[x,x']_\sigma=(e,f)$. 

For the second assertion, consider $x\in X_\sigma^{(e,f)}(\alpha)$ and 
$x'\in X_\sigma^{(f,e)}(\alpha)$. 
Since $x,x'\in X_\sigma(\alpha)$, we have $\{x,x'\}\not\in E(\Gamma_{(\sigma,\overline{X})})$ by Remark~\ref{rem_p}. 
Furthermore, 
since $\Gamma_{(\sigma,\overline{X})}$ does not have isolated vertices, 
there exists $y'\in\overline{X}\setminus\{x,x'\}$ such that 
$\{x',y'\}\in E(\Gamma_{(\sigma,\overline{X})})$. 
Suppose for a contradiction that $\{x,y'\}\in E(\Gamma_{(\sigma,\overline{X})})$. 
We obtain $\{x,y'\},\{x',y'\}\in E(\Gamma_{(\sigma,\overline{X})})$. 
It follows from Fact~\ref{fac1_first_results} that $e=f$, which contradicts our assumption. 
Therefore $\{x,y'\}\not\in E(\Gamma_{(\sigma,\overline{X})})$. 
It follows from the second assertion of Fact~\ref{fac2_first_results} applied to $x,x',y'$ that 
$\{\alpha,x\}$ is a module of $\sigma[X\cup\{x,x',y'\}]$. 
Thus $[x,x']_\sigma=[\alpha,x']_\sigma$. 
Since $x'\in X_\sigma^{(f,e)}(\alpha)$, we obtain $[x,x']_\sigma=(e,f)$. 
\end{proof}

To state the next result, we use the following notation and definition. 

\begin{nota}\label{nota_restriction}
Let $\sigma$ be a 2-structure. 
For $e\in E(\sigma)$ and $W\subseteq V(\sigma)$, set 
$$e[W]=e\cap(W\times W).$$
Given $e\in E(\sigma)$ and $W\subseteq V(\sigma)$, we do not have 
$e\in E(\sigma[W])$, but we have $e[W]\in E(\sigma[W])$ when 
$e[W]\neq\emptyset$. 
\end{nota}

\begin{defi}\label{defi_linear}
A 2-structure $\sigma$ is {\em constant} if $|E(\sigma)|=1$. 
Besides, a 2-structure $\sigma$ is {\em linear} if 
there exist distinct $e,f\in E(\sigma)$ such that 
$$(V(\sigma),\{(v,w):v,w\in V(\sigma),v\neq w,[v,w]_\sigma=(e,f)\})$$ 
is a linear order (see Remark~\ref{rem_linear}). 
\end{defi}

\begin{rem}\label{rem_linear}
Let $\sigma$ be a linear 2-structure. 
There exist distinct $e,f\in E(\sigma)$ such that 
$(V(\sigma),\{(v,w):v,w\in V(\sigma),v\neq w,[v,w]_\sigma=(e,f)\})$ is a linear order. 
Therefore, $(V(\sigma),e)$ and $(V(\sigma),f)$ are total orders such that 
$$(V(\sigma),e)^\star=(V(\sigma),f).$$
Clearly, we have $E(\sigma)=\{e,f\}$. 
\end{rem}

\begin{lem}\label{l3_p_and_q}
Given a 2-structure $\sigma$, consider $X\subsetneq V(\sigma)$ such that $\sigma[X]$ is prime. 
Suppose that Statement (S3) holds. 
If $\sigma$ is prime, then the next two assertions hold. 
\begin{enumerate}
\item Let $e\in E(\sigma)$. 
If $|\langle X\rangle_\sigma^{(e,e)}|\geq 2$, then 
$\sigma[\langle X\rangle_\sigma]$ is constant, and 
$$E(\sigma[\langle X\rangle_\sigma])=\{e[\langle X\rangle_\sigma]\}.$$ 
Similarly, given $\alpha\in X$, if $|X_\sigma^{(e,e)}(\alpha)|\geq 2$, then 
$\sigma[X_\sigma(\alpha)]$ is constant, and 
$E(\sigma[X_\sigma(\alpha)])=\{e[X_\sigma(\alpha)]\}$. 
\item Consider distinct $e,f\in E(\sigma)$. 
If $|\langle X\rangle_\sigma^{(e,f)}|\geq 2$, then 
$\sigma[\langle X\rangle_\sigma]$ is linear, and 
$$E(\sigma[\langle X\rangle_\sigma])=\{e[\langle X\rangle_\sigma],f[\langle X\rangle_\sigma]\}.$$ 
Similarly, given $\alpha\in X$, if $|X_\sigma^{(e,f)}(\alpha)|\geq 2$, then 
$\sigma[X_\sigma(\alpha)]$ is linear, and 
$E(\sigma[X_\sigma(\alpha)])=\{e[X_\sigma(\alpha)],f[X_\sigma(\alpha)]\}$. 
\end{enumerate}
\end{lem}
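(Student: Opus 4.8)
The proof proposal is to treat the two assertions uniformly by working inside a single block $B_q$ of the refined partition $q_{(\sigma,\overline{X})}$, namely $B_q=\langle X\rangle_\sigma^{(e,f)}$ in the first case and $B_q=X_\sigma^{(e,f)}(\alpha)$ in the second, and to exploit that $\sigma$ is prime so that (by Corollary~\ref{cor1_first_results}) $\Gamma_{(\sigma,\overline{X})}$ has no isolated vertices. First I would observe that, by Remark~\ref{rem_p}, $\Gamma_{(\sigma,\overline{X})}[B_q]$ is empty; hence any $x,x'\in B_q$ are non-adjacent, and using the absence of isolated vertices we may fix a neighbour $v$ of one of them outside $B_q$ and appeal to Fact~\ref{fac2_first_results} to compare $x$ and $x'$. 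The point is that Fact~\ref{fac2_first_results} (together with the definitions of $\langle X\rangle_\sigma^{(e,f)}$ and $X_\sigma^{(e,f)}(\alpha)$, which pin down $[v,\alpha]_\sigma$) forces $[x,x']_\sigma$ to lie in $\{(e,f),(f,e)\}$ for \emph{every} pair $x,x'\in B_q$. This already shows $E(\sigma[B_q])\subseteq\{e[B_q],f[B_q]\}$.

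\textbf{Key steps.} After that, I would split into the two cases of the lemma. If $e=f$ (first assertion), then $[x,x']_\sigma=(e,e)$ for all pairs, so $E(\sigma[B_q])=\{e[B_q]\}$ and $\sigma[B_q]$ is constant; it remains to upgrade ``$B_q$'' to ``the whole $\langle X\rangle_\sigma$'' (resp.\ $X_\sigma(\alpha)$), for which I would invoke Lemma~\ref{l1_p_and_q}: since $\langle X\rangle_\sigma^{(e,e)}\neq\emptyset$ (it has $\geq 2$ elements), all other blocks $\langle X\rangle_\sigma^{(e',f')}$ with $\{e',f'\}\neq\{e,e\}$ are empty, so $\langle X\rangle_\sigma=\langle X\rangle_\sigma^{(e,e)}=B_q$. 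If $e\neq f$ (second assertion), I would first use Lemma~\ref{l1_p_and_q} the same way to conclude that $\langle X\rangle_\sigma$ is the disjoint union of $\langle X\rangle_\sigma^{(e,f)}$ and $\langle X\rangle_\sigma^{(f,e)}$ (and likewise for $X_\sigma(\alpha)$); then Lemma~\ref{l2_p_and_q} tells me $[\langle X\rangle_\sigma^{(e,f)},\langle X\rangle_\sigma^{(f,e)}]_\sigma=(e,f)$, i.e.\ $\sigma[\langle X\rangle_\sigma]$ is ``$e$-above'' in going from the second part to the first. Within each of the two parts, the pairwise label is $(e,f)$ or $(f,e)$ by the paragraph above; to rule out a pair inside $\langle X\rangle_\sigma^{(e,f)}$ having label $(f,e)$ (which would destroy transitivity) I would note that if $x,x',x''\in B_q$ had a cyclic pattern of $(e,f)$'s and $(f,e)$'s, picking a common neighbour and applying Fact~\ref{fac1_first_results} (which gives that $\{\cdot,\cdot\}$ is a module and hence a strong constraint) produces a contradiction; thus $\sigma[\langle X\rangle_\sigma^{(e,f)}]$ and $\sigma[\langle X\rangle_\sigma^{(f,e)}]$ each carry a consistent linear-order structure with classes $\{e[\cdot],f[\cdot]\}$, and gluing them via Lemma~\ref{l2_p_and_q} yields that $\sigma[\langle X\rangle_\sigma]$ is linear with $E(\sigma[\langle X\rangle_\sigma])=\{e[\langle X\rangle_\sigma],f[\langle X\rangle_\sigma]\}$, and the argument for $X_\sigma(\alpha)$ is verbatim with the second assertions of Facts and Lemmas replacing the first.

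\textbf{Main obstacle.} The routine part is the reduction to a single $B_q$ via Fact~\ref{fac2_first_results}; the delicate point I expect to spend most effort on is proving transitivity of the order within one part $\langle X\rangle_\sigma^{(e,f)}$ (resp.\ $X_\sigma^{(e,f)}(\alpha)$) when $e\neq f$ — that is, showing the relation ``$[x,x']_\sigma=(e,f)$'' is a \emph{linear order} and not merely a tournament-like relation with classes $\{e,f\}$. The resolution should again be Fact~\ref{fac1_first_results}: given three elements of the block, using a neighbour of one of them (available since there are no isolated vertices and the block is an independent set of $\Gamma_{(\sigma,\overline{X})}$, with edges going out) forces two of the three pairs to behave as a module, and comparing the induced labels rules out the cyclic orientation. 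Care is needed because a neighbour of $x$ need not be a neighbour of $x'$ or $x''$, so one may have to combine an ``out'' edge at $x$ with Fact~\ref{fac2_first_results} applied at $x'$ and $x''$; organizing this case analysis cleanly is where the write-up will be least mechanical.
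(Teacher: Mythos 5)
There is a genuine gap at the very first step of your plan, and it propagates to the transitivity argument. You claim that, since $\Gamma_{(\sigma,\overline{X})}$ has no isolated vertices, you can pick a neighbour $v$ of one of $x,x'\in B_q$ and apply Fact~\ref{fac2_first_results} to force $[x,x']_\sigma\in\{(e,f),(f,e)\}$ for \emph{every} pair. But Fact~\ref{fac2_first_results} requires a vertex $v$ adjacent to exactly one of $x,x'$; if every $v\in\overline{X}\setminus B_p$ is adjacent to both or to neither (i.e.\ $\{x,x'\}$ is a module of $\Gamma_{(\sigma,\overline{X})}$), the Fact says nothing about $[x,x']_\sigma$, and Fact~\ref{fac1_first_results} only tells you $x,x'$ lie in the same block of $q_{(\sigma,\overline{X})}$. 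Nothing you have invoked rules out such ``unseparated'' pairs, nor a pair carrying a third class $g\notin\{e,f\}$, nor a cyclic triple all of whose elements have the same $\Gamma$-neighbourhood. The only way to kill these configurations is to exhibit a nontrivial module of $\sigma$ inside $B_q$ and contradict primality of $\sigma$ itself --- and your proposal uses primality only through Corollary~\ref{cor1_first_results} (no isolated vertices), which is strictly weaker than what the lemma needs.

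This is precisely where the paper's proof does its real work: it defines an equivalence relation $\thickapprox$ on $B_q$ (two elements are equivalent when they are joined, in both directions, by chains avoiding the label $(e,f)$), shows each class is a module of $\sigma[B_q]$, upgrades it to a module of $\sigma[B_p]$ via Lemmas~\ref{l1_p_and_q} and \ref{l2_p_and_q}, shows it is a module of $\Gamma_{(\sigma,\overline{X})}$ via Corollary~\ref{cor0_first_results}, and then concludes from Corollary~\ref{cor1_oppo_modules} and the primality of $\sigma$ that every class is a singleton. The facts you were trying to prove directly (all pairs labelled $(e,f)$ or $(f,e)$; no cyclic triple, hence a linear order) then fall out immediately from the definition of $\thickapprox$. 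Your reduction to a single block and the gluing of $\langle X\rangle_\sigma^{(e,f)}$ with $\langle X\rangle_\sigma^{(f,e)}$ via Lemmas~\ref{l1_p_and_q} and \ref{l2_p_and_q} match the paper and are fine; what is missing is this module-detection step, without which the core claims of both assertions remain unproved.
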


\begin{proof}
Consider $B_q\in q_{(\sigma,\overline{X})}$, with $|B_q|\geq 2$. 
There exist $e,f\in E(\sigma)$ such that $B_q=\langle X\rangle_\sigma^{(e,f)}$ or 
$X_\sigma^{(e,f)}(\alpha)$. 
We define on $B_q$ the equivalence relation $\thickapprox$ 
in the following way. 
Given $c,d\in B_q$, 
$c\thickapprox d$ if either $c=d$ or $c\neq d$ and there exist sequences 
$(c_0,\ldots,c_p)$ and $(d_0,\ldots,d_q)$ of elements of $B_q$ satisfying
\begin{itemize}
\item $c_0=c$ and $c_p=d$;
\item for $0\leq m\leq p-1$, $[c_m,c_{m+1}]_\sigma\neq (e,f)$; 
\item $d_0=y$ and $d_q=x$;
\item for $0\leq m\leq q-1$, $[d_m,d_{m+1}]_\sigma\neq (e,f)$. 
\end{itemize}
Let us consider an equivalence class $C$ of $\thickapprox$. 
We prove that $C$ is a module of $\sigma$. 
We utilize Corollary~\ref{cor1_oppo_modules} in the following manner. 
Since $B_q\in q_{(\sigma,\overline{X})}$, there exists 
$B_p\in p_{(\sigma,\overline{X})}$ such that $B_q\subseteq B_p$. 

First, we show that $C$ is a module of $\sigma[B_q]$. 
Let $x\in B_q\setminus C$. 
By definition of $\thickapprox$, $[x,c]_\sigma=(e,f)$ or $(f,e)$ for every $c\in C$. 
Hence, $C$ is a module of $\sigma[B_q]$ when $e=f$. 
Suppose that $e\neq f$. 
For a contradiction, suppose that there exist $c,d\in C$ such that 
$[x,c]_\sigma=(e,f)$ and $[x,d]_\sigma=(f,e)$. 
Since $c\thickapprox d$, there exists a sequence 
$c_0,\ldots,c_p$ of elements of $B_q$ satisfying
\begin{itemize}
\item $d_0=d$ and $d_q=c$;
\item for $0\leq m\leq q-1$, $[d_m,d_{m+1}]_\sigma\neq (e,f)$.
\end{itemize}
By considering the sequences $(d=d_0,\ldots,d_q=c,x)$ and $(x,d)$, we obtain 
$x\thickapprox d$, which contradicts the fact that $C$ is an equivalence class of 
$\thickapprox$. 
It follows that $[x,C]_\sigma=(e,f)$ or $(f,e)$. 
Thus, $C$ is a module of $\sigma[B_q]$ when $e\neq f$. 

Second, we show that $C$ is a module of $\sigma[B_p]$. 
Suppose that $e=f$. 
It follows from Lemma~\ref{l1_p_and_q} that $B_q=B_p$. 
Hence $C$ is a module of $\sigma[B_p]$. 
Suppose that $e\neq f$. 
If $B_q=B_p$, then we proceed as previously. 
 Hence suppose that $B_q\neq B_p$. 
It follows from Lemma~\ref{l1_p_and_q} that 
$B_p\setminus B_q\in q_{(\sigma,\overline{X})}$ and 
\begin{equation*}
B_p\setminus B_q=\ 
\begin{cases}
\text{$\langle X\rangle_\sigma^{(f,e)}$ if $B_q=\langle X\rangle_\sigma^{(e,f)}$}\\
\text{or}\\
\text{$X_\sigma^{(f,e)}(\alpha)$ if $B_q=X_\sigma^{(e,f)}(\alpha)$}.
\end{cases}
\end{equation*}
It follows from Lemma~\ref{l2_p_and_q} that $B_q$ is a module of $\sigma[B_p]$. 
Since $C$ is a module of $\sigma[B_q]$, we obtain that 
$C$ is a module of $\sigma[B_p]$. 

Third, we prove that $C$ is a module of $\Gamma_{(\sigma,\overline{X})}$. 
Since $C\subseteq B_p$, we have 
$\{c,x\}\not\in E(\Gamma_{(\sigma,\overline{X})})$ for $c\in C$ and 
$x\in B_p\setminus C$ (see Remark~\ref{rem_p}). 
Therefore, we have to verify that 
$C$ is a module of $\Gamma_{(\sigma,\overline{X})}[C\cup\{x\}]$ 
for each $x\in\overline{X}\setminus B_p$. 
Let $x\in\overline{X}\setminus B_p$. 
Set 
\begin{equation*}
\begin{cases}
C^+=\{c\in C:\{c,x\}\in E(\Gamma_{(\sigma,\overline{X})})\}\\
{\rm and}\\
C^-=\{c\in C:\{c,x\}\not\in E(\Gamma_{(\sigma,\overline{X})})\}.
\end{cases}
\end{equation*}
For a contradiction, suppose that $C^-\neq\emptyset$ and $C^+\neq\emptyset$. 
It follows from Corollary~\ref{cor0_first_results} that $[C^-,C^+]_\sigma=(e,f)$ or 
$(f,e)$, which contradicts the fact that $C$ is an equivalence class of 
$\thickapprox$. 
Therefore, $C^-=\emptyset$ or $C^+=\emptyset$, that is, 
$C$ is a module of $\Gamma_{(\sigma,\overline{X})}[C\cup\{x\}]$ 
for each $x\in\overline{X}\setminus B_p$. 
Thus 
$C$ is a module of $\Gamma_{(\sigma,\overline{X})}$. 

Consequently, $C$ is a module of $\sigma[B_p]$, and $C$ is a module of $\Gamma_{(\sigma,\overline{X})}$. 
It follows from Corollary~\ref{cor1_oppo_modules} that $C$ is a module of 
$\sigma$. 
Since $\sigma$ is prime, $C$ is trivial. 
Hence $|C|=1$ because $C\neq\emptyset$, and $C\cap X=\emptyset$. 
We conclude as follows by distinguishing the following two cases. 
\begin{itemize}
\item Suppose that $e=f$. 
Recall that $B_q=B_p$ by Lemma~\ref{l1_p_and_q}. 
Since every equivalence class of $\thickapprox$ is reduced to a singleton, we obtain 
$(v,w)_\sigma=e$ for distinct elements $v$ and $w$ of $B_p$. 
In other words, 
$\sigma[B_p]$ is constant, and 
$E(\sigma[B_p])=\{e[B_p]\}$. 
\item Suppose that $e\neq f$. 
For instance, suppose that $B_q=\langle X\rangle_\sigma^{(e,f)}$. 
We verify that 
$\sigma[\langle X\rangle_\sigma^{(e,f)}]$ is linear, and 
$E(\sigma[\langle X\rangle_\sigma^{(e,f)}])=\{e[\langle X\rangle_\sigma^{(e,f)}],f[\langle X\rangle_\sigma^{(e,f)}]\}$.  
Since every equivalence class of $\thickapprox$ is reduced to a singleton, we obtain 
$[v,w]_\sigma=(e,f)$ or $(f,e)$ for distinct elements $v$ and $w$ of 
$\langle X\rangle_\sigma^{(e,f)}$. 
We consider the digraph $\lambda$ defined on $\langle X\rangle_\sigma^{(e,f)}$ as follows. 
Given distinct $v,w\in\langle X\rangle_\sigma^{(e,f)}$, $(v,w)\in A(\lambda)$ if 
$[v,w]_\sigma=(e,f)$. 
Since $[v,w]_\sigma=(e,f)$ or $(f,e)$ for distinct elements $v$ and $w$ of 
$\langle X\rangle_\sigma^{(e,f)}$, $\lambda$ is a tournament. 
For a contradiction, suppose that there exist distinct 
$u,v,w\in\langle X\rangle_\sigma^{(e,f)}$ such that $(u,v),(v,w),(w,u)\in A(\lambda)$. 
By considering the sequences $(v,u)$ and $(u,w,v)$, we obtain 
$v\thickapprox u$, which contradicts the fact that 
every equivalence class of $\thickapprox$ is reduced to a singleton. 
It follows that for distinct elements 
$u,v,w\in\langle X\rangle_\sigma^{(e,f)}$, if $(u,v),(v,w)\in A(\lambda)$, then 
$(u,w)\in A(\lambda)$. 
Therefore, $\lambda$ is a linear order, that is, 
$\sigma[\langle X\rangle_\sigma^{(e,f)}]$ is linear, and 
$E(\sigma[\langle X\rangle_\sigma^{(e,f)}])=\{e[\langle X\rangle_\sigma^{(e,f)}],f[\langle X\rangle_\sigma^{(e,f)}]\}$.  

Lastly, suppose that $B_q\subsetneq B_p$. 
It follows from Lemma~\ref{l1_p_and_q} that 
$B_p\setminus B_q=\langle X\rangle_\sigma^{(f,e)}$. 
Similarly, we have $\sigma[\langle X\rangle_\sigma^{(f,e)}]$ is linear, and 
$E(\sigma[\langle X\rangle_\sigma^{(f,e)}])=\{e[\langle X\rangle_\sigma^{(f,e)}],f[\langle X\rangle_\sigma^{(f,e)}]\}$.   
Moreover, we have 
$$[\langle X\rangle_\sigma^{(e,f)},\langle X\rangle_\sigma^{(f,e)}]_\sigma=(e,f)$$ 
by the first assertion of Lemma~\ref{l2_p_and_q}. 
Consequently, 
$\sigma[\langle X\rangle_\sigma]$ is linear, and 
$E(\sigma[\langle X\rangle_\sigma])=\{e[\langle X\rangle_\sigma],f[\langle X\rangle_\sigma]\}$. \qedhere
\end{itemize}
\end{proof}

Lemma~\ref{l3_p_and_q} ends the examination of blocks of the partitions 
$p_{(\sigma,\overline{X})}$ and $q_{(\sigma,\overline{X})}$. 
We complete Section~\ref{S_first_results} with a result on the components of the outside graph, which follows from Fact~\ref{fac1_first_results} and the following easy consequence of Fact~\ref{fac2_first_results}. 
We use the following notation. 

\begin{nota}\label{nota_a+s}
Given a 2-structure $\sigma$, consider $X\subsetneq V(\sigma)$ such that $\sigma[X]$ is prime. 
First, the set $\{\langle X\rangle_\sigma^{(e,e)}:e\in E(\sigma)\}\cup\{X_\sigma^{(e,e)}(\alpha):e\in E(\sigma),\alpha\in X\}$ is denoted by 
$q_{(\sigma,\overline{X})}^s$. 
Second, the set $q_{(\sigma,\overline{X})}\setminus(q_{(\sigma,\overline{X})}^s\cup\{{\rm Ext}_\sigma(X)\})$ is denoted by $q_{(\sigma,\overline{X})}^a$. 
\end{nota}

\begin{fac}\label{fac3_first_results}
Given a 2-structure $\sigma$, consider $X\subsetneq V(\sigma)$ such that $\sigma[X]$ is prime. 
Suppose that Statement (S3) holds. 
Consider distinct elements $x,x',y,y'$ of $\overline{X}$ such that 
$\{x,y\},\{x',y'\}\in E(\Gamma_{(\sigma,\overline{X})})$ and 
$\{x,y'\},\{x',y\}\not\in E(\Gamma_{(\sigma,\overline{X})})$. 
If there exist $B_q\in q_{(\sigma,\overline{X})}$ such that $y,y'\in B_q$, then 
$B_q\in q_{(\sigma,\overline{X})}^s$. 
\end{fac}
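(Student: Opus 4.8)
The plan is to argue by contradiction: assume $y,y'\in B_q$ but $B_q\notin q_{(\sigma,\overline{X})}^s$, so $B_q=\langle X\rangle_\sigma^{(e,f)}$ or $B_q=X_\sigma^{(e,f)}(\alpha)$ for some \emph{distinct} $e,f\in E(\sigma)$ and some $\alpha\in X$. Since Statement (S3) holds, Statement (S1) holds by Remark~\ref{rem1_conditions_Ck}, so ${\rm Ext}_\sigma(X)=\emptyset$, and by Remark~\ref{rem_p} the set $B_q$ (being contained in a block of $p_{(\sigma,\overline{X})}$ distinct from ${\rm Ext}_\sigma(X)$) induces an empty subgraph of $\Gamma_{(\sigma,\overline{X})}$; in particular $\{y,y'\}\notin E(\Gamma_{(\sigma,\overline{X})})$, which is consistent with the hypothesis that only one of $x,x'$ is adjacent to each of $y,y'$.

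First I would pin down the relation $[y,y']_\sigma$ using Corollary~\ref{cor0_first_results}. Apply it with the block $B_q$ and the vertex $v:=x\in\overline{X}\setminus B_q$ (note $x\notin B_q$ since $x$ is adjacent to $y\in B_q$ in $\Gamma_{(\sigma,\overline{X})}$, whereas $B_q$ induces an empty subgraph): we have $y\in\{z\in B_q:\{z,x\}\in E(\Gamma_{(\sigma,\overline{X})})\}\neq\emptyset$ and $y'\in\{z\in B_q:\{z,x\}\notin E(\Gamma_{(\sigma,\overline{X})})\}\neq\emptyset$, so Corollary~\ref{cor0_first_results} gives $[y',y]_\sigma=(e,f)$ if $B_q=\langle X\rangle_\sigma^{(e,f)}$, and $[y',y]_\sigma=(f,e)$ if $B_q=X_\sigma^{(e,f)}(\alpha)$. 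Next I would do the same computation with the vertex $v:=x'\in\overline{X}\setminus B_q$ (again $x'\notin B_q$ for the same reason), but now the roles of $y,y'$ with respect to adjacency to $x'$ are swapped: $y'$ is adjacent to $x'$ and $y$ is not. Hence Corollary~\ref{cor0_first_results} yields $[y,y']_\sigma=(e,f)$ in the $\langle X\rangle$-case and $[y,y']_\sigma=(f,e)$ in the $X_\sigma(\alpha)$-case, i.e. $[y',y]_\sigma=(f,e)$ resp. $(e,f)$.

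Comparing the two computations of $[y',y]_\sigma$ forces $(e,f)=(f,e)$, hence $e=f$, contradicting the assumption that $e\neq f$. Therefore $B_q\in q_{(\sigma,\overline{X})}^s$, as claimed. The only delicate point is checking that $x,x'\notin B_q$ so that Corollary~\ref{cor0_first_results} legitimately applies with $v=x$ and $v=x'$; this is immediate from Remark~\ref{rem_p} (the block $B_q$ induces an empty subgraph) together with the hypothesis $\{x,y\},\{x',y'\}\in E(\Gamma_{(\sigma,\overline{X})})$ with $y,y'\in B_q$. Everything else is a direct bookkeeping of which of $y,y'$ lies in the ``adjacent to $v$'' part versus the ``non-adjacent to $v$'' part of the partition in Corollary~\ref{cor0_first_results}.
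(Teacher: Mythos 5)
Your proof is correct and is essentially the paper's argument: the paper applies Fact~\ref{fac2_first_results} directly to the triples $x,y,y'$ and $x',y,y'$ to compute $[y',y]_\sigma$ and $[y,y']_\sigma$ and conclude $e=f$, while you route the same two computations through Corollary~\ref{cor0_first_results} (itself just a packaging of Fact~\ref{fac2_first_results}) and phrase it as a contradiction. The side checks you flag ($x,x'\notin B_q$ via Remark~\ref{rem_p} and ${\rm Ext}_\sigma(X)=\emptyset$, and the nonemptiness of both parts of the partition) are all valid.
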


\begin{proof}
Since $y$ and $y'$ belong to the same block of $p_{(\sigma,\overline{X})}$, we have 
$\{y,y'\}\not\in E(\Gamma_{(\sigma,\overline{X})})$ by Remark~\ref{rem_p}. 
Besides, there exist $e,f\in E(\sigma)$ such that $B_q=\langle X\rangle_\sigma^{(e,f)}$ or 
$B_q=X_\sigma^{(e,f)}(\alpha)$, where $\alpha\in X$. 

First, suppose that $B_q=\langle X\rangle_\sigma^{(e,f)}$. 
By the first assertion of Fact~\ref{fac2_first_results} applied to $x,y,y'$, $X\cup\{x,y\}$ is a module of $\sigma[X\cup\{x,y,y'\}]$. 
Since $y'\in\langle X\rangle_\sigma^{(e,f)}$, $[y',y]_\sigma=(e,f)$. 
Similarly, it follows from the first assertion of Fact~\ref{fac2_first_results} applied to $x',y,y'$ that 
$[y,y']_\sigma=(e,f)$. 
Thus $e=f$, and hence $B_q\in q_{(\sigma,\overline{X})}^s$. 

Second, suppose that $B_q=X_\sigma^{(e,f)}(\alpha)$, where $\alpha\in X$. 
By the second assertion of Fact~\ref{fac2_first_results} applied to $x,y,y'$, $\{\alpha,y'\}$ is a module of $\sigma[X\cup\{x,y,y'\}]$. 
Thus $[y,y']_\sigma=[y,\alpha]_\sigma$. 
Since $y\in X_\sigma^{(e,f)}(\alpha)$, we obtain $[y,y']_\sigma=(e,f)$. 
Similarly, it follows from the second assertion of Fact~\ref{fac2_first_results} applied to $x',y,y'$ that 
$[y',y]_\sigma=(e,f)$. 
Therefore $e=f$, so $B_q\in q_{(\sigma,\overline{X})}^s$. 
\end{proof}

\begin{prop}\label{prop_component}
Given a 2-structure $\sigma$, consider $X\subsetneq V(\sigma)$ such that $\sigma[X]$ is prime. 
Suppose that Statement (S3) holds. 
If $\Gamma_{(\sigma,\overline{X})}$ does not have isolated vertices, then the following two assertions hold. 
\begin{enumerate}
\item For each component $C$ of $\Gamma_{(\sigma,\overline{X})}$, there exist distinct 
$B_p,D_p\in p_{(\sigma,\overline{X})}$ and $B_q,D_q\in q_{(\sigma,\overline{X})}$ such that 
$B_q\subseteq B_p$, $D_q\subseteq D_p$, and $C$ is bipartite with bipartition 
$\{V(C)\cap B_q,V(C)\cap D_q\}$. 
\item For a component $C$ of $\Gamma_{(\sigma,\overline{X})}$ and for 
$B_q\in q_{(\sigma,\overline{X})}^a$, if $V(C)\cap B_q\neq\emptyset$, then $B_q\subseteq V(C)$. 
\end{enumerate}
\end{prop}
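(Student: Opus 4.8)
The plan is to exploit the facts about the outside graph accumulated so far, in particular Fact~\ref{fac1_first_results}, Fact~\ref{fac2_first_results}, Corollary~\ref{cor0_first_results}, and Fact~\ref{fac3_first_results}, together with the structural results of Lemmas~\ref{l1_p_and_q}, \ref{l2_p_and_q}, and \ref{l3_p_and_q}. Throughout we may assume $\Gamma_{(\sigma,\overline{X})}$ has no isolated vertices, and since Statement~(S3) holds, Statement~(S1) holds by Remark~\ref{rem1_conditions_Ck}, so ${\rm Ext}_\sigma(X)=\emptyset$ and hence $\Gamma_{(\sigma,\overline{X})}$ is multipartite with partition $p_{(\sigma,\overline{X})}$ (Remark~\ref{rem_p}); moreover each block of $p_{(\sigma,\overline{X})}$ is $\langle X\rangle_\sigma$ or some $X_\sigma(\alpha)$, and refines into blocks of $q_{(\sigma,\overline{X})}$.

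For the first assertion, fix a component $C$ of $\Gamma_{(\sigma,\overline{X})}$ and a vertex $x\in V(C)$. Let $B_q$ be the block of $q_{(\sigma,\overline{X})}$ containing $x$ and $B_p$ the block of $p_{(\sigma,\overline{X})}$ with $B_q\subseteq B_p$. Pick a neighbour $y$ of $x$ in $C$ (it exists since there are no isolated vertices), let $D_q$ be the block of $q_{(\sigma,\overline{X})}$ containing $y$, and $D_p\supseteq D_q$ the corresponding block of $p_{(\sigma,\overline{X})}$; by Remark~\ref{rem_p}, $D_p\neq B_p$, so $B_p,D_p$ are distinct. The heart of the argument is to show, by a connectedness (shortest-path or inductive) argument along $C$, that $V(C)\subseteq B_q\cup D_q$, with the vertices at even distance from $x$ lying in $B_q$ and those at odd distance lying in $D_q$. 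The key local step: if $u\in V(C)\cap B_q$ and $v$ is a neighbour of $u$ in $C$, then $\{x,u\}$-type reasoning via Fact~\ref{fac1_first_results} (applied to the pair of edges $\{u,x\},\{u,v\}$ when $x$ is also a neighbour of $u$) or, more robustly, via Fact~\ref{fac2_first_results} forces $v$ to lie in $D_q$; symmetrically a neighbour of a vertex of $D_q$ lies in $B_q$. Concretely one propagates along a walk from $x$: given a vertex $w$ already shown to lie in $B_q$ (resp.\ $D_q$), take the next edge $\{w,w'\}$ on the walk; comparing $w'$ with $y$ (resp.\ $x$) through the adjacency/non-adjacency patterns and invoking Fact~\ref{fac1_first_results} when $w'$ is adjacent to the reference vertex and Fact~\ref{fac2_first_results}/Corollary~\ref{cor0_first_results} otherwise, one pins down the $q$-block of $w'$. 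Since $C$ is connected this covers $V(C)$, and since $B_q$ and $D_q$ are independent sets in $\Gamma_{(\sigma,\overline{X})}$ (Remark~\ref{rem_p}), $C$ is bipartite with bipartition $\{V(C)\cap B_q, V(C)\cap D_q\}$, and $B_p\neq D_p$.

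For the second assertion, suppose $C$ is a component with $V(C)\cap B_q\neq\emptyset$ where $B_q\in q_{(\sigma,\overline{X})}^a$; by Notation~\ref{nota_a+s} this means $B_q=\langle X\rangle_\sigma^{(e,f)}$ or $X_\sigma^{(e,f)}(\alpha)$ with $e\neq f$. Pick $x\in V(C)\cap B_q$ and an arbitrary $x'\in B_q$; we must show $x'\in V(C)$. Pick a neighbour $y$ of $x$ in $C$. If $\{x',y\}\in E(\Gamma_{(\sigma,\overline{X})})$ then $x'\in V(C)$ and we are done; otherwise $\{x,y\}\in E$, $\{x',y\}\notin E$, and $x,x'$ lie in the same $q$-block. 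Since $\Gamma_{(\sigma,\overline{X})}$ has no isolated vertices, $x'$ has some neighbour $y'$; if $y'=y$ or $y'$ is also a neighbour of $x$ then $x'\in V(C)$; otherwise we are in the configuration of Fact~\ref{fac3_first_results} with $x,x',y,y'$ and the common block $B_q$, which would force $B_q\in q_{(\sigma,\overline{X})}^s$, contradicting $B_q\in q_{(\sigma,\overline{X})}^a$. Hence the only surviving possibility puts $x'$ in $V(C)$, so $B_q\subseteq V(C)$.

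\textbf{Anticipated obstacle.} The delicate point is the propagation step in the first assertion: making sure that every edge of $C$ is correctly classified as going between $B_q$ and $D_q$, and that one never "escapes" into a third block. One must carefully separate the case where the new vertex is adjacent to the reference vertex $x$ or $y$ (handled directly by Fact~\ref{fac1_first_results}, which also identifies the $q$-block via the ordered pair $[\,\cdot\,,\cdot\,]_\sigma$) from the case where it is not (handled by Fact~\ref{fac2_first_results} together with the fact that each $B_p$ is either $\langle X\rangle_\sigma$ or some $X_\sigma(\alpha)$, so the two forms of the conclusion of Fact~\ref{fac2_first_results} apply). Keeping the bookkeeping of which block plays the role of the "$\langle X\rangle_\sigma$" block versus the "$X_\sigma(\alpha)$" block straight, and checking that the pattern of adjacencies along a shortest path is consistent with a single bipartition, is where the real work lies; the rest is routine given the earlier lemmas.
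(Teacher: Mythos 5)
Your proposal follows the paper's route for both assertions, and the second assertion is handled exactly as in the paper: take $y\in V(C)\cap B_q$, a hypothetical $y'\in B_q\setminus V(C)$, neighbours $x\in V(C)$ of $y$ and $x'\notin V(C)$ of $y'$, observe that the cross pairs cannot be edges since $C$ is a component, and invoke Fact~\ref{fac3_first_results} (whose configuration is symmetric in the two sides, so your relabelling is harmless) to contradict $B_q\in q_{(\sigma,\overline{X})}^a$.

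For the first assertion your target claim (parity along paths, $V(C)\subseteq B_q\cup D_q$, bipartiteness from Remark~\ref{rem_p}) and your main tool (Fact~\ref{fac1_first_results}) are the right ones, but the propagation mechanism you sketch is not the one that works. You propose to classify a new vertex $w'$ by comparing it with the distant reference vertices $x$ or $y$, falling back on Fact~\ref{fac2_first_results} and Corollary~\ref{cor0_first_results} when $w'$ is not adjacent to the reference; those two results produce modules, not membership in a $q$-block, so that branch does not close. The paper's induction is simpler and entirely local: fix an edge $\{c,d\}$ of $C$ with $c\in B_q$, $d\in D_q$, and for any other vertex take a path $x_0\ldots x_n$ from $x_0\in\{c,d\}$ to it (prepending the other endpoint of the edge when $n$ is odd so that the path starts in the correct block); then for each internal vertex $x_{i+1}$, the two edges $\{x_{i+1},x_i\}$ and $\{x_{i+1},x_{i+2}\}$ put $x_i$ and $x_{i+2}$ in the same $q$-block by Fact~\ref{fac1_first_results}, since they share the common neighbour $x_{i+1}$. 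This two-steps-at-a-time application of Fact~\ref{fac1_first_results} is the missing idea; once you replace your "compare with the reference vertex" step by it, no case analysis and no appeal to Fact~\ref{fac2_first_results} or Corollary~\ref{cor0_first_results} is needed, and the rest of your argument goes through.
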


\begin{proof}
For the first assertion, consider a component $C$ of 
$\Gamma_{(\sigma,\overline{X})}$. 
Since $\Gamma_{(\sigma,\overline{X})}$ does not have isolated vertices, 
$v(C)\geq 2$. 
Hence, there exist distinct $c,d\in V(C)$ such that 
$\{c,d\}\in E(\Gamma_{(\sigma,\overline{X})})$. 
There exist $B_p,D_p\in p_{(\sigma,\overline{X})}$ and $B_q,D_q\in q_{(\sigma,\overline{X})}$ such that $c\in B_q$, $d\in D_q$, $B_q\subseteq B_p$ and $D_q\subseteq D_p$. 
Since $\{c,d\}\in E(\Gamma_{(\sigma,\overline{X})})$, we have $B_p\neq D_p$ by Remark~\ref{rem_p}. 
Let $x\in V(C)\setminus\{c,d\}$. 
Since $C$ is a component of $\Gamma_{(\sigma,\overline{X})}$, there exist a path 
$x_0\ldots x_n$ such that $x_0\in\{c,d\}$, $x_n=x$, and $\{x_1,\ldots,x_n\}\cap\{c,d\}=\emptyset$. 
We have $n\geq 1$. 
We distinguish the following two cases. 
\begin{enumerate}
\item Suppose that $n$ is even. 
It follows from Fact~\ref{fac1_first_results} that $x_0,x_2,\ldots,x_n$ belong to the same block of 
$q_{(\sigma,\overline{X})}$. 
Since $x_0\in\{c,d\}$ and $x_n=x$, we obtain $x\in B_q\cup D_q$. 
\item Suppose that $n$ is odd. 
Set 
\begin{equation*}
x_{-1}=\ 
\begin{cases}
d\ {\rm if}\ x_0=c\\
{\rm and}\\
c\ {\rm if}\ x_0=d.
\end{cases}
\end{equation*}
We have $x_{-1}\in B_q\cup D_q$. 
By considering the path $x_{-1}x_0\ldots x_n$, it follows from Fact~\ref{fac1_first_results} that 
$x$ and $x_{-1}$ belong to the same block of $q_{(\sigma,\overline{X})}$. 
Hence $x\in B_q\cup D_q$. 
\end{enumerate}
Therefore $V(C)\setminus\{c,d\}\subseteq B_q\cup D_q$, so 
$V(C)\subseteq B_q\cup D_q$. 
By Remark~\ref{rem_p}, 
$C$ is bipartite with bipartition 
$\{V(C)\cap B_q,V(C)\cap D_q\}$. 

For the second assertion, consider a component $C$ of $\Gamma_{(\sigma,\overline{X})}$, and an  element $B_q$ of $q_{(\sigma,\overline{X})}^a$ such that $V(C)\cap B_q\neq\emptyset$. 
Consider $y\in V(C)\cap B_q$. 
For a contradiction, suppose that $B_q\setminus V(C)\neq\emptyset$, and consider 
$y'\in B_q\setminus V(C)$. 
Since $\Gamma_{(\sigma,\overline{X})}$ does not have isolated vertices, there exist $x\in\overline{X}\setminus\{y\}$ and 
$x'\in\overline{X}\setminus\{y'\}$ such that 
$\{x,y\},\{x',y'\}\in E(\Gamma_{(\sigma,\overline{X})})$. 
Furthermore, since $C$ is a component of $\Gamma_{(\sigma,\overline{X})}$, with 
$y\in V(C)$ and $y'\not\in V(C)$, 
we obtain $x\in V(C)$ and $x'\not\in V(C)$. 
Hence $x\neq x'$, and $\{x,y'\},\{x',y\}\not\in E(\Gamma_{(\sigma,\overline{X})})$. 
It follows from Fact~\ref{fac3_first_results} that $B_q\in q_{(\sigma,\overline{X})}^s$, which contradicts $B_q\in q_{(\sigma,\overline{X})}^a$. 
\end{proof}

Proposition~\ref{prop_component} leads us to the following notation. 

\begin{nota}\label{nota_prop_component}
Given a 2-structure $\sigma$, consider $X\subsetneq V(\sigma)$ such that $\sigma[X]$ is prime. 
Suppose that Statement (S3) holds. 
To use Proposition~\ref{prop_component}, we have also to suppose that 
$\Gamma_{(\sigma,\overline{X})}$ does not have isolated vertices. 
By Corollary~\ref{cor1_first_results}, we can also suppose that $\sigma$ is prime. 

Consider a component $C$ of $\Gamma_{(\sigma,\overline{X})}$. 
By the first assertion of Proposition~\ref{prop_component}, 
there exist distinct 
$B_p,D_p\in p_{(\sigma,\overline{X})}$ and $B_q,D_q\in q_{(\sigma,\overline{X})}$ such that 
$B_q\subseteq B_p$, $D_q\subseteq D_p$, and $C$ is bipartite with bipartition 
$\{V(C)\cap B_q,V(C)\cap D_q\}$. 
In the sequel, $V(C)\cap B_q$ and $V(C)\cap D_q$ are respectively denoted by 
$B^C_q$ and $D^C_q$. (Note that we use the Axiom of Ultrafilter to introduce such a notation for each component of $\Gamma_{(\sigma,\overline{X})}$, when 
$q_{(\sigma,\overline{X})}$ has infinitely many blocks.) 
\end{nota}

\section{Proofs of the main results}

We use the following notation. 

\begin{nota}\label{nota_component}
Given a graph $\Gamma$, $\mathcal{C}(\Gamma)$ denotes the set of the components of $\Gamma$.  
\end{nota}

\begin{proof}[Proof of Theorem~\ref{thm_main_1}]
To begin, suppose that $\sigma$ is not prime. 
We prove that there exists $C\in\mathcal{C}(\Gamma_{(\sigma,\overline{X})})$ such that $\sigma[X\cup V(C)]$ is not prime. 
First, suppose that $\Gamma_{(\sigma,\overline{X})}$ admits isolated vertices. 
There exists $v\in\overline{X}$ such that $\{v\}\in\mathcal{C}(\Gamma_{(\sigma,\overline{X})})$. 
Since Statement (S3) holds, ${\rm Ext}_\sigma(X)=\emptyset$ by 
Remark~\ref{rem1_conditions_Ck}. 
Thus $\sigma[X\cup\{v\}]$ is not prime. 
Second, suppose that $\Gamma_{(\sigma,\overline{X})}$ does not have isolated vertices. 
Since $\sigma$ is not prime, $\sigma$ admits a nontrivial module $M$. 
It follows from Corollary~\ref{cor1_modules} that $M\cap X=\emptyset$. 
By Lemma~\ref{lem2_modules}, there exists $B_p\in p_{(\sigma,\overline{X})}$ such that 
$M\subseteq B_p$, and $M$ is a module of $\Gamma_{(\sigma,\overline{X})}$. 
Let $x\in M$. 
Since $\Gamma_{(\sigma,\overline{X})}$ does not have isolated vertices, there exists $y\in\overline{X}\setminus\{x\}$ such that 
$\{x,y\}\in E(\Gamma_{(\sigma,\overline{X})})$. 
Since $M\subseteq B_p$, we have $y\not\in M$ by Remark~\ref{rem_p}. 
Denote by $C$ the component of $\Gamma_{(\sigma,\overline{X})}$ 
containing $x$. 
Hence $y\in V(C)$ because $\{x,y\}\in E(\Gamma_{(\sigma,\overline{X})})$. 
Since $M$ is a module of $\Gamma_{(\sigma,\overline{X})}$, we obtain 
$\{x',y\}\in E(\Gamma_{(\sigma,\overline{X})})$ for every $x'\in M$. 
Therefore $M\subseteq V(C)$. 
It follows that $M$ is a nontrivial module of $\sigma[X\cup V(C)]$. 

Now, we suppose that there exists $C\in\mathcal{C}(\Gamma_{(\sigma,\overline{X})})$ such that $\sigma[X\cup V(C)]$ is not prime. 
Since $\sigma[X\cup V(C)]$ is not prime, we have $v(C)\neq 2$. 
We assume that $v(C)\geq 4$, and we have to prove that $C$ is not prime. 
Consider a nontrivial module $M$ of $\sigma[X\cup V(C)]$. 
Clearly, $\sigma[X\cup V(C)]$ satisfies Statement~(S3). 
Moreover, $$\Gamma_{(\sigma[X\cup V(C)],V(C))}=C.$$
Since $v(C)\geq 4$, it follows from Corollary~\ref{cor1_modules} applied to 
$\sigma[X\cup V(C)]$ that $M\subseteq V(C)$. 
By Lemma~\ref{lem2_modules} applied to $\sigma[X\cup V(C)]$, there exists 
$B_p\in p_{(\sigma[X\cup V(C)],V(C))}$ such that 
$M\subseteq B_p^C$, and $M$ is a module of $C$. 
We have to verify that $M\neq V(C)$. 
Let $x\in M$. 
Since $v(C)\geq 4$, there exists 
$y\in V(C)\setminus\{x\}$ such that $\{x,y\}\in E(C)$. 
Since $M\subseteq B_p^C$, we have $y\not\in M$ by 
Remark~\ref{rem_p} applied to $\sigma[X\cup V(C)]$. 
Hence $y\in V(C)\setminus M$. 

Lastly, we suppose that there exists 
$C\in\mathcal{C}(\Gamma_{(\sigma,\overline{X})})$ such that $v(C)=1$ or $v(C)\geq 3$ and $C$ is not prime. 
We have to prove that $\sigma$ is not prime. 
Therefore, by Corollary~\ref{cor1_first_results}, we can assume that 
\begin{equation}\label{E1_thm_main_1}
\text{$\Gamma_{(\sigma,\overline{X})}$ does not have isolated vertices.}
\end{equation}
In particular, we obtain $v(C)\geq 3$. 
Consider a nontrivial module $M$ of $C$. 
Clearly, $M$ is a module of $\Gamma_{(\sigma,\overline{X})}$ because $C$ is a component of 
$\Gamma_{(\sigma,\overline{X})}$. 
Since $\Gamma_{(\sigma,\overline{X})}$ does not have isolated vertices 
(see \eqref{E1_thm_main_1}), it follows from the first assertion of Proposition~\ref{prop_component} that there exist distinct 
$B_p,D_p\in p_{(\sigma,\overline{X})}$ and $B_q,D_q\in q_{(\sigma,\overline{X})}$ such that 
$B_q\subseteq B_p$, $D_q\subseteq D_p$, and $C$ is bipartite with bipartition 
$\{V(C)\cap B_q,V(C)\cap D_q\}$. 
Since $C$ is connected, we have $M\subseteq V(C)\cap B_q$ or $M\subseteq V(C)\cap D_q$. 
For instance, assume that $M\subseteq V(C)\cap B_q$. 
To conclude, we distinguish the following two cases. 
\begin{enumerate}
\item Suppose that $B_q\in q_{(\sigma,\overline{X})}^s$. 
There exists $e\in E(\sigma)$ such that $B_q=\langle X\rangle_\sigma^{(e,e)}$ or 
$X_\sigma^{(e,e)}(\alpha)$, where $\alpha\in X$. 
If $\sigma[B_p]$ is not constant, then it follows from the first assertion of Lemma~\ref{l3_p_and_q} 
that $\sigma$ is not prime. 
Thus, suppose that $\sigma[B_p]$ is constant. 
It follows that any subset of $B_p$ is a module of $\sigma[B_p]$. 
In particular, $M$ is a module of $\sigma[B_p]$. 
Since $M$ is a module of $\Gamma_{(\sigma,\overline{X})}$, 
it follows from Corollary~\ref{cor1_oppo_modules} that $M$ is a module of $\sigma$. 
\item Suppose that $B_q\in q_{(\sigma,\overline{X})}^a$. 
Since $\Gamma_{(\sigma,\overline{X})}$ does not have isolated vertices 
(see \eqref{E1_thm_main_1}), it follows from the second assertion of Proposition~\ref{prop_component} that $B_q\subseteq V(C)$. 
In general, $M$ is not a module of $\sigma[B_q]$, and hence $M$ is not a module of 
$\sigma[B_p]$. 
Therefore, we cannot apply Corollary~\ref{cor1_oppo_modules} to $M$. 
Nevertheless, we construct a superset of $M$, which is a module of 
$\Gamma_{(\sigma,\overline{X})}$, and a module of 
$\sigma[B_p]$. 
Consider the set $\mathcal{M}$ of the nontrivial modules $M'$ of $C$ such that $M\subseteq M'$. 
Set $$\widetilde{M}=\bigcup\mathcal{M}.$$
Clearly, $M\in\mathcal{M}$. 
Since $M\neq\emptyset$ and all the elements of $\mathcal{M}$ contain $M$, 
$\widetilde{M}$ is a module of $C$. 
Since $C$ is a component of $\Gamma_{(\sigma,\overline{X})}$, 
$\widetilde{M}$ is a module of $\Gamma_{(\sigma,\overline{X})}$. 
As previously seen for $M$, $\widetilde{M}\subseteq V(C)\cap B_q$ or 
$\widetilde{M}\subseteq V(C)\cap D_q$. 
Since $M\subseteq\widetilde{M}$ and $M\subseteq V(C)\cap B_q$, we have 
$\widetilde{M}\subseteq V(C)\cap B_q$. 
Therefore $\widetilde{M}\subseteq B_q$. 
Set $$N=\{v\in B_q\setminus\widetilde{M}:v\not\longleftrightarrow_\sigma \widetilde{M}\}.$$
We verify that $\widetilde{M}\cup N$ is a module of $C$. 
It suffices to show that for any $v\in V(C)\cap D_q$, $x\in\widetilde{M}$ and $y\in N$, we have 
$\{v,x\},\{v,y\}\in E(\Gamma_{(\sigma,\overline{X})})$ or 
$\{v,x\},\{v,y\}\not\in E(\Gamma_{(\sigma,\overline{X})})$. 
Since $y\in N$, there exist $x',x''\in\widetilde{M}$ such that 
$y\not\longleftrightarrow_\sigma \{x',x''\}$. 
Furthermore, since $\widetilde{M}$ is a module of $C$, we have 
$\{v,x\},\{v,x'\},\{v,x''\}\in E(\Gamma_{(\sigma,\overline{X})})$ or 
$\{v,x\},\{v,x'\},\{v,x''\}\not\in E(\Gamma_{(\sigma,\overline{X})})$. 
For instance, suppose that 
$\{v,x\},\{v,x'\},\{v,x''\}\in E(\Gamma_{(\sigma,\overline{X})})$. 
By Corollary~\ref{cor0_first_results}, $\{z\in B_q:\{z,v\}\in E(\Gamma_{(\sigma,\overline{X})})\}$ is a module of $\sigma[B_q]$. 
Since $x,x',x''\in\{z\in B_q:\{z,v\}\in E(\Gamma_{(\sigma,\overline{X})})\}$ and 
$y\not\longleftrightarrow_\sigma \{x',x''\}$, we obtain 
$y\in\{z\in B_q:\{z,v\}\in E(\Gamma_{(\sigma,\overline{X})})\}$. 
Hence $\{v,x\},\{v,x'\},\{v,x''\},\{v,y\}\in E(\Gamma_{(\sigma,\overline{X})})$. 
Similarly, if $\{v,x\},\{v,x'\},\{v,x''\}\not\in E(\Gamma_{(\sigma,\overline{X})})$, then if follows from 
Corollary~\ref{cor0_first_results} that $\{v,x\},\{v,x'\},\{v,x''\},\{v,y\}\not\in E(\Gamma_{(\sigma,\overline{X})})$. 
Consequently, $\widetilde{M}\cup N$ is a module of $C$. 
It follows from the definition of $\widetilde{M}$ that $N\subseteq\widetilde{M}$. 
Therefore $N=\emptyset$, and hence $\widetilde{M}$ is a module of $\sigma[B_q]$. 
Since $\Gamma_{(\sigma,\overline{X})}$ does not have isolated vertices 
(see \eqref{E1_thm_main_1}), it follows from Lemmas~\ref{l1_p_and_q} and \ref{l2_p_and_q} that $\widetilde{M}$ is a module of 
$\sigma[B_p]$. 
Lastly, since $\widetilde{M}$ is a module of $\Gamma_{(\sigma,\overline{X})}$, 
it follows from Corollary~\ref{cor1_oppo_modules} that $\widetilde{M}$ is a module of $\sigma$. \qedhere
\end{enumerate}
\end{proof}

We use the next notation to demonstrate Theorem~\ref{thm_main_2}. 

\begin{nota}\label{nota_set_comp}
Given graphs $G$ and $H$, $G\leq H$ means that $G$ is isomorphic to an induced subgraph of $H$.  
\end{nota}

\begin{nota}\label{nota_union}
Let $G$ and $H$ be graphs such that $V(G)\cap V(H)=\emptyset$. 
The {\em disjoint union} of $G$ and $H$ is the graph 
$G\oplus H=(V(G)\cup V(H),E(G)\cup E(H))$. 
If $V(G)\cap V(H)\neq\emptyset$, then we can define $G\oplus H$ up to 
isomorphism by considering graphs $G'$ and $H'$ such that $G\simeq G'$, $H\simeq H'$, and $V(G')\cap V(H')=\emptyset$. 
\end{nota}

We use also the following two lemmas. 
The next result is a consequence of Theorem~\ref{thm_main_1}. 

\begin{lem}\label{lem1_component}
Given a 2-structure $\sigma$, consider $X\subsetneq V(\sigma)$ such that $\sigma[X]$ is prime. 
Suppose that Statement (S5) holds. 
For each component $C$ of $\Gamma_{(\sigma,\overline{X})}$, $P_5\not\leq C$. 
\end{lem}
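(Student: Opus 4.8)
The plan is to argue by contradiction using Theorem~\ref{thm_main_1} together with Statement~(S5). Suppose some component $C$ of $\Gamma_{(\sigma,\overline{X})}$ contains an induced path $P_5$, say on vertices $v_1v_2v_3v_4v_5$ with $v_i\in V(C)\subseteq\overline{X}$. The idea is that such a configuration would force $\sigma[X\cup\{v_1,\ldots,v_5\}]$ to be prime, which directly contradicts Statement~(S5), i.e. $\{Y\in\varepsilon_{(\sigma,\overline{X})}:|Y|=5\}=\emptyset$.

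First I would set $Y=\{v_1,\ldots,v_5\}$ and apply Theorem~\ref{thm_main_1} to the induced $2$-structure $\tau=\sigma[X\cup Y]$, with the same prime part $X$. I need to check that the hypothesis of Theorem~\ref{thm_main_1} holds for $\tau$: Statement~(S3) for the pair $(\tau,Y)$. Since Statement~(S5) holds for $(\sigma,\overline{X})$, by Remark~\ref{rem1_conditions_Ck} Statement~(S3) also holds for $(\sigma,\overline{X})$, hence ${\rm Ext}_\sigma(X)=\emptyset$; it follows that ${\rm Ext}_\tau(X)=\emptyset$ as well (any $v\in Y$ with $\sigma[X\cup\{v\}]$ prime would already be in ${\rm Ext}_\sigma(X)$), which is exactly Statement~(S1) for $(\tau,Y)$, and then Statement~(S3) for $(\tau,Y)$ follows — actually I should be a little careful: Statement~(S3) for $(\tau,Y)$ asserts $\{Z\in\varepsilon_{(\tau,Y)}:|Z|=3\}=\emptyset$, and such a $Z\subseteq Y\subseteq\overline{X}$ with $\sigma[X\cup Z]$ prime would contradict (S3) for $(\sigma,\overline{X})$, so this is fine.

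Next, I would identify the outside graph $\Gamma_{(\tau,Y)}$. By definition it is a graph on $Y$ whose edges are the pairs $\{v_i,v_j\}$ with $\sigma[X\cup\{v_i,v_j\}]$ prime, i.e. the pairs lying in $E(\Gamma_{(\sigma,\overline{X})})$. Since the $v_i$ induce a $P_5$ inside $C\subseteq\Gamma_{(\sigma,\overline{X})}$, the graph $\Gamma_{(\tau,Y)}$ equals that induced $P_5$, which is connected with $5$ vertices. Applying the equivalence (3)$\Rightarrow$(1) of Theorem~\ref{thm_main_1} to $\tau$: the unique component $C'=P_5$ of $\Gamma_{(\tau,Y)}$ has $v(C')=5\geq 4$ and $P_5$ is prime (it is a standard fact that $P_n$ is prime for $n\geq 4$), so $\tau=\sigma[X\cup Y]$ is prime. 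But then $Y\in\varepsilon_{(\sigma,\overline{X})}$ with $|Y|=5$, contradicting Statement~(S5). Hence no component contains an induced $P_5$, i.e. $P_5\not\leq C$ for every component $C$.

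The main obstacle I anticipate is the bookkeeping needed to transfer the hypotheses (Statement~(S3), emptiness of ${\rm Ext}$, and the identification of the outside graph) from $(\sigma,\overline{X})$ to the restricted pair $(\sigma[X\cup Y],Y)$; these are all routine but must be verified since Theorem~\ref{thm_main_1} is applied to a different ambient $2$-structure. One should also double-check the direction of Theorem~\ref{thm_main_1} being used: we need ``(3)$\Rightarrow$(1)'', and we must be certain that with a single component that is a prime $P_5$, assertion (3) is satisfied, so that primality of $\sigma[X\cup Y]$ is forced — this is where the contradiction with (S5) is extracted.
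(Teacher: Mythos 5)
Your proposal is correct and follows essentially the same route as the paper: restrict to $\tau=\sigma[X\cup Y]$ where $Y$ induces the $P_5$, observe that $\Gamma_{(\tau,Y)}=\Gamma_{(\sigma,\overline{X})}[Y]\simeq P_5$ is connected and prime, and apply the implication (3)$\Rightarrow$(1) of Theorem~\ref{thm_main_1} to contradict Statement (S5). Your extra care in transferring Statement (S3) to the pair $(\tau,Y)$ is a detail the paper leaves implicit, and it is verified correctly.
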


\begin{proof}
For a contradiction, suppose that there exists a component $C$ of 
$\Gamma_{(\sigma,\overline{X})}$ such that $P_5\leq C$. 
Hence, there exists $Y\subseteq V(C)$ such that 
$C[Y]\simeq P_5$. 
We have 
$$\Gamma_{(\sigma[X\cup Y],Y)}=\Gamma_{(\sigma,\overline{X})}[Y].$$
Since $\Gamma_{(\sigma,\overline{X})}[Y]=C[Y]$, $\Gamma_{(\sigma[X\cup Y],Y)}$ is prime. 
It follows from Theorem~\ref{thm_main_1} applied to $\sigma[X\cup Y]$ that 
$\sigma[X\cup Y]$ is prime, which contradicts the fact that 
Statement (S5) holds. 
\end{proof}

Since the proof of the next lemma is easy, we omit it. 

\begin{lem}\label{fac1_half_graph}
Given a connected graph $\Gamma$, $K_2\oplus K_2\leq \Gamma$ if and only if $P_5\leq \Gamma$. 
\end{lem}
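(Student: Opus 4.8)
The plan is to prove the two implications separately; only ``$K_2\oplus K_2\leq\Gamma\Rightarrow P_5\leq\Gamma$'' will use the connectedness of $\Gamma$.

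The implication ``$P_5\leq\Gamma\Rightarrow K_2\oplus K_2\leq\Gamma$'' reduces to the observation that $K_2\oplus K_2\leq P_5$: writing $P_5$ as the path $v_1v_2v_3v_4v_5$, the pairs $\{v_1,v_2\}$ and $\{v_4,v_5\}$ are edges while $v_i$ is non-adjacent to $v_j$ for all $i\in\{1,2\}$ and $j\in\{4,5\}$, so $P_5[\{v_1,v_2,v_4,v_5\}]=K_2\oplus K_2$. Since ``$\leq$'' is transitive, $P_5\leq\Gamma$ yields $K_2\oplus K_2\leq\Gamma$; note that this direction does not use connectedness.

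For the converse, assume $\Gamma[\{a,b,c,d\}]=K_2\oplus K_2$ with edges $\{a,b\}$ and $\{c,d\}$ (so $a,b,c,d$ are distinct and $a,b$ are non-adjacent to $c,d$). As $\Gamma$ is connected there is a path joining $\{a,b\}$ to $\{c,d\}$; pick one, $P=p_0p_1\cdots p_n$, of least length, with $p_0\in\{a,b\}$ and $p_n\in\{c,d\}$; relabelling, we may assume $p_0=a$ and $p_n=c$. Minimality of $P$ forces, successively: $p_1,\dots,p_{n-1}\notin\{a,b,c,d\}$ (else a proper subpath of $P$ still joins the two sets); $P$ is induced (else a chord shortens it); $n\geq 2$ (there is no edge between $\{a,b\}$ and $\{c,d\}$); and $b$ is non-adjacent to $p_2,\dots,p_n$ while $d$ is non-adjacent to $p_0,\dots,p_{n-2}$ (else $b,p_i,\dots,p_n$, resp.\ $p_0,\dots,p_i,d$, is a shorter such path).

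Now the sequence $b,\,a=p_0,\,p_1,\dots,p_n=c,\,d$ consists of $n+3\geq 5$ pairwise distinct vertices with consecutive ones adjacent, hence is a path $W$ of $\Gamma$ on at least $5$ vertices; if $W$ is \emph{induced}, then any five consecutive vertices of $W$ induce a $P_5$ and we are done. By the constraints just listed, the only non-consecutive pairs on $W$ that might be edges are $\{b,p_1\}$, $\{d,p_{n-1}\}$ and $\{b,d\}$, and $\{b,d\}$ is a non-edge since $\Gamma[\{a,b,c,d\}]=K_2\oplus K_2$. So everything comes down to excluding the chords $\{b,p_1\}$ and $\{d,p_{n-1}\}$, and this is the step I expect to be the crux. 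It is immediate as soon as $\Gamma$ is bipartite, which is the setting in which the lemma is applied (a component of an outside graph is bipartite, cf.\ Proposition~\ref{prop_component}): $b$ and $p_1$ are both neighbours of $a$, hence lie in the same part of the bipartition, so $b\not\sim p_1$; symmetrically $d$ and $p_{n-1}$ are both neighbours of $c$, so $d\not\sim p_{n-1}$. Then $W$ is an induced path on at least $5$ vertices, whence $P_5\leq\Gamma$.
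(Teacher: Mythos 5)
The paper omits the proof of this lemma (``the proof \ldots is easy''), so there is no argument of the authors to compare yours against; but your dissection is the right one, and your worry about the chords $\{b,p_1\}$ and $\{d,p_{n-1}\}$ is justified in the strongest sense: the lemma is \emph{false} as stated. Consider the bowtie graph on $\{a,b,c,d,x\}$ with edge set $\{ab,cd,ax,bx,cx,dx\}$ (two triangles glued at $x$). It is connected and $\{a,b,c,d\}$ induces $K_2\oplus K_2$, yet it contains no induced $P_5$: such a $P_5$ would have to use all five vertices, and the graph has six edges while $P_5$ has four. In your notation this is exactly the case $n=2$ with both problematic chords present ($b\sim p_1=x$ and $d\sim p_{n-1}=x$). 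So the chord-exclusion step cannot be carried out in general, and no alternative argument can rescue the unrestricted statement.

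Your repaired version, assuming $\Gamma$ bipartite, is correct; triangle-freeness already suffices, since $b,p_1\in N_\Gamma(a)$ and $d,p_{n-1}\in N_\Gamma(c)$. This covers every invocation of Lemma~\ref{fac1_half_graph} in the paper: the implication $K_2\oplus K_2\leq\Gamma\Rightarrow P_5\leq\Gamma$ is only ever applied to components of an outside graph (bipartite by Proposition~\ref{prop_component}) or to bipartite graphs in Section~\ref{section_Half_graphs}, while the converse implication --- which you prove correctly via $P_5[\{v_1,v_2,v_4,v_5\}]\simeq K_2\oplus K_2$, and which is all that is used in the forward direction of Theorem~\ref{thm1_half_graph} --- needs neither connectedness nor bipartiteness. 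The rest of your argument is sound: the minimality consequences for the shortest path joining $\{a,b\}$ to $\{c,d\}$, the identification of $\{b,p_1\}$, $\{d,p_{n-1}\}$, $\{b,d\}$ as the only candidate chords of the walk $b,p_0,\ldots,p_n,d$, and the extraction of an induced $P_5$ from an induced path on at least five vertices are all correct. The statement of the lemma should be amended to include the bipartite (or triangle-free) hypothesis.
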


\begin{proof}[Proof of Theorem~\ref{thm_main_2}]
To begin, suppose that the first assertion holds, that is, $\sigma$ is $\overline{X}\!$-critical. 
We have to prove that the second assertion holds. 
Consider $C\in\mathcal{C}(\Gamma_{(\sigma,\overline{X})})$.  
By Theorem~\ref{thm_main_1} applied to $\sigma$, $\sigma[X\cup V(C)]$ is prime. 
We have to show that $\sigma[X\cup V(C)]$ is $V(C)$-critical. 
Let $c\in V(C)$. 
Since $\sigma$ is $\overline{X}\!$-critical, $\sigma-c$ is not prime. 
We have $$\Gamma_{(\sigma-c,\overline{X}\setminus\{c\})}=\Gamma_{(\sigma,\overline{X})}-c.$$
Therefore 
\begin{equation}\label{E1_thm_main_2}
\mathcal{C}(\Gamma_{(\sigma-c,\overline{X}\setminus\{c\})})=
(\mathcal{C}(\Gamma_{(\sigma,\overline{X})})\setminus\{C\})\cup\mathcal{C}(C-c).
\end{equation}
Since $\sigma-c$ is not prime, it follows from Theorem~\ref{thm_main_1} applied to 
$\sigma-c$ that there exists 
$C'\in\mathcal{C}(\Gamma_{(\sigma-c,\overline{X}\setminus\{c\})})$ such that 
$\sigma[X\cup V(C')]$ is not prime. 
By \eqref{E1_thm_main_2}, 
$C'\in(\mathcal{C}(\Gamma_{(\sigma,\overline{X})})\setminus\{C\})\cup\mathcal{C}(C-c)$. 
By Theorem~\ref{thm_main_1} applied to $\sigma$, $\sigma[X\cup V(D)]$ is prime for every $D\in\mathcal{C}(\Gamma_{(\sigma,\overline{X})})\setminus\{C\})$. 
Thus $C'\in\mathcal{C}(C-c)$. 
Finally, 
since $$\Gamma_{(\sigma[X\cup V(C)]-c,V(C)\setminus\{c\})}=C-c,$$
it follows from Theorem~\ref{thm_main_1} applied to $\sigma[X\cup V(C)]-c$ that 
$\sigma[X\cup V(C)]-c$ is not prime. 
Consequently $\sigma[X\cup V(C)]$ is $V(C)$-critical. 

To continue, suppose that the second assertion holds. 
We have to prove that the third assertion holds. 
Consider $C\in\mathcal{C}(\Gamma_{(\sigma,\overline{X})})$. 
By Theorem~\ref{thm_main_1} applied to $\sigma$, $v(C)=2$ or $v(C)\geq 4$ and 
$C$ is prime. 
Suppose that $v(C)\geq 4$ and 
$C$ is prime. 
We have to show that $C$ is critical. 
If $v(C)=4$, then $C$ is critical by Proposition~\ref{prop1_half_graph}. 
Hence suppose that $v(C)\geq 5$. 
Let $c\in V(C)$. 
If $C-c$ is disconnected, then $C-c$ is not prime. 
Thus, suppose that $C-c$ is connected. 
Since the second assertion holds, $\sigma[X\cup V(C)]-c$ is not prime. 
We have $$\Gamma_{(\sigma[X\cup V(C)]-c,V(C)\setminus\{c\})}=C-c.$$
It follows from Theorem~\ref{thm_main_1} applied to $\sigma[X\cup V(C)]-c$ that $C-c$ is not prime. 

Lastly, suppose that the third assertion holds. 
Hence, for every $C\in\mathcal{C}(\Gamma_{(\sigma,\overline{X})})$, 
\begin{equation}\label{E2_thm_main_2}
\text{$v(C)=2$ or $v(C)\geq 4$ and $C$ is critical.} 
\end{equation} 
We have to prove that $\sigma$ is $\overline{X}$-critical. 
By Theorem~\ref{thm_main_1} applied to $\sigma$, $\sigma$ is prime. 
Let $x\in\overline{X}$. 
We have to prove that $\sigma-x$ is not prime. 
Denote by $C$ the component of $\Gamma_{(\sigma,\overline{X})}$ containing $x$. 
As seen in \eqref{E1_thm_main_2}, 
\begin{equation}\label{E4_thm_main_2}
\mathcal{C}(C-x)\subseteq
\mathcal{C}(\Gamma_{(\sigma-x,\overline{X}\setminus\{x\})}).
\end{equation}
Suppose that $C-x$ admits isolated vertices. 
By \eqref{E4_thm_main_2}, 
$\Gamma_{(\sigma-x,\overline{X}\setminus\{x\})}$ admits isolated vertices as well. 
It follows from Corollary~\ref{cor1_first_results} that $\sigma-x$ is not prime. 
Finally, suppose that $C-x$ does not admit isolated vertices, that is, $v(C')\geq 2$ for each $C'\in\mathcal{C}(C-x)$. 
In particular, we do not have $v(C)=2$. 
It follows from \eqref{E2_thm_main_2} that 
\begin{equation}\label{E5_thm_main_2}
\text{$v(C)\geq 4$ and $C$ is critical.} 
\end{equation} 
By Lemma~\ref{lem1_component}, $P_5\not\leq C$. 
Therefore $K_2\oplus K_2\not\leq C$ by Lemma~\ref{fac1_half_graph}. 
Since $v(C')\geq 2$ for each $C'\in\mathcal{C}(C-x)$, we obtain that $C-x$ possesses a unique component, that is, $C-x$ is connected. 
By \eqref{E4_thm_main_2}, $C-x\in\mathcal{C}(\Gamma_{(\sigma-x,\overline{X}\setminus\{x\})})$. 
Furthermore, it follows from \eqref{E5_thm_main_2} that $v(C-x)\geq 3$ and 
$C-x$ is not prime. 
By Theorem~\ref{thm_main_1} applied to $\sigma-x$, $\sigma-x$ is not prime. 
\end{proof}

\begin{proof}[Proof of Corollary~\ref{cor1_thm_main_2}]
To begin, suppose that $\sigma$ is $\overline{X}\!$-critical. 
As seen in Remark~\ref{rem1_conditions_Ck}, Statement (S5) holds. 

Conversely, suppose that Statement (S5) holds, and $\sigma$ is prime. 
To prove that $\sigma$ is $\overline{X}\!$-critical, we apply 
Theorem~\ref{thm_main_2}. 
Let $C$ be a component of $\Gamma_{(\sigma,\overline{X})}$. 
Since $\sigma$ is prime, it follows from Theorem~\ref{thm_main_1} that $v(C)=2$ or $v(C)\geq 4$ and $C$ is prime. 
Suppose that $v(C)\geq 4$ and $C$ is prime. 
By Lemma~\ref{lem1_component}, $P_5\not\leq C$. 
It follows from Proposition~\ref{prop1_half_graph} that $C$ is critical. 
Consequently, for each component $C$ of $\Gamma_{(\sigma,\overline{X})}$, we have $v(C)=2$ or $v(C)\geq 4$ and $C$ is critical. 
By Theorem~\ref{thm_main_2}, $\sigma$ is $\overline{X}\!$-critical. 
\end{proof}

\begin{proof}[Proof of Theorem~\ref{cor1_thm_main_1}]
To begin, suppose that Statement (S5) holds, and $\sigma$ is prime. 
Let $F$ be a finite subset of $\overline{X}$. 
By Corollary~\ref{cor1_pi_infinite}, 
there exist a finite subset $F'$ of 
$\overline{X}$ such that 
$F\subseteq F'$ and $\sigma[X\cup F']$ is prime. 
Since Statement (S5) holds, it follows from Corollary~\ref{cor1_thm_main_2} that 
$\sigma[X\cup F']$ is $(F')$-critical. 
Consequently, $\sigma$ is finitely $\overline{X}$-critical. 

Conversely, suppose that  $\sigma$ is finitely $\overline{X}$-critical. 
Hence, we obtain that for each finite subset $F$ of $\overline{X}$, there exist a finite subset $F'$ of 
$\overline{X}$ such that 
$F\subseteq F'$ and $\sigma[X\cup F']$ is prime. 
By Corollary~\ref{cor1_pi_infinite}, $\sigma$ is prime. 
Lastly, consider $W\subseteq\overline{X}$ such that $|W|=5$. 
Since $\sigma$ is finitely $\overline{X}$-critical, there exists 
$W'\subseteq\overline{X}$ such that $W'$ is finite and $\sigma[X\cup W']$ is 
$(W')$-critical. 
As seen in Remark~\ref{rem1_conditions_Ck}, Statement (S5) holds in 
$\sigma[X\cup W']$. 
Therefore Statement (S5) holds in $\sigma$. 
\end{proof}

\section{Half graphs}\label{section_Half_graphs}

We begin with a remark on half graphs. 

\begin{rem}\label{rem_infinite_half_graph}
Consider a half graph $\Gamma$, with bipartition $\{X,Y\}$. 
There exist a linear order $L$ defined on $X$, and a bijection $\varphi$ from $X$ onto $Y$ such that $E(\Gamma)=\{\{x,\varphi(x')\}:x\leq x'\!\mod L\}$. 
Denote by $\varphi(L)$ the unique linear order defined on $Y$ such that $\varphi$ is an isomorphism from $L$ onto $\varphi(L)$. 
We obtain $$E(\Gamma)=\{\{y,\varphi^{-1}(y')\}:y\leq y'\hspace{-2mm}\mod \varphi(L)^\star\}.$$
Consequently, $\Gamma$ is also a half graph by considering the linear order $\varphi(L)^\star$ defined on $Y$, and the bijection $\varphi^{-1}:Y\longrightarrow X$. 
\end{rem}

In the next remark, we explain how to decompose a discrete linear order 
(see Definition~\ref{defi_discrete}) into a lexicographic sum. 

\begin{rem}\label{rem_discrete}
Given an infinite linear order $L$, $L$ is discrete if and only if $L$ is decomposed into a lexicographic sum $\sum_ll_v$ satisfying the following conditions.
\begin{enumerate}
\item If $l$ admits a unique vertex $v$, then $L=l_v$, and 
$L\simeq\omega^\star$ or $\omega$ or $\omega^\star+\omega$. 
\item For every $v\in V(l)$, if $v$ is neither the smallest nor the largest element of $l$, then $l_v\simeq\omega^\star+\omega$.
\item If $l$ admits a smallest element denoted by ${\rm min}$, then $l_{\rm min}\simeq\omega$ or $\omega^\star+\omega$.
\item If $l$ admits a largest element denoted by ${\rm max}$, then $l_{\rm max}\simeq\omega^\star$ or $\omega^\star+\omega$. 
\end{enumerate}

\begin{proof}[Hint]
For a linear order, both notions of an interval and a module coincide. 
Consider an infinite discrete linear order $L$. 
We define on $V(L)$ the binary relation $\sim$ as follows. 
Given $v,w\in V(L)$, $v\sim w$ if the smallest interval of $L$ containing $v$ and $w$ is finite. 
Clearly, $\sim$ is an equivalence relation. 
Furthermore, the equivalence classes of $\sim$ are intervals of $L$. 
Thus, the set $P$ of the vertex sets of the equivalence classes of $\sim$ is an interval partition of $L$. 
We consider for $l$ the quotient $L/P$ of $L$ by $P$ defined on $P$ in the following manner. 
Given distinct $I,J\in P$, $I<J\!\mod\! L/P$ if $i<j\!\mod\! L$ for $i\in I$ and $j\in J$. 
It is easy to verify that $L/P$ is a linear order. 
Lastly, since $L$ is discrete, $L[I]$ is isomorphic to 
$\omega$, $\omega^\star$ or $\omega^\star+\omega$ for each $I\in P$. 
\end{proof}
\end{rem}

Now, we examine Theorem~\ref{thm1_half_graph} in the finite case. 
Given $n\geq 1$, we consider the graph $H_{2n}$ defined on $\{0,\ldots,2n-1\}$ by 
$$E(H_{2n})=\bigcup_{0\leq p\leq n-1}\{\{2p,2q+1\}:p\leq q\leq n-1\}\ \text{(see Figure~\ref{H_2n})}.$$ 
Clearly, the cardinality of a finite half graph is even. 
Up to isomorphism, $H_{2n}$ is the unique finite half graph defined on $2n$ vertices. 

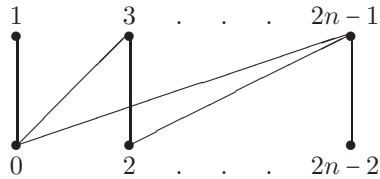
\begin{figure}[h]
\begin{center}
\setlength{\unitlength}{1cm}
\begin{picture}(11,2.5)
\put(3,0){0}\put(3,0.3){$\bullet$}

\put(3,2){1}\put(3,1.75){$\bullet$}

\put(4.5,2){3}\put(4.5,1.75){$\bullet$}
\put(4.5,0){2}\put(4.5,0.3){$\bullet$}
\put(5.2,0){.}
\put(5.8,0){.}
\put(6.4,0){.}
\put(5.2,2){.}
\put(5.8,2){.}
\put(6.4,2){.}
\put(7,0){$2n-2$}\put(7.45,0.3){$\bullet$}
\put(7,2){$2n-1$}\put(7.45,1.75){$\bullet$}

\put(3.1,0.4){\line(0,1){1.4}}
\put(3.1,0.4){\line(1,1){1.45}}
\put(3.1,0.4){\line(3,1){4.4}}

\put(4.6,0.4){\line(0,1){1.4}}
\put(4.6,0.4){\line(2,1){2.9}}

\put(7.55,0.4){\line(0,1){1.4}}

\end{picture}
\end{center}
\caption{The half graph $H_{2n}$.\label{H_2n}}
\end{figure}

\begin{prop}\label{prop1_half_graph}
For a finite and bipartite graph $\Gamma$, with $v(\Gamma)\geq 4$, the following assertions are equivalent 
\begin{enumerate}
\item $P_5\not\leq \Gamma$ and $\Gamma$ is prime;
\item $\Gamma$ is critical;
\item $\Gamma$ is a half graph. 
\end{enumerate}
\end{prop}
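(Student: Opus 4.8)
The plan is to prove the cycle of implications $(3)\Rightarrow(2)\Rightarrow(1)\Rightarrow(3)$, since $(3)\Rightarrow(2)$ and $(2)\Rightarrow(1)$ are comparatively routine and the substantive content lies in $(1)\Rightarrow(3)$. For $(3)\Rightarrow(2)$: since $\Gamma$ is a finite half graph with $v(\Gamma)\geq 4$, it is isomorphic to some $H_{2n}$ with $n\geq 2$; one checks directly that $H_{2n}$ is prime (its bipartition classes are linearly ordered by neighborhood inclusion, so a nontrivial module would have to be contained in one class and consist of vertices with equal neighborhoods, but distinct vertices of $H_{2n}$ have distinct neighborhoods) and that removing any vertex $i$ yields $H_{2n}-i$, which contains a nontrivial module — deleting an even-indexed vertex $2p$ makes $2p-2$ and the ``next'' even vertex equivalent (or creates an isolated/twin vertex), and symmetrically for odd-indexed vertices — hence $\Gamma$ is critical. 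For $(2)\Rightarrow(1)$: a critical graph is prime by definition, and $P_5$ itself is prime but not critical (its two endpoints are not critical, since $P_5$ minus an endpoint is $P_4$, which is prime), so if $P_5\leq\Gamma$ we could restrict to a copy of $P_5$ and, by Theorem~\ref{thm_main_1} (or directly by Theorem~\ref{tEHR} applied repeatedly, using that $\Gamma$ is bipartite so all odd ``outside'' sizes vanish), produce a non-critical vertex of $\Gamma$, contradicting criticality; thus $P_5\not\leq\Gamma$.

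**The main step, $(1)\Rightarrow(3)$.** Assume $\Gamma$ is bipartite, prime, $P_5$-free, with $v(\Gamma)\geq 4$; fix a bipartition $\{X,Y\}$. First I would observe that by Lemma~\ref{fac1_half_graph} a connected $P_5$-free graph contains no induced $K_2\oplus K_2$ (i.e.\ no induced $2K_2$); and $\Gamma$ must be connected, for a disconnected graph has a nontrivial module (a union of components), contradicting primality. So $\Gamma$ is a connected bipartite $2K_2$-free graph. The classical fact (``difference graphs'' / ``chain graphs'') is that a bipartite graph is $2K_2$-free if and only if the neighborhoods in each side form a chain under inclusion: given $x,x'\in X$, if neither $N_\Gamma(x)\subseteq N_\Gamma(x')$ nor $N_\Gamma(x')\subseteq N_\Gamma(x)$ then we can pick $y\in N(x)\setminus N(x')$ and $y'\in N(x')\setminus N(x)$, and $\{x,y\},\{x',y'\}$ induce a $2K_2$. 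Hence $L_X:=\{(x,x'):N_\Gamma(x)\supseteq N_\Gamma(x')\}$ is a total preorder on $X$; it is a linear order precisely when no two vertices of $X$ have equal neighborhoods — and that follows from primality, since two vertices of $X$ with the same neighborhood form a nontrivial module. Symmetrically the neighborhoods in $Y$ are linearly ordered by inclusion.

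**Building the bijection.** It now remains to produce a bijection $\varphi:X\to Y$ realizing the half-graph structure \eqref{E1_defi_infinite_half_graph} relative to $L:=L_X$. List $X=\{x_1,\dots,x_m\}$ with $N(x_1)\supsetneq N(x_2)\supsetneq\cdots\supsetneq N(x_m)$. Connectedness plus the chain condition force $N(x_1)=Y$ (else a vertex of $Y$ outside $N(x_1)$ would be isolated or else have a neighborhood incomparable-or-equal issue) and $N(x_m)\neq\emptyset$; moreover $|X|=|Y|=:m$, which I would get by counting: the distinct neighborhoods $N(x_1)\supsetneq\cdots\supsetneq N(x_m)$ are $m$ nested subsets of $Y$, the symmetric statement gives $m'$ nested subsets of $X$ where $m'=|Y|$, and matching ``where the neighborhood drops'' on the two sides shows $m=m'$. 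Concretely, for each $i$ the set $N(x_i)\setminus N(x_{i+1})$ (with $N(x_{m+1}):=\emptyset$) is nonempty — if it were empty then $x_i,x_{i+1}$ would be twins — and in fact is a singleton $\{y\}$: two vertices in it would be $Y$-twins, violating primality. Define $\varphi(x_i)$ to be that singleton's element. This $\varphi$ is injective, hits all of $Y$ by the counting, and satisfies $\{x_i,\varphi(x_j)\}\in E(\Gamma)\iff i\leq j$, i.e.\ $x_i\leq x_j\bmod L$, which is exactly \eqref{E1_defi_infinite_half_graph}. The one place to be careful — and the step I expect to cost the most bookkeeping — is pinning down that the ``drop'' $N(x_i)\setminus N(x_{i+1})$ is a singleton and that these singletons exhaust $Y$, i.e.\ that primality on the $Y$-side exactly complements the chain structure on the $X$-side; once that dovetailing is nailed, the bijection and the edge characterization are immediate. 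Hence $\Gamma$ is a half graph, completing the cycle.
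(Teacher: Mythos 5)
Your proof is essentially correct, but it takes a genuinely different route from the paper: the paper does not prove Proposition~\ref{prop1_half_graph} directly at all, deriving it instead as an immediate consequence of two cited results, Fact~\ref{fac2_half_graph} (a finite prime graph is critical iff it has no prime induced subgraph of size~$5$, from \cite{BDY18}) and Fact~\ref{fac3_half_graph} (a finite bipartite graph on at least $4$ vertices is critical iff it is a half graph, from \cite{BI09}). Your argument is self-contained, and its core step $(1)\Rightarrow(3)$ --- prime plus connected plus $P_5$-free gives $K_2\oplus K_2$-free by Lemma~\ref{fac1_half_graph}, hence a chain of neighborhoods on each side, which primality makes strict, and the ``drops'' $N(x_i)\setminus N(x_{i+1})$ are singletons (two vertices in a drop would be twins) whose disjoint union exhausts $Y$, yielding the bijection $\varphi$ --- is sound and is in fact the finite, primality-only analogue of the machinery the paper builds for the infinite case in Section~\ref{section_Half_graphs} (Definition~\ref{defi_L_half_graph} and Claims~\ref{cla1_half_graph}--\ref{cla7_half_graph}, which instead lean on criticality to construct $\varphi$). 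What your approach buys is independence from the two external references; what the paper's buys is brevity and reuse of results it needs anyway.

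Two small repairs are needed. First, in $(3)\Rightarrow(2)$ you name the wrong pair: deleting the even vertex $2p$ of $H_{2n}$ (for $p\geq 1$) does not make $2p-2$ and the next even vertex equivalent (even neighborhoods are unaffected by deleting an even vertex); it makes the \emph{odd} vertices $2p-1$ and $2p+1$ into twins, since $N(2p+1)\setminus N(2p-1)=\{2p\}$; deleting $0$ isolates $1$. The symmetric statement handles odd deletions, so criticality still holds. Second, your $(2)\Rightarrow(1)$ parity argument is incomplete as stated: applying Theorem~\ref{tEHR} repeatedly from a copy of $P_5$ only produces a leftover non-critical vertex when $v(\Gamma)-5$ is odd, and you have not yet established the parity of $v(\Gamma)$. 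The fix is to use two anchors via Remark~\ref{rem1_conditions_Ck}: a prime graph on at least $4$ vertices contains an induced $P_4$, and $\overline{X}$-criticality for that $P_4$ forces $v(\Gamma)-4$ even, while $\overline{W}$-criticality for a hypothetical induced $P_5$ on $W$ forces $v(\Gamma)-5$ even --- a contradiction. With these adjustments the cycle of implications is complete.
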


Proposition~\ref{prop1_half_graph} is an immediate consequence of the following two facts. 
The next fact is due to Boudabbous et al.~\cite{BDY18}. 

\begin{fac}\label{fac2_half_graph}
For a finite and prime graph $\Gamma$, $\Gamma$ is critical if and only if $\Gamma$ does not admit a prime induced subgraphs of size 5. 
\end{fac}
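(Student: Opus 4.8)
The plan is to deduce both directions from two elementary observations together with the parity and descent results already recorded above. Observation~(a): a finite prime graph on at least four vertices is not a cograph, hence contains an induced $P_4$; in particular a prime graph on exactly five vertices is never critical, since deleting the fifth vertex leaves a copy of $P_4$. Observation~(b): by Theorem~\ref{tEHR}, from a prime graph $\Gamma$ and a prime induced subgraph $\Gamma[X]$ with $|\overline{X}|\ge 2$ one may pass to a prime induced subgraph on $|X|+2$ vertices; and by Theorem~\ref{thm_ST} a finite prime graph on at least seven vertices has a prime induced subgraph on two fewer vertices, so iterating, every finite prime graph of odd order at least five has a prime induced subgraph on exactly five vertices.

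For the forward implication, suppose for contradiction that some finite prime critical graph has a prime $5$-vertex induced subgraph, and choose such a graph $\Gamma$ with $n:=v(\Gamma)$ minimal; by Observation~(a) we have $n\ge 6$. Starting from a prime $5$-vertex induced subgraph and enlarging by two vertices as often as possible (Theorem~\ref{tEHR}) produces prime induced subgraphs of sizes $5,7,9,\dots$; if $n$ is even this process terminates at a prime induced subgraph on $n-1$ vertices (for $n=6$ the $5$-vertex subgraph is already such), whose single complementary vertex is then non-critical in $\Gamma$, contradicting criticality. Hence $n$ is odd and $n\ge 7$, so Theorem~\ref{thm_ST} provides $a,b$ with $\Gamma[Y]:=\Gamma-\{a,b\}$ prime of odd order $n-2\ge 5$. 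If some $u\in Y$ were non-critical in $\Gamma[Y]$, then $\Gamma[Y\setminus\{u\}]$ would be prime on $n-3$ vertices with a $3$-element complement in $\Gamma$, and enlarging it by two vertices (Theorem~\ref{tEHR}) would yield a prime induced subgraph of $\Gamma$ on $n-1$ vertices, again forcing a non-critical vertex of $\Gamma$ --- impossible. So $\Gamma[Y]$ is prime and critical, and by Observation~(b) it has a prime $5$-vertex induced subgraph: if $n-2\ge 7$ this contradicts the minimality of $n$, and if $n-2=5$ then $\Gamma[Y]$ is a prime critical graph on five vertices, contradicting Observation~(a). Therefore no finite prime critical graph has a prime $5$-vertex induced subgraph.

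For the converse, let $\Gamma$ be a finite prime graph that is not critical; we exhibit a prime $5$-vertex induced subgraph. As $P_4$ is (up to isomorphism) the unique prime graph on four vertices and it is critical, $\Gamma$ has $n\ge 5$ vertices. If $n=5$ then $\Gamma$ itself works; if $n=6$ a non-critical vertex leaves a prime graph on five vertices; if $n\ge 7$ is odd, apply Observation~(b); and if $n\ge 8$ is even, a non-critical vertex $v$ leaves $\Gamma-v$ prime of odd order $n-1\ge 7$, which by Observation~(b) --- hence also $\Gamma$ --- has a prime $5$-vertex induced subgraph.

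The one genuinely delicate step is showing that $\Gamma[Y]$ is critical, which must be organized so that the base case $n-2=5$ is precisely the one killed by Observation~(a); the remainder is routine bookkeeping with parity and the descent theorems. Combined with the complementary description of which bipartite prime graphs are half graphs, this yields Proposition~\ref{prop1_half_graph}.
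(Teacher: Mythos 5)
Your proof is correct. Note that the paper supplies no argument for this fact at all --- it is simply attributed to Boudabbous, Dammak and Yaich \cite{BDY18} --- so your derivation is a genuinely self-contained alternative built only from Theorem~\ref{tEHR} and Theorem~\ref{thm_ST}. The two pillars are sound: a prime graph on at least four vertices is not a cograph and hence contains an induced $P_4$, which shows a five-vertex prime graph is never critical and anchors both the base case and the final contradiction; and the parity ascent of Theorem~\ref{tEHR} combined with the Schmerl--Trotter descent of Theorem~\ref{thm_ST} traps a minimal critical counterexample at odd order $n\geq 7$, produces a prime $\Gamma-\{a,b\}$ of order $n-2$, and your three-vertex-complement argument (a non-critical vertex of $\Gamma-\{a,b\}$ would leave a complement of size $3$, whence Theorem~\ref{tEHR} manufactures a prime subgraph on $n-1$ vertices and a non-critical vertex of $\Gamma$) correctly forces $\Gamma-\{a,b\}$ to be critical, closing the induction either by minimality or by the five-vertex observation. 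The converse is routine parity bookkeeping over the cases $n=5$, $n=6$, $n$ odd $\geq 7$, $n$ even $\geq 8$, and all cases check out. What the citation buys the paper is brevity; what your argument buys is that Fact~\ref{fac2_half_graph}, and with it Proposition~\ref{prop1_half_graph}, becomes self-contained modulo the two theorems already quoted in Subsection~\ref{sub_prime}.
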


A simple characterization of finite and critical digraphs is provided in \cite{BI09} by using the primality graph (see Definition~\ref{defi_primality}). 
The next fact follows from it. 

\begin{fac}\label{fac3_half_graph}
Given a finite and bipartite graph $\Gamma$, with $v(\Gamma)\geq 4$, $\Gamma$ is critical if and only if 
$\Gamma$ is a half graph.  
\end{fac}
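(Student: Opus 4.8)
The plan is to derive Fact~\ref{fac3_half_graph} from Fact~\ref{fac2_half_graph} and Lemma~\ref{fac1_half_graph}, after a preliminary observation about small prime bipartite graphs. First I would record that, up to isomorphism, $P_5$ is the only prime bipartite graph on five vertices: a bipartition of type $1+4$ is never prime, since among the four vertices of the independent side two have the same neighbourhood and thus form a nontrivial module; and in a bipartition of type $2+3$, primality first rules out an isolated vertex (otherwise the remaining four vertices form a nontrivial module), and then forces the three vertices of the larger side to realize the three distinct nonempty neighbourhoods $\{a\}$, $\{b\}$, $\{a,b\}$, which gives exactly $P_5$. Since every five-vertex induced subgraph of a bipartite graph is bipartite, a finite prime bipartite graph has no prime induced subgraph of size $5$ if and only if it has no induced $P_5$. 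Combining this with Fact~\ref{fac2_half_graph}, proving Fact~\ref{fac3_half_graph} amounts to showing that a finite prime bipartite graph $\Gamma$ with $v(\Gamma)\geq 4$ is a half graph if and only if $P_5\not\leq\Gamma$, together with the fact that every finite half graph on at least four vertices is prime.

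For the implication ``$P_5\not\leq\Gamma\Rightarrow\Gamma$ is a half graph'', I would use that a prime graph on at least three vertices is connected, so by Lemma~\ref{fac1_half_graph} the hypothesis $P_5\not\leq\Gamma$ is equivalent to $K_2\oplus K_2\not\leq\Gamma$, and I would invoke the classical fact that this last condition means that on each side of $\Gamma$ the neighbourhoods are linearly ordered by inclusion. Writing the (unique, since $\Gamma$ is connected) bipartition as $\{A,B\}$ with $A=\{a_1,\dots,a_s\}$ and $B=\{b_1,\dots,b_t\}$ indexed so that $N_\Gamma(a_1)\supseteq\cdots\supseteq N_\Gamma(a_s)$ and $N_\Gamma(b_1)\supseteq\cdots\supseteq N_\Gamma(b_t)$, primality makes all these inclusions strict (two same-side vertices with equal neighbourhood form a nontrivial module) and forbids isolated vertices; hence $N_\Gamma(a_i)=\{b_1,\dots,b_{d_i}\}$ with $t=d_1>d_2>\cdots>d_s\geq 1$, and dually $N_\Gamma(b_j)=\{a_1,\dots,a_{c_j}\}$ with $s=c_1>\cdots>c_t\geq 1$. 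The $d_i$ are $s$ distinct positive integers with maximum $t$, so $t\geq s$; by the symmetric argument on the $c_j$, $s\geq t$; hence $s=t$, which forces $d_i=s+1-i$, that is, $\{a_i,b_j\}\in E(\Gamma)$ exactly when $i+j\leq s+1$. This graph is isomorphic to $H_{2s}$ (Figure~\ref{H_2n}), so $\Gamma$ is a half graph.

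For the remaining assertion, a finite half graph on at least four vertices is isomorphic to $H_{2n}$ for some $n\geq 2$, and I would check its primality via the same nested-neighbourhood picture: a module meeting a side in two vertices is impossible because the neighbourhood chains of $H_{2n}$ are strict, and a module of the form $\{a_i,b_j\}$ is impossible because, since $n\geq 2$, some vertex of $B$ other than $b_j$ is adjacent to $a_i$ but not to $b_j$. Being a half graph, $H_{2n}$ contains no induced $K_2\oplus K_2$, hence no induced $P_5$, hence no prime induced subgraph of size $5$; so Fact~\ref{fac2_half_graph} gives that $H_{2n}$ is critical, which closes the ``half graph $\Rightarrow$ critical'' direction of Fact~\ref{fac3_half_graph}. (Alternatively one can avoid Fact~\ref{fac2_half_graph} here by exhibiting a nontrivial module of $H_{2n}-v$ for each vertex $v$: for $v=2p+1$ with $0\leq p\leq n-2$ the pair $\{2p,2p+2\}$ works, symmetrically for an interior even vertex, and $H_{2n}-0\simeq H_{2n-2}\oplus K_1$ with $V(H_{2n-2})$ a module.)

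The routine parts are the module bookkeeping and the small case analyses. The step I expect to be the crux is the counting argument that pins $\Gamma$ down as $H_{2s}$, and in particular the two-sided use of primality—strictness of the neighbourhood chains on both $A$ and $B$—that forces $|A|=|B|$; the other delicate point is the little classification of prime bipartite graphs on five vertices, which is exactly what makes Fact~\ref{fac2_half_graph} usable in the bipartite setting.
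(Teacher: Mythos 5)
Your argument is correct, but it takes a genuinely different route from the paper, which in fact offers no proof of Fact~\ref{fac3_half_graph}: it only remarks that the fact ``follows from'' the primality-graph characterization of finite critical graphs in \cite{BI09} (see Definition~\ref{defi_primality}). Your derivation is self-contained modulo Fact~\ref{fac2_half_graph}: you classify the prime bipartite graphs on five vertices (only $P_5$), so that for a finite prime bipartite graph criticality becomes $P_5$-freeness, hence $K_2\oplus K_2$-freeness by Lemma~\ref{fac1_half_graph} and connectivity; the chain-graph analysis (neighbourhoods nested on each side, made strictly nested by primality, with the two-sided degree count forcing the two sides to have equal size) then pins the graph down as $H_{2n}$, and the converse follows from the primality of $H_{2n}$ together with Fact~\ref{fac2_half_graph} again. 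This buys an explicit elementary identification of the critical bipartite graphs without importing the machinery of \cite{BI09}, at the price of leaning on Fact~\ref{fac2_half_graph}, which the paper also only cites (to \cite{BDY18}); it also inverts the paper's logic, since the paper treats Facts~\ref{fac2_half_graph} and~\ref{fac3_half_graph} as two independent external inputs to Proposition~\ref{prop1_half_graph}, whereas you derive the latter fact from the former. One spot to tighten in the routine part: your two cases for the primality of $H_{2n}$ (a module meeting one side in two vertices, and a module of the form $\{a_i,b_j\}$) do not literally cover a module $M$ meeting both sides with $|M|\geq 3$ and $M\neq V$; that case is dispatched by noting that such an $M$ would admit no edges to its complement (forcing disconnectedness) or would force an isolated vertex, so it is a one-line addition rather than a gap.
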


The next result is a consequence of Proposition~\ref{prop1_half_graph} and Theorem~\ref{thm2_pi_infinite}. 

\begin{cor}\label{cor1_infinite_half_graph}
A half graph $\Gamma$, with $v(\Gamma)\geq 4$, is prime. 
\end{cor}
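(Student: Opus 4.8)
Corollary~\ref{cor1_infinite_half_graph} asserts that every half graph $\Gamma$ with $v(\Gamma)\geq 4$ is prime. The plan is to deduce primality from Theorem~\ref{thm2_pi_infinite} (the compactness criterion) together with Proposition~\ref{prop1_half_graph}, which tells us that \emph{finite} half graphs with at least $4$ vertices are prime. So the whole task reduces to a combinatorial claim about half graphs: \emph{every finite subset of vertices of a half graph is contained in a finite induced subgraph which is itself a half graph on at least $4$ vertices}.

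First I would fix a bipartition $\{X,Y\}$ of $\Gamma$, a linear order $L$ on $X$, and a bijection $\varphi:X\to Y$ realizing $\Gamma$ as a half graph in the sense of Definition~\ref{defi_infinite_half_graph}, so that $E(\Gamma)=\{\{x,\varphi(x')\}:x\leq x'\bmod L\}$. Given a finite $F\subseteq V(\Gamma)$, I would enlarge it to a "closed" finite set as follows: let $F_X=(F\cap X)\cup\varphi^{-1}(F\cap Y)$, a finite subset of $X$, and set $F'=F_X\cup\varphi(F_X)$. Then $F\subseteq F'$, and the key observation is that $\Gamma[F']$ is again a half graph: restricting $L$ to the finite set $F_X$ gives a finite linear order $L'$, the restriction of $\varphi$ to $F_X$ is a bijection onto $\varphi(F_X)=F'\cap Y$, and since $E(\Gamma[F'])=\{\{x,\varphi(x')\}:x,x'\in F_X,\ x\leq x'\bmod L'\}$, equation~\eqref{E1_defi_infinite_half_graph} holds for $\Gamma[F']$ with $(L',\varphi_{\restriction F_X})$. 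If needed, pad $F'$ with two more $\varphi$-pairs to guarantee $|F'|\geq 4$ (a finite half graph has even cardinality, and adding or deleting extreme $\varphi$-pairs keeps it a half graph). By Proposition~\ref{prop1_half_graph} (the implication from being a half graph on $\geq 4$ vertices to being prime), $\Gamma[F']$ is prime.

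Having produced, for every finite $F\subseteq V(\Gamma)$, a finite $F'\supseteq F$ with $\Gamma[F']$ prime, Theorem~\ref{thm2_pi_infinite} immediately gives that $\Gamma$ is prime when $\Gamma$ is infinite; and when $\Gamma$ is finite, $v(\Gamma)\geq 4$ already makes $\Gamma$ a finite half graph on $\geq 4$ vertices, so Proposition~\ref{prop1_half_graph} applies directly. One should also handle the trivial degenerate shapes: if one side of the bipartition is empty the graph is edgeless and $v(\Gamma)\geq 4$ forces a nontrivial module, but in fact a half graph with $v(\Gamma)\geq 4$ always has both parts nonempty and of equal finite-or-infinite "shape" via $\varphi$, so this case does not arise.

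The only genuinely delicate point is verifying that the restriction of a half-graph structure to the closed finite set $F'$ is itself a half-graph structure — i.e. that the defining formula~\eqref{E1_defi_infinite_half_graph} is preserved under passing to the induced subgraph on a set closed under $\varphi$ and $\varphi^{-1}$. This is where the "closure" step ($F_X$ absorbing both $F\cap X$ and $\varphi^{-1}(F\cap Y)$) is essential: without it, a vertex $\varphi(x')\in F$ might have its partner $x'$ omitted, and the induced graph on $F$ need not be a half graph at all. Once the closure is in place the verification is a one-line check against the formula, and nothing else in the argument is more than bookkeeping; the heavy lifting has already been done in Proposition~\ref{prop1_half_graph} and Theorem~\ref{thm2_pi_infinite}.
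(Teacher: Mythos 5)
Your proposal is correct and follows essentially the same route as the paper: reduce to the infinite case via Proposition~\ref{prop1_half_graph}, close a given finite set $F$ under $\varphi$ and $\varphi^{-1}$ to obtain a finite $\varphi$-saturated set $F'$ with $|F'|\geq 4$ on which the induced subgraph is again a half graph, and conclude by the compactness criterion of Theorem~\ref{thm2_pi_infinite}. The paper's proof is exactly this, with your ``padding'' step realized by simply requiring $|X'|\geq 2$ for the chosen finite subset $X'$ of $X$.
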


\begin{proof}
There exists a bipartition $\{X,Y\}$ of $V(\Gamma)$, a linear order $L$ defined on $X$, and a bijection $\varphi$ from $X$ onto $Y$ such that 
$E(\Gamma)=\{\{x,\varphi(x')\}:x\leq x'\!\mod L\}$. 
By Proposition~\ref{prop1_half_graph}, we can suppose that $\Gamma$ is infinite. 
Consider a finite subset $F$ of $V(\Gamma)$. 
Let $X'$ be a finite subset of $X$ such that $F\cap X\subseteq X'$, 
$\varphi^{-1}(F\cap Y)\subseteq X'$, and 
$|X'|\geq 2$. 
Set $$F'=X'\cup\varphi(X').$$
Clearly $F\subseteq F'$. 
By considering $Y'=\varphi(X')$, the linear order $L'=L[X']$, and the bijection 
$\varphi_{\restriction X'}:X'\longrightarrow Y'$, we obtain that $\Gamma[F']$ is a 
half graph. 
By Proposition~\ref{prop1_half_graph}, $\Gamma[F'']$ is prime. 
To conclude, it suffices to use Theorem~\ref{thm2_pi_infinite}. 
\end{proof}

Now, we are ready to demonstrate Theorem~\ref{thm1_half_graph}. 

\begin{proof}[Proof of Theorem~\ref{thm1_half_graph}]
By Proposition~\ref{prop1_half_graph}, we can suppose that $\Gamma$ is infinite. 

To begin, suppose that $\Gamma$ is a discrete half graph. 
There exists a bipartition $\{X,Y\}$ of $V(\Gamma)$, a discrete linear order $L$ defined on $X$, and a bijection $\varphi$ from $X$ onto $Y$ such that 
$E(\Gamma)=\{\{x,\varphi(x')\}:x\leq x'\!\mod L\}$. 
By Corollary~\ref{cor1_infinite_half_graph}, 
$\Gamma$ is prime. 
Hence $\Gamma$ is connected. 
Since $\Gamma$ is a half graph, $K_2\oplus K_2\not\leq \Gamma$. 
It follows from Lemma~\ref{fac1_half_graph} that $P_5\not\leq \Gamma$. 
We verify that 
\begin{equation}\label{E1_prop2_half_graph}
\text{for every $x\in X$, $\Gamma-x$ is not prime.}
\end{equation}
First, suppose that $x$ is not the smallest element of $L$. 
Since $L$ is discrete, $x$ admits a predecessor $x^-$. 
It is easy to verify that $\{\varphi(x^-),\varphi(x)\}$ is a module of $\Gamma-x$. 
Second, suppose that $x$ is the smallest element of $\Gamma$. 
Clearly, $\varphi(x)$ is an isolated vertex of $\Gamma-x$, so $\Gamma-x$ is not prime. 
Thus \eqref{E1_prop2_half_graph} holds. 
Similarly, it follows from Remark~\ref{rem_infinite_half_graph} that 
$\Gamma-y$ is not prime for each $y\in Y$. 
Consequently $\Gamma$ is critical.

Conversely, suppose that $P_5\not\leq \Gamma$ and $\Gamma$ is critical. 
Since $\Gamma$ is bipartite, 
there exists a bipartition $\{X,Y\}$ of $V(\Gamma)$ such that $X$ and $Y$ are stable sets of $\Gamma$. 
To complete the proof, we establish the next claims. 
\end{proof}

\begin{defi}\label{defi_L_half_graph}
Since $\Gamma$ is prime, we have $N_\Gamma(x)\neq N_\Gamma(x')$ 
for distinct $x,x'\in X$. 
Moreover, since $P_5\not\leq \Gamma$, $K_2\oplus K_2\not\leq \Gamma$ by Lemma~\ref{fac1_half_graph}. 
It follows that for distinct $x,x'\in X$, we have 
$N_\Gamma(x)\subsetneq N_\Gamma(x')$ or $N_\Gamma(x')\subsetneq N_\Gamma(x)$. 
Therefore, we can define on $X$ a linear order $L$ as follows. 
Given distinct $x,x'\in X$, 
$$\text{$x<x'\hspace{-2mm}\mod L$ if $N_\Gamma(x)\supsetneq N_\Gamma(x')$.}$$ 
\end{defi}

We show that $\Gamma$ is the half graph defined from the linear order $L$ 
(see Claim~\ref{cla7_half_graph}). 
We have also to define a suitable bijection from $X$ onto $Y$ 
(see Definition~\ref{defi_varphi_half_graph}). 
We use the fact that $\Gamma$ is critical.

\begin{cla}\label{cla1_half_graph}
Given $x\in X$, if $\Gamma-x$ is disconnected, then the following assertions hold 
\begin{enumerate}
\item $\Gamma-x$ admits a unique isolated vertex $i_x$, and $i_x\in Y$;
\item $N_\Gamma(x)=Y$, so $x$ is the smallest element of $L$;
\item $i_x$ is the unique element of $V(\Gamma)\setminus\{x\}$ such that 
$\Gamma-\{x,i_x\}$ is prime. 
\end{enumerate}
\end{cla}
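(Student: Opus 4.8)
The plan is to exploit the bipartition $\{X,Y\}$, the linear order $L$ of Definition~\ref{defi_L_half_graph}, and two facts recorded there: $\Gamma$ is prime (being critical), hence connected and with at least three vertices (a disconnected graph on at least three vertices has a nontrivial module), and $K_2\oplus K_2\not\leq\Gamma$ by Lemma~\ref{fac1_half_graph}; recall also that $\Gamma$ is infinite, by the reduction opening the proof of Theorem~\ref{thm1_half_graph}. For assertion~1, I would first analyse the components of $\Gamma-x$: if two distinct components each contained an edge, these four vertices would induce $K_2\oplus K_2$ in $\Gamma$, which is impossible; so at most one component of $\Gamma-x$ has an edge, i.e. at most one has more than one vertex. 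Next, if $\{p\}$ and $\{q\}$ were two distinct isolated-vertex components of $\Gamma-x$, then (since $\Gamma$ is connected) $x$ would be the unique neighbour in $\Gamma$ of each of $p,q$, so $N_\Gamma(p)=N_\Gamma(q)=\{x\}$ and $\{p,q\}$ would be a nontrivial module of $\Gamma$, again impossible. As $\Gamma-x$ is disconnected it has at least two components; combined with the two facts just proved, it has exactly two: a singleton $\{i_x\}$ and a component $C$ with $v(C)\geq 2$, and $C$ is infinite since $\Gamma$ is. Finally $i_x$, being isolated in $\Gamma-x$ while $\Gamma$ is connected, has $x$ as its unique neighbour, so $i_x\in N_\Gamma(x)\subseteq Y$. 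This gives assertion~1, together with the identity $V(C)=V(\Gamma)\setminus\{x,i_x\}$.

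For assertion~2, I would use $N_\Gamma(i_x)=\{x\}$: no $x'\in X\setminus\{x\}$ is adjacent to $i_x$, so $i_x\notin N_\Gamma(x')$ while $i_x\in N_\Gamma(x)$; since the neighbourhoods of distinct elements of $X$ are $\subsetneq$-comparable (Definition~\ref{defi_L_half_graph}), the possibility $N_\Gamma(x)\subsetneq N_\Gamma(x')$ is excluded, whence $N_\Gamma(x')\subsetneq N_\Gamma(x)$ for every $x'\neq x$, i.e. $x$ is the smallest element of $L$. Then $N_\Gamma(x)$ contains $\bigcup_{x'\in X}N_\Gamma(x')$; and since $\Gamma$ is connected and $Y$ is a stable set, every $y\in Y$ has a neighbour, necessarily in $X$, so $\bigcup_{x'\in X}N_\Gamma(x')=Y$ and therefore $N_\Gamma(x)=Y$.

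For assertion~3, observe that $V(\Gamma)\setminus\{x,i_x\}=V(C)$, so every $z\in V(\Gamma)\setminus\{x,i_x\}$ lies in $V(C)$; for such $z$, in $\Gamma-\{x,z\}$ the vertex $i_x$ is isolated (its only $\Gamma$-neighbour $x$ has been removed) while $V(C)\setminus\{z\}$ is infinite, so $V(C)\setminus\{z\}$ is a nontrivial module of $\Gamma-\{x,z\}$ and $\Gamma-\{x,z\}$ is not prime. It remains to prove $\Gamma-\{x,i_x\}=C$ is prime; since $C$ is infinite it suffices to rule out a nontrivial module $M$ of $C$. If $M$ met both parts $X\setminus\{x\}$ and $Y\setminus\{i_x\}$ of $C$, then for any $t\in V(C)\setminus M$ the module property would force $t$ to miss all of $M$ (it already misses the part of $M$ on its own side), so there would be no edge of $C$ between $M$ and $V(C)\setminus M$, contradicting connectedness of $C$. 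Otherwise $M$ lies in one part; then $x$, which in $\Gamma$ is adjacent to all of $Y$ and to none of $X$, and $i_x$, which is adjacent in $\Gamma$ to nothing in $V(C)$, each see $M$ uniformly, so $M$ is a nontrivial module of $\Gamma$, contradicting primality. Hence $C$ is prime, and $i_x$ is the unique vertex whose removal together with $x$ leaves a prime graph.

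I do not expect a genuine obstacle here; the two points that need care are the component count in assertion~1 (where both $K_2\oplus K_2$-freeness and primality of $\Gamma$ are genuinely used) and, in assertion~3, the dichotomy that a one-sided module of $C$ lifts to a module of $\Gamma$ while a two-sided one would disconnect $C$. The one assumption that must be kept visible is that $\Gamma$ is infinite: it is what guarantees $C$ is infinite, hence has at least three vertices and cannot be the (non-prime) graph $K_2$ — indeed the analogue of assertion~3 fails for the finite half graph $P_4$, which is why the finite case is disposed of separately via Proposition~\ref{prop1_half_graph}.
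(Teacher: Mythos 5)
Your proof is correct and follows essentially the same route as the paper: the component count via $K_2\oplus K_2$-freeness and the module formed by isolated vertices, the identification $N_\Gamma(i_x)=\{x\}$, and the one-sided/two-sided dichotomy for a putative nontrivial module of $\Gamma-\{x,i_x\}$. The only (harmless) variation is in assertion~2, where you deduce that $x$ is the smallest element of $L$ first, from the $\subsetneq$-comparability of neighbourhoods recorded in Definition~\ref{defi_L_half_graph}, and then conclude $N_\Gamma(x)=Y$, whereas the paper obtains $N_\Gamma(x)=Y$ directly from the connectedness of $\Gamma-\{x,i_x\}$ and $K_2\oplus K_2$-freeness.
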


\begin{proof}
Since $\Gamma$ is connected, the set of the isolated vertices of $\Gamma-x$ is a module of $\Gamma$. 
Thus $|\{C\in\mathcal{C}(\Gamma-x):v(C)=1\}|\leq 1$. 
Furthermore, since $K_2\oplus K_2\leq \Gamma$, 
if $\Gamma-x$ admits at most one nontrivial component. 
Therefore $|\{C\in\mathcal{C}(\Gamma-x):v(C)\geq 2\}|\leq 1$. 
It follows that $\Gamma-x$ admits a unique isolated vertex $i_x$, and 
$\Gamma-\{x,i_x\}$ is connected. 
Since $i_x$ is an isolated vertex of $\Gamma-x$, $\{x,i_x\}\in E(\Gamma)$ because $\Gamma$ is connected. 
Hence $i_x\in Y$. 

Now, we verify that $N_\Gamma(x)=Y$. 
Let $y\in Y\setminus\{i_x\}$. 
Since $\Gamma-\{x,i_x\}$ is connected, there exits $x'\in X\setminus\{x\}$ such that 
$\{x',y\}\in E(\Gamma)$. 
Since $\Gamma[\{x,x',y,i_x\}]\not\simeq K_2\oplus K_2$, we obtain 
$\{x,y\}\in E(\Gamma)$. 
It follows that $N_\Gamma(x)=Y$. 
Hence $x$ is the smallest element of $L$. 

Lastly, we verify that $\Gamma-\{x,i_x\}$ is prime. 
Otherwise, $\Gamma-\{x,i_x\}$ admits a nontrivial module $M$. 
Since $\Gamma-\{x,i_x\}$ is connected and bipartite with bipartition 
$\{X\setminus\{x\},Y\setminus\{i_x\}\}$, we have $M\subseteq X\setminus\{x\}$ or 
$M\subseteq Y\setminus\{i_x\}$. 
Since $N_\Gamma(x)=Y$ and $N_\Gamma(i_x)=\{x\}$, $M$ is a module of $\Gamma$, which contradicts the fact that $\Gamma$ is prime. 
Consequently $\Gamma-\{x,i_x\}$ is prime. 
Moreover, consider $v\in V(\Gamma)\setminus\{x,i_x\}$. 
Since $i_x$ is isolated in $\Gamma-x$, it is also isolated in $\Gamma-\{x,v\}$. 
Therefore $\Gamma-\{x,v\}$ is not prime. 
It follows that $i_x$ is the unique element of $V(\Gamma)\setminus\{x\}$ such that 
$\Gamma-\{x,i_x\}$ is prime. 
\end{proof}

\begin{cla}\label{cla2_half_graph}
Let $x\in X$ such that $\Gamma-x$ is connected. 
For any nontrivial module $M$ of $\Gamma-x$, there exist 
$x^-,x^+\in Y$ such that 
$M=\{x^-,x^+\}$, $\{x,x^-\}\not\in E(\Gamma)$, and $\{x,x^+\}\in E(\Gamma)$. 
\end{cla}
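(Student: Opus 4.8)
The plan is to first locate $M$ inside one side of the bipartition, and then use primality to force $|M|=2$. First I would observe that, since $\Gamma-x$ is connected and $M$ is a nontrivial module of $\Gamma-x$ (so $M\neq\emptyset$, $|M|\geq 2$, and $M\neq V(\Gamma-x)$), there is a vertex $w\in V(\Gamma-x)\setminus M$ adjacent to some vertex of $M$; by the module property $w$ is then adjacent to every vertex of $M$. As $\Gamma-x$ is bipartite with parts $X\setminus\{x\}$ and $Y$, a single vertex $w$ cannot be adjacent simultaneously to a vertex of $X\setminus\{x\}$ and to a vertex of $Y$, so $M$ meets only one side: $M\subseteq X\setminus\{x\}$ or $M\subseteq Y$. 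If $M\subseteq X\setminus\{x\}$, then $x$, which lies in $X$, is non-adjacent to every vertex of $M$ by bipartiteness; since in addition every vertex of $V(\Gamma-x)\setminus M$ treats $M$ uniformly (as $M$ is a module of $\Gamma-x$), the set $M$ would be a nontrivial module of $\Gamma$, contradicting the primality of $\Gamma$. Hence $M\subseteq Y$.

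Next I would bound $|M|$. For any $y,y'\in M$, the module property of $\Gamma-x$ gives $N_\Gamma(y)\cap(X\setminus\{x\})=N_\Gamma(y')\cap(X\setminus\{x\})$, and since $N_\Gamma(y),N_\Gamma(y')\subseteq X$, the neighbourhoods $N_\Gamma(y)$ and $N_\Gamma(y')$ can differ only in whether they contain $x$. Because $\Gamma$ is prime, distinct vertices have distinct neighbourhoods, so $N_\Gamma(y)\neq N_\Gamma(y')$, and therefore exactly one of $\{x,y\}$, $\{x,y'\}$ is an edge of $\Gamma$. Thus the map sending each $y\in M$ to its adjacency status with $x$ is injective into $\{0,1\}$, whence $|M|\leq 2$; as $M$ is nontrivial, $|M|=2$, say $M=\{x^-,x^+\}$. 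Since the two elements have distinct neighbourhoods and those neighbourhoods can differ only at $x$, one of them is adjacent to $x$ and the other is not; relabelling so that $\{x,x^+\}\in E(\Gamma)$ and $\{x,x^-\}\notin E(\Gamma)$, and recalling $M\subseteq Y$, gives exactly the asserted pair $x^-,x^+\in Y$.

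The only delicate point, such as it is, will be the first step: the standard fact that a proper nonempty module of a connected graph has a common neighbour outside it, and its combination with bipartiteness and with primality to exclude $M\subseteq X\setminus\{x\}$; once $M\subseteq Y$ is in hand, the size bound is a one-line neighbourhood computation. It is worth noting that this claim uses only that $\Gamma$ is connected, bipartite and prime, and not the hypothesis $P_5\not\leq\Gamma$ (equivalently $K_2\oplus K_2\not\leq\Gamma$); that hypothesis will be needed afterwards, when one must control how the modules of $\Gamma-x$ and the vertices of $X$ and $Y$ fit together to recover the half-graph structure.
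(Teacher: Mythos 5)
Your proof is correct and follows essentially the same route as the paper's: connectivity and bipartiteness place $M$ in one side of the bipartition, primality of $\Gamma$ rules out $M\subseteq X\setminus\{x\}$, and primality again forces $|M|=2$ with the two elements differing in their adjacency to $x$. The only cosmetic difference is in the last step, where the paper splits $M$ into $M^-=\{y\in M:\{x,y\}\not\in E(\Gamma)\}$ and $M^+=\{y\in M:\{x,y\}\in E(\Gamma)\}$ and observes that both are modules of $\Gamma$, hence singletons, whereas you express the same fact as injectivity of the map recording adjacency to $x$ via distinctness of neighbourhoods.
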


\begin{proof}
Let $M$ be a nontrivial module of $\Gamma-x$. 
Since $\Gamma-x$ is connected, we have $M\subseteq X\setminus\{x\}$ or 
$M\subseteq Y$. 
In the first instance, $M$ is a module of $\Gamma$. 
Therefore $M\subseteq Y$. 
Set $M^-=\{y\in M:\{x,y\}\not\in E(\Gamma)\}$ and 
$M^+=\{y\in M:\{x,y\}\in E(\Gamma)\}$. 
Clearly, $M^-$ and $M^+$ are modules of $\Gamma$. 
Since $\Gamma$ is prime and $|M|\geq 2$, we obtain $|M^-|=1$ and $|M^+|=1$. 
Denote by $x^-$ the unique element of $M^-$, and denote by 
$x^+$ the unique element of $M^+$. 
We obtain $M=\{x^-,x^+\}$. 
Furthermore, $\{x,x^-\}\not\in E(\Gamma)$ and $\{x,x^+\}\in E(\Gamma)$. 
\end{proof}

\begin{cla}\label{cla3_half_graph}
Given $x\in X$, if $\Gamma-x$ is connected, then there exist $x^-,x^+\in Y$ satisfying the following assertions 
\begin{enumerate}
\item $\{x^-,x^+\}$ is the only nontrivial module of $\Gamma-x$;
\item $\{x,x^-\}\not\in E(\Gamma)$ and $\{x,x^+\}\in E(\Gamma)$;
\item for every $u\in X$, if $u<x\hspace{-1mm}\mod L$, then $\{u,x^-\}\in E(\Gamma)$;
\item for every $u\in X$, if $x<u\hspace{-1mm}\mod L$, then $\{u,x^+\}\not\in E(\Gamma)$;
\item $\Gamma-\{x,x^-\}$ and $\Gamma-\{x,x^+\}$ are prime;
\item $x^+$ is the unique element of $V(\Gamma)\setminus\{x\}$ such that 
$\{x,x^+\}\in E(\Gamma)$ and $\Gamma-\{x,x^+\}$ is prime. 
\end{enumerate}
\end{cla}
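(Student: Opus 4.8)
The plan is to fix $x\in X$ with $\Gamma-x$ connected. Since $\Gamma$ is critical, $\Gamma-x$ is not prime, and being connected, bipartite and infinite it has a nontrivial module, which by Claim~\ref{cla2_half_graph} has the form $\{x^-,x^+\}$ with $x^-,x^+\in Y$, $\{x,x^-\}\notin E(\Gamma)$ and $\{x,x^+\}\in E(\Gamma)$; this is exactly assertion~(2). Assertions~(3) and~(4) then come out immediately: since $\{x^-,x^+\}$ is a module of $\Gamma-x$, every $u\in X\setminus\{x\}$ is adjacent to both or to neither of $x^-,x^+$. If $u<x\mod L$ then $N_\Gamma(u)\supsetneq N_\Gamma(x)\ni x^+$, so $u\sim x^+$, hence $u\sim x^-$; if $x<u\mod L$ then $N_\Gamma(u)\subsetneq N_\Gamma(x)$ and $x^-\notin N_\Gamma(x)$, so $u\not\sim x^-$, hence $u\not\sim x^+$. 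Note also, for later use, that the module condition forces $N_\Gamma(x^+)=N_\Gamma(x^-)\cup\{x\}$ with $x\notin N_\Gamma(x^-)$.

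For the uniqueness assertion~(1), I would take two nontrivial modules $M_1=\{a_1,b_1\}$ and $M_2=\{a_2,b_2\}$ of $\Gamma-x$ in the form of Claim~\ref{cla2_half_graph}, so $b_i\sim x$, $a_i\not\sim x$. If $M_1\cap M_2\neq\emptyset$, then $M_1\cup M_2$ is a module of $\Gamma-x$ of size $3$, which is nontrivial since $\Gamma-x$ is infinite, contradicting Claim~\ref{cla2_half_graph}. If $M_1\cap M_2=\emptyset$, then $M_1,M_2$ lie in the stable set $Y$, so there are no edges between them; were $M_1\cup M_2$ not a module, some vertex $w$ — necessarily in $X\setminus\{x\}$, since a vertex of $Y$ is adjacent to neither module — would be adjacent to all of one $M_i$ and to none of the other. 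Comparing $w$ with $x$ in $L$ and using $b_j\in N_\Gamma(x)$ (when $w<x$) respectively $a_j\notin N_\Gamma(x)$ (when $w>x$) gives a contradiction in every case; hence $M_1\cup M_2$ is a module of size $4$, again impossible. Therefore $M_1=M_2$.

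The bulk of the work is assertion~(5). First $\Gamma-\{x,x^+\}$ is connected (in $\Gamma-x$ the vertex $x^+$ is a non-adjacent twin of $x^-$, so deleting it cannot disconnect) and bipartite with parts $X\setminus\{x\}$ and $Y\setminus\{x^+\}$, so any nontrivial module $N$ lies in one part. If $N\subseteq Y\setminus\{x^+\}$: if $x^-\in N$ then $N\cup\{x^+\}$ is a nontrivial module of $\Gamma-x$ of size $\ge 3$, contradicting~(1); if $x^-\notin N$ then, $x^+$ being a twin of $x^-$ in $\Gamma-x$, $x^+$ is uniform on $N$, so $N$ is a module of $\Gamma-x$, hence a module of $\Gamma$ (contradicting primality) unless $x$ splits $N$, in which case $N$ is a nontrivial module of $\Gamma-x$ and so $N=\{x^-,x^+\}$ by~(1), absurd. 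If $N\subseteq X\setminus\{x\}$: for distinct $u<u'$ in $N$, any $y\in N_\Gamma(u)\setminus N_\Gamma(u')$ with $y\neq x^+$ would split $N$ inside $\Gamma-\{x,x^+\}$, so $N_\Gamma(u)=N_\Gamma(u')\cup\{x^+\}$ with $x^+\notin N_\Gamma(u')$; this identity cannot hold for three nested elements, so $N=\{u_1,u_2\}$ with $u_1\sim x^+$, $u_2\not\sim x^+$. Then $N_\Gamma(x^+)=N_\Gamma(x^-)\cup\{x\}$ gives $u_1\sim x^-$ and $u_2\not\sim x^-$, so $x^-$ (a vertex of $\Gamma-\{x,x^+\}$ outside $N$) splits $N$ — contradiction. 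Thus $\Gamma-\{x,x^+\}$ is prime; the argument for $\Gamma-\{x,x^-\}$ is the mirror image, with $x^+$ now playing the role of the splitting vertex. Assertion~(6) is then quick: if $v\neq x$, $\{x,v\}\in E(\Gamma)$ and $\Gamma-\{x,v\}$ is prime, then $v\in Y$, and if $v\notin\{x^-,x^+\}$ the pair $\{x^-,x^+\}$ is still a nontrivial module of $\Gamma-\{x,v\}$, impossible; since $\{x,x^-\}\notin E(\Gamma)$, we get $v=x^+$.

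The main obstacle is assertion~(5), specifically ruling out a module concentrated in $X$: the decisive observation is that a would-be two-element module in $X$ is "detected" by $x^-$ (or, symmetrically, by $x^+$) through the identity $N_\Gamma(x^+)=N_\Gamma(x^-)\cup\{x\}$; setting up that identity and organising the twin arguments cleanly is where care is needed. Everything else is routine bookkeeping with Claim~\ref{cla2_half_graph}, the linear order $L$, the fact that $K_2\oplus K_2\not\leq\Gamma$, and the infinitude of $\Gamma$.
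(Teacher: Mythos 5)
Your proposal is correct and follows essentially the same route as the paper: existence and the form of $\{x^-,x^+\}$ via criticality and Claim~\ref{cla2_half_graph}, uniqueness by showing two distinct such modules would merge into a nontrivial module of size $3$ or $4$, assertions (3)--(4) from the definition of $L$ together with the module property, and (6) from the persistence of $\{x^-,x^+\}$ in $\Gamma-\{x,v\}$. The only divergences are cosmetic: in the disjoint-modules case you argue via the order $L$ where the paper uses a $K_2\oplus K_2$ configuration directly, and for assertion (5) you supply a full case analysis (which could be shortened: since $x^+$ is a non-adjacent twin of $x^-$ in $\Gamma-x$, any nontrivial module $N$ of $\Gamma-\{x,x^+\}$ yields the nontrivial module $N$ or $N\cup\{x^+\}$ of $\Gamma-x$, contradicting uniqueness) where the paper merely states that (5) follows from (1).
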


\begin{proof}
Since $\Gamma$ is critical, $\Gamma-x$ admits a nontrivial module $M$. 
By Claim~\ref{cla2_half_graph}, 
there exist $x^-,x^+\in Y$ such that $M=\{x^-,x^+\}$, $\{x,x^-\}\not\in E(\Gamma)$, and 
$\{x,x^+\}\in E(\Gamma)$. 
Hence $\{x^-,x^+\}$ is a nontrivial module of $\Gamma-x$. 

For a contradiction, suppose that $M$ is not the only nontrivial module of $\Gamma-x$. 
Thus, there exists a nontrivial module $N$ of $\Gamma-x$ such that $N\neq M$. 
By Claim~\ref{cla2_half_graph}, 
there exist $z^-,z^+\in Y$ such that $N=\{z^-,z^+\}$, $\{x,z^-\}\not\in E(\Gamma)$, and 
$\{x,z^+\}\in E(\Gamma)$. 
If $M\cap N\neq\emptyset$, then $M\cup N$ is a nontrivial module of $\Gamma-x$ of size 3, which contradicts Claim~\ref{cla2_half_graph}. 
Hence $M\cap N=\emptyset$. 
We show that $M\cup N$ is a module of $\Gamma-x$. 
Let $u\in (X\setminus\{x\})\setminus(M\cup N)$. 
It suffices to verify that $M\cup N$ is a module of $\Gamma[M\cup N\cup\{u\}]$. 
Suppose that there exists $v\in M\cup N$ such that $\{u,v\}\in E(\Gamma)$. 
For instance, suppose that $v\in M$. 
Since $M$ is a module of $\Gamma-x$, we have $\{u,x^-\},\{u,x^+\}\in E(\Gamma)$. 
We have $\{u,x^-\}\in E(\Gamma)$, $\{x,x^-\}\not\in E(\Gamma)$, and 
$\{x,z^+\}\in E(\Gamma)$. 
Since $K_2\oplus K_2\not\leq \Gamma$, we obtain $\{u,z^+\}\in E(\Gamma)$. 
Since $\{z^-,z^+\}$ is a module of $\Gamma-x$, we have $\{u,z^-\}\in E(\Gamma)$. 
Therefore, $\{u,w\}\in E(\Gamma)$ for every $w\in M\cup N$. 
It follows that $M\cup N$ is a module of $\Gamma-x$, which contradicts 
Claim~\ref{cla2_half_graph} because $|M\cup N|=4$. 
Consequently, 
$\{x^-,x^+\}$ is the only nontrivial module of $\Gamma-x$. 
It follows that $\Gamma-\{x,x^-\}$ and $\Gamma-\{x,x^+\}$ are prime. 
 
 Let $u\in X$ such that $u<x\hspace{-1mm}\mod L$. 
 Since $u<x\hspace{-1mm}\mod L$, we have 
 $N_\Gamma(u)\supseteq N_\Gamma(x)$. 
 Hence $\{u,x^+\}\in E(\Gamma)$ because $\{x,x^+\}\in E(\Gamma)$. 
 Since $\{x^-,x^+\}$ is a module of $\Gamma-x$, we obtain $\{u,x^-\}\in E(\Gamma)$. 
 
 Let $u\in X$ such that $x<u\hspace{-1mm}\mod L$. 
 Since $x<u\hspace{-1mm}\mod L$, we have $N_\Gamma(x)\supseteq N_\Gamma(u)$. 
 Hence $\{u,x^-\}\not\in E(\Gamma)$ because $\{x,x^-\}\not\in E(\Gamma)$. 
 Since $\{x^-,x^+\}$ is a module of $\Gamma-x$, we obtain 
 $\{u,x^+\}\not\in E(\Gamma)$. 
 
 As previously seen, $\Gamma-\{x,x^-\}$ and $\Gamma-\{x,x^+\}$ are prime. 
 Now, consider $v\in V(\Gamma)\setminus\{x,x^-,x^+\}$. 
 Clearly, $\{x^-,x^+\}$ is a nontrivial module of $\Gamma-\{x,v\}$, so 
 $\Gamma-\{x,v\}$ is not prime. 
 Since $\{x,x^-\}\not\in E(\Gamma)$, 
 $x^+$ is the unique element of $V(\Gamma)\setminus\{x\}$ such that 
$\{x,x^+\}\in E(\Gamma)$ and $\Gamma-\{x,x^+\}$ is prime. 
\end{proof}

\begin{defi}\label{defi_varphi_half_graph}
We define a function $\varphi:X\longrightarrow Y$ as follows. 
Given $x\in X$, 
\begin{equation*}
\varphi(x)=\ 
\begin{cases}
\text{$i_x$ if $\Gamma-x$ is disconnected (see Claim~\ref{cla1_half_graph}),}\\
\text{or}\\
\text{$x^+$ if $\Gamma-x$ is connected (see Claim~\ref{cla3_half_graph}).}
\end{cases}
\end{equation*}
\end{defi}

The next claim follows easily from Claim~\ref{cla1_half_graph} and \ref{cla3_half_graph}.

\begin{cla}\label{cla4_half_graph}
For every $x\in X$, 
$\varphi(x)$ is the unique element of $V(\Gamma)\setminus\{x\}$ such that 
$\{x,\varphi(x)\}\in E(\Gamma)$ and $\Gamma-\{x,\varphi(x)\}$ is prime. 
\end{cla}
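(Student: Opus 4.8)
## Proof Proposal for Claim~\ref{cla4_half_graph}

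The plan is to show that the characterization is essentially already done, case by case, according to whether $\Gamma-x$ is disconnected or connected. Fix $x\in X$ and recall that $\varphi(x)$ was defined precisely to be $i_x$ in the first case and $x^+$ in the second. So the claim amounts to checking that in each case the chosen vertex has the stated uniqueness property, and that no vertex of the \emph{other} type can compete.

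First I would treat the disconnected case. Suppose $\Gamma-x$ is disconnected. By the third assertion of Claim~\ref{cla1_half_graph}, $i_x$ is the unique vertex $v\in V(\Gamma)\setminus\{x\}$ with $\Gamma-\{x,v\}$ prime; and by the first assertion, $i_x\in Y$ is an isolated vertex of $\Gamma-x$, hence $\{x,i_x\}\in E(\Gamma)$ since $\Gamma$ is connected. So $\varphi(x)=i_x$ satisfies both $\{x,\varphi(x)\}\in E(\Gamma)$ and $\Gamma-\{x,\varphi(x)\}$ prime, and the uniqueness of such a vertex is immediate from the (stronger) uniqueness in Claim~\ref{cla1_half_graph}(3): any vertex $v$ with $\{x,v\}\in E(\Gamma)$ and $\Gamma-\{x,v\}$ prime in particular has $\Gamma-\{x,v\}$ prime, so $v=i_x$.

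Next I would treat the connected case. Suppose $\Gamma-x$ is connected. Then Claim~\ref{cla3_half_graph} gives $x^-,x^+\in Y$ with $\{x,x^-\}\notin E(\Gamma)$, $\{x,x^+\}\in E(\Gamma)$, and by its fifth assertion $\Gamma-\{x,x^-\}$ and $\Gamma-\{x,x^+\}$ are prime. Its sixth assertion states exactly that $x^+$ is the unique element of $V(\Gamma)\setminus\{x\}$ with $\{x,x^+\}\in E(\Gamma)$ and $\Gamma-\{x,x^+\}$ prime. Since $\varphi(x)=x^+$ in this case, this is precisely the desired statement.

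Combining the two cases yields the claim for every $x\in X$. I do not expect any genuine obstacle here: the content has been front-loaded into Claims~\ref{cla1_half_graph} and \ref{cla3_half_graph}, whose assertions (3) and (6) respectively are tailored to give exactly the uniqueness needed, so this final claim is a two-line bookkeeping argument matching the definition of $\varphi$ against those earlier uniqueness statements. The only point worth a word of care is making explicit that $\varphi(x)$ as \emph{defined} does lie in $N_\Gamma(x)$ in both branches (immediate: $i_x$ is isolated in $\Gamma-x$ and $\Gamma$ is connected; $x^+\in N_\Gamma(x)$ by construction), so that the edge condition $\{x,\varphi(x)\}\in E(\Gamma)$ in the statement is met and not merely the primality condition.
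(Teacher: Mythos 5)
Your proof is correct and matches the paper exactly: the paper omits the argument, stating only that the claim ``follows easily from Claims~\ref{cla1_half_graph} and \ref{cla3_half_graph},'' and your two-case bookkeeping (assertion~3 of Claim~\ref{cla1_half_graph} in the disconnected case, assertion~6 of Claim~\ref{cla3_half_graph} in the connected case, plus the observation that $\varphi(x)\in N_\Gamma(x)$ in both branches) is precisely that intended argument.
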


In the next two claims, we verify that $\varphi$ is bijective. 

\begin{cla}\label{cla5_half_graph}
$\varphi$ is injective. 
\end{cla}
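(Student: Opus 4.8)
The plan is to argue by contradiction. Suppose there exist distinct $x,x'\in X$ with $\varphi(x)=\varphi(x')=:y$. By Claim~\ref{cla4_half_graph} (equivalently, by Definition~\ref{defi_varphi_half_graph} together with Claims~\ref{cla1_half_graph} and \ref{cla3_half_graph}) we then have $\{x,y\},\{x',y\}\in E(\Gamma)$. The first observation I would make is that $\Gamma-x$ and $\Gamma-x'$ cannot both be disconnected: if $\Gamma-x$ were disconnected, then $x$ would be the smallest element of $L$ by Claim~\ref{cla1_half_graph}(2), and a linear order has a unique smallest element. So, after relabelling $x$ and $x'$ if necessary, I may assume that $\Gamma-x'$ is connected; hence $y=\varphi(x')=(x')^{+}$ in the notation of Claim~\ref{cla3_half_graph}, and in particular $\{x',y\}\in E(\Gamma)$.

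Next I would split into two cases according to whether $\Gamma-x$ is connected. If $\Gamma-x$ is disconnected, then $y=\varphi(x)=i_x$ is the isolated vertex of $\Gamma-x$ by Claim~\ref{cla1_half_graph}(1); since $\Gamma$ is connected, the only vertex adjacent to $i_x$ in $\Gamma$ is $x$, so $N_\Gamma(i_x)=\{x\}$, which is incompatible with $\{x',i_x\}=\{x',y\}\in E(\Gamma)$ because $x'\neq x$. If $\Gamma-x$ is connected, then $y=\varphi(x)=x^{+}$. Since $L$ is a linear order on $X$ and $x\neq x'$, either $x<x'\mod L$ or $x'<x\mod L$. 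In the first case, Claim~\ref{cla3_half_graph}(4) applied to $x$ with $u=x'$ yields $\{x',x^{+}\}\notin E(\Gamma)$, that is $\{x',y\}\notin E(\Gamma)$, contradicting $\{x',y\}\in E(\Gamma)$. In the second case, Claim~\ref{cla3_half_graph}(4) applied to $x'$ with $u=x$ yields $\{x,(x')^{+}\}=\{x,y\}\notin E(\Gamma)$, again contradicting an edge produced above. Every case leads to a contradiction, so $\varphi$ is injective.

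Almost all of the content has already been packaged into Claims~\ref{cla1_half_graph}--\ref{cla4_half_graph}; the only point needing a little care is arranging the case distinction so that Claim~\ref{cla3_half_graph} is invoked for a vertex $v\in\{x,x'\}$ only when $\Gamma-v$ is already known to be connected, and disposing of the ``$\Gamma-x$ disconnected'' branch separately (where one instead exploits that the isolated vertex $i_x$ has $x$ as its unique neighbour). I do not anticipate any genuine obstacle beyond this bookkeeping.
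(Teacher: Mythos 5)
Your proof is correct and uses essentially the same ingredients as the paper's: Claim~\ref{cla1_half_graph} (uniqueness of the smallest element and the fact that $N_\Gamma(i_x)=\{x\}$) for the disconnected branch, the fourth assertion of Claim~\ref{cla3_half_graph} for the connected branch, and Claim~\ref{cla4_half_graph} to produce the contradicting edge. The only difference is organizational (contradiction versus the paper's direct argument fixing $u<v\!\mod L$ first), so nothing further is needed.
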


\begin{proof}
Consider distinct $u,v\in X$. 
For instance, suppose that $u<v \hspace{-1mm}\mod L$. 
In particular, $v$ is not the smallest element of $L$. 
It follows from Claim~\ref{cla1_half_graph} that $\Gamma-v$ is connected. 
By Claim~\ref{cla3_half_graph}, 
there exist $v^-,v^+\in Y$ such that $\{v,v^-\}\not\in E(\Gamma)$, 
$\{v,v^+\}\in E(\Gamma)$, and 
$\{v^-,v^+\}$ is the only nontrivial module of $\Gamma-v$. 
We have $\varphi(v)=v^+$. 

First, suppose that $\Gamma-u$ is disconnected. 
We have $\varphi(u)=i_u$, where $i_u$ is the unique isolated vertex of $\Gamma-u$ by Claim~\ref{cla1_half_graph}. 
We obtain 
$\{v,\varphi(u)\}\not\in E(\Gamma)$. 
Thus $\varphi(u)\neq\varphi(v)$ because $\{v,\varphi(v)\}\in E(\Gamma)$ (see Claim~\ref{cla4_half_graph}). 

Second, suppose that $\Gamma-u$ is connected. 
By Claim~\ref{cla3_half_graph}, 
there exist $u^-,u^+\in Y$ such that $\{u,u^-\}\not\in E(\Gamma)$, 
$\{u,u^+\}\in E(\Gamma)$, and 
$\{u^-,u^+\}$ is the only nontrivial module of $\Gamma-u$. 
We have $\varphi(u)=u^+$. 
Since $u<v \hspace{-1mm}\mod L$, it follows from the fourth assertion of Claim~\ref{cla3_half_graph} applied to $u$ that $\{v,\varphi(u)\}\not\in E(\Gamma)$. 
Since $\{v,\varphi(v)\}\in E(\Gamma)$ (see Claim~\ref{cla4_half_graph}), 
$\varphi(u)\neq\varphi(v)$. 
\end{proof}

\begin{cla}\label{cla6_half_graph}
$\varphi$ is surjective. 
\end{cla}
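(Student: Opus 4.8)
The plan is to exploit the symmetry between the two parts $X$ and $Y$ of the bipartition. All the hypotheses used so far in the proof of Theorem~\ref{thm1_half_graph} after assuming $\Gamma$ infinite --- namely that $\Gamma$ is bipartite with bipartition $\{X,Y\}$, that $P_5\not\leq\Gamma$, and that $\Gamma$ is critical (whence $\Gamma$ is prime and connected) --- are invariant under exchanging $X$ and $Y$. Hence Definition~\ref{defi_L_half_graph}, Claims~\ref{cla1_half_graph} to \ref{cla3_half_graph}, Definition~\ref{defi_varphi_half_graph} and Claim~\ref{cla4_half_graph} all remain valid with the roles of $X$ and $Y$ interchanged. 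In particular, there is a function $\psi:Y\longrightarrow X$ --- the analogue of $\varphi$ built from the side $Y$ --- such that, by the $Y$-version of Claim~\ref{cla4_half_graph}, for every $y\in Y$ the vertex $\psi(y)$ is the unique element of $V(\Gamma)\setminus\{y\}$ satisfying $\{y,\psi(y)\}\in E(\Gamma)$ and $\Gamma-\{y,\psi(y)\}$ prime.

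Next I would fix $y\in Y$ and set $x=\psi(y)$. Since $\{x,y\}\in E(\Gamma)$ while $X$ and $Y$ are stable sets of $\Gamma$, we get $x\in X$, and moreover $\Gamma-\{x,y\}$ is prime. Now apply Claim~\ref{cla4_half_graph} to $x\in X$: the vertex $\varphi(x)$ is the \emph{unique} element of $V(\Gamma)\setminus\{x\}$ with $\{x,\varphi(x)\}\in E(\Gamma)$ and $\Gamma-\{x,\varphi(x)\}$ prime. Since $y\neq x$, $\{x,y\}\in E(\Gamma)$, and $\Gamma-\{x,y\}$ is prime, this uniqueness forces $y=\varphi(x)$. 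Thus $y$ lies in the image of $\varphi$; as $y\in Y$ was arbitrary and $\varphi(X)\subseteq Y$ by Definition~\ref{defi_varphi_half_graph}, it follows that $\varphi$ is surjective, which completes Claim~\ref{cla6_half_graph}.

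The one delicate point is the appeal to symmetry in the first paragraph: one must check that nothing in the earlier part of this section used an asymmetry between $X$ and $Y$ (for instance, that the linear order analogous to $L$ is well defined on $Y$, which again follows from the primality of $\Gamma$ together with $K_2\oplus K_2\not\leq\Gamma$ via Lemma~\ref{fac1_half_graph}, exactly as in Definition~\ref{defi_L_half_graph}). If one prefers to avoid this meta-argument, one can argue directly instead: since $\Gamma$ is critical, $\Gamma-y$ is not prime, and then a verbatim repetition of the case analysis of Claims~\ref{cla1_half_graph} and \ref{cla3_half_graph} with $y$ in the role of $x$ produces a vertex $v$ with $\{y,v\}\in E(\Gamma)$ and $\Gamma-\{y,v\}$ prime; necessarily $v\in X$, and Claim~\ref{cla4_half_graph} applied to $v$ yields $y=\varphi(v)$ as before.
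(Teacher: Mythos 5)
Your proposal is correct and follows essentially the same route as the paper: the paper's proof also fixes $v\in Y$, reruns the case analysis of Claims~\ref{cla1_half_graph} and \ref{cla3_half_graph} with the roles of $X$ and $Y$ exchanged (writing ``as in Claim~\ref{cla1_half_graph}/\ref{cla3_half_graph}'') to produce a neighbour $v^+\in X$ of $v$ with $\Gamma-\{v,v^+\}$ prime, and then invokes the uniqueness in Claim~\ref{cla4_half_graph} to get $v=\varphi(v^+)$. Your packaging of the symmetric construction as a map $\psi:Y\to X$ is just a more explicit form of the same argument, and your closing remark about checking that only symmetric hypotheses were used is exactly the (implicit) justification the paper relies on.
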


\begin{proof}
Let $v\in Y$. 
Since $\Gamma$ is critical, $\Gamma-v$ is not prime. 

First, suppose that $\Gamma-v$ is disconnected. 
As in Claim~\ref{cla1_half_graph}, we obtain that $\Gamma-v$ admits an isolated vertex $i_v$. 
Thus $N_\Gamma(i_v)=\{v\}$. 
Since $\{i_v,\varphi(i_v)\}\in E(\Gamma)$, we obtain $\varphi(i_v)=v$. 

Second, suppose that $\Gamma-v$ is connected. 
As in Claim~\ref{cla3_half_graph}, there exist $v^-,v^+\in X$ such that 
$\{v^-,v^+\}$ is the only nontrivial module of $\Gamma-v$, $\{v,v^-\}\not\in E(\Gamma)$, and 
$\{v,v^+\}\in E(\Gamma)$. 
Furthermore, $\Gamma-\{v,v^-\}$ and $\Gamma-\{v,v^+\}$ are prime. 
Thus $\Gamma-\{v,v^+\}$ is prime, and $\{v,v^+\}\in E(\Gamma)$. 
It follows from Claim~\ref{cla4_half_graph} that $v=\varphi(v^+)$. 
\end{proof}

It follows from Claims~\ref{cla5_half_graph} and \ref{cla6_half_graph} that $\varphi$ is bijective. 

\begin{cla}\label{cla7_half_graph}
$\Gamma$ is the half graph defined from the linear order $L$, and the bijection $\varphi$. 
\end{cla}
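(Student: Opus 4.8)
The plan is to reduce Claim~\ref{cla7_half_graph} to the single equivalence: for all $x,x'\in X$, $\{x,\varphi(x')\}\in E(\Gamma)$ if and only if $x\leq x'\!\mod L$, and then to settle this equivalence by a three-case analysis built entirely on Claims~\ref{cla1_half_graph} and \ref{cla3_half_graph}. For the reduction, observe that $\Gamma$ is bipartite with stable sets $X$ and $Y$, so every edge of $\Gamma$ joins a vertex of $X$ to a vertex of $Y$; and since $\varphi\colon X\longrightarrow Y$ is a bijection (Claims~\ref{cla5_half_graph} and \ref{cla6_half_graph}), every such edge is of the form $\{x,\varphi(x')\}$ for uniquely determined $x,x'\in X$. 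Hence the displayed equivalence is precisely the statement $E(\Gamma)=\{\{x,\varphi(x')\}:x\leq x'\!\mod L\}$, which is the definition (Definition~\ref{defi_infinite_half_graph}) of the half graph associated with $L$ and $\varphi$.

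For $x=x'$ both sides hold: $x\leq x\!\mod L$ trivially, and $\{x,\varphi(x)\}\in E(\Gamma)$ by Claim~\ref{cla4_half_graph}. Next suppose $x<x'\!\mod L$. Then $x'$ is not the smallest element of $L$, so $\Gamma-x'$ is connected by the second assertion of Claim~\ref{cla1_half_graph}; thus Claim~\ref{cla3_half_graph} applies to $x'$ and provides $(x')^-,(x')^+\in Y$ with $\varphi(x')=(x')^+$. The third assertion of Claim~\ref{cla3_half_graph}, applied to $x'$ with $u=x$, gives $\{x,(x')^-\}\in E(\Gamma)$; since $\{(x')^-,(x')^+\}$ is a module of $\Gamma-x'$ and $x\neq x'$ lies outside it, $x$ is adjacent to both or to neither of $(x')^-,(x')^+$, whence $\{x,\varphi(x')\}=\{x,(x')^+\}\in E(\Gamma)$.

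Finally suppose $x'<x\!\mod L$. If $\Gamma-x'$ is disconnected, then $\varphi(x')=i_{x'}$, where by Claim~\ref{cla1_half_graph} the vertex $i_{x'}$ is isolated in $\Gamma-x'$; as $\Gamma$ is connected its only neighbour is $x'$, so $N_\Gamma(i_{x'})=\{x'\}$ and hence $\{x,\varphi(x')\}\notin E(\Gamma)$ because $x\neq x'$. If $\Gamma-x'$ is connected, then $\varphi(x')=(x')^+$, and the fourth assertion of Claim~\ref{cla3_half_graph}, applied to $x'$ with $u=x$ (legitimate since $x'<x\!\mod L$), gives $\{x,(x')^+\}\notin E(\Gamma)$, i.e. $\{x,\varphi(x')\}\notin E(\Gamma)$. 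This exhausts all cases and establishes the equivalence, hence the claim.

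Since the argument is a bookkeeping combination of the already proved claims, I do not expect a genuine obstacle; essentially all of the work was done in Claims~\ref{cla1_half_graph}--\ref{cla4_half_graph}. The only point requiring a little care is the case $x<x'\!\mod L$, where one must pass from the vertex $(x')^-$ (which the third assertion of Claim~\ref{cla3_half_graph} speaks about) to $\varphi(x')=(x')^+$; this is handled by invoking that $\{(x')^-,(x')^+\}$ is a module of $\Gamma-x'$, so adjacency of $x$ to one of them forces adjacency to the other.
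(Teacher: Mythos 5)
Your proof is correct and follows essentially the same route as the paper: reduce to the single equivalence $\{u,\varphi(x)\}\in E(\Gamma)\Leftrightarrow u\leq x\!\mod L$ and settle it via Claims~\ref{cla1_half_graph}, \ref{cla3_half_graph} and \ref{cla4_half_graph}. If anything, your treatment of the direction $x'<x\!\mod L$ is slightly more careful than the paper's, since you explicitly separate the sub-case where $\Gamma-x'$ is disconnected (so that $\varphi(x')=i_{x'}$) instead of invoking the fourth assertion of Claim~\ref{cla3_half_graph} unconditionally; the paper's forward direction instead reads $x\leq x'$ directly as $N_\Gamma(x)\supseteq N_\Gamma(x')$ and applies Claim~\ref{cla4_half_graph}, which is marginally shorter than your module argument but equivalent.
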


\begin{proof}
Consider distinct $u,x\in X$. 
We have to verify that 
$$\text{$\{u,\varphi(x)\}\in E(\Gamma)$ if and only if $u\leq x\hspace{-2mm}\mod L$.}$$
Suppose that $u\leq x\hspace{-1mm}\mod L$. 
We obtain $N_\Gamma(x)\subseteq N_\Gamma(u)$. 
By Claim~\ref{cla4_half_graph}, $\varphi(x)\in N_\Gamma(x)$. 
Hence $\varphi(x)\in N_\Gamma(u)$. 
Conversely, suppose that $x<u\hspace{-1mm}\mod L$. 
In particular, $u$ is not the smallest element of $L$. 
It follows from Claim~\ref{cla1_half_graph} that $\Gamma-u$ is connected. 
By the fourth assertion of Claim~\ref{cla3_half_graph} applied to $x$, $\{u,x^+\}\not\in E(\Gamma)$, 
that is, 
$\{u,\varphi(x)\}\not\in E(\Gamma)$. 
\end{proof}

\begin{cla}\label{cla8_half_graph}
Given $x\in X$, if $x$ is not the smallest element of $L$, then $x$ admits a predecessor in $L$. 
\end{cla}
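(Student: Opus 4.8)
The plan is to show that the predecessor of $x$ is the vertex $\varphi^{-1}(x^{-})$, where $x^{-}\in Y$ is one of the two vertices furnished by Claim~\ref{cla3_half_graph}. First I would set up the objects. Since $x$ is not the smallest element of $L$, the contrapositive of assertion~2 of Claim~\ref{cla1_half_graph} forces $\Gamma-x$ to be connected, so Claim~\ref{cla3_half_graph} applies to $x$: there are $x^{-},x^{+}\in Y$ with $\{x,x^{-}\}\notin E(\Gamma)$, $\{x,x^{+}\}\in E(\Gamma)$, and moreover $\{w,x^{-}\}\in E(\Gamma)$ for every $w\in X$ with $w<x$ in $L$ (assertion~3). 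Since $\varphi$ is a bijection (Claims~\ref{cla5_half_graph} and~\ref{cla6_half_graph}), I set $u=\varphi^{-1}(x^{-})$. Note $u\neq x$: indeed $\varphi(x)=x^{+}$ by Definition~\ref{defi_varphi_half_graph} (we are in the connected case), and $x^{+}\neq x^{-}$ because $\{x,x^{+}\}\in E(\Gamma)$ while $\{x,x^{-}\}\notin E(\Gamma)$.

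Next I would use the half-graph description. By Claim~\ref{cla7_half_graph}, $\Gamma$ is the half graph attached to $L$ and $\varphi$, so for distinct $a,b\in X$ we have $\{a,\varphi(b)\}\in E(\Gamma)$ if and only if $a\leq b$ in $L$. Applying this with $a=x$, $b=u$: since $\{x,\varphi(u)\}=\{x,x^{-}\}\notin E(\Gamma)$, we do not have $x\leq u$ in $L$, hence $u<x$ in $L$. Now let $w\in X$ be arbitrary with $w<x$ in $L$. By assertion~3 of Claim~\ref{cla3_half_graph}, $\{w,x^{-}\}=\{w,\varphi(u)\}\in E(\Gamma)$, so applying Claim~\ref{cla7_half_graph} again with $a=w$, $b=u$ gives $w\leq u$ in $L$. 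Thus every element of $X$ lying below $x$ is at most $u$, and since $u$ itself lies below $x$, the element $u$ is the largest element of $X$ below $x$, i.e.\ its predecessor in $L$.

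I do not expect a genuine obstacle: the heavy lifting is already done in Claims~\ref{cla1_half_graph}, \ref{cla3_half_graph} and~\ref{cla7_half_graph}, and the one small piece of insight is to realize that the predecessor should be $\varphi^{-1}(x^{-})$ rather than something read off $\Gamma-x$ directly. The only points requiring a little care are checking $u\neq x$ (so that $u<x$ is strict) and invoking the correct orientation of the half-graph characterization of $E(\Gamma)$.
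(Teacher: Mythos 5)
Your proof is correct, and it identifies the same candidate predecessor as the paper, namely $t=\varphi^{-1}(x^-)$. The difference lies in how the two halves of the verification are carried out. For $t<x$, the paper argues directly from the definition of $L$ via neighborhoods ($x^-\in N_\Gamma(t)\setminus N_\Gamma(x)$ gives $N_\Gamma(t)\supsetneq N_\Gamma(x)$), while you invoke Claim~\ref{cla7_half_graph}; these are interchangeable. For the key step that $(t,x)_L=\emptyset$, the paper re-applies Claims~\ref{cla1_half_graph} and~\ref{cla3_half_graph} to the vertex $t$ itself, splitting into cases according to whether $\Gamma-t$ is connected, in order to show that $\{u,x^-\}\notin E(\Gamma)$ whenever $t<u\hspace{-1mm}\mod L$. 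You instead combine the third assertion of Claim~\ref{cla3_half_graph} (applied to $x$) with the half-graph characterization of Claim~\ref{cla7_half_graph}: every $w<x$ satisfies $\{w,\varphi(t)\}\in E(\Gamma)$ and hence $w\leq t$. Since Claim~\ref{cla7_half_graph} is already established at this point, your route is legitimate and avoids the case analysis entirely, making it somewhat shorter; the paper's version has the minor virtue of not relying on the global half-graph description, only on the local structure of $\Gamma-t$. Your bookkeeping (checking $t\neq x$ via $\varphi(x)=x^+\neq x^-$, and the degenerate case $w=t$ in the application of Claim~\ref{cla7_half_graph}) is handled correctly.
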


\begin{proof}
Let $x\in X$. 
Suppose that $x$ is not the smallest element of $L$. 
It follows from Claim~\ref{cla1_half_graph} that $\Gamma-x$ is connected. 
By Claim~\ref{cla3_half_graph}, there exist $x^-,x^+\in Y$ such that 
$\{x^-,x^+\}$ is the only nontrivial module of $\Gamma-x$, $\{x,x^-\}\not\in E(\Gamma)$, and 
$\{x,x^+\}\in E(\Gamma)$. 
Furthermore, for every $u\in X$, 
we have 
\begin{equation}\label{E6_prop2_half_graph}
\text{if $u<x\hspace{-2mm}\mod L$, then 
$\{u,x^-\}\in E(\Gamma)$,}
\end{equation}
by the third assertion of Claim~\ref{cla3_half_graph} applied to $x$. 
Set $$t=\varphi^{-1}(x^-).$$ 
By Claim~\ref{cla4_half_graph}, $\{t,\varphi(t)\}\in E(\Gamma)$, that is, 
$\{t,x^-\}\in E(\Gamma)$. 
We obtain $x^-\in N_\Gamma(t)\setminus N_\Gamma(x)$. 
Hence $N_\Gamma(t)\supsetneq N_\Gamma(x)$, so $t<x\hspace{-1mm}\mod L$. 
We prove that $t$ is the predecessor of $x$. 
It suffices to verify that $$(t,x)_L=\emptyset.$$
First, suppose that $\Gamma-t$ is disconnected. 
By Claim~\ref{cla1_half_graph}, there exists $i_t\in Y$ such that $i_t$ is an isolated vertex of $\Gamma-t$. 
Since $\varphi(t)=i_t$, $i_t=x^-$. 
We obtain that $\{u,x^-\}\not\in E(\Gamma)$ for every $u\in V(\Gamma)\setminus\{t,x^-\}$. 
It follows from \eqref{E6_prop2_half_graph} that $(t,x)_L=\emptyset$. 
Second, suppose that $\Gamma-t$ is connected. 
By Claim~\ref{cla3_half_graph}, there exist $t^-,t^+\in Y$ such that 
$\{t^-,t^+\}$ is the only nontrivial module of $\Gamma-t$, $\{t,t^-\}\not\in E(\Gamma)$, and 
$\{t,t^+\}\in E(\Gamma)$. 
Furthermore, for every $u\in X$ such that $t<u\hspace{-1mm}\mod L$, we have 
$\{u,t^+\}\not\in E(\Gamma)$ by the fourth assertion of Claim~\ref{cla3_half_graph} applied to $t$. 
Recall that $t^+=\varphi(t)$. 
Since $t=\varphi^{-1}(x^-)$, we obtain $t^+=x^-$. 
Therefore, for every $u\in X$ such that $t<u\hspace{-1mm}\mod L$, we have 
$\{u,x^-\}\not\in E(\Gamma)$. 
It follows from \eqref{E6_prop2_half_graph} that $(t,x)_L=\emptyset$. 
\end{proof}

By Remark~\ref{rem_infinite_half_graph}, 
$\Gamma$ is also the half graph defined from the linear order $\varphi(L)^\star$ defined on $Y$, and the bijection $\varphi^{-1}:Y\longrightarrow X$. 
The analogue of Claim~\ref{cla8_half_graph} for $\varphi(L)^\star$ follows. 

\begin{cla}\label{cla9_half_graph}
Given $y\in Y$, if $y$ is not the smallest element of $\varphi(L)^\star$, then $y$ admits a predecessor in $\varphi(L)^\star$. 
\end{cla}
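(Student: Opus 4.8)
The plan is to obtain Claim~\ref{cla9_half_graph} as the mirror image of Claim~\ref{cla8_half_graph} under exchanging the two sides of the bipartition. Every standing hypothesis used in Section~\ref{section_Half_graphs} --- that $\Gamma$ is bipartite, prime, critical, infinite, and $P_5$-free --- treats $X$ and $Y$ symmetrically, and none of Claims~\ref{cla1_half_graph}--\ref{cla3_half_graph} (nor Claim~\ref{cla4_half_graph}, whose proof only invokes Claims~\ref{cla1_half_graph} and \ref{cla3_half_graph}) refers to the chosen labelling of the two classes. Hence Definitions~\ref{defi_L_half_graph} and \ref{defi_varphi_half_graph} together with Claims~\ref{cla1_half_graph}--\ref{cla8_half_graph} may be re-run verbatim with $Y$ in the role of $X$ and $X$ in the role of $Y$; in particular there is no circularity.

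First I would fix the dual data. Running Definition~\ref{defi_L_half_graph} on $Y$ yields the linear order $L'$ on $Y$ defined by $y<y'\mod L'$ whenever $N_\Gamma(y)\supsetneq N_\Gamma(y')$; running Definition~\ref{defi_varphi_half_graph} on $Y$ yields a bijection $\psi\colon Y\to X$; and the $Y$-analogue of Claim~\ref{cla7_half_graph} shows that $\Gamma$ is the half graph determined by $L'$ and $\psi$. Next I would check that $L'=\varphi(L)^\star$. By Remark~\ref{rem_infinite_half_graph}, $\Gamma$ is also the half graph determined by the linear order $\varphi(L)^\star$ on $Y$ together with $\varphi^{-1}\colon Y\to X$; by the uniqueness statement in Remark~\ref{rem0_infinite_half_graph}, the half-graph order on the $Y$-side is necessarily the reverse-neighbourhood-inclusion order, so $\varphi(L)^\star=L'$ (and, although I would not need it, $\psi=\varphi^{-1}$).

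With the identification $L'=\varphi(L)^\star$ in hand, I would invoke the $Y$-analogue of Claim~\ref{cla8_half_graph}: if $y\in Y$ is not the smallest element of $L'$, then $y$ has a predecessor in $L'$. Rewriting $L'$ as $\varphi(L)^\star$ is precisely the assertion of Claim~\ref{cla9_half_graph}.

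The only point that requires genuine attention, rather than rote transcription, is the identification $L'=\varphi(L)^\star$ --- that is, confirming that the order produced by the $Y$-side construction of Definition~\ref{defi_L_half_graph} coincides with the order $\varphi(L)^\star$ supplied by Remark~\ref{rem_infinite_half_graph}; this is immediate from the uniqueness in Remark~\ref{rem0_infinite_half_graph}. Everything else is a direct copy of the proofs of Claims~\ref{cla1_half_graph}--\ref{cla8_half_graph}, so there is no real obstacle.
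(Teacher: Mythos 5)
Your proposal is correct and follows essentially the same route as the paper, which states Claim~\ref{cla9_half_graph} without proof immediately after invoking Remark~\ref{rem_infinite_half_graph}: the intended argument is precisely the symmetry you describe, re-running Claims~\ref{cla1_half_graph}--\ref{cla8_half_graph} with the roles of $X$ and $Y$ exchanged. Your explicit identification of the $Y$-side neighbourhood-inclusion order with $\varphi(L)^\star$ via the uniqueness in Remark~\ref{rem0_infinite_half_graph} is the one detail the paper leaves tacit, and you handle it correctly.
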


The next claim is an immediate consequence of Claims~\ref{cla9_half_graph}. 

\begin{cla}\label{cla11_half_graph}
Given $x\in X$, if $x$ is not the largest element of $L$, then $x$ admits a successor in $L$. 
\end{cla}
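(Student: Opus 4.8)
The plan is to deduce Claim~\ref{cla11_half_graph} from Claim~\ref{cla9_half_graph} by transporting the statement along the order isomorphism $\varphi$. Recall from Remark~\ref{rem_infinite_half_graph} (and the discussion preceding Claim~\ref{cla9_half_graph}) that $\varphi$ is an isomorphism from $L$ onto $\varphi(L)$, so $\varphi^{-1}$ is an isomorphism from $\varphi(L)$ onto $L$, and $\Gamma$ is also the half graph defined from $\varphi(L)^\star$ on $Y$ and the bijection $\varphi^{-1}$. The key elementary observation is that an order isomorphism preserves and reflects the covering relation, hence preserves predecessors and successors, while passing to the dual order interchanges "predecessor" and "successor"; thus a successor in $L$ corresponds to a successor in $\varphi(L)$, which is exactly a predecessor in $\varphi(L)^\star$.

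Concretely, I would fix $x\in X$ that is not the largest element of $L$ and set $y=\varphi(x)\in Y$. Since $\varphi$ is an order isomorphism from $L$ onto $\varphi(L)$, the vertex $y$ is not the largest element of $\varphi(L)$, and therefore $y$ is not the smallest element of the reversed order $\varphi(L)^\star$. Claim~\ref{cla9_half_graph} then applies to $y$ and yields a predecessor $z\in Y$ of $y$ in $\varphi(L)^\star$; equivalently, $z$ is the successor of $y$ in $\varphi(L)$. Applying the order isomorphism $\varphi^{-1}$ from $\varphi(L)$ onto $L$, we obtain that $\varphi^{-1}(z)\in X$ is the successor of $\varphi^{-1}(y)=x$ in $L$, which is what we want.

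There is essentially no obstacle here; the only care needed is the bookkeeping of which of the three orders $L$, $\varphi(L)$, $\varphi(L)^\star$ one is working in at each step. Together with Claim~\ref{cla8_half_graph}, this establishes both conditions of Definition~\ref{defi_discrete} for the linear order $L$, so $L$ is discrete; by Claim~\ref{cla7_half_graph}, $\Gamma$ is then the half graph defined from the discrete linear order $L$ and the bijection $\varphi$, i.e.\ $\Gamma$ is a discrete half graph, which completes the proof of Theorem~\ref{thm1_half_graph}.
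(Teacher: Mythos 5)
Your argument is correct and is exactly the deduction the paper intends: the paper simply declares Claim~\ref{cla11_half_graph} to be ``an immediate consequence of Claim~\ref{cla9_half_graph}'', and you have spelled out that consequence by transporting the statement along the order isomorphism $\varphi$ and dualizing. No issues.
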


It follows from Claims~\ref{cla8_half_graph} and \ref{cla11_half_graph} that $L$ is discrete, which completes the proof of Theorem~\ref{thm1_half_graph}. 

As announced in Subsection~\ref{Main results}, we discuss Theorem~\ref{cor1_thm_main_1} by using Theorems~\ref{thm_main_2} and 
\ref{thm1_half_graph}. 

\begin{rem}\label{rem_thm_24}
We denote by $\mathbb{Q}$ the set of rational numbers, and $L_\mathbb{Q}$ denotes the usual linear order on $\mathbb{Q}$. 
Obviously, $L_\mathbb{Q}$ is not discrete. 
We consider  the graph $G$ defined on 
$\{0,1,2,3\}\cup(\{0,1\}\times\mathbb{Q})$ by 
\begin{align*}
E(G)=\{\{0,1\},\{1,2\},\{2,3\}\}&\cup\{\{1,(1,q)\}:q\in\mathbb{Q}\}\\&\cup(\bigcup_{q\in\mathbb{Q}}\{\{(0,q),(1,r)\}:r\geq q\}).
\end{align*}
Set $X=\{0,1,2,3\}$, $Y=\{0\}\times\mathbb{Q}$ and $Z=\{1\}\times\mathbb{Q}$. 
We have $G[X]$ is prime because $G[X]=P_4$. 
We consider the 2-structure $\sigma_G$ associated with $G$. 
Since $G[X]$ is prime, $\sigma_G[X]$ is prime too. 
We have $Y=\langle X\rangle_{\sigma_G}$, $Z=X_{\sigma_G}(0)$, and 
$p_{(\sigma_G,\overline{X})}=\{Y,Z\}$. 
Furthermore, it is not difficult to verify that 
\begin{equation}\label{E2_rem_thm_24}
\Gamma_{(\sigma_G,\overline{X})}=G[Y\cup Z].
\end{equation}
We verify that $\sigma_G$ is finitely $\overline{X}$-critical (see Definition~\ref{defi_finitely}), without being 
$\overline{X}\!$-critical. 

We show that Statement~(Sk) holds for every odd integer $k\geq 1$. 
Let $W$ be a finite and nonempty subset of $Y\cup Z$ such that 
$W\in\varepsilon_{(\sigma_G,\overline{X})}$ (see Notation~\ref{nota_outsideg}). 
We have to show that $W$ is even. 
If $W\cap Y=\emptyset$, then $\{0\}\cup W$ is a module of $\sigma_G[X\cup W]$ because 
$Z=X_{\sigma_G}(0)$. 
Hence $W\cap Y\neq\emptyset$. 
We denote the elements of $W\cap Y$ by $(0,q_0),\ldots,(0,q_m)$, where $m\geq 0$, in such a way that 
$q_0<\cdots <q_m$, when $m\geq 1$. 
Set $Z^-=\{j<q_0:(1,j)\in W\}$. 
Since $Z=X_{\sigma_G}(0)$, $\{0\}\cup(\{1\}\times Z^-)$ is a module of $\sigma_G[X\cup W]$. 
Hence $Z^-=\emptyset$. 
Set $Z^+=\{j\geq q_m:(1,j)\in W\}$. 
We obtain that $\{1\}\times Z^+$ is a module of $\sigma_G[X\cup W]$. 
Hence $|Z^+|\leq 1$. 
If $Z^+=\emptyset$, then $(X\cup W)\setminus\{(0,q_m)\}$ 
is a module of $\sigma_G[X\cup W]$ because $(0,q_m)\in\langle X\rangle_{\sigma_G}$. 
Thus $|Z^+|=1$. 
Therefore, $|W|=2$ if $m=0$. 
Suppose that $m\geq 1$, and set $Z_i=\{j\in[q_i,q_{i+1}):(1,j)\in W\}$ for $i=0,\ldots,m-1$. 
Given $i=0,\ldots,m-1$, we have $\{1\}\times Z_i$ is a module of $\sigma_G[X\cup W]$. 
Hence $|Z_i|\leq 1$. 
Moreover, $\{(0,q_i),(0,q_{i+1})\}$ is a module of $\sigma_G[X\cup W]$ if $Z_i=\emptyset$. 
Therefore, $|Z_i|=1$. 
Consequently, $Z^-=\emptyset$, $|Z^+|=1$, and $|Z_i|=1$ for $i=0,\ldots,m-1$. 
Thus, $|W\cap Z|=m+1$, and hence $|W|=2m+2$. 

We prove that $\sigma_G$ is finitely $\overline{X}$-critical. 
Let $F$ be a finite subset of $Y\cup Z$. 
There exists a finite subset $F'$ of $\mathbb{Q}$ such that $|F'|\geq 2$ and 
$F\subseteq(\{0,1\}\times F')$. 
We have $G[\{0,1\}\times F']\simeq H_{2\times |F'|}$ (see Figure~\ref{H_2n}). 
It follows from Proposition~\ref{prop1_half_graph} that $G[\{0,1\}\times F']$ is critical. 
Set $\widetilde{F}=\{0,1\}\times F'$. 
We obtain that 
\begin{equation}\label{E1_rem_thm_24}
\text{$F\subseteq\widetilde{F}$ and $G[\widetilde{F}]$ is critical.}
\end{equation}
It follows from \eqref{E2_rem_thm_24} and \eqref{E1_rem_thm_24} that 
$\Gamma_{(\sigma_G[X\cup\widetilde{F}],\widetilde{F})}$ is critical. 
Since Statement (S5) holds, it follows from Theorem~\ref{thm_main_2} that $\sigma_G[X\cup\widetilde{F}]$ is $\widetilde{F}$\!-critical. 
Consequently, 
$\sigma_G$ is finitely $\overline{X}$-critical. 

Since $\sigma_G$ is finitely $\overline{X}$-critical, it follows from Theorem~\ref{cor1_thm_main_1} that 
$\sigma_G$ is prime. 
Lastly, we verify that $\sigma_G$ is not 
$\overline{X}\!$-critical. 
To begin, we verify that $G[Y\cup Z]$ is a non discrete half graph. 
Clearly, $G[Y\cup Z]$ is bipartite with bipartition $\{Y,Z\}$. 
Consider the bijection $\varphi:Y\longrightarrow Z$, which maps $(0,q)$ to $(1,q)$ for each 
$q\in\mathbb{Q}$. 
Moreover, consider the linear order $L_Y$ defined on $Y$ as follows. 
Given distinct $q,r\in\mathbb{Q}$, $(0,q)<(0,r)\hspace{-1mm}\mod L_Y$ if 
$q<r\hspace{-1mm}\mod L_\mathbb{Q}$. 
Clearly, $G[Y\cup Z]$ is the half graph defined from $L_Y$ and $\varphi$. 
Since $L_Y\simeq L_\mathbb{Q}$, $G[Y\cup Z]$ is not discrete. 

Since Statement (S5) holds,  $P_5\not\leq\Gamma_{(\sigma_G,\overline{X})}$ by Lemma~\ref{lem1_component}. 
Since $G[Y\cup Z]$ is a non discrete half graph, 
$\Gamma_{(\sigma_G,\overline{X})}$ is a non discrete half graph by \eqref{E2_rem_thm_24}. 
It follows from Theorem~\ref{thm1_half_graph} that $\Gamma_{(\sigma_G,\overline{X})}$ is not critical. 
Clearly, $G[Y\cup Z]$ is connected. 
Therefore, $\Gamma_{(\sigma_G,\overline{X})}$ is connected by \eqref{E2_rem_thm_24}. 
It follows from Theorem~\ref{thm_main_2} that $\sigma_G$ is not 
$\overline{X}\!$-critical. 
Since $\sigma_G$ is prime, there exists $v\in\overline{X}$ such that $\sigma_G-v$ is prime. 
In fact, we have $\sigma_G-w$ is prime for every $w\in\overline{X}$. 
\end{rem}

\appendix

\section*{Appendices}

\section{Description of partially critical 2-structures}\label{A_representation}

We use the following notation. 

\begin{nota}\label{prop_component_bis}
Given a 2-structure $\sigma$, consider $X\subsetneq V(\sigma)$ such that $\sigma[X]$ is prime. 
Suppose that Statement (S3) holds. 
Let $C$ be a component of $\Gamma_{(\sigma,\overline{X})}$. 
Consider $x,x'\in B_q^C$ and $y,y'\in D_q^C$ 
(see Notation~\ref{nota_prop_component}) such that 
$\{x,y\}\in E(\Gamma_{(\sigma,\overline{X})})$ and 
$\{x',y'\}\in E(\Gamma_{(\sigma,\overline{X})})$. 
Since $C$ is connected, it follows from Fact~\ref{fac1_first_results} that 
$[x,y]_\sigma=[x',y']_\sigma$. 
We denote $[x,y]_\sigma$ by $s_C$. 
\end{nota}

\begin{fac}\label{fac_description}
Given a 2-structure $\sigma$, consider $X\subsetneq V(\sigma)$ such that 
$\sigma[X]$ is prime. 
Suppose that Statement (S5) holds, and $\sigma$ is $\overline{X}\!$-critical. 
Under these assumptions, $\sigma$ is entirely determined by 
$\sigma[X]$, $q_{(\sigma,\overline{X})}$, 
$\Gamma_{(\sigma,\overline{X})}$, and 
$\{s_C:C\in\mathcal{C}(\Gamma_{(\sigma,\overline{X})})\}$. 
\end{fac}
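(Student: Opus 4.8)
We want to reconstruct the equivalence relation $\equiv_\sigma$ on all ordered pairs of distinct vertices of $\sigma$ from the listed data. Write $V(\sigma)=X\cup\overline{X}$, and recall that $\sigma$ being $\overline{X}$-critical forces Statement (S1) to hold (Remark~\ref{rem1_conditions_Ck}), so ${\rm Ext}_\sigma(X)=\emptyset$; moreover $\sigma$ is prime by Theorem~\ref{thm_main_1} (applied via Theorem~\ref{thm_main_2}), so Lemmas~\ref{l1_p_and_q}, \ref{l2_p_and_q}, \ref{l3_p_and_q} and Proposition~\ref{prop_component} are all available. I would split the verification according to the position of the two vertices $v,w$ whose class $[v,w]_\sigma$ we must recover: (i) $v,w\in X$; (ii) $v\in X$, $w\in\overline{X}$; (iii) $v,w\in\overline{X}$, and within (iii) the subcases "$v,w$ in the same block of $p_{(\sigma,\overline{X})}$'' and "$v,w$ in different blocks of $p_{(\sigma,\overline{X})}$''. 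Case (i) is immediate: $[v,w]_\sigma$ is read off from $\sigma[X]$, which is given.

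For case (ii), let $w\in\overline{X}$ lie in the block $B_p\in p_{(\sigma,\overline{X})}$. If $B_p=\langle X\rangle_\sigma^{(e,f)}$ for some $e,f\in E(\sigma)$ — and by Lemma~\ref{l1_p_and_q} the pair $\{e,f\}$ is determined by which nonempty block of $q_{(\sigma,\overline{X})}$ contains $w$ — then $w\longleftrightarrow_\sigma X$ with $[w,\alpha]_\sigma=(e,f)$ for every $\alpha\in X$, so $[v,w]_\sigma$ is known for all $v\in X$. If $B_p=X_\sigma(\alpha)$, say $w\in X_\sigma^{(e,f)}(\alpha)$ with $\{e,f\}$ again read off from $q_{(\sigma,\overline{X})}$, then $\{\alpha,w\}$ is a module of $\sigma[X\cup\{w\}]$, so for $v\in X\setminus\{\alpha\}$ we have $[v,w]_\sigma=[v,\alpha]_\sigma$ (known from $\sigma[X]$), and $[\alpha,w]_\sigma=(f,e)$; thus case (ii) is settled using only $\sigma[X]$ and $q_{(\sigma,\overline{X})}$.

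Case (iii) is the substantive one. Suppose first $v,w\in B_p$ for a common block of $p_{(\sigma,\overline{X})}$; Remark~\ref{rem_p} gives $\{v,w\}\notin E(\Gamma_{(\sigma,\overline{X})})$, and we must recover $[v,w]_\sigma$ inside $\sigma[B_p]$. Here I would invoke Lemma~\ref{l3_p_and_q}: if $v,w$ lie in the same block $B_q\in q^s_{(\sigma,\overline{X})}$ then $\sigma[B_p]$ is constant with $E(\sigma[B_p])=\{e[B_p]\}$, so $[v,w]_\sigma=(e,e)$, with $e$ determined by the label attached to $B_q$; if they lie in a block $B_q\in q^a_{(\sigma,\overline{X})}$ then $\sigma[\langle X\rangle_\sigma]$ (resp. $\sigma[X_\sigma(\alpha)]$) is linear, so its ordering is fixed once we know the two labels $e,f$ and the two $q$-blocks $\langle X\rangle_\sigma^{(e,f)},\langle X\rangle_\sigma^{(f,e)}$ — but the orientation of that linear order on each $q$-block, and the cross-link between the two $q$-blocks, are recovered from $\Gamma_{(\sigma,\overline{X})}$ together with the data $\{s_C\}$: an edge $\{x,y\}$ of $C$ with $x\in B_q^C$ forces, via Fact~\ref{fac1_first_results} and Corollary~\ref{cor0_first_results}, a known label on any pair $\{z^-,z^+\}\subseteq B_q$ separated by $x$, while Lemma~\ref{l2_p_and_q} pins down $[\langle X\rangle_\sigma^{(e,f)},\langle X\rangle_\sigma^{(f,e)}]_\sigma=(e,f)$. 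When instead $v,w$ lie in distinct blocks of $p_{(\sigma,\overline{X})}$: if the two blocks are "the same $p$-block split into two $q$-blocks'' this is handled by Lemma~\ref{l2_p_and_q} as just described; otherwise $v\in B_p$, $w\in D_p$ with $B_p\neq D_p$ genuinely distinct blocks of $p_{(\sigma,\overline{X})}$, and I would use $\{v,w\}\in E(\Gamma_{(\sigma,\overline{X})})$ or $\{v,w\}\notin E(\Gamma_{(\sigma,\overline{X})})$ as the case distinguisher. If $\{v,w\}\in E(\Gamma_{(\sigma,\overline{X})})$ then $v,w$ lie in a common component $C$ with $v\in B_q^C$, $w\in D_q^C$, and by Notation~\ref{prop_component_bis} $[v,w]_\sigma=s_C$, which is given. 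If $\{v,w\}\notin E(\Gamma_{(\sigma,\overline{X})})$, then $w$ has a neighbour $w'$ in $\Gamma_{(\sigma,\overline{X})}$ (no isolated vertices, by Corollary~\ref{cor1_first_results}) lying in the same $q$-block as $w$ with $\{v,w'\}$ still possibly an edge or not; using Fact~\ref{fac2_first_results} — $X\cup\{v,w'\}$ or $\{\alpha,v\}$ becomes a module of the relevant triple — one rewrites $[v,w]_\sigma$ in terms of $[v,w']_\sigma$ or $[\alpha,w]_\sigma$, and induction/connectivity on the component of $w'$ reduces to a case already handled or to the edge case $[v,w']_\sigma=s_{C'}$.

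\textbf{Main obstacle.} The delicate point is case (iii) when $v,w$ lie in different $p$-blocks and $\{v,w\}\notin E(\Gamma_{(\sigma,\overline{X})})$: there is no direct formula, and one must propagate the value of $[v,w]_\sigma$ along a path in the component of $w$ (or of $v$), repeatedly applying Lemma~\ref{lem_EHR}, Fact~\ref{fac1_first_results} and Fact~\ref{fac2_first_results} and checking that every step is determined by the listed data. Organizing this propagation cleanly — choosing, for a fixed $v$, a spanning structure of the component containing $w$ and verifying the label transported is independent of the chosen path (consistency) — is the real content; the consistency is exactly what Fact~\ref{fac1_first_results} (all edges of a connected $C$ carry the same $[x,y]_\sigma=s_C$) and Lemma~\ref{l2_p_and_q} guarantee, so the argument closes, but writing it without circularity against the definition of $q_{(\sigma,\overline{X})}$ and $\{s_C\}$ is where care is needed.
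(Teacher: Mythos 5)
Your case decomposition is the same as the paper's, and most of it is right: pairs meeting $X$ are read off from $\sigma[X]$ and $q_{(\sigma,\overline{X})}$; pairs in a common $p$-block are handled via Lemmas~\ref{l1_p_and_q}--\ref{l3_p_and_q}; edges across distinct $p$-blocks carry the label $s_C$. But your ``main obstacle'' paragraph misidentifies where the work is. For $v\in B_p$, $w\in D_p$ with $B_p\neq D_p$ and $\{v,w\}\notin E(\Gamma_{(\sigma,\overline{X})})$, the 2-substructure $\sigma[X\cup\{v,w\}]$ is not prime, so Lemma~\ref{lem_EHR} applies \emph{directly} to the pair: if $B_p=\langle X\rangle_\sigma$ then $X\cup\{w\}$ is a module of $\sigma[X\cup\{v,w\}]$ and $[v,w]_\sigma$ equals the label $(e,f)$ of the $q$-block of $v$; if $B_p=X_\sigma(\alpha)$ and $D_p=X_\sigma(\beta)$ then $\{\alpha,v\}$ is a module and $[v,w]_\sigma=[\alpha,\beta]_\sigma$, known from $\sigma[X]$. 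There is no propagation along paths, no induction, and no consistency check to organize --- this is exactly how the paper dispatches the case, in two lines. Moreover, your proposed propagation is not correctly set up: you take $w'$ to be a neighbour of $w$ ``lying in the same $q$-block as $w$,'' which is impossible (vertices of a common $p$-block, hence of a common $q$-block, are never adjacent in $\Gamma_{(\sigma,\overline{X})}$ by Remark~\ref{rem_p}), and Fact~\ref{fac2_first_results} requires a vertex of $D_p$ adjacent to $v$, which need not exist.

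Conversely, the one genuinely substantive case --- two vertices in the same asymmetric $q$-block $B_q^a$, where $\sigma[B_q^a]$ is a linear 2-structure by Lemma~\ref{l3_p_and_q} --- is where your sketch is thinnest. To recover $[u,v]_\sigma$ there you need, for \emph{every} pair $u,v\in B_q^a$, a witness outside $B_q$ adjacent to exactly one of them, so that Corollary~\ref{cor0_first_results} applies. The paper gets this by showing $B_q^a\subseteq V(C)$ (second assertion of Proposition~\ref{prop_component}), that $C$ is a critical $P_5$-free bipartite graph, hence a discrete half graph (Theorems~\ref{thm_main_2} and \ref{thm1_half_graph}), so that $N_C(u)$ and $N_C(v)$ are nested and distinct and the witness lives in their symmetric difference; the linear order on $B_q^a$ is then exactly the neighbourhood-inclusion order, read off from $\Gamma_{(\sigma,\overline{X})}$ alone (the data $\{s_C\}$ is not needed for this step, only for the cross-block edges). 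Your sketch gestures at Corollary~\ref{cor0_first_results} but never establishes the existence of the separating witness, which is the actual content of this case.
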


\begin{proof} 
We make the following preliminary observation. 
Since Statement (S5) holds, Statement (S3) holds as well 
(see Remark~\ref{rem1_conditions_Ck}). 
Since $\sigma$ is prime, it follows from Corollary~\ref{cor1_first_results} that 
\begin{equation}\label{E1_fac_description}
\text{$\Gamma_{(\sigma,\overline{X})}$ has no isolated vertices.}
\end{equation}

We have to determine $[x,y]_\sigma$ for distinct vertices $x,y$ of $\sigma$ such that 
$\{x,y\}\setminus X\neq\emptyset$. 
To begin, consider $x\in X$ and $y\in\overline{X}$. 
Since ${\rm Ext}_\sigma(X)=\emptyset$, $[x,y]_\sigma$ is determined by the block of $q_{(\sigma,\overline{X})}$ containing $y$. 
For instance, if $y\in X_\sigma^{(e,f)}(\alpha)$, where $e,f\in E(\sigma)$ and $\alpha\in X$, we have 
\begin{equation*}
[x,y]_\sigma=\ 
\begin{cases}
\text{$(f,e)$ if $x=\alpha$}\\
\text{or}\\
\text{$[x,\alpha]_\sigma$ if $x\neq\alpha$.}
\end{cases}
\end{equation*}

Now, we consider distinct $x,y\in\overline{X}$. 
To begin, we suppose that $x$ and $y$ belong to the same block of 
$p_{(\sigma,\overline{X})}$. 

\begin{itemize}
\item First, suppose that $\{x,y\}\cap B_q^s\neq\emptyset$, where 
$B_q^s\in q_{(\sigma,\overline{X})}^s$. 
Since $B_q^s\in q_{(\sigma,\overline{X})}$, there exists $e\in E(\sigma)$ such that $B_q^s=\langle X\rangle_\sigma^{(e,e)}$ or 
$B_q^s=X_\sigma^{(e,e)}(\alpha)$, where $\alpha\in X$. 
Since $\Gamma_{(\sigma,\overline{X})}$ has no isolated vertices 
(see \eqref{E1_fac_description}), it follows from Lemma~\ref{l1_p_and_q} 
 that $B_q^s\in p_{(\sigma,\overline{X})}$. 
 Hence $x,y\in B_q^s$. 
Since $\sigma$ is prime, it follows from the first assertion of Lemma~\ref{l3_p_and_q} that 
$[x,y]_\sigma=(e,e)$. 
\item Second, suppose that $x\in B_q^a$ and $y\in D_q^a$, where 
$B_q^a$ and $D_q^a$ are distinct elements of $q_{(\sigma,\overline{X})}^a$. 
Recall that $\Gamma_{(\sigma,\overline{X})}$ has no isolated vertices 
(see \eqref{E1_fac_description}). 
Therefore, we can apply Lemmas~\ref{l1_p_and_q} and \ref{l2_p_and_q} as follows. 
Since $x$ and $y$ belong to the same block of 
$p_{(\sigma,\overline{X})}$, 
it follows from Lemma~\ref{l1_p_and_q} that 
$B_q^a\cup D_q^a\in p_{(\sigma,\overline{X})}$. 
We use Lemma~\ref{l2_p_and_q} to determine 
$[x,y]_\sigma$. 
For instance, if $x\in\langle X\rangle_\sigma^{(e,f)}$ and 
$y\in\langle X\rangle_\sigma^{(f,e)}$, where 
$e,f\in E(\sigma)$ with $e\neq f$, then $[x,y]_\sigma=(e,f)$ by the first assertion of 
Lemma~\ref{l2_p_and_q}. 
\item Third, suppose that $x,y\in B_q^a$, where $B_q^a\in q_{(\sigma,\overline{X})}^a$. 
Since $B_q^a\in q_{(\sigma,\overline{X})}^a$, there exist distinct $e,f\in E(\sigma)$ such that 
\begin{equation*}
B_q^a=\ 
\begin{cases}
\langle X\rangle_\sigma^{(e,f)}\\
\text{or}\\
\text{$X_\sigma^{(e,f)}(\alpha)$, where $\alpha\in X$.}
\end{cases}
\end{equation*}
To determine $[x,y]_\sigma$, we describe $\sigma[B_q^a]$ in the following manner. 
Let $C$ be the component of $\Gamma_{(\sigma,\overline{X})}$ containing $x$. 
Since $\Gamma_{(\sigma,\overline{X})}$ has no isolated vertices 
(see \eqref{E1_fac_description}), it follows from the second assertion of Proposition~\ref{prop_component} that $B_q^a\subseteq V(C)$. 
For distinct $u,v\in B_q^a$, set 
$$\text{$u<v\!\mod l(B_q^a)$ if $[u,v]_\sigma=(e,f)$.}$$
Since $\sigma$ is prime, it follows from the second assertion of Lemma~\ref{l3_p_and_q} that $l(B_q^a)$ is a linear order. 
For instance, suppose that $B_q^a=B_q^C$ (see Notation~\ref{nota_prop_component}). 
Recall that $C$ is a bipartite graph, with bipartition $\{B_q^C,D_q^C\}$. 
Since $|B_q^a|\geq 2$, it follows from Theorem~\ref{thm_main_2} that $v(C)\geq 4$ and $C$ is critical. 
Moreover, $P_5\not\leq C$ by Lemma~\ref{lem1_component}. 
It follows from Theorem~\ref{thm1_half_graph} that 
$C$ is a discrete half graph. 
Precisely, for distinct $u,v\in B_q^a$, set 
$$\text{$u<v\!\mod L(B_q^a)$ if $N_C(u)\supsetneq N_C(v)$ 
(see Definition~\ref{defi_L_half_graph}).}$$
Furthermore, we define a function $\varphi(B_q^a):B_q^a\longrightarrow D_q^C$ as in Definition~\ref{defi_varphi_half_graph}. 
By Claims~\ref{cla5_half_graph} and \ref{cla6_half_graph}, $\varphi(B_q^a)$ is bijective. 
Lastly, by Claim~\ref{cla7_half_graph}, 
$C$ is the half graph defined from the linear order $L(B_q^a)$, and the bijection 
$\varphi(B_q^a)$. 
Consider distinct $u,v\in B_q^a$ such that $u<v\!\mod L(B_q^a)$. 
It follows that $\varphi(B_q^a)(u)\in N_C(u)\setminus N_C(v)$. 
By Corollary~\ref{cor0_first_results}, 
\begin{equation*}
[u,v]_\sigma=\ 
\begin{cases}
\text{$(f,e)$ if $B_q^a=\langle X\rangle_\sigma^{(e,f)}$}\\
\text{or}\\
\text{$(e,f)$ if $B_q^a=X_\sigma^{(e,f)}(\alpha)$.}
\end{cases}
\end{equation*}
Given distinct $u,v\in B_q^a$, it follows that 
\begin{equation*}
[u,v]_\sigma=(e,f)\ \text{if and only if}\ 
\begin{cases}
\text{$N_C(v)\supsetneq N_C(u)$ and $B_q^a=\langle X\rangle_\sigma^{(e,f)}$}\\
\text{or}\\
\text{$N_C(u)\supsetneq N_C(v)$ and $B_q^a=X_\sigma^{(e,f)}(\alpha)$.}
\end{cases}
\end{equation*}
Furthermore, observe that 
\begin{equation*}
l(B_q^a)=\ 
\begin{cases}
\text{$L(B_q^a)^\star$ if $B_q^a=\langle X\rangle_\sigma^{(e,f)}$}\\
\text{and}\\
\text{$L(B_q^a)$ if $B_q^a=X_\sigma^{(e,f)}(\alpha)$.}
\end{cases}
\end{equation*}
\end{itemize}
Lastly, we suppose that $x\in B_p$ and $y\in D_p$, where $B_p$ and $D_p$ are distinct elements of $p_{(\sigma,\overline{X})}$. 
\begin{itemize}
\item First, suppose that $\{x,y\}\not\in E(\Gamma_{(\sigma,\overline{X})})$. 
Suppose that $B_p=\langle X\rangle_\sigma$. 
There exist $e,f\in E(\sigma)$ such that $x\in \langle X\rangle_\sigma^{(e,f)}$. 
By the first assertion of Lemma~\ref{lem_EHR}, $[x,y]_\sigma=(e,f)$. 
Suppose that there exist distinct $\alpha,\beta\in X$ such that 
$B_p=X_\sigma(\alpha)$ and $B_q=X_\sigma(\beta)$. 
By the second assertion of Lemma~\ref{lem_EHR}, 
$[x,y]_\sigma=[\alpha,\beta]_\sigma$. 
\item Second, suppose that $\{x,y\}\in E(\Gamma_{(\sigma,\overline{X})})$. 
There exist $B_q,D_q\in q_{(\sigma,\overline{X})}$ such that $x\in B_q$ and $y\in D_q$. 
Thus $B_q\subseteq B_p$ and $D_q\subseteq D_p$. 
Denote by $C$ the component of $\Gamma_{(\sigma,\overline{X})}$ containing $x$ and $y$. 
We obtain $x\in V(C)\cap B_q$ and $y\in V(C)\cap D_q$. 
Therefore $x\in B_q^C$ and $y\in D_q^C$ (see Notation~\ref{nota_prop_component}). 
Hence 
$[x,y]_\sigma=s_C$ (see Notation~\ref{prop_component_bis}). \qedhere
\end{itemize}
\end{proof}

\begin{rem}\label{rem_description}
Given a 2-structure $\sigma$, consider $X\subsetneq V(\sigma)$ such that 
$\sigma[X]$ is prime. 
Suppose that Statement (S3) holds, and $\sigma$ is $\overline{X}\!$-critical. 
Let $C\in\mathcal{C}(\Gamma_{(\sigma,\overline{X})})$ such that $v(C)>2$. 
Since $C$ is prime  by Theorem~\ref{thm_main_1}, we have 
$[x,y]_\sigma\neq s_C$ for any $x\in B_q^C$ and $y\in D_q^C$ such that $\{x,y\}\not\in E(\Gamma_{(\sigma,\overline{X})})$. 
\end{rem}

We pursue by determining the modules created by partial criticality. 
We use the following notation. 

\begin{nota}\label{nota_varphi}
Given a 2-structure $\sigma$, consider $X\subsetneq V(\sigma)$ such that 
$\sigma[X]$ is prime. 
Suppose that Statement (S5) holds, and $\sigma$ is $\overline{X}\!$-critical. 
Consider a component $C$ of $\Gamma_{(\sigma,\overline{X})}$ such that 
$v(C)\geq 4$. 
By Theorem~\ref{thm_main_2}, $C$ is critical. 
By Lemma~\ref{lem1_component}, $P_5\not\leq C$. 
It follows from Theorem~\ref{thm1_half_graph} that 
$C$ is a half graph defined from a discrete linear order $L$ defined on $B_q^C$, and a bijection $\varphi$ from $B_q^C$ onto $D_q^C$. 

For distinct $u,v\in B_q^C$, we have 
$$\text{$\{u,\varphi(v)\}\in E(C)$ if and only if $u\leq v\hspace{-2mm}\mod L$.}$$
It follows that for distinct $u,v\in B_q^C$, 
$$\text{$u< v\hspace{-2mm}\mod L$ if and only if $N_C(u)\supsetneq N_C(v)$.}$$
Thus, the linear order $L$ is unique, it is denoted by $L_C$. 

Now, consider $u\in B_q^C$. 
First, suppose that $u$ is the largest element of $L_C$. 
We obtain that $N_C(u)\subsetneq N_C(v)$ for each $v\in B_q^C\setminus\{u\}$. 
It follows that $N_C(u)$ is a module of $C$. 
Hence $|N_C(u)|=1$, and $\varphi(u)$ is the unique element of $N_C(u)$. 
Second, suppose that $u$ is not the largest element of $L_C$. 
Since $L_C$ is discrete, $u$ admits a successor $u^+$ in $L_C$. 
It follows that $N_C(u)\setminus N_C(u^+)$ is a module of $C$. 
Hence $|N_C(u)\setminus N_C(u^+)|=1$, and $\varphi(u)$ is the unique element of 
$N_C(u)\setminus N_C(u^+)$. 
Consequently, the bijection $\varphi$ is unique, it is denoted by $\varphi_C$. 

Lastly, suppose that $\Gamma_{(\sigma,\overline{X})}$ admits a component $C$ such that $v(C)\leq 3$. 
By Theorem~\ref{thm_main_2}, $v(C)=2$. 
Therefore $|B^C_q|=|D^C_q|=1$ (see Notation~\ref{nota_prop_component}). 
In this case, $L_C$ denotes the unique linear order defined on $B^C_q$, and 
$\varphi_C $ denotes the unique function from $B^C_q$ to $D^C_q$.  
\end{nota}

The next fact follows from Theorems~\ref{thm_main_1} and \ref{thm1_half_graph}. 
We omit its proof. 

\begin{fac}\label{fac_description_modules}
Given a 2-structure $\sigma$, consider $X\subsetneq V(\sigma)$ such that 
$\sigma[X]$ is prime. 
Suppose that Statement (S5) holds, and $\sigma$ is $\overline{X}\!$-critical. 
Consider a component $C$ of $\Gamma_{(\sigma,\overline{X})}$. 
Let $x\in B_q^C$. 
We have $\sigma-\{x,\varphi_C(x)\}$ is prime. 
Set $$Y=V(\sigma)\setminus\{x,\varphi_C(x)\}.$$
Then, one of the following assertions holds 
\begin{itemize}
\item $\varphi_C(x)\in\langle Y\rangle_\sigma$, $C-x$ is disconnected, $Y$ is the unique nontrivial module of $\sigma-x$, and $x$ is the smallest element of $L_C$;
\item $\varphi_C(x)\in Y_\sigma(\alpha)$, where $\alpha\in X$, $C-x$ is disconnected, $\{\alpha,\varphi_C(x)\}$ is the unique nontrivial module of $\sigma-x$, and $x$ is the smallest element of $L_C$;
\item $\varphi_C(x)\in Y_\sigma(\varphi_C(x^-))$, where $x^-$ is the predecessor of $x$ in $L_C$, $C-x$ is connected, and $\{\varphi_C(x^-),\varphi_C(x)\}$ is the unique nontrivial module of $\sigma-x$. 
\end{itemize}
\end{fac}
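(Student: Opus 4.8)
The plan is to prove first that $\sigma-\{x,\varphi_C(x)\}$ is prime, and then to read the three alternatives off the unique nontrivial module of $\sigma-x$. (Throughout, $\sigma$ is prime because it is $\overline{X}$-critical, and Statement~(S5) gives Statements~(S3) and (S1) by Remark~\ref{rem1_conditions_Ck}; by Theorem~\ref{thm_main_2} and Notation~\ref{nota_varphi}, either $v(C)=2$ or $v(C)\ge4$ and $C$ is the discrete half graph of the canonical order $L_C$ on $B_q^C$ and the canonical bijection $\varphi_C\colon B_q^C\to D_q^C$.) For the first point I would set $W=\{x,\varphi_C(x)\}$ and use that $\Gamma_{(\sigma-W,\overline{X}\setminus W)}=\Gamma_{(\sigma,\overline{X})}-W$ and that Statement~(S3) is inherited by $\sigma-W$ (since $\varepsilon_{(\sigma-W,\overline{X}\setminus W)}\subseteq\varepsilon_{(\sigma,\overline{X})}$). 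Deleting the matched pair $W$ turns $C$ into the half graph built from $L_C$ restricted to $B_q^C\setminus\{x\}$ and the corresponding restriction of $\varphi_C$; a discrete linear order stays discrete when one point is removed, so $C-W$ is empty, a single edge, or a discrete half graph of order $\ge4$, the last being prime by Corollary~\ref{cor1_infinite_half_graph}. Since the other components of $\Gamma_{(\sigma,\overline{X})}$ are unchanged and, $\sigma$ being prime, have two vertices or at least four vertices and are prime (Theorem~\ref{thm_main_1}), every component of $\Gamma_{(\sigma,\overline{X})}-W$ has two vertices or at least four vertices and is prime; Theorem~\ref{thm_main_1} applied to $\sigma-W$ then gives that $\sigma-W$ is prime (the case $\overline{X}=W$ being trivial).

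Next I would analyse the nontrivial modules of $\sigma-x$. Write $Y=V(\sigma)\setminus\{x,\varphi_C(x)\}$, so $\sigma[Y]=\sigma-W$ is prime and $V(\sigma-x)=Y\cup\{\varphi_C(x)\}$; since $\sigma$ is $\overline{X}$-critical, $\sigma-x$ is not prime, hence has a nontrivial module $M$. As $M\cap Y$ is a module of the prime 2-structure $\sigma[Y]$, it is $\emptyset$, a singleton, or $Y$, and since $\varphi_C(x)\notin Y$ and $|M|\ge2$, this forces $M=Y$ or $M=\{\alpha,\varphi_C(x)\}$ for some $\alpha\in Y$. Two distinct nontrivial modules of $\sigma-x$ would, via the closure of the family of modules under intersection and difference of overlapping members, produce a nontrivial module of $\sigma[Y]$, which is impossible; so $\sigma-x$ has a unique nontrivial module, equal to $Y$ or to $\{\alpha,\varphi_C(x)\}$ with $\alpha\in Y$ uniquely determined. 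By Notation~\ref{nota_p} applied to $Y\subsetneq V(\sigma-x)$ this says precisely that $\varphi_C(x)\in\langle Y\rangle_\sigma$, or $\varphi_C(x)\in Y_\sigma(\alpha)$ for a unique $\alpha\in Y$.

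It then remains to match the three bullets to the three possibilities for this module. If it meets $X$ (that is, it is $Y$, or $\{\alpha,\varphi_C(x)\}$ with $\alpha\in X$), then Corollary~\ref{cor1_modules} applied to $\sigma-x$ shows that $\Gamma_{(\sigma,\overline{X})}-x$ has an isolated vertex; this vertex must lie in $C-x$ (the other components being unchanged with at least two vertices), so $C-x$ is disconnected, whereupon Claim~\ref{cla1_half_graph} (trivial when $v(C)=2$) yields that $x$ is the smallest element of $L_C$ and that $\varphi_C(x)$ is the unique isolated vertex of $C-x$; this gives bullet~1 ($M=Y$) and bullet~2 ($M=\{\alpha,\varphi_C(x)\}$, $\alpha\in X$). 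If instead the module is $\{\alpha,\varphi_C(x)\}$ with $\alpha\in\overline{X}$, then Lemma~\ref{lem2_modules} applied to $\sigma-x$ makes it a module of $\Gamma_{(\sigma,\overline{X})}-x$, and since $\alpha\neq\varphi_C(x)$ a short argument (using that $\varphi_C(x)$ would be the only possible isolated vertex of $\Gamma_{(\sigma,\overline{X})}-x$) shows $\alpha$ and $\varphi_C(x)$ lie in a common nonsingleton component, necessarily $C-x$, which is therefore connected; moreover, as the neighbourhood of $\varphi_C(x)$ in $C$ consists of the vertices of $B_q^C$ that are $\le x$ in $L_C$, nonisolation of $\varphi_C(x)$ forces $x$ not to be the smallest element of the discrete order $L_C$, so $x$ has a predecessor $x^-$. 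Finally $\{\alpha,\varphi_C(x)\}$ is a nontrivial module of the connected graph $C-x$, hence by Claim~\ref{cla3_half_graph} it coincides with the unique nontrivial module $\{z^-,z^+\}$ of $C-x$ with $z^+=\varphi_C(x)$, and comparing neighbourhoods in $C$ identifies $z^-$ with $\varphi_C(x^-)$, so $\alpha=\varphi_C(x^-)$; this is bullet~3.

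The step I expect to be the main obstacle is this last matching: transferring the description of the unique nontrivial module of the graph $C-x$ (Claims~\ref{cla1_half_graph} and \ref{cla3_half_graph}) to the unique nontrivial module of the 2-structure $\sigma-x$ through the outside graph, and in particular recovering $\alpha=\varphi_C(x^-)$ from the neighbourhood data; the boundary orders $v(C)=2$ and $v(C)=4$, where the prime/critical distinction is tight, will also require separate, routine checks.
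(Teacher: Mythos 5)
Your argument is correct and follows the route the paper intends: the authors omit the proof of this fact, saying only that it follows from Theorems~\ref{thm_main_1} and \ref{thm1_half_graph}, and your reduction to the components of the outside graph (via Theorem~\ref{thm_main_2}, Lemma~\ref{lem2_modules} and Corollary~\ref{cor1_modules}), your uniqueness argument through the nontrivial modules of $\sigma-x$ cut down to the prime $\sigma[Y]$, and your matching of the three bullets through Claims~\ref{cla1_half_graph} and \ref{cla3_half_graph} is exactly that route. The only caveat is the boundary case $v(C)=2$, where ``$C-x$ is disconnected'' in the first two bullets concerns a one-vertex graph and is a matter of convention in the statement itself rather than a gap in your proof.
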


\begin{defi}\label{defi_primality}
Given a prime 2-structure $\sigma$, Theorem~\ref{thm_ST} leads Ille~\cite{I93} to introduce the primality graph $\mathbb{P}(\sigma)$ of $\sigma$ as follows. 
It is defined on $V(\sigma)$ as well, and its edges are exactly the non-critical unordered pairs of 
$\sigma$ (see Definition~\ref{defi_critical}). 
Hence, by Theorem~\ref{thm_ST}, $\mathbb{P}(\sigma)$ is nonempty when 
$v(\sigma)\geq 7$. 
The primality graph is an efficient tool to recognize primality in different contexts 
(see \cite{I93} and \cite{BCI14}). 
\end{defi}

Given a 2-structure $\sigma$, consider $X\subsetneq V(\sigma)$ such that $\sigma[X]$ is prime. 
Suppose that Statement (S5) holds, and $\sigma$ is $\overline{X}\!$-critical. 
Note that an element of $\overline{X}$ is not isolated in $\mathbb{P}(\sigma)$ by 
Theorem~\ref{thm_main_4}. 

We end the section by determining the primality graph of a partially critical 2-structure outside the  prime 2-substructure. 
We use the following lemma due to Ille~\cite{I93}. 

\begin{lem}\label{lem_primality_graph}
Consider a prime 2-structure $\sigma$ such that $v(\sigma)\geq 5$. 
Given a critical vertex $v$ of $\sigma$ (see Definition~\ref{defi_critical}), the following three assertions hold
\begin{enumerate}
\item $d_{\mathbb{P}(\sigma)}(v)\leq 2$;
\item if $d_{\mathbb{P}(\sigma)}(v)=1$, then 
$V(\sigma)\setminus(\{v\}\cup N_{\mathbb{P}(\sigma)}(v))$ is a module of 
$\sigma-v$;
\item if $d_{\mathbb{P}(\sigma)}(v)=2$, then 
$N_{\mathbb{P}(\sigma)}(v)$ is a module of $\sigma-v$. 
\end{enumerate}
\end{lem}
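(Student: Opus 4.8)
The plan is to base everything on the following dichotomy, which I establish first. Suppose $w\in V(\sigma)\setminus\{v\}$ is such that $\sigma-\{v,w\}$ is prime, and let $M$ be a nontrivial module of $\sigma-v$ — one exists since $v$ is critical. If $w\notin M$, then $M$ is a module of $(\sigma-v)-w=\sigma-\{v,w\}$; primality of the latter makes $M$ trivial there, and as $|M|\geq 2$ this forces $M=V(\sigma)\setminus\{v,w\}$. Hence \emph{either $w\in M$, or $M=V(\sigma)\setminus\{v,w\}$}. I will combine this with two routine module facts: a subset $N$ of a module $P$ of a $2$-structure $\tau$ is a module of $\tau$ whenever it is a module of $\tau[P]$, and the union of two intersecting modules of $\tau$ is again a module of $\tau$. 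Observe also that if a nontrivial module $M$ of $\sigma-v$ contains a vertex $w$ with $\sigma-\{v,w\}$ prime, then $M\setminus\{w\}$ is a module of the prime $2$-structure $\sigma-\{v,w\}$ and differs from $V(\sigma)\setminus\{v,w\}$ (otherwise $M=V(\sigma)\setminus\{v\}$), so $|M\setminus\{w\}|\leq 1$ and hence $|M|=2$.

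For assertion~1, suppose $w_1,w_2,w_3$ are pairwise distinct and distinct from $v$, with each $\sigma-\{v,w_i\}$ prime, and fix a nontrivial module $M$ of $\sigma-v$. As the sets $V(\sigma)\setminus\{v,w_i\}$ are pairwise distinct, the dichotomy gives $w_i\in M$ for at least two indices, and then $|M|=2$ by the observation, say $M=\{w_i,w_j\}$ for those two. Applying the dichotomy to the third vertex $w_k$ and this $M$ gives $\{w_i,w_j\}=V(\sigma)\setminus\{v,w_k\}$, i.e. $v(\sigma)=4$, contradicting $v(\sigma)\geq 5$; so $d_{\mathbb{P}(\sigma)}(v)\leq 2$. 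For assertion~3, write $N_{\mathbb{P}(\sigma)}(v)=\{w_1,w_2\}$ and fix a nontrivial module $M$ of $\sigma-v$. By the dichotomy, either $w_1,w_2\in M$, or $M=V(\sigma)\setminus\{v,w_i\}$ for exactly one $i$; in the latter case the other vertex $w_j$ lies in $M$, and $M\setminus\{w_j\}=V(\sigma)\setminus\{v,w_1,w_2\}$ is a module of the prime $2$-structure $\sigma-\{v,w_j\}$ that is not all of $V(\sigma)\setminus\{v,w_j\}$, forcing $v(\sigma)\leq 4$, a contradiction. Hence $w_1,w_2\in M$, so $|M|=2$ and $M=\{w_1,w_2\}$; thus $N_{\mathbb{P}(\sigma)}(v)$ is a module of $\sigma-v$.

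For assertion~2, write $N_{\mathbb{P}(\sigma)}(v)=\{w\}$. If some nontrivial module of $\sigma-v$ avoids $w$, the dichotomy at once gives that $V(\sigma)\setminus\{v,w\}$ is a module of $\sigma-v$, which is the conclusion. So assume every nontrivial module of $\sigma-v$ contains $w$; by the observation each has size $2$. There is then $z\in V(\sigma)\setminus\{v,w\}$ with $\{w,z\}$ a module of $\sigma-v$, and since the union of two distinct intersecting such modules would have size $\geq 3$, the pair $\{w,z\}$ is the \emph{unique} nontrivial module of $\sigma-v$. I claim $\sigma-\{v,z\}$ is prime; since $z\neq w$, this contradicts $d_{\mathbb{P}(\sigma)}(v)=1$ and finishes the proof. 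Suppose not, and let $N$ be a nontrivial module of $\sigma-\{v,z\}$. First, $w\in N$: if $z$ were linked uniformly to $N$ within $\sigma-v$, then $N$ would be a nontrivial module of $\sigma-v$ with $z\notin N$, impossible; so there are $a,b\in N$ with $[a,z]_\sigma\neq[b,z]_\sigma$, and if $w\notin\{a,b\}$ then $[a,w]_\sigma=[a,z]_\sigma\neq[b,z]_\sigma=[b,w]_\sigma$ because $\{w,z\}$ is a module of $\sigma-v$, which forces $w\in N$ (otherwise $N$, a module of $\sigma-\{v,z\}$, would link $w$ uniformly to $\{a,b\}$); in all cases $w\in N$. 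Next, $N\cup\{z\}$ is a module of $\sigma-v$: for $u\in(V(\sigma)\setminus\{v\})\setminus(N\cup\{z\})$ the value $[u,a]_\sigma$ is constant over $a\in N$ (as $N$ is a module of $\sigma-\{v,z\}$), and $[u,z]_\sigma=[u,w]_\sigma$ equals that common value because $\{w,z\}$ is a module of $\sigma-v$ and $w\in N$. But $|N\cup\{z\}|\geq 3$ and $N\cup\{z\}\neq V(\sigma)\setminus\{v\}$ (else $N=V(\sigma)\setminus\{v,z\}$ would be trivial in $\sigma-\{v,z\}$), contradicting that every nontrivial module of $\sigma-v$ has size $2$. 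Hence $\sigma-\{v,z\}$ is prime.

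The main obstacle is assertion~2: the two other parts are immediate once the dichotomy is available, whereas here one must actually produce a second non-critical pair at $v$, which calls for first collapsing the modules of $\sigma-v$ to the single pair $\{w,z\}$ and then the more delicate argument that a hypothetical nontrivial module $N$ of $\sigma-\{v,z\}$ is forced to contain $w$ and enlarges to the oversized module $N\cup\{z\}$ of $\sigma-v$.
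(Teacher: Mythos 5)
Your proof is correct. Note that the paper itself gives no argument for this lemma --- it is quoted from Ille's earlier work \cite{I93} --- so there is no internal proof to compare against; judged on its own, your write-up is a complete and self-contained derivation. The dichotomy (for a non-critical pair $\{v,w\}$ and a nontrivial module $M$ of $\sigma-v$, either $w\in M$ or $M=V(\sigma)\setminus\{v,w\}$) together with the size-two observation does dispatch assertions~1 and~3 cleanly, and both uses of $v(\sigma)\geq 5$ are exactly where they are needed. The only part that merits close scrutiny is assertion~2, and it holds up: in the case where every nontrivial module of $\sigma-v$ contains $w$, you correctly isolate the unique pair $\{w,z\}$, and the argument that a nontrivial module $N$ of $\sigma-\{v,z\}$ must contain $w$ (via the failure of $z\longleftrightarrow_\sigma N$ and the transfer $[a,w]_\sigma=[a,z]_\sigma$ through the module $\{w,z\}$) and then inflates to the module $N\cup\{z\}$ of $\sigma-v$ of size at least $3$ is sound; this manufactures the second non-critical pair $\{v,z\}$ and contradicts $d_{\mathbb{P}(\sigma)}(v)=1$. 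All the background facts you invoke (restriction of a module to an induced substructure, union of intersecting modules, existence of a nontrivial module in a non-prime structure on at least $3$ vertices) are standard and correctly applied.
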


The next fact follows from Fact~\ref{fac_description_modules} and 
Lemma~\ref{lem_primality_graph}. 
We omit its proof. 

\begin{fac}\label{fac_description_primality}
Given a 2-structure $\sigma$, consider $X\subsetneq V(\sigma)$ such that 
$\sigma[X]$ is prime. 
Suppose that Statement (S5) holds, and $\sigma$ is $\overline{X}\!$-critical. 
Consider a component $C$ of $\Gamma_{(\sigma,\overline{X})}$ such that 
$v(C)\geq 6$. 
Then, we have 
\begin{equation}\label{E0_fac_description_primality}
\mathbb{P}(\sigma)[V(C)]=\mathbb{P}(C).
\end{equation}
Moreover, the following two assertions hold. 
\begin{enumerate}
\item For each $x\in B_q^C$, if $\varphi_C(x)\in Y_\sigma(\alpha)$, where 
$Y=V(\sigma)\setminus\{x,\varphi_C(x)\}$ and $\alpha\in X$, then 
$N_{\mathbb{P}(C)}(x)=\{\varphi_C(x)\}$ and 
$N_{\mathbb{P}(\sigma)}(x)=\{\alpha,\varphi_C(x)\}$. 
\item For each $x\in B_q^C$, $N_{\mathbb{P}(C)}(x)\neq N_{\mathbb{P}(\sigma)}(x)$ if and only if $\varphi_C(x)\in Y_\sigma(\alpha)$, where 
$Y=V(\sigma)\setminus\{x,\varphi_C(x)\}$ and $\alpha\in X$. 
\end{enumerate}
\end{fac}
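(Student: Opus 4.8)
The plan is to establish the equality \eqref{E0_fac_description_primality} first, and then read off the two numbered assertions from it together with Fact~\ref{fac_description_modules} and Lemma~\ref{lem_primality_graph}. Throughout I will use the standing facts: Statement~(S5) forces Statements~(S3) and~(S1) (Remark~\ref{rem1_conditions_Ck}); $\sigma$ is prime (by the very definition of $\overline{X}\!$-critical) and $\Gamma_{(\sigma,\overline{X})}$ has no isolated vertices (Corollary~\ref{cor1_first_results}); $C$ is prime, critical, and a discrete half graph with $v(C)\geq 6$ (Theorems~\ref{thm_main_1}, \ref{thm_main_2}, \ref{thm1_half_graph}); and $P_5\not\leq C$ (Lemma~\ref{lem1_component}), hence $K_2\oplus K_2\not\leq C$ (Lemma~\ref{fac1_half_graph}). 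Since $v(C)\geq 6\geq 5$, Lemma~\ref{lem_primality_graph} applies both to $C$ and to each critical vertex of $\sigma$.

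For \eqref{E0_fac_description_primality} I fix distinct $u,v\in V(C)$ and show that $\sigma-\{u,v\}$ is prime if and only if $C-\{u,v\}$ is. Since $u,v\in\overline{X}$, Statement~(S3) still holds for $\sigma-\{u,v\}$, and $\Gamma_{(\sigma-\{u,v\},\overline{X}\setminus\{u,v\})}=\Gamma_{(\sigma,\overline{X})}-\{u,v\}$, which --- as $C$ is a component of $\Gamma_{(\sigma,\overline{X})}$ --- is the disjoint union of $C-\{u,v\}$ and the components of $\Gamma_{(\sigma,\overline{X})}$ other than $C$. Applying Theorem~\ref{thm_main_1} to $\sigma$ shows that those other components already satisfy ``$v=2$, or $v\geq 4$ and prime''; applying Theorem~\ref{thm_main_1} to $\sigma-\{u,v\}$ then gives that $\sigma-\{u,v\}$ is prime iff every component of $C-\{u,v\}$ satisfies this same condition. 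It remains to see that this is equivalent to $C-\{u,v\}$ being prime: if $C-\{u,v\}$ is prime it has $v(C)-2\geq 4$ vertices hence is connected, so the condition holds; conversely the condition rules out components of size $1$, so each component of $C-\{u,v\}$ carries an edge, whence $K_2\oplus K_2\not\leq C$ forces $C-\{u,v\}$ to be connected, and being of size $\geq 4$ it is prime. Consequently $\mathbb{P}(\sigma)[V(C)]=\mathbb{P}(C)$, and in particular $N_{\mathbb{P}(C)}(x)=N_{\mathbb{P}(\sigma)}(x)\cap V(C)$ for every $x\in V(C)$.

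For the two assertions I fix $x\in B_q^C$ and invoke Fact~\ref{fac_description_modules}, which singles out the unique nontrivial module $M$ of $\sigma-x$; it has size $2$ (namely $\{\alpha,\varphi_C(x)\}$ with $\alpha\in X$, or $\{\varphi_C(x^-),\varphi_C(x)\}$) except in the case $\varphi_C(x)\in\langle Y\rangle_\sigma$, where $M=Y=V(\sigma)\setminus\{x,\varphi_C(x)\}$. In every case $\sigma-\{x,\varphi_C(x)\}$ is prime, so $\varphi_C(x)\in N_{\mathbb{P}(\sigma)}(x)$; and when $M=\{a,\varphi_C(x)\}$ has size $2$, exchanging $a$ and $\varphi_C(x)$ is (because $M$ is a module of $\sigma-x$) an isomorphism from $\sigma-\{x,a\}$ onto $\sigma-\{x,\varphi_C(x)\}$, so $a\in N_{\mathbb{P}(\sigma)}(x)$ too. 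Feeding this into Lemma~\ref{lem_primality_graph} ($d_{\mathbb{P}(\sigma)}(x)\leq 2$, and in the case $M=Y$ a second neighbour would force $N_{\mathbb{P}(\sigma)}(x)$ to be a nontrivial module of $\sigma-x$ of size $2\neq|Y|$) pins down $N_{\mathbb{P}(\sigma)}(x)$: it is $\{\alpha,\varphi_C(x)\}$ when $\varphi_C(x)\in Y_\sigma(\alpha)$ with $\alpha\in X$, it is $\{\varphi_C(x^-),\varphi_C(x)\}$ when $\varphi_C(x)\in Y_\sigma(\varphi_C(x^-))$, and it is $\{\varphi_C(x)\}$ when $\varphi_C(x)\in\langle Y\rangle_\sigma$. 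Since $\varphi_C(x),\varphi_C(x^-)\in V(C)$ but $\alpha\in X$ lies outside $V(C)$, intersecting with $V(C)$ and using \eqref{E0_fac_description_primality} yields assertion~(1) and, by comparing the three cases, assertion~(2).

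I expect the delicate point to be the equivalence buried in \eqref{E0_fac_description_primality} between ``every component of $C-\{u,v\}$ is of size $2$ or prime of size $\geq 4$'' and ``$C-\{u,v\}$ prime'': this is exactly where the $K_2\oplus K_2$-freeness of $C$ and the hypothesis $v(C)\geq 6$ are really used, and one must check at each step that Theorem~\ref{thm_main_1} is applied only to 2-substructures for which Statement~(S3) is still available.
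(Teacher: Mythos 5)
Your proof is correct. The paper omits its own argument, saying only that the fact ``follows from Fact~\ref{fac_description_modules} and Lemma~\ref{lem_primality_graph}''; your treatment of the two numbered assertions is exactly that intended route (the three mutually exclusive cases of Fact~\ref{fac_description_modules} give the unique nontrivial module of $\sigma-x$, the prime pair $\{x,\varphi_C(x)\}$ puts $\varphi_C(x)$ into $N_{\mathbb{P}(\sigma)}(x)$, and Lemma~\ref{lem_primality_graph} together with the uniqueness of that module pins the degree down to $1$ in the $\langle Y\rangle_\sigma$ case and to $2$ otherwise, with $N_{\mathbb{P}(\sigma)}(x)$ equal to the size-two module in the latter case). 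Where you add something the paper's hint does not cover is the identity \eqref{E0_fac_description_primality} itself: you derive it from Theorem~\ref{thm_main_1} by comparing the component decompositions of $\Gamma_{(\sigma,\overline{X})}-\{u,v\}$ and $C-\{u,v\}$, using $K_2\oplus K_2\not\leq C$ and $v(C)-2\geq 4$ to show that ``every component of $C-\{u,v\}$ is of size $2$ or prime of size $\geq 4$'' collapses to ``$C-\{u,v\}$ is prime''. This is a clean and conceptually self-contained argument; the alternative suggested by the paper's citation would be to compute $N_{\mathbb{P}(\sigma)}$ and $N_{\mathbb{P}(C)}$ separately on both sides of the bipartition and compare, which is more laborious. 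All the steps you flag as delicate do check out: Statement~(S3) is inherited by $\sigma-\{u,v\}$ because $\varepsilon_{(\sigma-\{u,v\},\overline{X}\setminus\{u,v\})}\subseteq\varepsilon_{(\sigma,\overline{X})}$; a disconnected graph on at least three vertices is never prime, so primality of $C-\{u,v\}$ does force connectedness; and $v(\sigma)\geq |X|+v(C)\geq 9$ rules out the degenerate size coincidence $|Y|=2$ when you exclude a second $\mathbb{P}(\sigma)$-neighbour in the $\langle Y\rangle_\sigma$ case.
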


\section{A new proof of Theorem~\ref{thm_pi}}\label{A_thm_pi}

\begin{proof}[Proof of Theorem~\ref{thm_pi}]
Let $\sigma$ be a prime 2-structure. 
Consider $X\subsetneq V(\sigma)$ such that $\sigma[X]$ is prime. 
Suppose that $\overline{X}$ is finite and $|\overline{X}|\geq 6$. 

For a contradiction, suppose that for each proper subset $Y$ of $\overline{X}$, we have 
\begin{equation}\label{E1_thm_pi}
\text{ if $\sigma[X\cup Y]$ is prime, then $|\overline{X\cup Y}|$ is odd. }
\end{equation}
For $Y=\emptyset$ in \eqref{E1_thm_pi}, we obtain $|\overline{X}|$ is odd. 
Hence $|\overline{X}|\geq 7$. 
For $Y\subsetneq\overline{X}$, with $|Y|=1,3$ or $5$, it follows from \eqref{E1_thm_pi} that 
$\sigma[X\cup Y]$ is not prime. 
Consequently Statement (S5) holds. 
Since $|\overline{X}|$ is odd, there exists 
$C\in\mathcal{C}(\Gamma_{(\sigma,\overline{X})})$ such that $v(C)$ is odd. 
Since $\sigma$ is prime, it follows from Theorem~\ref{thm_main_1} that 
$\sigma[X\cup V(C)]$ is prime. 
We have $\overline{X}=V(C)\cup\overline{X\cup V(C)}$. 
Since $|\overline{X}|$ and $v(C)$ are odd, we obtain that $|\overline{X\cup V(C)}|$ is even. 
It follows from \eqref{E1_thm_pi} that $V(C)=\overline{X}$. 
Thus $\mathcal{C}(\Gamma_{(\sigma,\overline{X})})=\{\Gamma_{(\sigma,\overline{X})}\}$. 
Since $\sigma$ is prime, it follows from Theorem~\ref{thm_main_1} that 
$\Gamma_{(\sigma,\overline{X})}$ is prime. 
By Proposition~\ref{prop_component}, $\Gamma_{(\sigma,\overline{X})}$ is bipartite. 
Futhermore, $P_5\not\leq \Gamma_{(\sigma,\overline{X})}$ by Lemma~\ref{lem1_component}. 
Therefore, it follows from Proposition~\ref{prop1_half_graph} that 
$\Gamma_{(\sigma,\overline{X})}$ is a half graph, which is impossible because 
$v(\Gamma_{(\sigma,\overline{X})})=|\overline{X}|$ and $|\overline{X}|$ is odd. 

Consequently \eqref{E1_thm_pi} does not hold. 
Therefore, 
there exists $Y\subsetneq\overline{X}$ such that $\sigma[X\cup Y]$ is prime, and $|\overline{X\cup Y}|$ is even. 
Recall that $\overline{X}$ is finite, so $\overline{X\cup Y}$ is as well. 
Hence, by applying several times Theorem~\ref{tEHR} from $\sigma[X\cup Y]$, we obtain distinct $v,w\in\overline{X\cup Y}$ such that $\sigma-\{v,w\}$ is prime. 
\end{proof}

As announced in Subsection~\ref{sub_prime}, we extend Theorem~\ref{thm_mys} as follows. 

\begin{thm}\label{thm_mys_ext}
Given a prime 2-structure $\sigma$, consider $X\subsetneq V(\sigma)$ such that 
$\sigma[X]$ is prime. 
Suppose that $$\text{$q_{(\sigma,\overline{X})}^a\neq\emptyset$ (see Notation~\ref{nota_a+s})}.$$
If $\overline{X}$ is finite and $|\overline{X}|\geq 4$, then there exist distinct $v,w\in\overline{X}$ such that $\sigma-\{v,w\}$ is prime. 
\end{thm}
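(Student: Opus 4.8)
\textbf{Proof proposal for Theorem~\ref{thm_mys_ext}.}

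The plan is to argue by contradiction, following the structure of the proof of Theorem~\ref{thm_pi} given in Appendix~\ref{A_thm_pi}, but exploiting the extra hypothesis $q_{(\sigma,\overline{X})}^a\neq\emptyset$ to sharpen the bound from $6$ to $4$. Assume $\overline{X}$ is finite, $|\overline{X}|\geq 4$, and that for every proper subset $Y$ of $\overline{X}$, if $\sigma[X\cup Y]$ is prime then $|\overline{X\cup Y}|$ is odd. Taking $Y=\emptyset$ shows $|\overline{X}|$ is odd, so $|\overline{X}|\geq 5$. Taking $|Y|=1$ and $|Y|=3$ shows that Statements (S1) and (S3) hold, hence Statement (S5) holds as well. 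In particular ${\rm Ext}_\sigma(X)=\emptyset$ by Remark~\ref{rem1_conditions_Ck}, and by Corollary~\ref{cor1_first_results} the outside graph $\Gamma_{(\sigma,\overline{X})}$ has no isolated vertices.

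Next I would show that $\Gamma_{(\sigma,\overline{X})}$ must be connected. Since $|\overline{X}|$ is odd, some component $C$ of $\Gamma_{(\sigma,\overline{X})}$ has $v(C)$ odd; by Theorem~\ref{thm_main_1}, $\sigma[X\cup V(C)]$ is prime, and since $|\overline{X}|$ is odd and $v(C)$ is odd, $|\overline{X\cup V(C)}|$ is even, forcing (by the contradiction hypothesis, with $Y=V(C)$) $V(C)=\overline{X}$. Thus $\Gamma_{(\sigma,\overline{X})}$ is connected, and by Theorem~\ref{thm_main_1} it is prime. By Proposition~\ref{prop_component} it is bipartite, and by Lemma~\ref{lem1_component} we have $P_5\not\leq\Gamma_{(\sigma,\overline{X})}$; so by Proposition~\ref{prop1_half_graph} it is a (finite) half graph, hence isomorphic to $H_{|\overline{X}|}$. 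But a finite half graph has an even number of vertices, contradicting $|\overline{X}|$ odd --- exactly as in the proof of Theorem~\ref{thm_pi}. However, this argument so far only re-derives the threshold $|\overline{X}|\geq 6$ is not yet used, so the point is that it already works for $|\overline{X}|\geq 5$, i.e. for $|\overline{X}|\geq 4$ the only remaining case is $|\overline{X}|=4$, for which a half graph on $4$ vertices does exist.

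So the crux is to rule out the case $|\overline{X}|=4$ using $q_{(\sigma,\overline{X})}^a\neq\emptyset$. In that case the connectedness argument above does not bite (since $v(C)$ could be $4$, even), so I would instead reason directly: pick $B_q\in q_{(\sigma,\overline{X})}^a$. If $|B_q|\geq 2$, then by the second assertion of Lemma~\ref{l3_p_and_q}, $\sigma[B_q]$ is linear, and in particular $\sigma[B_q]$ has a nontrivial module (any proper initial segment of the linear order on $B_q$ of size $\geq 2$, or if $|B_q|=2$ then $B_q$ itself together with partners); more to the point, a linear 2-structure on $\geq 2$ vertices is never prime, and one shows the linear order structure lets us find $v,w\in B_q$ (consecutive in $l(B_q)$) so that $\{v,w\}$ is a module of $\sigma[B_q]$, and then use Lemmas~\ref{l1_p_and_q}, \ref{l2_p_and_q} and Corollary~\ref{cor1_oppo_modules} to promote it: removing the two ``partner'' vertices from the adjacent block of the partition should leave a prime 2-structure. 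If $|B_q|=1$, say $B_q=\{v\}$, then since $B_q\in q_{(\sigma,\overline{X})}^a$, $v$ lies in a component $C$ of $\Gamma_{(\sigma,\overline{X})}$ with $v(C)\geq 2$ meeting at least two distinct blocks of $q_{(\sigma,\overline{X})}$, so $\Gamma_{(\sigma,\overline{X})}$ is not a balanced half graph with the partition forced above --- one checks that in a $4$-vertex half graph $H_4$, every block of $q$ consistent with a half-graph outside structure lies in $q^s$, contradicting $q_{(\sigma,\overline{X})}^a\neq\emptyset$. The main obstacle I anticipate is precisely this last bookkeeping: verifying that $q^a\neq\emptyset$ is genuinely incompatible with $\overline{X}$ being a $4$-element half graph (equivalently, that in the half-graph case all the ``antisymmetric'' blocks would have to be singletons landing in $q^s$), which requires carefully matching the combinatorial description of $H_4$ against Notation~\ref{nota_a+s} and the structure of $p_{(\sigma,\overline{X})}$ and $q_{(\sigma,\overline{X})}$ from Lemmas~\ref{l1_p_and_q}--\ref{l3_p_and_q}.

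Once the case $|\overline{X}|=4$ is eliminated, the contradiction hypothesis fails, so there is $Y\subsetneq\overline{X}$ with $\sigma[X\cup Y]$ prime and $|\overline{X\cup Y}|$ even; applying Theorem~\ref{tEHR} repeatedly from $\sigma[X\cup Y]$ yields distinct $v,w\in\overline{X\cup Y}\subseteq\overline{X}$ with $\sigma-\{v,w\}$ prime, as desired.
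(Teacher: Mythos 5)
There is a genuine gap, and it comes from misidentifying which cardinality of $\overline{X}$ is the hard case. Under your contradiction hypothesis, $|\overline{X}|$ is odd, so after your parity/half-graph argument the residual case is $|\overline{X}|=5$, not $|\overline{X}|=4$. (The case $|\overline{X}|=4$ is in fact trivial and needs no extra hypothesis: by Theorem~\ref{tEHR} there are distinct $v,w\in\overline{X}$ with $\sigma[X\cup\{v,w\}]$ prime, and $\sigma[X\cup\{v,w\}]=\sigma-(\overline{X}\setminus\{v,w\})$ with $|\overline{X}\setminus\{v,w\}|=2$.) Your effort is therefore aimed at a case that never arises in the contradiction argument, while the case that does arise is not handled.

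Concretely, the step that fails for $|\overline{X}|=5$ is the deduction of Statement (S5) and the ensuing use of Lemma~\ref{lem1_component}. From the hypothesis ``$\sigma[X\cup Y]$ prime $\Rightarrow$ $|\overline{X\cup Y}|$ odd for every \emph{proper} $Y\subsetneq\overline{X}$'' you can only extract (S1) and (S3) when $|\overline{X}|=5$: the unique $5$-element subset of $\overline{X}$ is $\overline{X}$ itself, which is not proper, and since $\sigma$ is prime we have $\overline{X}\in\varepsilon_{(\sigma,\overline{X})}$, so (S5) is actually \emph{false} there. Hence $P_5\not\leq\Gamma_{(\sigma,\overline{X})}$ cannot be concluded; indeed the opposite happens: with (S3) and $|\overline{X}|=5$, Theorem~\ref{thm_main_1} forces a single component of size $5$ that is prime, and Proposition~\ref{prop_component} makes it bipartite, so $\Gamma_{(\sigma,\overline{X})}\simeq P_5$. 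This is exactly where the hypothesis $q_{(\sigma,\overline{X})}^a\neq\emptyset$ must be used, and the paper's mechanism is the one you only gesture at for the wrong case: since $K_2\oplus K_2\leq P_5$, one picks $x,x'$ in one block $B_q$ and $y,y'$ in the other block $D_q$ of $q_{(\sigma,\overline{X})}$ (here $p_{(\sigma,\overline{X})}=q_{(\sigma,\overline{X})}$ has exactly two blocks) with $\{x,y\},\{x',y'\}\in E$ and $\{x,y'\},\{x',y\}\notin E$, and Fact~\ref{fac3_first_results} then puts both $B_q$ and $D_q$ in $q_{(\sigma,\overline{X})}^s$, contradicting $q_{(\sigma,\overline{X})}^a\neq\emptyset$. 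Your proposed analysis of $H_4$ and of linear blocks via Lemma~\ref{l3_p_and_q} does not substitute for this: it addresses a configuration excluded by the parity of $|\overline{X}|$, and the claim that $q^a\neq\emptyset$ is incompatible with a $4$-vertex half graph is neither needed nor established.
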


\begin{proof}
By Theorem~\ref{thm_pi}, we can assume that $|\overline{X}|=4$ or $5$. 
If $|\overline{X}|=4$, then it suffices to apply Theorem~\ref{tEHR}. 
Hence suppose that $|\overline{X}|=5$. 
For a contradiction, suppose that Statement (S3) holds. 
It follows from Theorem~\ref{thm_main_1} that for each component $C$ of 
$\Gamma_{(\sigma,\overline{X})}$, we have $v(C)=2$ or $v(C)\geq 4$ and $C$ is prime. 
Since $|\overline{X}|=5$, we obtain that $\Gamma_{(\sigma,\overline{X})}$ is connected. 
Thus $\Gamma_{(\sigma,\overline{X})}$ is prime. 
Since $\Gamma_{(\sigma,\overline{X})}$ is connected, it follows from the first assertion of Proposition~\ref{prop_component} that 
$p_{(\sigma,\overline{X})}=q_{(\sigma,\overline{X})}$, and 
$q_{(\sigma,\overline{X})}$ has two elements, denoted by $B_q$ and $D_q$. 
Moreover, $\Gamma_{(\sigma,\overline{X})}$ is bipartite, with bipartition 
$\{B_q,D_q\}$. 
Since $\Gamma_{(\sigma,\overline{X})}$ is prime and bipartite, we have 
$\Gamma_{(\sigma,\overline{X})}\simeq P_5$. 
Hence $K_2\oplus K_2\leq\Gamma_{(\sigma,\overline{X})}$. 
Thus, there exists distinct $x,x'\in B_q$ and distinct $y,y'\in D_q$ such that 
$\{x,y\},\{x',y'\}\in E(\Gamma_{(\sigma,\overline{X})})$ and 
$\{x,y'\},\{x',y\}\not\in E(\Gamma_{(\sigma,\overline{X})})$. 
It follows from Fact~\ref{fac3_first_results} that 
$B_q,D_q\in q_{(\sigma,\overline{X})}^s$, which contradicts 
$q_{(\sigma,\overline{X})}^a\neq\emptyset$. 
Consequently, Statement (S3) does not hold. 
Hence, there exists $Y\subseteq\overline{X}$ such that $|Y|=3$ and 
$\sigma[X\cup Y]$ is prime, which completes the proof because 
$|\overline{X}|=5$. 
\end{proof}


\begin{thebibliography}{99}
\bibitem{BBH15}H. Belkhechine, I. Boudabbous, K. Hzami, The prime tournaments 
$T$ with $|W_5(T)|=|T|-2$, Turkish J. Math. 39 (2015) 570--582. 
\bibitem{B94}P. Bonizzoni, Primitive 2-structures with the $(n-2)$-property,
Theoret. Comput. Sci. 132 (1994), 151--178.
\bibitem{BDY18}I. Boudabbous, J. Dammak, M. Yaich, 
Prime criticality and prime covering, to appear in 
Math. Rep. (Bucur.). 
\bibitem{BI07}I. Boudabbous, P. Ille, Critical and infinite directed graphs, 
Discrete Math. 307 (2007) 2415--2428. 
\bibitem{BI09}Y. Boudabbous, P. Ille, 
Indecomposability graph and critical vertices of an indecomposable graph, 
Discrete Math. 309 (2009) 2839--2846. 
\bibitem{BCI14}A. Boussa\"{\i}ri, A. Cha\"{\i}cha\^a, P. Ille, 
Indecomposability graph and indecomposability recognition, 
Proceedings of ROGICS'08, European J. Combin. 37 (2014) 32--42. 
\bibitem{BDI08} A. Breiner, J. Deogun, P. Ille, 
Partially critical indecomposable graphs, 
Contrib. Discrete Math. 3 (2008) 40--59. 
\bibitem{EHR99}A. Ehrenfeucht, T. Harju, G. Rozenberg, 
The Theory of 2-Structures, A Framework for Decomposition and 
Transformation of Graphs, 
World Scientific, Singapore, 1999. 
\bibitem{EH85}P. Erd\H{o}s, A. Hajnal, 
Chromatic number of finite and infinite graphs and hypergraphs, 
Discrete Math. 53 (1985) 281--285. 
\bibitem{I93}P. Ille, Recognition problem in reconstruction 
for decomposable  relations, in B. Sands, N. Sauer, R. Woodrow (Eds.), 
Finite and Infinite Combinatorics in Sets and Logic, 
Kluwer Academic Publishers, 1993, 189--198.
\bibitem{I94}P. Ille, Graphes ind\'ecomposables infinis, 
C.R. Acad. Sci. Paris S\'erie I Math. 318 (1994) 499--503.
\bibitem{I97}P. Ille, Indecomposable graphs, 
Discrete Math 173 (1997) 71--78. 
\bibitem{I05}P. Ille, La d\'ecomposition intervallaire des structures binaires, 
La Gazette des Math\'ematiciens 104 (2005), 39--58. 
\bibitem{I05b}P. Ille, A characterization of the indecomposable and infinite graphs, 
Proceedings of the 13th Symposium of the Tunisian Mathematical Society 
(Sousse, Tunisia, 2005), Glob. J. Pure Appl. Math. 1 (2005) 272--285.
\bibitem{IR14}P. Ille, R. Villemaire, Recognition of prime graphs from a prime subgraph, Discrete Math. 327 (2014) 76--90. 
\bibitem{R82}J.G. Rosenstein, Linear orderings, Academic Press, New York, 1982.
\bibitem{S11}M.Y. Sayar, 
Partially critical tournaments and partially critical supports, 
Contrib. Discrete Math. 6 (2011) 52--76. 
\bibitem{S92}J. Spinrad, P4-trees and substitution decomposition, 
Discrete Appl. Math. 39 (1992) 263--291. 
\bibitem{ST93}J.H. Schmerl, W.T. Trotter, Critically indecomposable 
partially ordered sets, graphs, tournaments and other binary relational structures, 
Discrete Math 113 (1993), 191--205.
\end{thebibliography}
\end{document}